\documentclass[11pt,reqno]{amsart}

\usepackage{amsmath, amsthm, amssymb}
\usepackage{amsfonts}
\usepackage{mathtools}

\usepackage[ansinew]{inputenc}
\usepackage[dvips]{epsfig}
\usepackage{graphicx}
\usepackage[english]{babel}

\usepackage{stackengine}
\stackMath

\usepackage{aliascnt}

\theoremstyle{plain}
\newtheorem{theorem}{Theorem}[section]

\newaliascnt{lemma}{theorem}
\newtheorem{lemma}[lemma]{Lemma}
\aliascntresetthe{lemma}

\newaliascnt{proposition}{theorem}
\newtheorem{proposition}[proposition]{Proposition}
\aliascntresetthe{proposition}

\newaliascnt{corollary}{theorem}

\aliascntresetthe{corollary}

\theoremstyle{definition}
\newaliascnt{definition}{theorem}
\newtheorem{definition}[definition]{Definition}
\aliascntresetthe{definition}

\newaliascnt{remark}{theorem}
\newtheorem{remark}[remark]{Remark}
\aliascntresetthe{remark}

\newaliascnt{example}{theorem}
\newtheorem{example}[example]{Example}
\aliascntresetthe{example}

\theoremstyle{definition}
\newtheorem*{remark*}{Remark}
\newtheorem*{assumption*}{Assumption}

\numberwithin{equation}{section}

\usepackage[backgroundcolor=white, bordercolor=blue,
linecolor=blue]{todonotes}

\usepackage{hyperref}
\usepackage{cleveref}

\usepackage{dsfont}
\usepackage{bbm}

\newcommand{\N}{\mathbb{N}}
\newcommand{\R}{\mathbb{R}}

\newcommand{\cH}{\mathcal{H}}

\newcommand{\cP}{\mathcal{P}}
\newcommand{\cQ}{\mathcal{Q}}

\newcommand{\cN}{\mathcal{N}}

\newcommand{\cK}{\mathcal{K}}
\newcommand{\cM}{\mathcal{M}}

\newcommand{\eps}{\epsilon}

\newcommand{\ddiv}{\mathrm{div}_v}

\newcommand{\bbR}{\mathbb{R}}

\newcommand{\sqarrow}[1]{%
\mathrel{\makebox[4em][c]{$\mathop{\rightsquigarrow}\limits_{(#1)}$}}%
}

\DeclareMathOperator{\supp}{supp}

\DeclareMathOperator*{\osc}{osc}

\renewcommand{\d}{\textnormal{\,d}}

\newcommand{\average}{{\mathchoice {\kern1ex\vcenter{\hrule height.4pt
width 6pt depth0pt} \kern-9.7pt} {\kern1ex\vcenter{\hrule
height.4pt width 4.3pt depth0pt} \kern-7pt} {} {} }}
\newcommand{\dashint}{\average\int}

\begin{document}
\allowdisplaybreaks
\title{Kinetic Fokker-Planck equations with Maxwell boundary conditions}

\author{Kyeongbae Kim}
\address{Institute for Applied Mathematics, University of Bonn, Endenicher Allee 60, 53115, Bonn, Germany}
\email{kkim@uni-bonn.de}
\urladdr{https://sites.google.com/view/kyeongbaekim/}

\author{Marvin Weidner}
\address{Institute for Applied Mathematics, University of Bonn, Endenicher Allee 60, 53115, Bonn, Germany}
\email{mweidner@uni-bonn.de}
\urladdr{https://sites.google.com/view/marvinweidner/}

\keywords{kinetic, Fokker-Planck, Kolmogorov, Maxwell, regularity}

\subjclass[2020]{35Q84, 35B65, 82C40}

\allowdisplaybreaks

\begin{abstract}
We develop the boundary regularity theory for solutions to linear kinetic Fokker-Planck equations with Maxwell boundary conditions. These conditions interpolate between diffuse and specular reflection via an accommodation coefficient $\alpha \in [0,1]$. While existing literature is restricted to the extreme cases $\alpha = 0$ and $\alpha = 1$, we resolve the entire intermediate regime $\alpha \in (0,1)$. Specifically, we show that solutions are H\"older continuous if the coefficients are merely uniformly elliptic. Furthermore, for sufficiently smooth coefficients, we establish  boundary regularity of order $\frac{3}{\pi} \arccos(\frac{\alpha}{2}) - 1$ up to the grazing set. This exponent is optimal. Beyond Maxwell conditions, we develop a unified approach that extends to a broad class of reflection boundary conditions, including super-elastic collisions.
\end{abstract}

\maketitle

\section{Introduction}  

In this article, we establish an optimal regularity theory for solutions $f$ to linear kinetic equations
\begin{align}
\label{eq:PDE}
\partial_tf+v\cdot\nabla_xf-\ddiv(A\nabla_vf)=B\cdot\nabla_vf+F\quad \mbox{in  $(0,T)\times \Omega\times \bbR^n$}
\end{align}
in a bounded domain $\Omega \subset \R^n$, subject to Maxwell-type  reflection boundary conditions. \eqref{eq:PDE} is a fundamental model in kinetic theory, arising naturally as the linearization of the Landau equation from plasma physics. The study of such linear kinetic models is of central importance for advancing the theory of nonlinear equations, as sharp regularity estimates are the central mechanism required to establish conditional regularity and continuation criteria for the Landau and non-cutoff Boltzmann equations (see \cite{HeSn20,ImSi22,HST25}). To date, these results are restricted to $x$-periodic solutions and extending them to equations in bounded domains remains an interesting open problem \cite{Sil23}.

In real-world applications, the accuracy of these kinetic models depends heavily on how the reflection of particles at the spatial boundary $\partial \Omega$ is encoded via appropriate boundary conditions \cite{Cer00,Vil02}. Recent advances have successfully characterized the boundary regularity of linear kinetic equations (see \cite{Sil22,Zhu24}, \cite{DGY22,RoWe25}, \cite{HJJ15,HLW24,Zhu25,KiWe26}) for physically idealized scenarios, namely diffuse or specular reflection, which model boundaries that are perfectly thermalizing or perfectly smooth, respectively. In practice, however, such absolute surface properties are rarely observed. Instead, physical boundaries typically exhibit \emph{a combination of both}. This motivates the study of \emph{Maxwell boundary conditions}, which linearly interpolate between these extreme cases \cite{Max79,Mis10}.

We formalize this by splitting the boundary $\gamma = (0,T) \times \partial \Omega \times \R^n$ into the following three parts
\begin{align*}
    \gamma_{\pm} = (0,T) \times \{(x,v) \in \partial \Omega \times \R^n : \pm n_x \cdot v > 0 \}, \quad \gamma_0 = (0,T) \times \{(x,v) \in \partial \Omega \times \R^n : n_x \cdot v = 0\},
\end{align*}
where $n_x \in \mathbb{S}^{n-1}$ denotes the outer unit normal vector at $\partial \Omega$, and $\gamma_+$ and $\gamma_-$ model the outgoing and incoming particles, respectively, and $\gamma_0$ denotes the so called grazing set. 

To capture how particles are reflected back into the domain, and therefore balance out $f\big\rvert_{\gamma_-}$ and $f\big\rvert_{\gamma_+}$, Maxwell proposed in 1879 a phenomenological law of splitting the reflection operator into a local and a nonlocal part \cite{Max79}. These are mixed via an accommodation coefficient $\alpha \in [0,1]$:
\begin{align}
\label{eq:bdry}
    f=\alpha \mathcal{R}f+(1-\alpha)\mathcal{N}f \quad  \mbox{in $\gamma_-$}.
\end{align}
This so called ``Maxwell'' boundary condition was the only reflection law for gas-surface interactions that appeared in the literature before the 1960s \cite{Mis10}.

Here, the local reflection operator $\mathcal{R}$ is defined as the specular reflection of particles
\begin{align*}
    \mathcal{R} f(t,x,v) := f(t,x,R_x(v)) \quad \text{in } \gamma_-, \qquad \text{where } R_x(v) = v - 2 (n_x \cdot v) n_x,
\end{align*}
which describes the natural behavior of particles bouncing elastically off a perfectly smooth wall. 

The nonlocal reflection operator $\cN$ is given by
\begin{align*}
    \cN f(t,x,v):= \cM(t,x,v) \int_{\R^n} f(t,x,w) (w \cdot n_x)_- \d w,
\end{align*}
where $\cM$ is a smooth function with  $\int_{\bbR^n}\mathcal{M}(t,x,w)(w_n)_+\,dw=1 $. A natural choice for $\cM$ is given by a boundary Maxwellian, such as $\cM(v) = c_0 e^{-|v|^2}$ for a suitable $c_0 > 0$. The operator $\cN$ accounts for the microscopic roughness of the boundary. Particles interact with the boundary and are re-emitted according to a velocity distribution determined by the boundary temperature. 

There are several important results on existence, stability, long-time behavior, and diffusive limits of solutions to linear kinetic equations \eqref{eq:PDE} subject to the Maxwell boundary condition \eqref{eq:bdry} and also for nonlinear models such as the Landau and non-cutoff Boltzmann equation (see \cite{Mis10,GJZ18,BCMT23,Den23,OuSi24,Zhu24b,ImLo25}). However, the boundary behavior of solutions is far from understood, and even the continuity of solutions has remained open for $\alpha \in (0,1)$. Already in the simplest case of Kolmogorov's equation
\begin{align}
\label{eq:Kolm}
    \partial_t f + v \cdot \nabla_x f - \Delta_v f = F \quad \text{in } (0,T) \times \Omega \times \R^n,
\end{align}
the existing boundary regularity theory is strictly limited to the two idealized endpoints \footnote{All results in \cite{RoWe25} and \cite{KiWe26} also hold for solutions to \eqref{eq:PDE} with sufficiently regular coefficients}:
\begin{itemize}
    \item $\alpha = 0$ (diffuse reflection): solutions are globally $C^{1/2}$ and $C^{\infty}$ away from $\gamma_0$ (see \cite{KiWe26}).
    \item $\alpha = 1$ (specular reflection):  solutions are globally $C^{4,1}$ and $C^{\infty}$ away from $\gamma_0$ (see \cite{RoWe25}).
\end{itemize}

The goal of this article is to establish the regularity theory for weak solutions to \eqref{eq:PDE} subject to the Maxwell boundary condition \eqref{eq:bdry} for $\alpha \in (0,1)$. In particular, we will precisely answer the following natural question:
\begin{align*}
    \text{How does the optimal regularity of solutions interpolate between $C^{1/2}$ and $C^{4,1}$ for $\alpha \in (0,1)~$?}
\end{align*}
As we explain below, the case $\alpha \in (0,1)$ behaves fundamentally differently from $\alpha = 0$ and $\alpha = 1$, and its treatment requires substantial new ideas. In this article, we develop a unified approach that extends to a broad class of reflection-type boundary conditions, encompassing, in particular, super-elastic boundary conditions for which we also obtain optimal boundary regularity estimates.

\subsection{Maxwell boundary condition}

Our first main result establishes the optimal regularity of solutions to \eqref{eq:PDE} subject to the Maxwell boundary condition \eqref{eq:bdry} with $\alpha \in (0,1)$. We first state a simplified version of our result.

\pagebreak

\begin{theorem}
\label{thm:main-Maxwell}
    Let $\Omega \subset \R^n$ be a bounded smooth domain and let $A,B,F \in C^{\infty}((0,T) \times \overline{\Omega} \times \R^n)$ and $A$ be uniformly elliptic. Let $\cM \in C^{\infty}(\gamma_-)$. Let $f$ be a weak solution to \eqref{eq:PDE} with Maxwell boundary condition \eqref{eq:bdry} for some $\alpha \in (0,1)$ and assume that $f,F$, and $\cM$ have fast decay as $|v| \to \infty$. Then, for any $[t_0,t_1] \subset (0,T)$, it holds
\begin{itemize}
    \item[(i)] $f$ is smooth away from the grazing set, i.e. $f \in C^{\infty}(([t_0,t_1] \times \overline{\Omega} \times \R^n) \setminus \gamma_0)$.\\
    
    \item[(ii)] $f \in C^{\lambda_{\alpha}}([t_0,t_1] \times \overline{\Omega} \times \R^n)$, where $\lambda_{\alpha} = \frac{3}{\pi}\arccos(\frac{\alpha}{2})-1$, and we have
    \begin{align*}
        \|f\|_{C^{\lambda_{\alpha}}([t_0,t_1]\times\Omega\times \bbR^n)}\leq c \left( \Vert (1 + |v|)^q f \Vert_{L^{\infty}((0,T) \times \Omega ; L^1(\R^n))} +\|(1 + |v|)^qF\|_{L^\infty((0,T)\times\Omega\times \bbR^n)}\right)
    \end{align*}
    where $c,q > 0$ depend only on $n,A,B,\cM,t_0,t_1,\Omega$, and $\alpha$.  
\end{itemize}
\end{theorem}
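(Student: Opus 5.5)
The proof splits into three parts. First we reduce to a flat local problem. Second we handle the nonlocal term, which gives part (i). Third we run a Liouville-plus-blow-up argument near the grazing set, which gives part (ii).

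\textbf{Reduction.} We flatten $\partial\Omega$ locally by a kinetic change of variables that preserves the transport structure and maps specular reflection to $v_n\mapsto -v_n$. The equation becomes one in the half-space $\{x_n>0\}$ with lower-order and coefficient errors. The key structural observation is that the nonlocal term
\[
g(t,x):=\int_{\R^n} f(t,x,w)(w\cdot n_x)_-\,dw
\]
depends only on $(t,x')$ on the boundary. Hence $\cN f=\cM g$, and the condition reads $f=\alpha\mathcal{R}f+(1-\alpha)\cM g$ on $\gamma_-$. This is a specular-type condition with a right-hand side $h=(1-\alpha)\cM g$. If $g$ is as regular as $f$ in $(t,x')$, this right-hand side is harmless.

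\textbf{Step 1: H\"older continuity and part (i).} Using the weighted $L^\infty_{t,x}L^1_v$ bound, the $L^\infty$ bound and the De Giorgi--Nash--Moser theory for kinetic equations, we first obtain boundedness and some small H\"older exponent $\epsilon>0$ up to the boundary. The partially reflecting condition still permits the usual reflection or even extension trick, because $\alpha\le1$ makes the boundary operator contractive in $L^2$ on $\gamma_+\to\gamma_-$. Once $f\in C^\epsilon$, the moment $g$ is $C^\epsilon$ in $(t,x')$, and the decay in $v$ controls large velocities. Away from $\gamma_0$ we have $|v_n|>\delta$, so the boundary is non-characteristic. Tracing characteristics backward, the condition relates $f$ at $(x',v_n)$ to $f$ at $(x',-v_n)$ plus smooth data. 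Interior hypoelliptic regularity combined with this transport gives bootstrapping: each gain in the regularity of $f$ improves $g$, which improves the data. This yields $C^\infty$ away from $\gamma_0$.

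\textbf{Step 2: Liouville theorem at the grazing set.} The expected main obstacle is the homogeneity analysis for
\[
\partial_t f+v\cdot\nabla_x f-\Delta_v f=0 \quad \text{in } \{x_n>0\},\qquad f(x',0,v',v_n)=\alpha f(x',0,v',-v_n)\ \ (v_n>0),
\]
with the nonlocal term freezing to a constant after subtracting $f$ at the boundary point. Blow-ups depend only on $(x_n,v_n)$, which reduces the problem to the stationary 2D equation $v\partial_x u=\partial_{vv}u$ on $\{x>0\}$.

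We then look for homogeneous solutions $u=x^{\lambda/3}\Phi(v/x^{1/3})$, or better $u=r^\lambda\psi(\theta)$ in kinetic polar coordinates. For the half-space problem with the reflection $u(0,v)=\alpha u(0,-v)$, one can compute via Mellin transform / hypergeometric functions, as in the classical analyses of the Kolmogorov half-space problem. The admissible exponents then satisfy
\[
\cos\left(\frac{\pi(\lambda+1)}{3}\right)=\frac{\alpha}{2}.
\]
This gives $\lambda_\alpha=\frac3\pi\arccos(\alpha/2)-1$. As a consistency check, the endpoints match: $\alpha=1$ gives $\lambda=0$ for the nonconstant branch, to be reinterpreted, and $\alpha=0$ gives $1/2$.

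The Liouville statement is: any solution with growth $\le C(1+r)^{\lambda_\alpha+\epsilon'}$ is a constant plus a multiple of the homogeneous solution. We would prove it by expanding in the Mellin variable and showing that no exponents lie in $(0,\lambda_\alpha)$.

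\textbf{Step 3: blow-up and part (ii).} A standard compactness / blow-up argument with the kinetic scaling $(t,x,v)\mapsto(r^2t,r^3x,rv)$ yields $C^{\lambda_\alpha}$ at each grazing point. Variable coefficients, curvature, $B$, $F$ and the H\"older data $g$ are lower order under this scaling, so the Liouville theorem gives a contradiction unless the estimate holds. Combining this with Step 1 and interior estimates gives the global bound with the stated norms.
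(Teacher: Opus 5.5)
Your exponent equation $\cos(\frac{\pi(\lambda+1)}{3})=\frac{\alpha}{2}$ and your overall blow-up architecture agree with the paper. However, Step~1 contains a step that fails, and it is exactly the step the paper identifies as its main new contribution.

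\textbf{Step 1: boundedness and H\"older continuity up to $\gamma_0$.} Your claim that the partially reflecting condition ``still permits the usual reflection or extension trick'' is false for $\alpha<1$.
\begin{itemize}
\item If you extend $f$ to $\{x_n<0\}$ by $f(t,x',-x_n,v',-v_n)$, the one-sided traces on $\{x_n=0\}$ do not match. On $\gamma_-$ you have $f=\alpha\mathcal{R}f$, not $f=\mathcal{R}f$. So the extension solves the equation only up to a singular source supported on $\{x_n=0\}$.
\item Extending by a constant requires an $L^\infty$ bound on $f|_{\gamma_-}$. That is part of what you must prove, because $f|_{\gamma_-}$ is determined by $f|_{\gamma_+}$.
\item $L^2$-contractivity does not repair this. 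It gives at most a Caccioppoli inequality, and not even by testing with $f$, since $f^2|v_n|\in L^1_{\mathrm{loc}}(\gamma)$ is not known for $H^1_{\mathrm{kin}}$ functions. De Giorgi additionally needs a gain of integrability in all variables up to $\{x_n=0\}$.
\end{itemize}
The paper supplies this as follows. It extends $(f-k)_+$ by zero. The resulting boundary measure $(f-k)_+v_n\,\delta_{\{x_n=0\}}$ lies in $L^2_{t,x',v}H^{-s}_{x_n}$ for $s\in(\frac12,\frac23)$, with norm bounded by $\int_{\{x_n=0\}}|(f-k)_+v_n|^2\,d\Gamma$. It proves full-space $L^q$ bounds ($q>2$) for sources $(-\Delta_x)^{s/2}F_3$. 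It then controls that boundary integral by a boundary Caccioppoli inequality, using truncations $\varphi_M$ and the sign of the boundary flux given by the reflection condition. H\"older continuity follows by viewing the problem as an in-flow problem and closing the oscillation iteration of \cite{Sil22} with $\osc_{\gamma_-}f\le\alpha\osc_{\gamma_+}f+(1-\alpha)\osc g$; this is where $\alpha<1$ enters. Without this uniform $C^\beta$ bound up to $\gamma_0$, you have neither the compactness in Step~3 nor the reduction of blow-up limits to 1D. This boundary De Giorgi theory was open for $\alpha\in(0,1)$ and cannot simply be quoted.

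\textbf{The bootstrap for part (i).} This does not close as written. The moment $g$ integrates $f|_{\gamma_+}$ against $|w_n|$ over outgoing velocities arbitrarily close to grazing. There the outflow estimates for $f$ degenerate like negative powers of $|w_n|$. Improving $f$ away from $\gamma_0$ therefore improves $g$ only up to a fixed threshold.
\begin{itemize}
\item The paper uses this threshold argument once. The weight $|w_n|$ absorbs the degeneration and gives $\cN f\in C^{\lambda_\alpha+\eps}$ from weighted $L^\infty$ bounds alone. That, and not $g\in C^\epsilon$ from your Step~1, is what makes the boundary data lower order in the blow-up.
\item For $C^\infty$ away from $\gamma_0$ you need tangential regularity of $f$ uniformly up to $\gamma_0$. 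The paper takes difference quotients in $(t,x')$, which preserve the reflection condition. It then proves, via a perturbation lemma for equations with unbounded divergence-form sources, that $\delta_hf/|h|^{\theta}$ is again H\"older up to $\gamma_0$, and iterates.
\end{itemize}

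\textbf{The Liouville step.} This is only sketched. Ruling out exponents in $(0,\lambda_\alpha)$ is not enough. You must also prove that every solution of polynomial growth is a superposition of homogeneous ones, a completeness statement for a degenerate, non-self-adjoint problem whose boundary condition couples $v$ and $-v$. You must also exclude exponents in $(\lambda_\alpha,\lambda_\alpha+\epsilon']$. The paper avoids spectral completeness altogether by a boundary Harnack principle for $h/\phi_{\alpha}$. It builds this from a maximum principle that again uses the sign of the boundary flux, together with a mirror extension across $\{x=0\}\times\{v>v_1\}$, away from the grazing point, that transfers positivity from $\gamma_+$ to $\gamma_-$.
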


To the best of our knowledge, this is the first regularity result for solutions to kinetic equations with Maxwell boundary conditions. The $C^{\lambda_{\alpha}}$ regularity is sharp in the sense that we find an explicit solution $f$ with $F,\cM \in C^{\infty}$ such that $f \in C^{\lambda_{\alpha}}$ but $f \not\in C^{\lambda_{\alpha} + \eps}$ for any $\eps > 0$ and $\alpha \in (0,1)$ at the grazing set $\gamma_0$ (see \autoref{ex:opt}). Moreover, it seems that the explicit regularity exponent $\lambda_{\alpha}$ has not previously appeared in the literature.

Note that $\lambda_{\alpha} \nearrow \frac{1}{2}$ as $\alpha \searrow 0$, and therefore \autoref{thm:main-Maxwell} recovers the optimal $C^{\frac{1}{2}}$ regularity of solutions with diffuse reflection boundary conditions established in \cite{KiWe26}. In striking contrast, as $\alpha \nearrow 1$, we have $\lambda_{\alpha} \searrow 0$. Consequently, we do not recover the $C^{4,1}$ regularity of solutions with specular reflection boundary conditions from \cite{RoWe25}. This reveals that the higher regularity in the specular case is a highly rigid feature, relying entirely on symmetry properties of solutions that immediately disappear for $\alpha < 1$. Hence, our proof of \autoref{thm:main-Maxwell} for the intermediate regime $\alpha \in (0,1)$ requires  distinct mathematical techniques (see the discussion in Subsection \ref{subsec:strategy}).

Remarkably, our main result is not restricted to equations with smooth coefficients. 
\begin{itemize}
    \item We show that the $C^{\lambda_{\alpha}}$ regularity of solutions in \autoref{thm:main-Maxwell}(ii) remains true when $\partial \Omega \in C^{2,\eps}$, $A \in C^{\eps}$, $\cM \in C^{\lambda_{\alpha}+\eps}$ for some $\eps > 0$, and $B,F \in L^{\infty}$ (see \autoref{thm:Maxwell-body}).
    \item  Under suitable regularity and decay assumptions on $A,B,F,\cM$, we prove Schauder-type estimates of order $k+\eps$ away from $\gamma_0$ in localized cylinders  (see \autoref{thm.maxhalf})
\end{itemize}

Most importantly, we also develop a De Giorgi-Nash-Moser-type regularity theory at the boundary for equations with merely measurable coefficients by establishing H\"older regularity of weak solutions in this setting. To the best of our knowledge, also this result is entirely new in the context of Maxwell boundary conditions and was only known in the endpoint cases $\alpha = 0$ (see \cite{Zhu24}) and $\alpha = 1$ (see \cite{Sil22,Zhu24}).

\begin{theorem}
\label{thm:main-Holder}
    Let $\Omega \subset \R^n$ be a $C^{2,\eps}$ domain for some $\eps > 0$ and let $B,F \in L^{\infty}((0,T) \times \Omega \times \R^n)$ and $\Lambda^{-1} I \le A \le \Lambda I$.  Let $\cM \in C^{\eps}(\gamma_-)$. Let $f$ be a weak solution to \eqref{eq:PDE} with Maxwell boundary condition \eqref{eq:bdry} for some $\alpha \in (0,1)$ and assume that $f,F$, and $\cM$ have fast decay as $|v| \to \infty$. Then, for any $[t_0,t_1] \subset (0,T)$, it holds
    \begin{align*}
        \|f\|_{C^{\beta}([t_0,t_1]\times\Omega\times \bbR^n)}\leq c \left( \Vert (1 + |v|)^q f \Vert_{L^{\infty}((0,T) \times \Omega ; L^1(\R^n))} +\|(1 + |v|)^qF\|_{L^\infty((0,T)\times\Omega\times \bbR^n)}\right)
    \end{align*}
    where $c,q > 0$ depend only on $n,\Lambda,B,\cM,t_0,t_1,\Omega, \alpha,\eps$, and $\beta \in (0,1)$ depends only on $n,\Lambda,B,\cM,\Omega, \alpha,\eps$.  
\end{theorem}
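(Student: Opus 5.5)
We prove \autoref{thm:main-Holder} by reducing the Maxwell condition to a pure specular reflection problem with an extra source term at the boundary. The key fact is that the nonlocal part $\cN f$ only involves the outgoing flux $\int f(t,x,w)(w\cdot n_x)_-\d w$, which is a function of $(t,x)$ alone. Under the decay assumption it is controlled by $\Vert (1+|v|)^q f\Vert_{L^\infty L^1}$.

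\textbf{Step 1: Flattening and extension.} First we flatten $\partial\Omega$ locally with a $C^{2,\eps}$ change of variables that preserves the kinetic structure up to bounded lower-order drift terms. This is standard, as in \cite{Sil22,Zhu24}. Since $\partial\Omega \in C^{2,\eps}$, the transformed coefficients stay uniformly elliptic and bounded, and the transformed drift stays bounded.

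Next we write $f = g + h$, where $h$ absorbs the diffuse part. Two choices are possible:
\begin{itemize}
\item take $h := (1-\alpha)\cN f$, extended into the interior via a cutoff in $x_n$;
\item solve an auxiliary problem with boundary data $(1-\alpha)\cN f$.
\end{itemize}

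A cleaner alternative is to avoid splitting and treat the boundary condition directly. We use the mirror extension $\tilde f(t,x,v)=f(t,R x,R v)$ across $\{x_n=0\}$. In the reflected region the extension then satisfies a kinetic equation with a jump condition across $\gamma_\pm$ of the form
\[
f^- - f^+ = (\alpha-1)f^+ + (1-\alpha)\cN f .
\]

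\textbf{Step 2: De Giorgi lemmas in the extended setting.} We establish the two standard ingredients for weak solutions, following Guerand--Imbert / Silvestre adapted to boundaries.
\begin{itemize}
\item An $L^2\to L^\infty$ bound, using energy estimates. The boundary term there is $\int_{\gamma}(v\cdot n)(f-k)_+^2$. On $\gamma_-$ we have
\[
(f-k)_+ \le \alpha(\mathcal R f-k)_+ + (1-\alpha)(\cN f - k)_+ ,
\]
by convexity, since $\alpha\in(0,1)$. After the change of variables $v\mapsto R_x v$, the incoming flux is bounded by $\alpha$ times the outgoing flux plus a term involving $\cN f$. The sign is then favourable, with a gain of $(1-\alpha^2)$ times the outgoing part. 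The flux $\cN f$ is controlled by Jensen and by the normalization of $\cM$.
\item An intermediate-value / weak Harnack lemma at the boundary.
\end{itemize}

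\textbf{Step 3: Oscillation decay.} We iterate $\operatorname{osc}_{Q_{r/2}} f \le (1-\theta)\operatorname{osc}_{Q_r} f + C r^{\beta_0}(\cdots)$.

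The nonlocal term needs care, since it integrates over all $v$. The plan is to use the $C^\eps$ regularity of $\cM$ and to observe that $\int f(w\cdot n)_-\d w$ is itself a weighted average of $f$. At scale $r$ we split velocities into $|w|\le r^{-\delta}$ and its complement, using the $q$-moment decay for the tail. Inside we invoke the positivity expansion in $v$ at the boundary. This gives oscillation control for the flux in $(t,x)$ by the oscillation of $f$ over a larger velocity range.

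\textbf{Main obstacle.} The hardest step will be the boundary weak Harnack near the grazing set, where neither the specular symmetry nor the diffuse averaging alone gives positivity transfer. We would first try to propagate positivity along specularly reflected characteristics: after each bounce a fraction $\alpha$ is retained, and the diffuse fraction $(1-\alpha)$ is nonnegative when $f\ge0$. This gives a lower bound with constants depending on $\alpha$. The bound degenerates as $\alpha\to1$ only through the number of bounces, which is finite at unit scale.

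Finally, we globalize. Interior Hölder estimates are available away from $\gamma$, and boundary estimates hold in cylinders. Combining them with a covering argument on $[t_0,t_1]$ and weighting large $|v|$ via the decay assumptions gives the global $C^\beta$ bound.
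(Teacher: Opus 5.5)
\textbf{There is a genuine gap in the $L^\infty$ step.} You present it as a routine boundary adaptation of Guerand--Imbert/Silvestre, but for $\alpha\in(0,1)$ this is exactly where the existing boundary theory breaks.

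An energy estimate only controls $\nabla_v(f-k)_+$. The first De Giorgi lemma also needs a gain of integrability in $(t,x,v)$ up to $\{x_n=0\}$. The known boundary results get this by extending the solution, and both extensions fail here.
\begin{itemize}
\item Extension by a constant (in-flow case) needs an a priori bound on $f|_{\gamma_-}$, and here that trace depends on $f|_{\gamma_+}$.
\item Your mirror extension jumps across the whole hyperplane, including arbitrarily close to $\gamma_0$. The jump equals $(1-\alpha)$ times the difference between the outgoing trace and $\mathcal{N}f$. So $\tilde f\notin H^1_{\mathrm{kin}}$ across $\{x_n=0\}$, and the transport term produces an unsigned measure $v_n[\tilde f]\,\delta_{\{x_n=0\}}$ that your proposal never estimates.
\end{itemize}

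The missing idea is the paper's \autoref{lem.extension}:
\begin{itemize}
\item Extend $(f-k)_+\phi$ by zero.
\item Observe that the boundary measure $(f-k)_+v_n\phi\,\delta_{\{x_n=0\}}$ lies in $L^2_{t,x',v}H^{-s}_{x_n}$ for every $s>\frac12$, with norm bounded by $(\int(f-k)_+^2|v_n|^2\,d\Gamma)^{1/2}$.
\item Prove $L^q$ bounds, $q>2$, for Kolmogorov's equation with sources $(-\Delta_x)^{s/2}F_3$, $s<\frac23$, via the fundamental solution and comparison.
\end{itemize}
This in turn requires a Caccioppoli bound for the weighted trace $\int(f-k)_+^2|v_n|^2\,d\Gamma$ (\autoref{lem.extension1}). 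On $\gamma_+$ it comes from testing with a cut-off comparable to $|v_n|$ that vanishes on $\gamma_-$; on $\gamma_-$ it comes from the boundary condition.

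Your dissipativity computation also tests the equation with $(f-k)_+$ up to $\gamma$. That presupposes $(f-k)_+^2|v_n|\in L^1(\gamma)$, which is not known for $H^1_{\mathrm{kin}}$ functions. One has to truncate (quadratic below level $M$, linear above) and use only the $L^1$-dissipativity of the flux.

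\textbf{The H\"older step does not go through as proposed either.} The equation diffuses in $v$, so there are no characteristics along which to carry a lower bound. Moreover, near $\gamma_0$ there is no bound on the number of reflections, since grazing paths can hit the boundary arbitrarily often. So the ``finitely many bounces'' positivity transfer has no basis, and you leave it open as the main obstacle. It is also unnecessary.

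Once $f$ and $\mathcal{N}f$ are bounded, treat the problem as an in-flow problem with data $\alpha\mathcal{R}f+(1-\alpha)\mathcal{N}f$. Then:
\begin{itemize}
\item The in-flow improvement of oscillation \cite[Lemma 8.4]{Sil22} gives $\osc_{\mathcal{H}_{1/2}}f\le(1-\theta)\osc_{\mathcal{H}_1}f+\theta\osc_{\gamma_-\cap\mathcal{Q}_1}f+C\|F\|_{L^\infty}$.
\item The boundary condition gives $\osc_{\gamma_-\cap\mathcal{Q}_1}f\le\alpha\osc_{\gamma_+\cap\mathcal{Q}_1}f+(1-\alpha)\osc_{\gamma_-\cap\mathcal{Q}_1}\mathcal{N}f$.
\item Together they yield the contraction factor $1-\theta+\theta\alpha<1$ (\autoref{lem.oscillation}).
\end{itemize}
This is where $\alpha<1$ enters, and why $\beta\to0$ as $\alpha\to1$; the degeneration is intrinsic, since $\lambda_\alpha\to0$. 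Your iteration $\osc_{Q_{r/2}}f\le(1-\theta)\osc_{Q_r}f+Cr^{\beta_0}(\cdots)$ does not account for the fact that the boundary data contains $\alpha\mathcal{R}f$, whose oscillation is not $O(r^{\beta_0})$ a priori.

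Finally, the H\"older continuity of $\mathcal{N}f$ does not come from positivity expansion. The argument has three parts:
\begin{itemize}
\item $f$ is H\"older near the non-grazing outgoing set $\gamma_+$, independently of the boundary condition.
\item Velocities with small $|w_n|$ contribute only a power of $r$ to the flux, because of the weight $|w_n|$.
\item Large $|w|$ are absorbed by the moment bounds.
\end{itemize}
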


Note that all constants in \autoref{thm:main-Holder} are uniform as $\alpha \searrow 0$ and explode as $\alpha \nearrow 1$. This is in align with the sharpness of the $C^{\lambda_{\alpha}}$ regularity from \autoref{thm:main-Maxwell}.

It is an interesting task to establish analogous regularity results for generalized models of the Maxwell condition \cite{Cer00,Mis10}, where the accommodation coefficient is allowed to be flux-dependent, or the coefficient $\cM$ in the reflection operator is replaced by a kernel $\cK(v,w)$. In Subsection \ref{subsec:ext} we discuss such corresponding extensions.

\subsection{General reflection-type boundary conditions}

Beyond the Maxwell condition \eqref{eq:bdry}, our proofs of \autoref{thm:main-Maxwell} and \autoref{thm:main-Holder} yield a unified framework that naturally extends to the following much broader class of reflection-type boundary conditions:
\begin{align}
\label{eq:bdry-gen}
    f = \alpha \mathcal{R}_b f + (1-\alpha)g \quad\text{in } \gamma_-,
\end{align}
where $\alpha \in (0,1)$, $g : \gamma_- \to \R$, and we define for some $b \in (0,1]$ 
\begin{align*}
    \mathcal{R}_b f(t,x,v) = f\big(t,x,v - (1+b) (n_x \cdot v)n_x \big).
\end{align*}
Condition \eqref{eq:bdry-gen} generalizes \eqref{eq:bdry} by replacing the nonlocal reflection operator with a prescribed in-flow datum $g$, and modifying the local reflection by introducing $\mathcal{R}_b$, which coincides with $\mathcal{R}$ when $b=1$. 

In  case $b = 1$, \eqref{eq:bdry-gen} is also known as \emph{$\alpha$-partial specular reflection condition} and it models systems where only a portion of particles undergoes specular reflection, while a fraction $(1-\alpha)$ of them are absorbed by the wall. Well-posedness of linear kinetic equations with such boundary conditions was shown in \cite{Zhu24}, but to date, no regularity results are available.

For $b \in (0,1)$, the operator $\mathcal{R}_b$ models \emph{super-elastic collisions}, where incoming particles (i.e.~after hitting the boundary) move faster by a factor of $b^{-1}$ and have a smaller angle with $-n_x$.\footnote{Indeed, the normal speed of the pre-collisional (outgoing) velocity is given by $[v - (1+b) (v \cdot n_x) n_x ] \cdot n_x = -b (n_x \cdot v)$.} This models active (heating) boundaries which inject energy into the system. Such reflection laws also occur in the study of granular gases  \cite{BTF01,BrPo04}.

A particularly relevant scenario occurs when $\alpha = b^2$ and $g = 0$, in which case the system conserves mass and \eqref{eq:PDE}--\eqref{eq:bdry-gen} becomes exactly the adjoint of the inelastic problem studied in \cite{HJV19,HJV19b}. In the inelastic regime $b > 1$, where particles slow down after hitting the boundary, regularity completely breaks down at $\gamma_0$. For large enough $\beta$, this even leads to non-uniqueness of solutions due to inelastic collapse, as demonstrated in \cite{HJV19}. By contrast, in this article, we operate entirely in the super-elastic regime $b \in (0,1]$, where \eqref{eq:bdry-gen} interpolates between absorbing ($b=0$) and specular reflection ($b = 1$) boundary conditions when $\alpha = b^2$. 

Our next main result establishes optimal regularity results for solutions to \eqref{eq:PDE} subject to the boundary condition \eqref{eq:bdry-gen} in the regime $\alpha/b^2\in(0,1]$. As we discuss in Subsection \ref{subsec:strategy}, this is the precise range in which the boundary remains dissipative in $L^1$. The optimal regularity exponent is given by
\begin{align*}
    \lambda_{\alpha,b}\coloneqq \inf\left\{ \lambda>0\,:\,2\cos\left(\pi\left(\tfrac{\lambda}{3}+\tfrac13\right)\right)=\alpha b^{\lambda}\right\} \in [0,\tfrac{1}{2}].
\end{align*}

To the best of our knowledge, this is the first systematic regularity result for kinetic equations with super-elastic boundary conditions and our results are new for all values of $\alpha \in (0,1)$, $b \in (0,1]$.

\begin{theorem}
\label{thm:main-general}
    Let $\Omega \subset \R^n$ be a bounded domain for some $\eps > 0$ and $f$ be a weak solution to \eqref{eq:PDE} with boundary condition \eqref{eq:bdry-gen} for some $\alpha \in (0,1)$, $b \in (0,1]$, and $\alpha/b^2 \in (0,1]$. Moreover, assume that $A$ is uniformly elliptic and that $f,g$, and $F$ have fast decay as $|v| \to \infty$. \\
    Then, for any $[t_0,t_1] \subset (0,T)$, the following hold true:
\begin{itemize}
    \item[(i)] If $A,B,\Omega,F,g$ are smooth, then $f$ is smooth away from the grazing set, i.e. $f \in C^{\infty}(([t_0,t_1] \times \overline{\Omega} \times \R^n) \setminus \gamma_0)$.\\
    
    \item[(ii)] If $A \in C^{\eps}$, $B,F \in L^{\infty}$, $g \in C^{\lambda_{\alpha,b} + \eps}$, and $\Omega$ is a $C^{2,\eps}$ domain for some $\eps > 0$, then $f \in C^{\lambda_{\alpha,b}}([t_0,t_1] \times \overline{\Omega} \times \R^n)$, and we have
    \begin{align*}
        \|f\|_{C^{\lambda_{\alpha,b}}([t_0,t_1]\times\Omega\times \bbR^n)}\leq c \big( \Vert (1 + |v|)^q f \Vert_{L^{1}((0,T) \times \Omega \times \R^n)} + [g]_{C^{\lambda_{\alpha,b} + \eps}_q(\gamma_-)} +\|(1 + |v|)^qF\|_{L^\infty((0,T)\times\Omega\times \bbR^n)}\big),
    \end{align*}
    where $c,q > 0$ depend only on $n,A,B,t_0,t_1,\Omega,\eps,b$, and $\alpha$. \footnote{We write $[g]_{C^{\beta}_q(\gamma_-)} = \sup_{z_0 \in \gamma_-} (1 + |v_0|)^q [g]_{C^{\beta}(\cQ_1(z_0) \cap \gamma_-)}$} \\

    \item[(iii)] If $A,B,F \in L^{\infty}$, $g \in C^{\eps}$, and $\Omega$ is a $C^{2,\eps}$ domain for some $\eps > 0$, then $f \in C^{\beta}([t_0,t_1] \times \overline{\Omega} \times \R^n)$, and we have
    \begin{align*}
        \|f\|_{C^{\beta}([t_0,t_1]\times\Omega\times \bbR^n)}\leq c \big( \Vert (1 + |v|)^q f \Vert_{L^{1}((0,T) \times \Omega \times \R^n)} + [g]_{C^{\beta}_q(\gamma_-)} +\|(1 + |v|)^qF\|_{L^\infty((0,T)\times\Omega\times \bbR^n)}\big),
    \end{align*}
    where $c,q > 0$ depend only on $n,A,B,t_0,t_1,\Omega,\eps,b,\alpha$, and $\beta \in (0,1)$ depends only on $n,A,B,\Omega,\eps,b, \alpha$.
\end{itemize}
\end{theorem}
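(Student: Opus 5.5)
The plan is to localize near the grazing set, flatten the boundary, and treat the three parts separately: interior or boundary Schauder theory away from $\gamma_0$ for (i), and near $\gamma_0$ a blow-up/Liouville argument driven by homogeneous solutions of a flat model problem for (ii), resp.\ a De Giorgi-type oscillation decay for (iii). After a $C^{2,\eps}$ change of variables preserving the kinetic structure, we may assume locally $\Omega=\{x_n>0\}$, $n_x=-e_n$. The reflection becomes $v_n\mapsto -b v_n$ on $\{x_n=0,\ v_n>0\}$, and the equation acquires lower order terms, absorbed into $B$ and $F$. By kinetic scaling $(t,x,v)\mapsto(r^2t,r^3x,rv)$, the condition $f=\alpha\mathcal{R}_bf+(1-\alpha)g$ is scale invariant. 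This makes a Campanato-type iteration available.

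\textbf{Step 1 ($L^1\to L^\infty$ and dissipativity).} First I will prove a global $L^\infty$ bound with weights $(1+|v|)^q$ from the weighted $L^1$ norm via De Giorgi iteration on $(f-k)_+$. The boundary flux term $\int_{\gamma}(v\cdot n_x)(f-k)_+^2$ must have a good sign. After substituting $w=R_b v$, the incoming flux satisfies $\int_{\gamma_-}|v_n|\,\phi(\alpha f(R_bv))\,dv=b^{-2}\int_{\gamma_+}|w_n|\,\phi(\alpha f(w))\,dw$. This is controlled by the outgoing flux exactly when $\alpha/b^2\le 1$, using convexity of $\phi$ and $(1-\alpha)g$ treated as data. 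This is where the hypothesis enters.

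\textbf{Step 2 ((iii) and (i)).} For (iii) I will run a kinetic De Giorgi/weak Harnack argument in half-cylinders touching $\gamma_0$. The key point is that the boundary condition transfers a positive fraction of the measure of $\{f\le k\}$ from $\gamma_+$ to $\gamma_-$ with factor $\alpha<1$. This gives decay of oscillation $\osc_{\cQ_{r/2}}f\le(1-\theta)\osc_{\cQ_r}f+Cr^{\eps}[g]$, with $\theta=\theta(\alpha,b,\Lambda)$. For (i), away from $\gamma_0$ we have $|v_n|\ge c>0$ on $\gamma_\pm$, and the characteristics leaving $\gamma_-$ are transversal. So $f|_{\gamma_-}$ is determined by $f|_{\gamma_+}$ at a velocity bounded away from grazing, and the problem is an inflow problem with smooth-in-terms-of-$f$ data. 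I will bootstrap, using boundary Schauder estimates for inflow problems: each gain of regularity of $f$ on $\gamma_+$ upgrades the data on $\gamma_-$.

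\textbf{Step 3 ((ii), the main obstacle).} I will classify homogeneous solutions of the model problem $\partial_t f+v\cdot\nabla_xf-\Delta_vf=0$ in $\{x_n>0\}$ with $f(x',0,v)=\alpha f(x',0,R_bv)$ for $v_n>0$. Reducing to the 2D variables $(x_n,v_n)$ stationary Kolmogorov problem, I will seek solutions $f=|x_n|^{\lambda/3}\Phi(v_n/x_n^{1/3})$, i.e.\ Tricomi/Airy-type. The behavior at $x_n=0$ is then expressed through the Mellin transform, giving the asymptotics $f(0,v_n)\sim c_\pm|v_n|^{\lambda}$ for $\pm v_n>0$. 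A standard computation (as in the half-space Kolmogorov problem) yields $c_-/c_+=2\cos(\pi(\tfrac{\lambda}{3}+\tfrac13))$ relative to the admissible branch. The reflection condition imposes $c_-=\alpha b^{\lambda}c_+$, hence $\lambda_{\alpha,b}$ is the smallest positive root. With the Liouville theorem stating that global solutions of growth $<\lambda_{\alpha,b}+\eps'$ are constants (proved by compactness plus Step 2 and this classification), a contradiction-compactness blow-up argument gives $C^{\lambda_{\alpha,b}}$ at $\gamma_0$. The perturbative terms $A\in C^\eps$, $B,F\in L^\infty$ and $g\in C^{\lambda+\eps}$ are lower order under scaling. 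Combined with Step 2 away from $\gamma_0$ and the weighted decay bookkeeping in $|v|$, this yields the global estimate.

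The hardest part will be the rigorous classification/Liouville step: proving that no homogeneous solutions of degree in $(0,\lambda_{\alpha,b})$ exist. My first attempt is to use the Mellin transform in $v_n$ along $x_n=0$ together with the explicit half-line kernel.
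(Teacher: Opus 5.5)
\textbf{Step 3: the Liouville theorem is misstated, and your route does not reach it.} This is the decisive gap.

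First, the statement you invoke is false. For $\alpha<1$ a nonzero constant violates $f=\alpha\mathcal{R}_bf$, while $\phi_{\alpha,b}$ itself is a nonconstant global solution with growth $R^{\lambda_{\alpha,b}}$. The correct statement is that global solutions with growth $R^{\lambda_{\alpha,b}+\eps}$ ($\eps$ small) are multiples $m\phi_{\alpha,b}(x_n,v_n)$. Accordingly, the blow-up has to produce an expansion $|f-f(z_0)-C\phi_{\alpha,b}|\lesssim r^{\lambda_{\alpha,b}+\eps}$ at $z_0\in\gamma_0$:
\begin{itemize}
\item subtracting $f(z_0)=g(z_0)$ keeps you in the same class, because $\mathcal{R}_bf(z_0)=f(z_0)$ on $\gamma_0$;
\item one then removes the $L^2$-projection onto $\phi_{\alpha,b}$ at each scale;
\item the resulting error terms $\ddiv((A-A(z_0))\nabla_v\phi_{\alpha,b})$ are unbounded near $\gamma_0$, so they are not simply lower order. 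You need $\partial_{v_n}\phi_{\alpha,b}\in L^{4+\delta}$ and a $C^\gamma$ estimate for such right-hand sides.
\end{itemize}

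Second, and more seriously, classifying homogeneous solutions does not give the Liouville theorem. Excluding homogeneous solutions of degree in $(0,\lambda_{\alpha,b})$ is the easy part: a kinetically homogeneous function on $\{x>0\}$ is necessarily $x^{\lambda/3}\Phi(v/x^{1/3})$, so your Kummer-type ODE computation already classifies them. The real difficulty is that an arbitrary global solution of growth $R^{\lambda_{\alpha,b}+\eps}$ need not be homogeneous. Compactness gives no homogeneity of blow-downs without a monotonicity formula, and none is available here. The Mellin idea does not close this either:
\begin{itemize}
\item a priori the trace is only $O(|v|^{\beta})$ at $0$ with $\beta<\lambda_{\alpha,b}$, and $O(|v|^{\lambda_{\alpha,b}+\eps})$ at infinity, so there is no strip of convergence;
\item representing the solution through the half-line kernel presupposes uniqueness for the inflow problem in this growth class, which is itself a Liouville theorem.
\end{itemize}

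The missing idea is a boundary Harnack principle relative to $\phi_{\alpha,b}$, namely $\osc_{H_{rR}}(h/\phi_{\alpha,b})\le Cr^{\beta}R^{-\lambda_{\alpha,b}}\|h\|_{L^\infty(H_R)}$; letting $R\to\infty$ gives the Liouville theorem. The paper proves it from two ingredients:
\begin{itemize}
\item an energy maximum principle for the 1D reflection problem (the boundary term has a sign since $\alpha\le b^2\le b$), which yields $|h|\le C\|h\|_{L^\infty}\phi_{\alpha,b}$;
\item a lower bound $h\gtrsim\phi_{\alpha,b}$ for $h\ge0$, obtained from the mirror extension $\widetilde h(x,v)=\alpha h(-b^3x,-bv)$ across $\{x=0,\,v>v_1\}$. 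This carries positivity from $\gamma_+$ to $\gamma_-$ without touching the grazing set.
\end{itemize}
The reduction from $n$ dimensions to $(x_n,v_n)$ then uses difference quotients in $(t,x',v')$ and the a priori $C^\beta$ estimate.

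\textbf{Step 1: the gain of integrability up to the boundary is missing.} A sign on the boundary flux only gives the Caccioppoli inequality, i.e.\ $\nabla_v(f-k)_+\in L^2$. De Giorgi iteration additionally requires a gain $L^2\to L^q$, $q>2$, in all variables up to $\{x_n=0\}$. In the kinetic setting this gain comes from hypoellipticity, and at this boundary neither standard device is available:
\begin{itemize}
\item the mirror extension is discontinuous at $\gamma_0$ when $\alpha<1$;
\item extension by a constant needs an a priori $L^\infty$ bound on the trace, which is circular since $f|_{\gamma_-}$ is determined by $f|_{\gamma_+}$.
\end{itemize}
The paper instead extends $(f-k)_+$ by zero. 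This is a subsolution in the whole space with the extra source $\mu=(f-k)_+v_n\,\delta_{\{x_n=0\}}$, which lies in $L^2_{t,x',v}H^{-s}_{x_n}$ for every $s>\frac12$, with norm controlled by $\int_\gamma(f-k)_+^2|v_n|^2\,d\Gamma$. A fundamental-solution bound for sources $(-\Delta_x)^{s/2}F$ with $s<\frac23$ then yields the $L^q$ gain. This weighted trace term must itself be estimated: by testing with a cutoff comparable to $|v_n|$ on $\{v_n<0\}$ for the $\gamma_+$ part, and via the boundary condition for the $\gamma_-$ part.

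\textbf{Step 2, part (iii).} The \emph{transfer of measure of sublevel sets} is not what drives the oscillation decay, and it is not justified as stated. Once $f$ is bounded, you can treat $\alpha\mathcal{R}_bf+(1-\alpha)g$ as inflow data and apply the inflow improvement of oscillation $\osc_{\cH_{1/2}}f\le(1-\theta)\osc_{\cH_1}f+\theta\osc_{\gamma_-\cap\cQ_1}f+C\|F\|_{L^\infty}$. Inserting $\osc_{\gamma_-}f\le\alpha\osc_{\gamma_+}f+(1-\alpha)\osc g$ (using $b\le1$) gives the contraction factor $1-\theta(1-\alpha)<1$.

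\textbf{Part (i).} Your treatment is fine. No bootstrap is needed there, since regularity at $\gamma_+$ away from $\gamma_0$ does not depend on the boundary condition.
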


The $C^{\lambda_{\alpha,b}}$ regularity of solutions in (ii) is sharp as we show by constructing explicit solutions in the half-line with $F \equiv g \equiv 0$ in \autoref{lem.phialpha}. We also refer to \autoref{thm.inel} for a version of this result in the half-space, where we make the dependencies of the constants $c,q,\beta$ more precise.

Moreover, we have $\lambda_{\alpha,1} = \lambda_{\alpha}$, i.e. \autoref{thm:main-general} yields regularity for solutions with $\alpha$-partial specular reflection condition and demonstrates that the specific choice of $g = \cN f$ for the Maxwell condition does not influence the optimal boundary regularity of solutions at the grazing set (see \autoref{thm:main-Maxwell}(ii)). In fact, we prove \autoref{thm:main-Maxwell} and \autoref{thm:main-Holder} by setting $g = \cN f$ in \autoref{thm:main-general} after showing that $\cN f$ is sufficiently regular in $\gamma_-$. The latter is a non-trivial task, since $\cN f$ depends on $f\big\vert_{\gamma_+}$ in a nonlocal way.

Note that although the regularity exponent $\lambda_{\alpha,b}$ is implicit, one can easily show that in the mass conserving case $\alpha = b^2$, it holds $\lambda_{b^2,b} \nearrow \tfrac{1}{2}$ as $b \searrow 0$, and $\lambda_{b^2,b} \searrow 0$ as $b \nearrow 1$. This reveals that also under super-elastic boundary conditions, regularity degenerates close to the specular regime.

\subsection{Strategy of the proof}
\label{subsec:strategy}

Our main results establish a De Giorgi-Nash-Moser-type theory up to the boundary and also yield optimal higher regularity for equations with H\"older continuous coefficients. Because the boundary conditions \eqref{eq:bdry-gen} and \eqref{eq:bdry} simultaneously encode specular (or super-elastic) reflection and prescribed in-flow (or diffuse reflection), existing techniques developed for the endpoint cases $\alpha = 0$ and $\alpha = 1$ break down. Instead, the coupling of the two boundary conditions requires several novel techniques. 

In the sequel, we explain the main ideas of our proof of \autoref{thm:main-general} in the following simplified setting
    \begin{equation}
    \label{eq:PDE-half-intro}
\left\{
\begin{alignedat}{3}
\partial_tf+v\cdot\nabla_xf-\Delta_v f&=F&&\qquad \mbox{in  $\{ x_n > 0\} \cap \cQ_1 =: \mathcal{H}_1$}, \\
f&=\alpha \mathcal{R}_bf&&\qquad  \mbox{on $(\{ x_n = 0\} \times \{ v_n < 0 \}) \cap \cQ_1 = \gamma_-\cap \mathcal{Q}_1$}.
\end{alignedat} \right.
\end{equation}
Incorporating a nonlocal reflection term $(1-\alpha) \cN f$ or a suitably regular in-flow datum $(1-\alpha) g$ into the boundary condition introduces additional technical difficulties, which are essentially perturbative.

\subsubsection{Local boundedness and H\" older regularity}

For prescribed in-flow or pure diffuse reflection, the strategy to obtain H\" older regularity is to extend the solution by a constant (matching the maximum of the boundary datum) across the boundary to apply interior growth lemmas \cite{Sil22,Zhu24}. Conversely, for pure specular reflection, one can extend the solution via mirror reflection, which entirely reduces the problem to the interior setting \cite{Sil22,Zhu24}.

Both of the aforementioned strategies immediately fail for \eqref{eq:bdry-gen} and \eqref{eq:bdry}. Mirror reflection leads to discontinuities at the grazing set when $\alpha \not= 1$. Meanwhile, extending the solution by its maximum boundary value is impossible without first establishing $L^{\infty}$ bounds for the boundary trace $f\big\rvert_{\gamma_-}$, which is non-trivial since the boundary data depends on $f\big\rvert_{\gamma_+}$ itself through the $\alpha$-reflection condition.

To overcome these obstacles, we introduce a general mechanism to prove a gain of integrability in all variables for linear kinetic equations up to the boundary in $\cH_1$. This allows us to carry out a De Giorgi iteration scheme for $f$ to deduce local boundedness (see \autoref{lem.bdd}). Once this is established, we can recast \eqref{eq:PDE-half-intro} as an in-flow problem and use the results of \cite{Sil22} to prove oscillation decay at boundary points. The oscillation decay of the boundary data itself follows from the fact that $\alpha < 1$ and $b \in (0,1]$, which ultimately yields H\"older regularity for solutions to \eqref{eq:PDE-half-intro} (see \autoref{lem.holsmall}). 

As is common in kinetic equations, the main difficulty is transferring information from the diffusion variable $v$ to the $x$-variable. We find that for any solution $h$ to 
\begin{align*}
    \partial_th+v\cdot\nabla_xh-\Delta_v h&=F \quad \text{in } \R \times \{ x_n > 0 \} \times \R^n
\end{align*}
it holds that $\widetilde{h} = h \chi_{\{x_n > 0 \}}$ solves,
\begin{align*}
    \partial_t\widetilde{h}+v\cdot\nabla_x\widetilde{h}-\Delta_v \widetilde{h}&= F\chi_{\{x_n > 0 \}} + \mu \quad \text{in } \R^{1+2n},
\end{align*}
where $\mu$ is a measure supported on $\{ x_n = 0 \}$ such that for some $s > \frac{1}{2}$ it holds
\begin{align*}
    \Vert \mu \Vert_{L^2_{t,x',v}(\R^{2n} ; H^{-s}_{x_n}(\R))}^2 \lesssim \int_{\{ x_n = 0 \}} |h(t,x,v) v_n|^2 \d \Gamma.
\end{align*}
By adapting classical transfer of regularity results in the full space $\R^{1+2n}$  (see \cite{Bou02,PaPo04,ImSi20}) to source terms in $H^{-s}_x$ with $s > \frac{1}{2}$ and employing a suitable localization argument, we can deduce that  weak subsolutions $f \ge 0$ to \eqref{eq:PDE-half-intro}$_{(i)}$ satisfy for some $q > 2$
\begin{align}
\label{eq:gain-of-int-intro}
    \Vert f \Vert_{L^q(\cH_{1/2})}^2 \lesssim \Vert F \Vert_{L^2(\cH_1)}^2 + \Vert f \Vert_{L^2_{t,x}H^1_v(\cH_1)}^2 + \int_{\{ x_n = 0 \} \cap \cQ_1} |f(t,x,v) v_n|^2 \d \Gamma .
\end{align}
Note that an analogous result holds for equations with divergence-type source terms in $L^2$ (see \autoref{lem.extension}) and that \eqref{eq:gain-of-int-intro} is finite for all $f \in H^1_{\text{kin}}(\cQ_1)$, independent of any boundary condition.

The boundary condition \eqref{eq:PDE-half-intro}$_{(ii)}$ is crucial for estimating the right-hand side of \eqref{eq:gain-of-int-intro}, which is achieved by a kinetic Caccioppoli-type inequality at the boundary (see \autoref{lem.extension1}). The main observation is that by the boundary condition we have
\begin{align}
    \label{eq:Cacc-intro}
    \int_{\{x_n = 0\} \cap \cQ_1} f(t,x,v) (-v_n) \d \Gamma \ge \left(1 - \frac{\alpha}{b^2} \right) \int_{\{x_n = 0\} \cap \cQ_1} f(t,x,v) |v_n| \d \Gamma \ge 0,
\end{align}
i.e., the boundary flux is dissipative in $L^1$ when $\frac{\alpha}{b^2} \le 1$. This allows to estimate $[f]_{H^1_v}^2$ from above, independently of any boundary terms, showing that
\begin{align}
\label{eq:Cacc-intro-2}
    \Vert \nabla_v f \Vert_{L^2(\mathcal{H}_{1/2})}^2 + \int_{\{ x_n = 0 \} \cap \cQ_{1/2}} |f v_n|^2 \d \Gamma \lesssim \Vert F \Vert_{L^2(\cH_1)}^2 + \Vert f \Vert_{L^2(\cH_1)}^2.
\end{align}
Special care is required in the proof of \eqref{eq:Cacc-intro-2} since it is still an open problem to determine whether $(f^2 |v_n|)\big\rvert_{\gamma} \in L^1_{\text{loc}}$ (see \cite{AAMN24,Sil22}). Therefore it is not justified to test \eqref{eq:PDE-half-intro} with $f$ itself.\footnote{If $(f^2 |v_n|)\big\rvert_{\gamma} \in L^1_{\text{loc}}$ is true, then dissipativity of the boundary flux in $L^2$ would be sufficient to establish \autoref{lem.extension1} and thus also for \autoref{thm:main-general}. $L^2$ dissipativity holds true already under the weaker condition $\frac{\alpha}{b} \le 1$.}

By combining \eqref{eq:gain-of-int-intro} with \eqref{eq:Cacc-intro-2}, we can set up a De Giorgi iteration scheme and deduce local boundedness estimates for weak solutions to \eqref{eq:PDE-half-intro} (see \autoref{lem.bdd}). 

\begin{remark}
    Our technique has very broad applicability and also yields local boundedness for kinetic equations with bounce back conditions
\begin{align}
\label{eq:bounce-back}
    f(t,x,v) = f(t,x,-v) \quad\text{on } \gamma_- \cap \cQ_1,
\end{align}
as we demonstrate in Subsection \ref{subsec:bounce-back} (see \autoref{thm.bounce}). It is an interesting problem to establish an optimal regularity theory for solutions to \eqref{eq:PDE}--\eqref{eq:bdry} or \eqref{eq:PDE}--\eqref{eq:bdry-gen} when replacing the specular reflection operator $\mathcal{R}$ by \eqref{eq:bounce-back} for any $\alpha \in [0,1]$ and $b = 1$. We believe that the techniques and ideas in Subsection \ref{subsec:bounce-back} might be helpful in proving such results.
\end{remark}

\subsubsection{Liouville theorem and higher regularity}

To prove the optimal regularity in \autoref{thm:main-Maxwell} and \autoref{thm:main-general}(i),(ii), we follow the blow-up technique developed in \cite{RoWe25} (see also \cite{KiWe26}). Specifically, we show that any solution to \eqref{eq:PDE-half-intro} satisfies the following expansion at $0 \in \gamma_0$
\begin{align*}
    |f(t,x,v) - m \phi_{\alpha,b}(x_n,v_n)| \lesssim r^{\lambda_{\alpha,b}+\eps} \quad \forall (t,x,v) \in \mathcal{H}_r
\end{align*}
for some $\eps > 0$ and $m \in \R$, where $\phi_{\alpha,b}$ is the explicit $\lambda_{\alpha,b}$-homogeneous solution given in \eqref{phialpha:defn.phialpha}.

The main difficulty for general reflection boundary conditions lies in the classification of blow-up limits. The previously established H\"older regularity of solutions is crucial to show that all blow-up limits are 1D stationary solutions to the following problem
    \begin{equation}
    \label{eq:PDE-1D-intro}
\left\{
\begin{alignedat}{3}
v\partial_xf_0-\partial_{vv} f_0&=0&&\qquad \mbox{in  $\{ x > 0\} \times \R$}, \\
f_0&=\alpha \mathcal{R}_bf_0&&\qquad  \mbox{on $\{0\} \times \{ v < 0 \}$},
\end{alignedat} \right.
\end{equation}
and satisfy suitable polynomial growth at infinity. However, new arguments are needed in order to show that all solutions to \eqref{eq:PDE-1D-intro} are of the form $f_0 = m \phi_{\alpha,b}$ (see \autoref{prop.liou1}). 

We achieve this classification by establishing a boundary Harnack-type estimate for solutions to \eqref{eq:PDE-1D-intro} at $0 \in \gamma_0$, similar to \cite{KiWe26,KiWe26b}. However, due to the $\alpha$-reflection condition, the explicit 1D solution $\phi_{\alpha,b}$ is incompatible with the barrier arguments from \cite{KiWe26}, which are used to transfer comparability of two solutions from the interior to the boundary $\{x = 0\}$. Instead, we establish a novel maximum principle (see \autoref{lem.maximum}) for solutions to \eqref{eq:PDE-1D-intro} in $\cH_1$, subject to Dirichlet conditions on the remaining kinetic part of the boundary
\begin{align*}
    \big((0,1) \times \{ -1\}\big) \cup \big((0,1) \times \{1\}\big) \cup \big(\{1\} \times (-1,0)\big).
\end{align*}
Furthermore, we introduce a modified mirror-type reflection adapted to the operator $\mathcal{R}_b$ (see \autoref{lem.mirroext}). This allow us to transfer information from a neighborhood of $(0,-1) \in \gamma_+$ to $(0,1) \in \gamma_-$ while staying away from the grazing set. Together, these ingredients yield the 1D Liouville theorem (see \autoref{prop.liou1}), classifying all solutions to \eqref{eq:PDE-1D-intro} and thereby all possible blow-up limits.

\subsection{Outline}

This article is structured as follows. In Section \ref{sec:prelim} we introduce the necessary function spaces and weak solutions concepts required for this work. Section \ref{sec:transfer} is dedicated to proving transfer of regularity results for kinetic equations in the half-space. In Section \ref{sec:4} we show H\" older regularity estimates for equations with bounded measurable coefficients. In Section \ref{sec:Liouville}, we establish Liouville theorems in the half-space and in Section \ref{sec:opt} we prove optimal regularity results for solutions subject to the boundary condition \eqref{eq:bdry-gen}. Finally, in Section \ref{sec:main}, we prove our main results \autoref{thm:main-Maxwell}, \autoref{thm:main-Holder}, and \autoref{thm:main-general}.

\subsection{Acknowledgments}

Kyeongbae Kim and Marvin Weidner were supported by the Deutsche Forschungsgemeinschaft (DFG,
German Research Foundation) under Germany's Excellence Strategy - EXC-2047/1 - 390685813 and through the CRC 1720 ``Analysis of criticality: from complex phenomena to models and estimates'', 53930965.

\section{Preliminaries}
\label{sec:prelim}

In this section, we collect some important notation that will be used throughout the article. Moreover, we introduce kinetic H\"older spaces, and recall several regularity results for linear kinetic Fokker-Planck equations.

First, we collect some notation.
\begin{itemize}
\item For any $z=(t,x,v)\in \bbR^{1+2n}$ and $R>0$, we write $S_Rz\coloneqq (R^2t,R^3x,Rv)$. For $z_1=(t_1,x_1,v_1)\in \bbR^{1+2n}$, we define
\begin{align*}
    z_1\circ z=(t_1+t,x_1+x+tv_1,v+v_1).
\end{align*}
In particular, $z^{-1}\coloneqq (-t,-x+tv,-v)$.
\item Given $f$ and $g$, define the convolution of $f$ and $g$ by
    \begin{align*}
        f*g(z)\coloneqq \int_{\bbR^{1+2n}}f(w)g(w^{-1}\circ z)\,dw=\int_{\bbR^n}f(z\circ w^{-1})g(w)\,dw.
    \end{align*}
    \item $x_-\coloneqq -\min\{x,0\}$ and $x_+\coloneqq \max\{x,0\}$.
    \item We write $\,d\Gamma\coloneqq \,dt\,dx'\,dv$.
    \item Let us write $X=(x,v)\in \bbR^{2n}$. Given $X\in \bbR^{2n}$, we write $X'=(x',v')\in\bbR^{n-1}\times \bbR^{n-1}$, where $X=(x',x_n,v',v_n)$.
\item ${Q}_r\coloneqq B_{r^3}\times B_r\quad\text{and}\quad {H}_r\coloneqq B_{r^3}\times B_r\cap \{x_n>0\}$.
\item $ \mathcal{Q}_r\coloneqq  I_{r}\times B_{r^3}\times B_r\quad\text{and}\quad \mathcal{H}_r\coloneqq \mathcal{Q}_r\cap\big(\R \times \{x_n>0\} \times \R^n)$ with $I_{r}\coloneqq (-r^2,r^2)$. In addition, for any $z_0\in \bbR^{1+2n}$, we write $\mathcal{Q}_r(z_0)\coloneqq z_0\circ \mathcal{Q}_r$ and $\mathcal{H}_r(z_0)\coloneqq z_0\circ \mathcal{H}_r$.
\item For any $v\in\bbR^n$, we write $\langle v\rangle\coloneqq (1+|v|)$.
\item Let $(a,b)\times \Omega\times V\subset \bbR\times \bbR^n\times \bbR^n$, where $\Omega$ and $V$ are bounded subsets of $\bbR^n$. We write the kinetic boundary of $(a,b)\times \Omega\times V$ as 
\begin{align*}
    \partial_{\mathrm{kin}}((a,b)\times \Omega\times V)&\coloneqq (\{t=a\}\times \Omega\times V)\cup ((a,b)\times\Omega\times \partial V)\\
    &\quad\quad\cup \{(t,x,v)\in (a,b)\times\partial\Omega\times V\,:\, n_x\cdot v<0\},
\end{align*}
where $n_x$ is the outer unit normal vector to $\partial\Omega$ at $x$. In addition, we write
\begin{align*}
    &\gamma_-\coloneqq \{(t,x,v)\in (a,b)\times\partial\Omega\times V\,:\, n_x\cdot v<0\},\\
    &\gamma_+\coloneqq \{(t,x,v)\in (a,b)\times\partial\Omega\times V\,:\, n_x\cdot v>0\},\\
    &\gamma_0\coloneqq \{(t,x,v)\in (a,b)\times\partial\Omega\times V\,:\, n_x\cdot v=0\}.
\end{align*}
\end{itemize}

Throughout the paper, we assume for some $\Lambda\geq1$
\begin{align*}
    \Lambda^{-1}I\leq A\leq \Lambda I\quad\text{and}\quad |B|\leq \Lambda.
\end{align*}

We recall the kinetic Sobolev space $H^1_{\mathrm{kin}}(\mathcal{Q})$, which is given by
\begin{align*}
  H^1_{\mathrm{kin}}(\mathcal{Q})\coloneqq \left\{f\in L^2(Q)\,:\, \nabla_vf\in L^2(\mathcal{Q})\quad\text{and}\quad(\partial_t+v\cdot\nabla_x)f\in L_{t,x}^2H_v^{-1}(\mathcal{Q})\right\},
\end{align*}
where we say $f\in L_{t,x}^2H_v^{-1}(\mathcal{Q})$, if for any $\psi\in C^\infty_c(\mathcal{Q})$
\begin{align*}
    \left|\int_{\mathcal{Q}}f(\partial_t+v\cdot\nabla_x)\psi\,dz\right|\leq c\|\nabla_v\psi\|_{L^2(\mathcal{Q})}
\end{align*}
for some constant $c$. In particular, we write
\begin{align*}
    \|f\|_{H^1_{\mathrm{kin}}(\mathcal{Q})}\coloneqq \|f\|_{L^2(\mathcal{Q})}+\|\nabla_vf\|_{L^2(\mathcal{Q})}+\|(\partial_t+v\cdot\nabla_x)f\|_{L_{t,x}^2H_v^{-1}(\mathcal{Q})}.
\end{align*}
Next, we recall the kinetic distance, kinetic polynomials and kinetic H\"older spaces as in \cite{ImSi21}. 

For any $z_1,z_2\in \bbR^{1+2n}$, we write
\begin{align*}
    \mathrm{dist}_{\mathrm{kin}}(z_1,z_2)\coloneqq \min_{w\in\bbR^n}\left\{\max\{|t_1-t_2|^{\frac12},|x_1-(x_2+(t_1-t_2)w)|^{\frac13},|v_1-w|,|v_2-w|\right\}.
\end{align*}
Let $\cP_k$ be the space of kinetic polynomials of kinetic degree $k$ defined by 
\begin{align*}
    \cP_k\coloneqq \mathrm{span}\left\{\sum_{2|\beta_t|+3|\beta_x|+|\beta_v|\leq k}t^{\beta_t}x_1^{\beta_{x_1}}\cdots x_n^{\beta_{x_n}}v_1^{\beta_{v_1}}\cdots v_n^{\beta_{v_n}}\,:\, |\beta_x|=\sum_{i=1}^n\beta_{x_i},\, |\beta_v|=\sum_{i=1}^n\beta_{v_i}\right\}.
\end{align*}
For any domain $D\subset \bbR^{1+2n}$, $k\in\N\cup\{0\}$ and $\alpha\in[0,1)$, we say $f\in C^{k,\alpha}(D)$ if $\|f\|_{C^{k,\alpha}(D)}\coloneqq[f]_{C^{k,\alpha}(D)}+\|f\|_{L^\infty(D)}<\infty$, where
\begin{align*}
    [f]_{C^{k,\alpha}(D)}\coloneqq \sup_{z_1\in D}\inf_{p\in \cP_k}\sup_{z_2\in D\setminus\{z_1\}}\frac{|(f-p)(z_2)|}{\mathrm{dist}(z_1,z_2)^{k+\alpha}}.
\end{align*}

 We now introduce the reflection-type boundary condition that will be analyzed throughout the remainder of this article and define the corresponding notion of weak solutions.

\begin{definition}\label{defn.weak.sol}
  Let $z_0\in\bbR^{1+2n}$ with $x_{0,n}=v_{0,n}=0$. We say that $f$ is a weak solution to 
    \begin{equation*}
\left\{
\begin{alignedat}{3}
\partial_tf+v\cdot\nabla_xf-\ddiv(A\nabla_vf)&=B\cdot\nabla_vf+F&&\qquad \mbox{in  $\mathcal{H}_R(z_0)$}, \\
f(t,x',0,v)&=\alpha f(t,x',0,v',-bv_n)+(1-\alpha)g&&\qquad  \mbox{in $\gamma_-\cap \mathcal{Q}_R(z_0)$},
\end{alignedat} \right.
\end{equation*}
where $\alpha\in[0,1]$, $b\in(0,1]$ with $\alpha/b^2\in[0,1]$, $g\in L^\infty(\gamma_-)$ and $F\in L^2(\mathcal{Q}_R(z_0))$, if $f\in H^1_{\mathrm{kin}}(\mathcal{Q}_R(z_0))$ and for any $\psi\in C^\infty_c(\bbR^{1+2n})$ with $\mathrm{\supp}\,\psi\cap \partial \mathcal{H}_R(z_0)\subset \gamma$,
\begin{equation}\label{defn:eq.weak}
\begin{aligned}
    &-\int_{\mathcal{Q}_R(z_0)}f(\partial_t+v\cdot\nabla_x)\psi\,dz+\int_{\gamma}f\psi(n_x\cdot v)\,d\Gamma+\int_{\mathcal{Q}_R(z_0)}A\nabla_vf\cdot\nabla\psi\,dz\\
    &=\int_{\mathcal{Q}_R(z_0)}B\cdot\nabla f\psi\,dz+\int_{\mathcal{Q}_R(z_0)}F\psi\,dz
\end{aligned}
\end{equation}
with 
\begin{align}\label{defn2:eq.weak}
    \int_{\gamma_-}f\psi(n_x\cdot v)\,d\Gamma=\int_{\gamma_-}(\alpha\mathcal{R}_bf+(1-\alpha)g)\psi(n_x\cdot v)_-\,d\Gamma.
\end{align}
Here, we set $\mathcal{R}_bf(t,x',0,v',v_n)\coloneqq f(t,x',0,v',-bv_n)$.
\end{definition}
\begin{remark}\label{rmk.defn.weak}
\begin{itemize}
    \item First, note that if $f$ is a classical solution, then the weak formulation given in \autoref{defn.weak.sol} follows directly.
    \item Next, since $f\in H^1_{\mathrm{kin}}(Q)$, note from \cite[Proposition 4.2]{Sil22} that we have for any $\widetilde{\gamma}\Subset \gamma$,
\begin{align*}
    \int_{\widetilde{\gamma}}f^2\min\{|n_x\cdot v|,|n_x\cdot v|^2\}\,d\Gamma\leq c\|f\|_{H^1_{\mathrm{kin}}(Q)}^2.
\end{align*}
This together with the fact that $|n_x\cdot v|\leq c$ implies for any test function $\psi$,
\begin{align*}
    \int_{\gamma}|f\psi||n_x\cdot v|\leq \left(\int_{\gamma\cap\supp\,\psi}|f|^2|n_x\cdot v|^2\right)^{\frac12}\leq c\|f\|_{H^1_{\mathrm{kin}}(Q)},
\end{align*}
and shows that \eqref{defn:eq.weak} and \eqref{defn2:eq.weak} are well-defined.
\end{itemize}

\end{remark}

For later use, we recall two known regularity results for kinetic Fokker-Planck equations with prescribed in-flow boundary conditions. The first result is a $C^\alpha$ regularity estimate for equations with bounded measurable coefficients.

\begin{lemma}\label{lem.calpha}
    Let $z_0 \in \R^{1+2n}$ and $R \leq 1$. Let $f$ be a weak solution to 
    \begin{equation*}
\left\{
\begin{alignedat}{3}
\partial_tf+v\cdot\nabla_xf-\ddiv(A\nabla_vf)&=B\cdot\nabla_vf+F&&\qquad \mbox{in  $\mathcal{H}_R(z_0)$}, \\
f&=g&&\qquad  \mbox{in $\gamma_-\cap \mathcal{Q}_R(z_0)$}.
\end{alignedat} \right.
\end{equation*}
Then there is a small constant $\alpha=\alpha(n,\Lambda)$ such that 
\begin{align*}
    R^{\alpha}[f]_{C^\alpha(\mathcal{H}_{R/2}(z_0))}\leq c\left(R^{-(4n+2)}\|f\|_{L^1(\mathcal{H}_R(z_0))}+R^2\|F\|_{L^\infty(\mathcal{H}_R(z_0))}+R^{\alpha}[g]_{C^\alpha(\gamma_-\cap\mathcal{Q}_{R}(z_0))}\right)
\end{align*}
for some constant $c=c(n,\Lambda,R\|B\|_{L^\infty(\mathcal{H}_R(z_0))})$. In addition, we have 
\begin{align*}
    \|f\|_{L^\infty(\mathcal{H}_{R/2}(z_0))}\leq c\left(R^{-(4n+2)}\|f\|_{L^1(\mathcal{H}_R(z_0))}+R^2\|F\|_{L^\infty(\mathcal{H}_R(z_0))}+\|g\|_{L^\infty(\gamma_-\cap\mathcal{Q}_{R}(z_0))}\right)
\end{align*}
for some constant $c=c(n,\Lambda,R\|B\|_{L^\infty(\mathcal{H}_R(z_0))})$.
\end{lemma}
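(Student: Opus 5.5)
This lemma is a recollection of known results for the in-flow problem, so I would reduce it to \cite{Sil22} (see also \cite{Zhu24}) by scaling and translation rather than prove it from scratch. First, translate by $z_0^{-1}\circ$ and rescale via $f_R(z) := f(z_0\circ S_R z)$. Since $x_{0,n}=0$ is not assumed in the statement, I would split into two cases. If $\mathcal{Q}_R(z_0)$ does not meet $\{x_n=0\}$, we are in the interior case and invoke the interior De Giorgi--Nash--Moser estimate (Golse--Imbert--Mouhot--Vasseur type). Otherwise the half-space structure $\{x_n>0\}$ is preserved by the Galilean transformation up to a shift in $x_n$, since $x_n \mapsto x_{0,n}+x_n+tv_{0,n}$, and the boundary becomes a moving hyperplane. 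To handle this I would either use that \cite{Sil22} treats general $C^{1,1}$ domains that vary in time, or change variables to straighten $x_n - t v_{0,n}$, which only alters the drift. After scaling, $f_R$ solves the same equation in $\mathcal{H}_1$ with $A_R(z)=A(z_0\circ S_Rz)$, drift $R B$, source $R^2F$, and boundary data $g_R$. This explains the dependence of the constants on $R\|B\|_{L^\infty}$.

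Second, for the $L^\infty$ bound, I would use the boundary De Giorgi argument of \cite{Sil22,Zhu24}. Extending $(f-k)_+$ by zero across the boundary for $k\ge \|g\|_\infty$ gives a subsolution in the whole cube. On the inflow part $\gamma_-$ we have $f=g\le k$; on $\gamma_+$ the boundary term $\int_{\gamma_+}(f-k)_+^2 v_n$ has a good sign. The interior $L^2\to L^\infty$ estimate then yields a bound by the $L^2$ norm. I would pass from the $L^2$ to the $L^1$ norm with the standard interpolation/covering trick, iterating over shrinking cylinders and absorbing $\frac12\|f\|_{L^\infty}$, which produces the factor $R^{-(4n+2)}$, i.e.\ the inverse measure $|\mathcal{Q}_R|^{-1}=R^{-(4n+2)}$.

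Third, for the Hölder bound, I would prove oscillation decay at boundary points by applying the weak Harnack / growth lemma to $\sup f - f$ and $f-\inf f$, each extended by constants equal to the sup and inf of $g$ across the boundary. This gives $\operatorname{osc}_{\mathcal{H}_{r/2}} f\le (1-\theta)\operatorname{osc}_{\mathcal{H}_r} f + \operatorname{osc}_{\gamma_-\cap\mathcal{Q}_r} g + c r^2\|F\|_\infty$. Iterating yields a $C^\alpha$ estimate at the boundary, with $\alpha$ chosen smaller than the rate $\log(1-\theta)$ and no larger than the Hölder exponent of $g$. I would combine this with interior estimates in the usual way, taking the kinetic distance to the boundary into account to get the seminorm on $\mathcal{H}_{R/2}$.

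I expect the main obstacle to be the uniform oscillation decay at boundary points near the grazing set. There the growth lemma must hold in cylinders that only partly intersect the domain and may contain few inflow points. I would rely on \cite{Sil22}'s treatment of exactly this case rather than reprove it.
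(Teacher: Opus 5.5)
This is essentially the paper's argument. The paper also obtains the lemma by citing the known in-flow $L^\infty$ and $C^\alpha$ estimates (via \cite[Lemma 2.25]{RoWe25}, which rests on \cite{Sil22}) and by rescaling $z\mapsto z_0\circ S_Rz$ as in \cite[Proposition 5.1]{KiWe26}, which turns the drift into $RB$ and gives the dependence on $R\|B\|_{L^\infty}$; your De Giorgi and growth-lemma sketches only unpack what those references contain. One correction: your alternative of straightening $x_n-tv_{0,n}$ does not merely change the drift but produces a transport term $v_{0,n}\partial_{x_n}$, which scales like $r^{-1}v_{0,n}\partial_{x_n}$ under $S_r$ and so is not lower order, so you should instead rely on the cited estimate holding directly in the sheared cylinders $\mathcal{H}_R(z_0)$.
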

The proof follows directly from \cite[Lemma 2.25]{RoWe25} together with the scaling argument in \cite[Proposition 5.1]{KiWe26}, in order to get the dependence $R\|B\|_{L^\infty(\mathcal{H}_R(z_0))}$, instead of $\|B\|_{L^\infty(\mathcal{H}_R(z_0))}$.

Next, we provide Schauder-type estimates away from the grazing set $\gamma_0 =\{x_n=0\}\times \{v_n=0\}$.

\begin{lemma}\label{lem.schhol.away-}
    Let $z_0 \in \R^{1+2n}$ and $R \leq 1$. Let $\mathcal{Q}_R(z_0)\cap \gamma_0=\emptyset$. Let $f$ be a weak solution to 
    \begin{equation*}
\left\{
\begin{alignedat}{3}
\partial_tf+v\cdot\nabla_xf-\ddiv(A\nabla_vf)&=B\cdot\nabla_vf+F&&\qquad \mbox{in  $\mathcal{H}_R(z_0)$}, \\
f&=g&&\qquad  \mbox{in $\gamma_-\cap \mathcal{Q}_R(z_0)$},
\end{alignedat} \right.
\end{equation*}
    where $A\in C^{k-1,\eps}(\mathcal{H}_R(z_0))$, $B\in C^{k-2,\eps}(\mathcal{H}_R(z_0))$, $F\in C^{k-2,\eps}(\mathcal{H}_R(z_0))$, and $g\in C^{k,\eps}(\gamma_-\cap\mathcal{Q}_R(z_0))$ for some $k\in\N\cup\{0\}$ and $\eps\in(0,1)$. Then we have
    \begin{align*}
        R^{k+\eps}[f]_{C^{k,\eps}(\mathcal{H}_{R/2}(z_0))}\leq c(\|f\|_{L^\infty(\mathcal{H}_R(z_0))}+R^{k+\eps}[F]_{C^{k-2+\eps}(\mathcal{H}_R(z_0))}+R^{k+\eps}[g]_{C^{k+\eps}(\gamma_-\cap \mathcal{Q}_R(z_0))}),
    \end{align*}
    where $c=c(n,\Lambda,k,\eps,\|A\|_{C^{k-1+\eps}(\mathcal{H}_R(z_0))},\|B\|_{C^{k-2+\eps}(\mathcal{H}_R(z_0))})$. In particular, when $k=0$, we get
    \begin{align}\label{lem.schhol.away-.depB}
        R^{\eps}[f]_{C^{\eps}(\mathcal{H}_{R/2}(z_0))}\leq c\mathbf{B}(z_0,R)^{\eps}(\|f\|_{L^\infty(\mathcal{H}_R(z_0))}+R^{2}\|F\|_{L^{\infty}(\mathcal{H}_R(z_0))}+R^{\eps}[g]_{C^{\eps}(\gamma_-\cap \mathcal{Q}_R(z_0))}),
    \end{align}
    where $c=c(n,\Lambda,\eps,\|A\|_{C^{\eps}(\mathcal{H}_R(z_0))})$ and $\mathbf{B}(z_0,R)\coloneqq (1+\|B\|_{L^\infty(\mathcal{H}_R(z_0))})$.
\end{lemma}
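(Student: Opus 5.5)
The plan is to reduce the estimate to known Schauder theory for kinetic equations with prescribed in-flow, using that the cylinder stays away from the grazing set. Since $\mathcal{Q}_R(z_0)\cap\gamma_0=\emptyset$, there are two cases. In the first, $\mathcal{Q}_R(z_0)$ does not meet $\{x_n=0\}$ at all, or meets it only in points with $v_n>0$ (outgoing part $\gamma_+$). Then no boundary condition is imposed there, and the problem is interior or purely out-flow. In the second, $\mathcal{Q}_R(z_0)\cap\{x_n=0\}$ lies entirely in $\{v_n<0\}$, so $|v_n|$ is bounded below on the boundary portion and the Dirichlet in-flow condition $f=g$ is non-degenerate.

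For $k\ge1$, the interior and out-flow cases follow from the interior kinetic Schauder estimates (e.g.\ Imbert--Mouhot, Henderson--Snelson) applied on sub-cylinders. Near $\gamma_+$ one can argue that, because characteristics exit, the solution is determined by interior data; this is also covered by \cite[Lemma 2.25]{RoWe25}-type results. The in-flow case is the boundary Schauder estimate away from $\gamma_0$ established in \cite{RoWe25} (and in \cite{KiWe26} for in-flow data). I would first subtract an extension $\tilde g$ of $g$ that is constant along transport in $x_n$, to make the boundary datum zero at the cost of a $C^{k-2,\eps}$ source. Then I would flatten the $v$-dependence and apply the known estimate at unit scale. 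Scaling with $S_R$ and the left-translation $z_0\circ\cdot$ gives the factor $R^{k+\eps}$. Here $F$ scales like $R^2F(S_R\cdot)$ and $B$ like $RB$, so the constants depend on the stated norms of $A$ and $B$ (which only improve for $R\le1$). A standard covering of $\mathcal{H}_{R/2}(z_0)$ by such cylinders, together with the kinetic interpolation of Hölder seminorms, glues the local bounds together.

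The refined estimate \eqref{lem.schhol.away-.depB} for $k=0$ with explicit dependence $\mathbf{B}(z_0,R)^\eps$ needs a separate scaling argument, as in \cite[Proposition 5.1]{KiWe26}. Set $\rho=R/\mathbf{B}(z_0,R)\le R$. On cylinders of radius $\rho$ the rescaled drift satisfies $\rho\|B\|_{L^\infty}\le1$, so the $C^\eps$ estimate holds there with a constant independent of $B$. The $B\cdot\nabla_v f$ term is treated as part of the equation in divergence-free form, with bounded coefficient after scaling. Covering $\mathcal{H}_{R/2}(z_0)$ by cylinders of size $\rho$ yields two regimes. For pairs of points at distance below $\rho$, the local estimate gives $[f]_{C^\eps}\lesssim\rho^{-\eps}(\|f\|_\infty+\rho^2\|F\|_\infty+\rho^\eps[g]_{C^\eps})$. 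For points farther apart, the trivial bound $2\|f\|_\infty/d^\eps\le2\rho^{-\eps}\|f\|_\infty$ suffices. Multiplying by $R^\eps$ gives $R^\eps\rho^{-\eps}=\mathbf{B}^\eps$, which is the claimed form.

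I expect the main obstacle to be the geometry of the covering. Small cylinders centered in $\mathcal{H}_{R/2}(z_0)$ must remain inside $\mathcal{H}_R(z_0)$ and keep the one-case dichotomy (purely in-flow or free of $\gamma_-$). Because of the $x+tv$ shear in kinetic cylinders this is not automatic, but $\mathcal{Q}_R(z_0)\cap\gamma_0=\emptyset$ combined with $\rho\le R$ and connectedness of the $v_n$-sign region should make it work.
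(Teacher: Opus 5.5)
Your top-level plan agrees with the paper's: reduce to known interior and non-grazing boundary estimates and glue them by covering. Your derivation of the factor $\mathbf{B}(z_0,R)^{\eps}$ is also correct. Working at scale $\rho=R/\mathbf{B}(z_0,R)$ and using $2\|f\|_{L^\infty}/d^{\eps}$ for distant pairs is exactly the scaling argument of \cite[Proposition 5.1]{KiWe26} that the paper invokes. The main estimate, however, contains two steps that fail as written.

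First, subtracting an extension $\tilde g$ of $g$ that is constant in $x_n$ (or constant along characteristics) does not leave a $C^{k-2,\eps}$ source. On $\{x_n=0\}$ the kinetic distance contains the term $|(t-t_1)v_{1,n}|^{1/3}$ (see the proof of \autoref{lem.geo}), and $|v_n|\geq R/2$ on $\gamma_-\cap\mathcal{Q}_{R/2}(z_0)$. So the hypothesis on $g$ controls its regularity in $t$ (equivalently along $\partial_t+v'\cdot\nabla_{x'}$) only at order $(k+\eps)/3$. For instance, $g=\phi(t)$ with $\phi\in C^{\eps/3}$ is $C^{\eps}$ on $\gamma_-$ there, whereas:
\begin{itemize}
\item $\phi(t)$ is only $C^{2\eps/3}$ along characteristics;
\item $\phi(t-x_n/v_n)$ is only $C^{\eps/3}$ in $v_n$ at fixed $x_n>0$.
\end{itemize}
Neither extension is therefore kinetically $C^{\eps}$. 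The same loss for larger $k$ (take $\phi\in C^{(k+\eps)/3}$) makes $(\partial_t+v\cdot\nabla_x)\tilde g-\ddiv(A\nabla_v\tilde g)$ worse than $C^{k-2,\eps}$ in general, and adding $\tilde g$ back destroys the estimate. It is the $v$-diffusion that upgrades this weaker boundary regularity to kinetic $C^{k,\eps}$ regularity of $f$. You should drop this reduction and use an in-flow estimate that takes $g$ directly, measured in the restricted kinetic distance; the ``flattening of the $v$-dependence'' is not needed either.

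Second, the out-flow case is not justified. Interior Schauder estimates on sub-cylinders at distance $r$ from $\gamma_+$ only give $r^{k+\eps}[f]_{C^{k,\eps}}\lesssim\|f\|_{L^\infty}+\dots$, which degenerates as $r\to0$. Moreover, \cite[Lemma 2.25]{RoWe25} is a De Giorgi-type estimate with a small exponent $\alpha(n,\Lambda)$ (it is what underlies \autoref{lem.calpha}), not a Schauder estimate. Your intuition that characteristics exit through $\gamma_+$ is the right one, but it has to be turned into an expansion at non-grazing boundary points: $\|f-p_{z_1}\|_{L^\infty(\mathcal{H}_r(z_1))}\lesssim r^{k+\eps}$ with $p_{z_1}\in\mathcal{P}_k$, for $z_1\in\gamma_+$ and, with datum $g$, for $z_1\in\gamma_-$. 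Applying interior estimates to $f-p_{z_1}$ on cylinders comparable to the distance to the boundary then yields the uniform bound.

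This is precisely the paper's proof. It combines the expansions at $\gamma_+\cup\gamma_-$ from \cite[Lemma 6.1]{KiWe26} with the interior estimates \cite[Lemma 2.5]{KiWe26} and \cite[Lemma 2.23]{RoWe25}, glued as in \cite[Lemma 4.4]{RoWe25}. Note also that \cite[Proposition 4.5]{RoWe25} is for non-divergence form equations. For $\ddiv(A\nabla_v f)$ with $A\in C^{k-1,\eps}$ (in particular $A\in C^{\eps}$ when $k=0$) you need these divergence-form ingredients rather than non-divergence Schauder theory.
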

\begin{proof}
This result was established in \cite[Proposition 4.5]{RoWe25} for equations in non-divergence form. The claim can be shown by combining the expansion estimates at the non-grazing boundary $\gamma_+\cup\gamma_-$ from \cite[Lemma 6.1]{KiWe26} with the interior regularity estimates from \cite[Lemma 2.5]{KiWe26} and \cite[Lemma 2.23]{RoWe25} in the exact same way as in \cite[Lemma 4.4]{RoWe25}. Lastly, \eqref{lem.schhol.away-.depB} follows by applying the scaling argument in the proof of \cite[(5.3) in Proposition 5.1]{KiWe26}.
\end{proof}

Lastly, we prove an elementary property about the reflection operator $R_b(z)=(t,x,v',-bv_n)$ for $b>0$. In particular, the property for the case $b=1$ is given in \cite[Lemma 2.20]{RoWe25}.
\begin{lemma}\label{lem.geo}
   Let $\gamma_\mp=\{x_n=0\}\times \{\pm v_n>0\}$. For any $R>0$ and $z_0\in \bbR^{1+2n}\cap \{x_n=0\}$, it holds 
    \begin{align*}
        R_b(\gamma_-\cap \mathcal{Q}_R(z_0))=\gamma_+\cap  \mathcal{Q}_{R,b}(R_b(z_0)),
    \end{align*}
    where 
    \begin{equation}\label{defn.bflat.cylinder}
    \begin{aligned}
        &\mathcal{Q}_{R,b}(R_b(z_0))\coloneqq \{(t,x,v)\,:\, t\in I_{R}(t_0),\, x\in B_{R^3}(x_0+(t-t_0)(v_0',-bv_{0,n})),\, v\in B_{R,b}(v_0',-bv_{0,n})\},\\
        &B_{R,b}\coloneqq \{(v',bv_n)\,:\, v\in B_R\}.
    \end{aligned}
    \end{equation} In addition, there is $c=c(n)$ such that we have 
    \begin{align*}
         [\mathcal{R}_bf]_{C^{k,\eps}(\gamma_-\cap \mathcal{Q}_R(z_0))}\leq c[f]_{C^{k,\eps}(\gamma_+ \cap  \mathcal{Q}_{R,b}(R_b(z_0)))}.
    \end{align*}
\end{lemma}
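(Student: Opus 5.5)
The plan is to verify the set identity by direct computation and then obtain the seminorm bound by transporting kinetic polynomials through the linear change of variables $R_b$, checking how $R_b$ interacts with the kinetic distance and the group law.

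\textbf{Set identity.} Write $z_0=(t_0,x_0,v_0)$ with $x_{0,n}=0$. A point $z=(t,x,v)\in\mathcal{Q}_R(z_0)$ can be written as $z_0\circ(s,y,w)$ with $(s,y,w)\in\mathcal{Q}_R$. Then
\[
t=t_0+s,\qquad x=x_0+y+sv_0,\qquad v=v_0+w .
\]
Equivalently, $t\in I_R(t_0)$, $x\in B_{R^3}(x_0+(t-t_0)v_0)$ and $v\in B_R(v_0)$. The map $R_b$ leaves $(t,x)$ unchanged and sends $v\mapsto(v',-bv_n)$.

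Restricted to $\{x_n=0\}$, the constraint on $x$ only involves $(x',x_n)$ with $x_n=0$. There we need
\[
x\in B_{R^3}\big(x_0+(t-t_0)v_0\big)\iff x\in B_{R^3}\big(x_0+(t-t_0)(v_0',-bv_{0,n})\big).
\]
This is the point I would check most carefully. Since $x_{0,n}=0$, the centers differ only in the $n$-th coordinate, namely $(t-t_0)v_{0,n}$ versus $-(t-t_0)bv_{0,n}$. So these balls do not obviously coincide when $v_{0,n}\neq0$.

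I would therefore treat \eqref{defn.bflat.cylinder} as the definition of the image set. The $x$-constraint there is meant to be read on $\{x_n=0\}$ with $x$-center $x_0+(t-t_0)v_0$, exactly as in \cite[Lemma 2.20]{RoWe25}. Alternatively, one uses that the relevant $z_0$ has $v_{0,n}=0$ or is irrelevant. Under this reading the identity is a tautology in $(t,x)$.

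In $v$, the condition $v\in B_R(v_0)$ together with $v_n>0$ maps bijectively onto $(v',-bv_n)\in B_{R,b}(v_0',-bv_{0,n})$ with $-bv_n<0$. This is because $B_{R,b}$ is the image of $B_R$ under $v_n\mapsto bv_n$, up to the sign flip, which is a symmetry of balls. Hence the image is $\gamma_+\cap\mathcal{Q}_{R,b}(R_b(z_0))$.

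\textbf{Seminorm bound.} Fix $z_1\in\gamma_-\cap\mathcal{Q}_R(z_0)$. Let $p\in\mathcal{P}_k$ be near-optimal for $f$ at $R_b(z_1)$ on $\gamma_+\cap\mathcal{Q}_{R,b}(R_b(z_0))$. Set $\tilde p=p\circ R_b$. This is again a kinetic polynomial of degree $\le k$, because $R_b$ is linear, preserves $t$ and $x$, and acts diagonally on $v$, so kinetic degrees are preserved. Then
\[
|(\mathcal{R}_bf-\tilde p)(z_2)|=|(f-p)(R_b z_2)|\le [f]\,\mathrm{dist}(R_bz_1,R_bz_2)^{k+\eps}.
\]

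It remains to show $\mathrm{dist}(R_bz_1,R_bz_2)\le c\,\mathrm{dist}(z_1,z_2)$ for $z_1,z_2\in\{x_n=0\}$. Take an optimal $w$ for $(z_1,z_2)$ and use $(w',-bw_n)$ for the image pair. The $t$-term is unchanged. The $v$-terms shrink, since $b\le1$. For the $x$-term, the tangential part is unchanged. The normal part is
\[
\big|x_{1,n}-x_{2,n}-(t_1-t_2)(-bw_n)\big|=b\,|t_1-t_2|\,|w_n|,
\]
because $x_{1,n}=x_{2,n}=0$. This should be compared with the original normal part $|t_1-t_2||w_n|$, which is at least as large. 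So in fact $\mathrm{dist}(R_bz_1,R_bz_2)\le\mathrm{dist}(z_1,z_2)$, giving the claim with $c=1$, or $c(n)$ if the seminorm uses equivalent distances.

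Taking the supremum over $z_2$ and then over $z_1$ concludes the proof.

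The main obstacle is the consistency of the $x$-center in the definition of $\mathcal{Q}_{R,b}$. I would resolve it by reading the definition on $\{x_n=0\}$ as above.
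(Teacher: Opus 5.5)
Your proof is correct and has the same skeleton as the paper's. The set relation is checked directly, $p\circ R_b\in\mathcal{P}_k$ whenever $p\in\mathcal{P}_k$, and the seminorm bound reduces to comparing kinetic distances before and after $R_b$.

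The real difference is in how you compare distances.
\begin{itemize}
\item The paper uses the equivalent expression $\mathrm{dist}_{\mathrm{kin}}(z,z_1)\eqsim|t-t_1|^{1/2}+|x-(x_1+(t-t_1)v_1)|^{1/3}+|v-v_1|$ from \cite[(2.6)]{RoWe25}. It splits off the normal part $|(t-t_1)v_{1,n}|^{1/3}$ using $x_n=x_{1,n}=0$. This gives two-sided comparability with constants $c(n)$ and $c(n)/b$, of which only $\mathrm{dist}_{\mathrm{kin}}(R_bz,R_bz_1)\le c\,\mathrm{dist}_{\mathrm{kin}}(z,z_1)$ is used.
\item You work directly with the min--max definition and push the optimal intermediate velocity $w$ forward to $(w',-bw_n)$. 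Since $b\le 1$ and both points lie on $\{x_n=0\}$, each entry of the maximum can only decrease. This gives the needed inequality with constant $1$ and no external citation.
\end{itemize}
Your route is more elementary and sharper. The paper's route also yields the reverse inequality (with a loss $1/b$), which this lemma does not need.

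Your worry about the $x$-center is well founded, and the paper's ``follows directly from the definition'' glosses over it. On $\{x_n=0\}$ with $x_{0,n}=0$, the image of $\gamma_-\cap\mathcal{Q}_R(z_0)$ is cut out by $|x'-x_0'-(t-t_0)v_0'|^2+(t-t_0)^2v_{0,n}^2<R^6$. The stated definition of $\mathcal{Q}_{R,b}(R_b(z_0))$ has $b^2(t-t_0)^2v_{0,n}^2$ in place of $(t-t_0)^2v_{0,n}^2$.

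So for $b<1$ and $v_{0,n}\neq0$ the claimed equality can fail. What does hold is the inclusion $R_b(\gamma_-\cap\mathcal{Q}_R(z_0))\subseteq\gamma_+\cap\mathcal{Q}_{R,b}(R_b(z_0))$. Equality holds if the $x$-center is taken to be $x_0+(t-t_0)v_0$, and for $b=1$ there is no discrepancy.

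Rather than the vague remark that the relevant $z_0$ has $v_{0,n}=0$ or the issue is irrelevant, state this inclusion explicitly. Your seminorm argument then goes through verbatim, since you only need $R_bz_1,R_bz_2$ to lie in the domain of the right-hand seminorm. The inclusion is also all that the later applications of the lemma use.
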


\begin{proof}
    The first claim follows directly from the definition. Next, we observe from \cite[(2.6)]{RoWe25} that for any $z,z_1\in \gamma_-\cap \mathcal{Q}_R(z_0)$,
    \begin{align*}
        \mathrm{dist}_{\mathrm{kin}}(z,z_1)&\eqsim |t-t_1|^{\frac12}+|x-(x_1+(t-t_1)v_1|^{\frac13}+|v-v_1|\\
        &\eqsim_n |t-t_1|^{\frac12}+|x'-(x_1'+(t-t_1)v_1'|^{\frac13}+|(t-t_1)v_{1,n}|^{\frac13}+|v-v_1|\\
        &\eqsim_{n,b}|t-t_1|^{\frac12}+|x'-(x_1'+(t-t_1)v_1'|^{\frac13}+|(t-t_1)bv_{1,n}|^{\frac13}+|R_{b}(v)-R_{b}(v_1)|\\
        &\eqsim_n\mathrm{dist}_{\mathrm{kin}}(R_{b}(z),R_{b}(z_1)),
    \end{align*}
    where $R_{b}(v)\coloneqq(v',-bv_n)$ and we have used the fact that $x_n=x_{1,n}=0$. In particular, we have 
    \begin{align*}
      \frac1c\mathrm{dist}_{\mathrm{kin}}(R_{b}(z),R_{b}(z_1))  \leq \mathrm{dist}_{\mathrm{kin}}(z,z_1)\leq \frac{c}{b}\mathrm{dist}_{\mathrm{kin}}(R_{b}(z),R_{b}(z_1))
    \end{align*} for some $c=c(n)$.
    Thus, the second claim follows immediately upon using the previous two observations and the fact that $z \mapsto p(R_{b}(z))\in \cP_k$, if $p\in\cP_k$.
\end{proof}

\section{Transfer of regularity}
\label{sec:transfer}

In this section, we establish a transfer of regularity result up to the boundary for kinetic equations posed in the half-space $\{ x_n > 0\}$. Our main results are \autoref{lem.extension}, which yields higher integrability of subsolutions in all variables and will be used to show local boundedness in the following section, as well as \autoref{prop.transbdd}, which contains fractional Sobolev regularity in $x$ and is of independent interest. 

Since such results are classical for global solutions the main idea in the proof of our results is to extend the equation from the half-space $\{ x_n > 0 \}$ to the whole space and to apply existing transfer of regularity results in that setting. The main technical difficulty comes from the fact that extending the solution yields an additional right-hand side, which belongs to the dual space of a fractional Sobolev space. Our argument works in a very general framework (see \autoref{prop.transbdd}) and holds for any boundary condition. 

In order to prove \autoref{lem.extension}, we first need to establish a suitable gain of integrability for the Kolmogorov equation in the full space with source terms of the form $(-\Delta_x)^{\frac{s}{2}}$. To do so, we follow the argument used in \cite[Theorem 2.1]{PaPo04} and \cite[Proposition 2.2]{ImSi20}. Note that in those papers, the authors consider right-hand sides given by $F_1+D_v^{s}F_2$, where $D_v$ is a differential operator with respect to the $v$-variable.

\begin{lemma}\label{lem.highint}
Given functions $F_1,F_2,F_3\in L^2((0,T)\times\bbR^{2n})$ and $s\in[0,\frac23)$, there is a distributional solution $f$ to
    \begin{align}\label{highint:eq.distri}
        \partial_tf+v\cdot\nabla_xf-\Delta_vf=F_1-\ddiv F_2+(-\Delta_x)^{\frac{s}{2}}F_3\quad\text{in } (0,T)\times \bbR^{2n}.
    \end{align}
     Moreover, we have
    \begin{align}\label{highint:est.lp}
        \|f\|_{L^2((0,T)\times \bbR^{2n})}+\|f\|_{L^q((0,T)\times \bbR^{2n})}\leq c\sum_{j=1}^3\|F_j\|_{L^2((0,T)\times \bbR^{2n})}
    \end{align}
    for some $q=q(n,s)>2$, where $c=c(n,s,T)$.
\end{lemma}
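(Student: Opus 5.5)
We construct $f$ explicitly via Duhamel's formula with the fundamental solution $\Gamma$ of the Kolmogorov operator. Extending $F_j$ by zero outside $(0,T)$, we set
\begin{align*}
f=\Gamma*\big(F_1-\ddiv F_2+(-\Delta_x)^{\frac s2}F_3\big)=:f_1+f_2+f_3,
\end{align*}
where the convolution is the group convolution defined in the preliminaries, restricted to times $t\in(0,T)$. By linearity it suffices to treat each $f_j$ separately. The cases $j=1,2$ are classical, see \cite[Theorem 2.1]{PaPo04} and \cite[Proposition 2.2]{ImSi20}. Since $\Gamma$ is a smooth kernel, $f$ is a distributional solution of \eqref{highint:eq.distri}.

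\textbf{Strategy for $L^2$ and $L^q$.} We work in Fourier variables $(x,v)\mapsto(\xi,\eta)$. The transport term becomes $-\xi\cdot\nabla_\eta$, so the equation can be solved along characteristics $\eta(\tau)=\eta+(t-\tau)\xi$. This gives the representation
\begin{align*}
\hat f(t,\xi,\eta)=\int_0^t \exp\Big(-\int_\tau^t|\eta+(t-\sigma)\xi|^2\,d\sigma\Big)\,\hat G(\tau,\xi,\eta+(t-\tau)\xi)\,d\tau.
\end{align*}
The exponent is comparable to $(t-\tau)|\eta|^2+(t-\tau)^3|\xi|^2$. The plan has two parts.

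First, we show the gain of regularity: $\|(-\Delta_x)^{\sigma/2}f\|_{L^2}+\|(-\Delta_v)^{\rho/2}f\|_{L^2}\lesssim\sum_j\|F_j\|_{L^2}$ for some $\sigma,\rho>0$. Then Sobolev embedding in $(x,v)$, interpolated with the $L^\infty_tL^2$ bound in time, gives $L^q$ with $q>2$.

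Second, we obtain the $L^2$ bound. This follows from the energy estimate for $f_1$ and $f_2$. For $f_3$ we estimate the multiplier directly. Since $|\xi|^s\le 1+|\xi|$ is not integrable alone, we need the time decay $e^{-c(t-\tau)^3|\xi|^2}$, which is available on the finite interval $(0,T)$. The resulting constant depends on $T$.

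\textbf{Key step for $f_3$.} The operator $G\mapsto\Gamma*G$ maps $L^2$ into the space where $D_x^{2/3}f\in L^2$. This is the standard hypoelliptic gain of $2/3$ derivatives in $x$ for $L^2$ sources; equivalently, the weight $(t-\tau)^3|\xi|^2$ yields the factor $|\xi|^{-2/3}$ after integrating in $t-\tau$. It also maps $L^2$ into the space where $D_v^2f\in L^2$. Hence $f_3=\Gamma*\big((-\Delta_x)^{s/2}F_3\big)$ satisfies
\begin{align*}
\|D_x^{2/3-s}f_3\|_{L^2}\lesssim\|F_3\|_{L^2},
\end{align*}
which is a positive gain precisely because $s<\tfrac23$. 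To control the $v$-regularity we interpolate between the $x$-gain and the $v$-gain, using the kinetic scaling in which one $x$-derivative counts as three $v$-derivatives. This should give $\|D_v^{3(2/3-s)}f_3\|_{L^2}\lesssim\|F_3\|_{L^2}$, or at least some positive fractional order in $v$, together with a time regularity of order $(2/3-s)\cdot\tfrac32$.

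Rigorously, I would verify these bounds by a dyadic decomposition of the multiplier
\begin{align*}
m(t,\tau,\xi,\eta)=|\xi|^s\exp\big(-c(t-\tau)|\eta|^2-c(t-\tau)^3|\xi|^2\big),
\end{align*}
combined with Schur's test in $\tau$. Anisotropic Sobolev embedding in $(t,x,v)$ with homogeneous dimension $4n+2$ then gives $f_3\in L^q$ for some $q=q(n,s)>2$. Finally, combining with $L^2$ through interpolation yields \eqref{highint:est.lp}.

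\textbf{Main obstacle.} The hardest step is the $L^2$ and fractional-regularity estimate for $f_3$, because the source is of negative order in $x$, which is the variable without diffusion. Low frequencies $|\xi|\lesssim1$ are harmless since $|\xi|^s\le1$ there. For high frequencies, the difficulty is to show that the $x$-smoothing of order $2/3$ absorbs $|\xi|^s$ while leaving a strictly positive remainder. I would first try the averaging-lemma route of Bouchut \cite{Bou02}: apply it to the equation written as transport equal to $\Delta_vf+(-\Delta_x)^{s/2}F_3$, using $\nabla_vf\in L^2$, to obtain $D_x^{\theta}f\in L^2$ for some small $\theta>0$ when $s<\tfrac23$.
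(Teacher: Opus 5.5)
Your Fourier computations are correct as far as they go. With $h=t-\tau$ the exponent is comparable to $h|\eta|^2+h^3|\xi|^2$, and Minkowski in $\eta$ plus Young in $\tau$ give $f_3\in L^2$, $D_x^{2/3-s}f_3\in L^2$ and $D_v^{2-3s}f_3\in L^2$ (the bound $\int_0^T h^{-\rho/2}|\xi|^s e^{-ch^3|\xi|^2}\,dh\lesssim_T 1$ needs $\rho\le 2-3s$).

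\textbf{The gap.} It lies in passing to $L^q_{t,x,v}$ for the $F_3$ part. Regularity in $(x,v)$ alone only gives $L^2_tL^{r}_{x,v}$, so you need integrability in $t$ better than $L^2$. The source you name, an $L^\infty_tL^2_{x,v}$ bound, is false for $f_3$ once $s>\frac13$. Replacing $F_3$ by its kinetic dilation $F_3(\lambda^2t,\lambda^3x,\lambda v)$ multiplies both $\|f_3\|_{L^\infty_tL^2}/\|F_3\|_{L^2}$ and $\|\nabla_vf_3\|_{L^2}/\|F_3\|_{L^2}$ by at least $\lambda^{3s-1}$. Consistently, the Fourier bound for $\sup_t$ carries the factor $|\xi|^s\|e^{-ch^3|\xi|^2}\|_{L^2_h}\sim|\xi|^{s-1/3}$. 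Hence neither the energy estimate nor your fallback via Bouchut's lemma ``using $\nabla_vf\in L^2$'' is available for $f_3$. This matters because the lemma is applied with $s\in(\frac12,\frac23)$ in the proof of \autoref{lem.extension}. Your other route, time regularity of order $\frac32(\frac23-s)$ plus anisotropic embedding, is only asserted. Schur's test in $\tau$ on a multiplier in $(\xi,\eta)$ does not produce $t$-derivatives.

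\textbf{The repair.} Use one more Young inequality in time, with an exponent $r>1$. The bound $\|\,|\xi|^se^{-ch^3|\xi|^2}\|_{L^r_h(0,T)}\lesssim_T 1+|\xi|^{s-\frac{2}{3r}}$ is uniformly bounded for some $r>1$ exactly because $s<\frac23$. This gives $\|\hat f_3(\cdot,\xi,\cdot)\|_{L^p_tL^2_\eta}\lesssim\|\hat F_3(\cdot,\xi,\cdot)\|_{L^2_{t,\eta}}$ with $\frac1p=\frac1r-\frac12$, so $p>2$. Minkowski's inequality (valid since $p\ge 2$) then gives $f_3\in L^p_tL^2_{x,v}$. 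Mixed-norm H\"older interpolation with $L^2_tH^{2/3-s}_{x,v}\subset L^2_tL^{\tilde r}_{x,v}$, $\tilde r>2$, yields $L^q$ with $q>2$.

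\textbf{Comparison with the paper.} The paper uses the same mechanism but works in physical space, which avoids fractional regularity, Sobolev embeddings and time regularity altogether. It writes $f_3$ as a Duhamel integral against $(-\Delta_x)^{s/2}J(t-\tau)$ and uses the scaling $\|(-\Delta_x)^{s/2}J(t)\|_{L^p(\mathbb{R}^{2n})}=t^{-2n(1-\frac1p)-\frac32 s}\|(-\Delta_x)^{s/2}\mathcal{J}\|_{L^p}$. It then applies Young's inequality in $(x,v)$, with $p=1$ for the $L^2$ bound and $p$ slightly above $1$ for the $L^q$ bound with $q>2$. Finally it applies Young's inequality in $t$, which requires $\big(2n(1-\frac1p)+\frac32 s\big)p<1$; this is possible for $p$ near $1$ precisely when $s<\frac23$.
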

\begin{proof}
    Let $J=J(t,x,v)$ be the fundamental solution to the Kolmogorov equation in $(0,\infty) \times \R^{2n}$ and note that by \cite[Proposition 2.1]{ImSi20}, $J$ is of the form 
    \begin{align*}
        J(t,x,v)=\frac{c_n}{t^{2n}}\mathcal{J}\left(\frac{x}{t^{\frac32}},\frac{v}{t^{\frac12}}\right)
    \end{align*}
    for some constant $c_n$ and a smooth function $\mathcal{J}$ satisfying $\|D^k\mathcal{J}\|_{L^p(\bbR^{2n})}\leq c(k,n,p)$ for any $k \in \N \cup \{0\}$. In addition, we have for any $p\geq1$,
    \begin{equation}\label{highint:ineq1.normJ}
    \begin{aligned}
        \|\partial_vJ(t,\cdot,\cdot)\|_{L^p(\bbR^{2n})}=t^{-2n(1-\frac1{p})-\frac12}\|\partial_v\mathcal{J}\|_{L^p(\bbR^{2n})},\\
        \|(-\Delta_x)^{\frac{s}2}J(t,\cdot,\cdot)\|_{L^p(\bbR^{2n})}=t^{-2n(1-\frac1{p})-\frac{3}2s}\|(-\Delta_x)^{\frac{s}2}\mathcal{J}\|_{L^p(\bbR^{2n})},\\
         \|\partial_xJ(t,\cdot,\cdot)\|_{L^p(\bbR^{2n})}=t^{-2n(1-\frac1{p})-\frac{3}2}\|\partial_x\mathcal{J}\|_{L^p(\bbR^{2n})}.
    \end{aligned}
    \end{equation}
    We define a function $f$ by 
    \begin{align*}
        f(t,x,v)&\coloneqq \int_{0}^{t}\int_{\bbR^n}\int_{\bbR^n}J((\tau,y,w)^{-1}\circ (t,x,v)) F_1(\tau,y,w)\,dw\,dy\,d\tau\\
        &\quad-\int_{0}^{t}\int_{\bbR^n}\int_{\bbR^n}(-\nabla_w)J((\tau,y,w)^{-1}\circ (t,x,v)) \cdot F_2(\tau,y,w)\,dw\,dy\,d\tau\\
        &\quad-\int_{0}^{t}\int_{\bbR^n}\int_{\bbR^n}(-\Delta_y)^{\frac{s}{2}}J((\tau,y,w)^{-1}\circ (t,x,v))  F_3(\tau,y,w)\,dw\,dy\,d\tau
        \eqqcolon \sum_{j=1}^3\int_{0}^{t}f_j(\tau)\,d\tau.
    \end{align*}
    Let $p_1=1$ and $p_2=\frac{2n+1}{2n+\frac12(\frac{3}2s+1)}$ and note that
    \begin{align}\label{highint:ineq2.p1p2}
        p_2>1 \quad\text{and}\quad\left(-2n\left(1-\frac{1}{p_i}\right)-\frac3{2}s\right)p_i>-1,
    \end{align} as $s<\frac23$ and $\frac12(\frac32s+1)\in(\frac32s,1)$.  Next, choose $q_i$ for $i \in \{1,2\}$ by
    \begin{align*}
        1+\frac{1}{q_i}=\frac12+\frac{1}{p_i}
    \end{align*}
    to see that ${q}_1=2$ and $q_2>2$. As in \cite[Proposition 2.2]{ImSi20}, we use Young's convolution inequality together with \eqref{highint:ineq1.normJ} applied with $p=p_i$ to obtain
    \begin{align*}
        &\|f_1(\tau,\cdot,\cdot)\|_{L^{q_i}( \bbR^{2n})}\leq c\|F_1(\tau,\cdot,\cdot)\|_{L^2(\bbR^{2n})}(t-\tau)^{-2n(1-\frac1{p_i})}\|\mathcal{J}\|_{L^{p_i}(\bbR^{2n})},\\
        &\|f_2(\tau,\cdot,\cdot)\|_{L^{q_i}( \bbR^{2n})}\leq c\|F_2(\tau,\cdot,\cdot)\|_{L^2(\bbR^{2n})}(t-\tau)^{-2n(1-\frac1{p_i})-\frac12}(\|\partial_x\mathcal{J}\|_{L^{p_i}(\bbR^{2n})}+\|\partial_v\mathcal{J}\|_{L^{p_i}(\bbR^{2n})}),\\
         &\|f_3(\tau,\cdot,\cdot)\|_{L^{q_i}( \bbR^{2n})}\leq c\|F_3(\tau,\cdot,\cdot)\|_{L^2(\bbR^{2n})}(t-\tau)^{-2n(1-\frac1{p_i})-\frac32s}\|(-\Delta_x)^{\frac{s}2}\mathcal{J}\|_{L^{p_i}(\bbR^{2n})}.
    \end{align*}
    Using Young's convolution inequality again together with the second inequality in \eqref{highint:ineq2.p1p2}, we derive 
    \begin{align*}
        \|f_j\|_{L^{q_i}((0,T)\times \bbR^{2n})}\leq c\|F_j\|_{L^2((0,T)\times \bbR^{2n})},
    \end{align*}
    for any $j \in \{1,2,3\}$ and $i \in \{1,2\}$, where $c=c(n,s,T)$. In particular, we have used the condition $s<\frac23$ to obtain the estimate of $f_3$ in $L^{q_i}$ for each $i\in\{1,2\}$. This proves \eqref{highint:est.lp}. Moreover, since $f$ converges to zero at $\infty$ as $f\in L^2((0,T)\times \bbR^{2n})$, using the definition of $f$ and integration by parts yields that $f$ is a distributional solution to \eqref{highint:eq.distri}. This completes the proof.
\end{proof}

We are now in a position to prove a gain of integrability for nonnegative subsolutions $f$ to Kolmogorov's equation in $\mathcal{H}_1$. As we mentioned before, we will extend the equation to the whole space and then find a suitable global solution, which allows to use the comparison principle to obtain higher integrability. 

\begin{lemma}\label{lem.extension}
    Let $f\geq0$ be a subsolution to 
    \begin{align}\label{extension:eq.given}
        \partial_tf+v\cdot\nabla_xf-\Delta_vf= F_1+\nabla_v\cdot F_2\quad\text{in }\mathcal{H}_1,
    \end{align}
    where $F_1,F_2\in L^2(\mathcal{H}_1)$. Let $\rho\in[\frac12,1)$. Then we have
    \begin{equation}\label{extension:ineq.goal}
    \begin{aligned}
        \|f\|_{L^q(\mathcal{H}_{\rho})}&\leq \frac{c}{1-\rho}\left(\|\nabla_vf\|_{L^2(\mathcal{H}_1)}+\|f\|_{L^2(\mathcal{H}_1)}+\|F_2\|_{L^2(\mathcal{H}_1)}\right)\\
        &\quad+c\left(\|F_1\|_{L^2(\mathcal{H}_1)}+\left(\int_{\mathcal{Q}_1\cap \{x_n=0\}}|f v_n|^2\,d\Gamma\right)^{\frac12}\right)
    \end{aligned}
    \end{equation}
    for some constants $c=c(n)$ and $q=q(n)>2$.
\end{lemma}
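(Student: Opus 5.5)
We localize, extend by zero across $\{x_n=0\}$, and compare with a global solution from \autoref{lem.highint}. Fix a cutoff $\eta\in C^\infty_c(\mathcal{Q}_1)$ with $0\le\eta\le1$, $\eta\equiv1$ on $\mathcal{Q}_\rho$, and $|\nabla_v\eta|+|(\partial_t+v\cdot\nabla_x)\eta|\lesssim (1-\rho)^{-1}$. The transport part costs more than $(1-\rho)^{-1}$ under the kinetic scaling. To keep the stated linear dependence, we either iterate between intermediate radii or absorb the extra factor through the $L^2$ terms. Set $h=\eta f\ge 0$ and $\widetilde h=h\chi_{\{x_n>0\}}$. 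Testing the subsolution inequality with $\eta\psi$, $\psi\ge0$ arbitrary in $C^\infty_c(\R^{1+2n})$, shows that $\widetilde h$ is a distributional subsolution in $\R^{1+2n}$ of
\begin{align*}
\partial_t\widetilde h+v\cdot\nabla_x\widetilde h-\Delta_v\widetilde h\le G_1+\nabla_v\cdot G_2+\mu .
\end{align*}
Here $G_1=\chi(\eta F_1 - F_2\cdot\nabla_v\eta+f(\partial_t+v\cdot\nabla_x)\eta-\nabla_v f\cdot\nabla_v\eta)$ and $G_2=\chi(\eta F_2-f\nabla_v\eta)$, with $\chi=\chi_{\{x_n>0\}}$. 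The boundary term is $\mu=-(h v_n)\,\mathcal{H}^{2n}\llcorner\{x_n=0\}$, namely $\langle\mu,\psi\rangle=\int_{\{x_n=0\}}h\psi(-v_n)\,d\Gamma$, which comes from the integration by parts in $x_n$ (the trace term in the weak formulation). All terms in $G_1,G_2$ are bounded in $L^2$ by the right-hand side of \eqref{extension:ineq.goal}.

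The key step is to write $\mu=(-\Delta_x)^{s/2}F_3$ with $F_3\in L^2$ and some $s\in(\frac12,\frac23)$. In Fourier variables, $\mu$ is $\delta_{x_n=0}\otimes m$ with $m=-hv_n|_{x_n=0}\in L^2_{t,x',v}$. We have $\|\delta_0\|_{\dot H^{-s}(\R)}<\infty$ only locally, since $|\xi|^{-2s}$ is not integrable at infinity for $s\le\frac12$ and not integrable at $0$ for $s\ge\frac12$. To handle this, split $\mu=\mu_{\mathrm{low}}+\mu_{\mathrm{high}}$ by replacing $\delta_0$ with $\delta_0*\varphi$ plus $(\delta_0-\delta_0*\varphi)$, where $\varphi$ is a smooth mollifier in $x_n$. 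The low part is $\varphi(x_n)m$, an $L^2$ function that goes into $G_1$. The high part has Fourier symbol $1-\hat\varphi(\xi_n)$, which vanishes at $0$ to second order. Then $F_3:=\mathcal{F}^{-1}\big(|\xi|^{-s}(1-\hat\varphi(\xi_n))\hat m(\xi',\cdot)\big)$ belongs to $L^2$, using $|\xi|^{-s}\le|\xi_n|^{-s}$ and $\int|\xi_n|^{-2s}|1-\hat\varphi|^2d\xi_n<\infty$ for $s\in(\frac12,\frac23)$. This gives $\|F_3\|_{L^2}+\|\varphi m\|_{L^2}\lesssim(\int_{\mathcal{Q}_1\cap\{x_n=0\}}|fv_n|^2d\Gamma)^{1/2}$. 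The sign of $\mu$ is irrelevant because we work with the full distribution.

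Next, we let $H$ be the solution from \autoref{lem.highint} with data $(G_1+\varphi m,\,G_2,\,F_3)$ on a time interval containing $\operatorname{supp}\eta$, starting with zero data at the initial time. Then $H\in L^2\cap L^q$ with norm controlled by the $L^2$ norms of these data. The difference $w=\widetilde h-H$ is a distributional subsolution of the Kolmogorov equation in all of space, with zero initial data and decay in $L^2$. The main obstacle is to justify the comparison $w\le0$, because $w$ is only a distributional subsolution and $H$ involves the nonlocal term $(-\Delta_x)^{s/2}F_3$. To get around this, we mollify in all variables (kinetic convolution commutes with the operator up to harmless errors) and apply the weak maximum principle / energy argument to $w_\delta^+$, then let $\delta\to0$. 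Alternatively, we represent $\widetilde h$ directly by Duhamel against $J$ (the kinetic fundamental solution), which is legitimate for compactly supported distributional subsolutions.

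Given $0\le\widetilde h\le H$, we obtain $\|f\|_{L^q(\mathcal{H}_\rho)}\le\|H\|_{L^q}$, and the bound \eqref{extension:ineq.goal} follows with $q=q(n,s)>2$ from \autoref{lem.highint} after fixing $s$, for instance $s=\frac{7}{12}$.
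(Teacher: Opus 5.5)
Your proposal is correct and follows the paper's proof. You cut off, extend by zero so that the boundary trace produces a measure on $\{x_n=0\}$, and split it as $(-\Delta_x)^{s/2}F_3$ plus an $L^2$ part with $s=\frac{7}{12}$; your explicit Fourier splitting of $\delta_0$ is a concrete version of the paper's $H^{-s}_x$-duality step. You then solve globally with \autoref{lem.highint} and compare after kinetic mollification, exactly as the paper does.

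Two slips are harmless. First, the measure is $+h v_n\,\delta_{\{x_n=0\}}$, because $\partial_{x_n}\chi_{\{x_n>0\}}=\delta_{\{x_n=0\}}$; this is immaterial since you represent the full distribution. Second, the transport term costs nothing extra. For $\rho\ge\frac12$ the gaps $1-\rho^2$ and $1-\rho^3$ are at least $1-\rho$, so the cutoff directly satisfies $|(\partial_t+v\cdot\nabla_x)\eta|\lesssim(1-\rho)^{-1}$, as you wrote in your first line, and no iteration or absorption is needed.
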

Let us point out that the last term in \eqref{extension:ineq.goal} is finite whenever $f\in H^1_{\mathrm{kin}}$ by \cite[Proposition 4.2]{Sil22}.

\begin{proof}
    Let $\phi\in C_c^\infty(\mathcal{Q}_1)$ be a cutoff function such that $\phi\equiv 1$ in $\mathcal{Q}_\rho$. As a first step, we prove that $\widetilde{f}\coloneqq f\phi\mbox{\large$\chi$}_{x_n>0}$ is a subsolution of a suitable equation in the whole space. First, we choose a smooth function $\xi_\eps=\xi_\eps(x_n)$ such that $\xi_\eps(0)=0$, $\xi_\eps(\eps)=1$, and $0\leq\xi_\eps\leq 1$ with 
    \begin{align}\label{extension:moll.dirac}
        \xi_\eps\rightharpoonup \mbox{\large$\chi$}_{x_n>0}\quad\text{and}\quad\partial_{x_n}\xi_\eps\rightharpoonup^* \delta_{\{x_n=0\}},
    \end{align}as $\eps\to0$. Then we observe for any nonnegative smooth function $\psi\in C_c^\infty(\bbR^{1+2n})$, 
    \begin{align*}
        &\sum_{i=1}^2J_{i,\eps}\coloneqq\int_{\mathcal{H}_1}-f(\partial_t+v\cdot\nabla_x)(\phi\xi_\eps\psi)\,dz+\int_{\mathcal{H}_1}\nabla_vf\cdot\nabla_v(\phi\xi_\eps\psi)\,dz\\
        &\leq \int_{\mathcal{H}_1}F_1(\phi\xi_\eps\psi)-F_2\cdot \nabla_v(\phi\xi_\eps\psi)\,dz\eqqcolon J_{3,\eps},
    \end{align*}
    by testing \eqref{extension:eq.given} with $\phi\xi_\eps\psi \in C_c^\infty(\mathcal{H}_1)$. We now investigate the limit of $J_{i,\eps}$ as $\eps\to0$ for each $i\in\{1,2,3\}$.
    \begin{itemize}
        \item We rewrite $J_{1,\eps}$ as 
        \begin{align*}
            J_{1,\eps}=-\int_{\mathcal{H}_1}f\xi_\eps\phi(\partial_t+v\cdot\nabla_x)\psi\,dz-\int_{\mathcal{H}_1}f\xi_\eps\psi(\partial_t+v\cdot\nabla_x)\phi\,dz-\int_{\mathcal{H}_1}f\phi\psi (v_n\partial_{x_n}\xi_\eps)\,dz.
        \end{align*}
        Using \eqref{extension:moll.dirac} and the fact that $\mathrm{supp} (\phi\xi_\eps\psi)\subset \mathcal{H}_1$, we have 
        \begin{align*}
            \lim_{\eps\to0}J_{1,\eps}&=-\int_{\bbR}\int_{ \bbR^{2n}}f\mbox{\large$\chi$}_{x_n>0}\phi(\partial_t+v\cdot\nabla_x)\psi\,dz-\int_{\mathcal{H}_1}f\mbox{\large$\chi$}_{x_n>0}\psi(\partial_t+v\cdot\nabla_x)\phi\,dz\\
            &\quad-\int_{\mathcal{Q}_1\cap \{x_n=0\}}(f\phi v_n) \psi \,d\Gamma.
        \end{align*}
        \item Next, using the first property in \eqref{extension:moll.dirac}, we derive
        \begin{align*}
            \lim_{\eps\to0}J_{2,\eps}=\int_{\bbR}\int_{ \bbR^{2n}}\phi\nabla_v(f\mbox{\large$\chi$}_{\{x_n>0\}})\cdot\nabla_v\psi+\psi\nabla_v(f\mbox{\large$\chi$}_{\{x_n>0\}})\cdot\nabla_v\phi\,dz.
        \end{align*}
        Using the Leibniz rule, we further rewrite this limit as
        \begin{align*}
            \lim_{\eps\to0}J_{2,\eps}=\int_{\bbR}\int_{ \bbR^{2n}}\nabla_v(f\mbox{\large$\chi$}_{\{x_n>0\}}\phi)\cdot\nabla_v\psi-\int_{\mathcal{H}_1}(f\mbox{\large$\chi$}_{\{x_n>0\}})\nabla_v\phi\cdot\nabla_v\psi+\int_{\mathcal{H}_1}\nabla_v(f\mbox{\large$\chi$}_{\{x_n>0\}})\cdot\nabla_v\phi\,\psi.
        \end{align*}
        \item Similarly, we have 
        \begin{align*}
            \lim_{\eps\to0}J_{3,\eps}=\int_{\R}\int_{\bbR^{2n}}(F_1\phi-F_2\cdot\nabla_v\phi)\mbox{\large$\chi$}_{\{x_n>0\}} \psi-\phi\mbox{\large$\chi$}_{\{x_n>0\}}F_2\cdot\nabla_v\psi\,dz.
        \end{align*}
    \end{itemize}
    Combining all the previously established identities, we deduce that $\widetilde{f}=f\phi\mbox{\large$\chi$}_{\{x_n>0\}}$ is a subsolution to
    \begin{align*}
        (\partial_t+v\cdot\nabla_x)\widetilde{f}-\Delta_v\widetilde{f}=\widetilde{F}_1+\nabla_v\cdot\widetilde{F}_2+\mu\quad\text{in } \bbR^{1+2n},
    \end{align*}
    where 
    \begin{align*}
        &\widetilde{F}_1\coloneqq \left[f(\partial_t+v\cdot\nabla_x)\phi-\nabla_vf\cdot\nabla_v\phi+F_1\phi-F_2\cdot\nabla_v\phi\right]\mbox{\large$\chi$}_{\{x_n>0\}},\\
        &\widetilde{F}_2\coloneqq (-f\nabla_v\phi+\phi F_2)\mbox{\large$\chi$}_{\{x_n>0\}},\\
        &\mu\coloneqq fv_n\phi\delta_{\{x_n=0\}}.
    \end{align*}
    
    Next, we prove that $\mu\in L^2((-\infty,0]\times \bbR^n;H^{-s}_x(\bbR^n))$ for any $s>\frac12$. To do so, we observe that for any $\psi\in C_c^\infty(\R^{1+2n})$ and $s>\frac12$, 
    \begin{equation}\label{extension:est.meas}
    \begin{aligned}
        \left|\int_{\bbR^{1+2n}}\psi\,d\mu\right|=\left|\int_{\mathcal{Q}_1\cap\{x_n=0\}}v_nf\phi\psi\,d\Gamma\right|&\leq \left(\int_{\mathcal{Q}_1\cap\{x_n=0\}}|v_n f|^2\,d\Gamma)\right)^{\frac12}\left(\int_{\mathcal{Q}_1\cap\{x_n=0\}}|\psi|^2\,d\Gamma)\right)^{\frac12}\\
        &\leq c\left(\int_{\mathcal{Q}_1\cap\{x_n=0\}}|v_n f|^2\,d\Gamma)\right)^{\frac12}\|\psi\|_{L^2( \bbR^{1+n};H^{s}_x(\bbR^n))},
    \end{aligned}
    \end{equation}
    for some constant $c=c(s)$, where we have used H\"older's inequality and the embedding lemma given in \cite[Theorem 8.2]{DinPalVal12} with $n=1$ and $s>\frac12$. In particular, we have used in the last step
    \begin{align*}
        \left(\int_{\mathcal{Q}_1\cap\{x_n=0\}}|\psi(t,x',0,v)|^2\,d\Gamma)\right)^{\frac12}\leq \left(\int_{\mathcal{Q}_1\cap\{x_n=0\}}\|\psi\|_{H_{x_n}^s([0,1])}^2\,d\Gamma)\right)^{\frac12}&\leq c\|\psi\|_{L^2(\bbR^{1+2n-1};H^s_{x_n}(\bbR))}\\
        &\leq c\|\psi\|_{L^2(\bbR^{1+n};H^s_x(\bbR^n))}.
    \end{align*}
    Now using \eqref{extension:est.meas} and the representation for the Hilbert space $H^s$, we can write 
    \begin{align*}
        \int_{\bbR^{1+2n}}\psi\,d\mu=\int_{\bbR^{1+2n}}((-\Delta_x)^{\frac{s}2}\widetilde{F}_3+\widetilde{F}_4)\psi\,dz,
    \end{align*}
    where 
    \begin{align*}
        \|\widetilde{F}_3\|_{L^2(\bbR^{1+2n})}+\|\widetilde{F}_4\|_{L^2( \bbR^{1+2n})}\leq c\left(\int_{\mathcal{Q}_1\cap\{x_n=0\}}|v_n f|^2\,d\Gamma)\right)^{\frac12}.
    \end{align*}
    In addition, we observe that $\widetilde{F}_3(t,\cdot,\cdot)$ and $\widetilde{F}_4(t,\cdot,\cdot)$ are zero when $t\leq-1$, as $\mu=fv_n\phi\delta_{\{x_n=0\}}$, where $\phi\in C_c^\infty(\mathcal{Q}_1)$. Thus, $\widetilde{f}$ is a subsolution to 
    \begin{align}
        \label{eq:tilde-f}(\partial_t+v\cdot\nabla_x)\widetilde{f}-\Delta_v\widetilde{f}=\widetilde{F}_1+\nabla_v\cdot\widetilde{F}_2+(-\Delta_x)^{\frac{s}{2}}\widetilde{F}_3+\widetilde{F}_4\quad\text{in } \bbR^{1+2n},
    \end{align}
    where $\widetilde{F}_i(t,\cdot,\cdot)\equiv 0$ when $t\leq-1$, and it holds    \begin{equation}\label{extension:est.tildeFi}
    \begin{aligned}
        \sum_{i=1}^4\|\widetilde{F}_i\|_{L^2(\bbR^{1+2n})} &\leq \frac{c}{1-\rho}\left(\|\nabla_vf\|_{L^2(\mathcal{Q}_1)}+\|f\|_{L^2(\mathcal{Q}_1)}+\|F_2\|_{L^2(\mathcal{Q}_1)}\right)\\
        &\quad+c\left(\|F_1\|_{L^2(\mathcal{Q}_1)}+\left(\int_{\mathcal{Q}_1\cap \{x_n=0\}}|v_nf|^2\,d\Gamma\right)^{\frac12}\right)
    \end{aligned}
    \end{equation}
    for some constant $c=c(s)$, where we have used the fact that we can choose $\phi$ in such a way that $|\nabla_v\phi| + |(\partial_t+v\cdot\nabla_x)\phi| \leq \frac{c}{1-\rho}$, where $\rho\in[\frac12,1)$.

    We now find a distributional solution $f_0$ to the equation in \eqref{eq:tilde-f} and use the comparison principle to prove the desired result. To this end, first we fix $s=(\frac12+\frac23)\frac12 \in(\frac12,\frac23)$. Hence, by \autoref{lem.highint}, there is a distributional solution $f_0$ to 
    \begin{align*}
        (\partial_t+v\cdot\nabla_x){f}_0-\Delta_v{f}_0=\widetilde{F}_1+\nabla_v\cdot\widetilde{F}_2+(-\Delta_x)^{\frac{s}{2}}\widetilde{F}_3+\widetilde{F}_4\quad\text{in }(-2,2)\times \bbR^{2n}
    \end{align*}
    with 
    \begin{align}\label{extension:est.f0}
        \|f_0\|_{L^2((-2,2)\times \bbR^{2n})}+\|f_0\|_{L^q((-2,2)\times \bbR^{2n})}\leq c(n,s) \sum_{i=1}^4\|\widetilde{F}_i\|_{L^2( \bbR^{1+2n})}
    \end{align}
    for some $q=q(n)>2$. Since $f_0$ is not a weak solution, we need to mollify the function $f_0$, in order to apply the comparison principle. To do so, we 
    choose a smooth function $\eta\in C_c^\infty(\cQ_1)$ and define for any $\delta>0$, $(\widetilde{f}-f_0)_\delta\coloneqq (\widetilde{f}-f_0)* \eta_\delta$ with $\eta_\delta(z)\coloneqq \frac{1}{\delta^{4n+2}}\eta(S_{\frac1\delta} z)$. The function $(\widetilde{f}-f_0)_\delta$ mollifies the function $\widetilde{f}-f_0$, and converges to $\widetilde{f}-f_0$ in $L^2( \bbR^{1+2n})$ as $\delta\to0$. Moreover, since $\widetilde{f}$ is also a distributional solution to \eqref{eq:tilde-f}, we observe for small $\delta>0$,
    \begin{align*}
        (\partial_t+v\cdot\nabla_x)(\widetilde{f}-f_0)_\delta-\Delta_v(\widetilde{f}-f_0)_\delta\leq 0\quad\text{in }(-3/2,2)\times \bbR^{2n}
    \end{align*}
    with $(\widetilde{f}-f_0)_\delta(-\frac32,\cdot,\cdot)\equiv0$ as $(\widetilde{f}-f_0)(t,\cdot,\cdot)=0$ for any $t\leq-1$. By the comparison principle, we have $(\widetilde{f})_\delta\leq (f_{0})_{\delta}$ in $(-\frac32,2)\times \bbR^{2n}$. Combined with the convergence that $(\widetilde{f}-f_0)_\delta\to \widetilde{f}-f_0$ in $L^2$, this implies $\widetilde{f}\leq f_0$ in $(-\frac32,2)\times \bbR^{2n}$. Therefore, the desired estimate follows from \eqref{extension:est.tildeFi} and \eqref{extension:est.f0}.
\end{proof}

Our extension argument is sufficiently general to yield a transfer of regularity in the $x$-direction for kinetic equations posed in the half-space $\{x_n>0\}$. Since higher regularity up to boundary $\{x_n=0\}$ is of independent interest for kinetic equations, we conclude this section by establishing fractional regularity in the $x$-direction in the half-space.
\begin{proposition}\label{prop.transbdd}
    Let $f\in L^2(\{x_n>0\})$ be a solution to 
    \begin{align*}
        \partial_tf+v\cdot\nabla_xf=F_1+\nabla_v\cdot F_2\quad\text{in }\{x_n>0\},
    \end{align*}
    where $F_1,F_2\in L^2(\{x_n>0\})$. Then for any $\alpha<\frac14$,
    \begin{align*}
        \|(-\Delta_x)^{\frac\alpha2}(f\mbox{\large$\chi$}_{x_n>0})\|_{L^2(\bbR^{1+2n})}&\leq c\left(\|f\|_{L^2(\{x_n>0\}}+\|\nabla_vf\|_{L^2(\{x_n>0\})}+\sum_{i=1}^2\|F_i\|_{L^2(\{x_n>0\})}\right)\\
        &\quad+c\left(\int_{\bbR^{1+2n}\cap \{x_n=0\}}|v_nf|^2\,d\Gamma\right)^{\frac12}
    \end{align*}
    for some constant $c=c(n,\alpha)$.
\end{proposition}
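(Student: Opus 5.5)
Following the extension argument of \autoref{lem.extension}, we set $\widetilde{f}\coloneqq f\chi_{\{x_n>0\}}$ and show that $\widetilde{f}$ is a distributional solution of a transport equation in the whole space. Testing the equation with $\xi_\eps\psi$, where $\xi_\eps$ is as in \eqref{extension:moll.dirac}, and letting $\eps\to0$ (no cutoff $\phi$ is needed since $f\in L^2(\{x_n>0\})$ globally), we obtain
\begin{align*}
    \partial_t\widetilde{f}+v\cdot\nabla_x\widetilde{f}=\widetilde{F}_1+\nabla_v\cdot\widetilde{F}_2+\mu\quad\text{in }\bbR^{1+2n},
\end{align*}
with $\widetilde{F}_i=F_i\chi_{\{x_n>0\}}$ and $\mu=-fv_n\delta_{\{x_n=0\}}$. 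Here we use equality in place of the one-sided inequality, since $f$ is a solution. As in \eqref{extension:est.meas}, the trace embedding $H^s_{x_n}(\bbR)\hookrightarrow C$ for $s>\frac12$ shows
\begin{align*}
    \|\mu\|_{L^2_{t,v}H^{-s}_x}\le c\Big(\int_{\{x_n=0\}}|v_nf|^2\d\Gamma\Big)^{\frac12}.
\end{align*}
Hence $\mu=(1-\Delta_x)^{s/2}G$ with $G\in L^2$, which we decompose as $(-\Delta_x)^{s/2}\widetilde{F}_3+\widetilde{F}_4$ using the $H^s$ representation.

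Next we apply a full-space transfer of regularity (averaging lemma) in the form of Bouchut \cite{Bou02}, adapted to $x$-derivative sources, via Fourier analysis in $(t,x)$. Write $\widehat{\widetilde f}(\omega,k,v)$ and fix a frequency $(\omega,k)$. Split $v$-space according to whether $|\omega+v\cdot k|\le\delta$ or not, localizing in $v$ on the scale $\delta/|k|$ in the direction $k/|k|$ with a smooth cutoff. On the near-resonant set we use $\|\nabla_v f\|_{L^2}$ to pay for the small measure via Poincaré-type control. On the complementary set we divide by $i(\omega+v\cdot k)$ and integrate the $\nabla_v\cdot F_2$ term by parts in $v$, which costs $|k|/\delta^2$-type factors. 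The key new term is $\widehat{\mu}$, which is bounded by $|k|^{s}\,\widehat{G}$ and therefore contributes $|k|^s/\delta$ on the non-resonant set. Optimizing $\delta=|k|^\theta$ yields a gain of $|k|^{\alpha}$ for the $F_1,F_2,f,\nabla_vf$ contributions with $\alpha$ up to $\frac13$, as classically. For the $\mu$ contribution, balancing $\delta^{1/2}$ against $|k|^s\delta^{-1}$ (with weight $|k|^{1/2}$ from the Poincaré step) gives a gain of order $\frac{1-s}{2}\cdot\frac12$-type. The main obstacle is to carry out this optimization carefully so that the resulting exponent, as $s\searrow\frac12$, exceeds every $\alpha<\frac14$. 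The threshold $\frac14$ should be exactly the output of letting $s\to\frac12$ in this balance, which explains the restriction $\alpha<\frac14$.

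Finally, we choose $s=s(\alpha)>\frac12$ close enough to $\frac12$ that the $\mu$ gain exceeds $\alpha$. Interpolating low frequencies by $\|\widetilde f\|_{L^2}$ then gives the stated estimate for $\|(-\Delta_x)^{\alpha/2}\widetilde{f}\|_{L^2}$. If the direct Fourier computation turns out to be cumbersome, a fallback is to apply Bouchut's result separately to the linear pieces $\widetilde{f}=f^{(1)}+f^{(2)}$, where $f^{(1)}$ solves the equation with sources $\widetilde F_1,\nabla_v\cdot\widetilde F_2$ and $f^{(2)}$ the equation with source $\mu$. The $f^{(2)}$ piece can then be treated by the explicit kinetic averaging estimate with an $H^{-s}_x$ source, using that $\nabla_v\widetilde f$ (rather than $\nabla_v f^{(2)}$) is controlled.
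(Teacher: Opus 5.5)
Your architecture is the paper's. You extend by zero, identify the boundary measure, and place it in $L^2_{t,v}H^{-s}_x$ for $s>\frac12$ via the trace embedding. You then write it as $(-\Delta_x)^{s/2}\widetilde F_3+\widetilde F_4$, run Bouchut's hypoelliptic estimate \cite{Bou02} with a fractional $x$-derivative source, and let $s\searrow\frac12$. The paper does nothing beyond this. It reads off from Bouchut's argument a bound on $(-\Delta_x)^{\frac{1-r}{4}}\widetilde f$, i.e.\ a gain of $|k|^{\frac{1-r}{2}}$ for the $\mu$-term, which tends to $|k|^{1/4}$. (Minor point: $\mu=+v_nf\,\delta_{\{x_n=0\}}$, since $v\cdot\nabla_x\chi_{\{x_n>0\}}=v_n\delta_{\{x_n=0\}}$; the sign plays no role.)

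\textbf{The gap.} The exponent for the $\mu$-term is the only source of the threshold $\frac14$, and you assert it rather than derive it. The balance you sketch does not produce it.

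\emph{Your balance.} Read your $\delta^{1/2}$ bookkeeping as follows. The resonant slab $\{|\omega+v\cdot k|\le\delta\}$ has width $\sigma=\delta/|k|$ in $v_\parallel=v\cdot k/|k|$. You pay for it by $\sigma^{1/2}\sup_{v_\parallel}|\hat f|$, with the sup controlled through $\|\hat f\|^{1/2}\|\partial_{v_\parallel}\hat f\|^{1/2}$. Then you need both $|k|^{\alpha}(\delta/|k|)^{1/2}\lesssim1$ and $|k|^{\alpha+s}\delta^{-1}\lesssim1$. These are compatible only if $\alpha\le\frac{1-s}{3}$, which tends to $\frac16$, not $\frac14$. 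The number $\frac{1-s}{2}$ you are aiming for is the correct $|k|$-exponent, but it does not come out of this balance.

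\emph{The fix.} The resonant contribution must be linear in $\sigma$. Let $u=\hat{\widetilde f}(\omega,k,\cdot)$, take an $L^2_v$ source $\hat g$ (e.g.\ only the $\mu$-part), and set $a=-\omega/|k|$.
\begin{itemize}
\item On the neighbouring annulus $\{\sigma\le|v_\parallel-a|\le2\sigma\}$, the equation gives $|u|\le|\hat g|/(|k|\sigma)$.
\item Along each line in $v_\parallel$, $|u(v)-u(w)|\le(4\sigma)^{1/2}\|\partial_{v_\parallel}u\|_{L^2}$.
\item Comparing $u$ on the slab with its mean over that annulus then yields
\[
\|u\|_{L^2_v}\lesssim\frac{\|\hat g\|_{L^2_v}}{|k|\sigma}+\sigma\|\nabla_v u\|_{L^2_v}.
\]
\end{itemize}
With $\hat g=|k|^s\widehat{\widetilde F}_3$ and $\sigma=|k|^{-(1-s)/2}$ this gives $|k|^{\frac{1-s}{2}}\|u\|_{L^2_v}\lesssim\|\widehat{\widetilde F}_3\|_{L^2_v}+\|\nabla_v u\|_{L^2_v}$. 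This is the exponent the paper uses.

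\textbf{The fallback does not repair this.} The pieces $f^{(1)}$ and $f^{(2)}$ solve pure transport equations and have no $v$-regularity of their own. The estimate, however, must be applied to a function with $\nabla_v\in L^2$. You should apply it to $\widetilde f$ itself with the full source, as Bouchut's argument does; the $\nabla_v\cdot\widetilde F_2$ part is then the classical $\frac13$ case.
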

\begin{proof}
    As in the proof of \autoref{lem.extension}, we observe that $\widetilde{f}\coloneqq f\mbox{\large$\chi$}_{x_n>0}$ is a solution to 
    \begin{align*}
         \partial_t\widetilde{f}+v\cdot\nabla_x\widetilde{f}=\widetilde{F}_1+\nabla_v\cdot \widetilde{F}_2+\mu\quad\text{in }\bbR^{1+2n},
    \end{align*}
    where $\mu\coloneqq fv_n\delta_{\{x_n=0\}}$ and $\widetilde{F}_i$ is extended zero when $x_n\leq0$. In particular, we have the representation of $\mu$ by
    \begin{align*}
        \mu=(-\Delta_x)^{\frac{r}{2}}\widetilde{F}_3+\widetilde{F}_4,
    \end{align*}
    where $r>\frac12$ and
    \begin{align}\label{transbdd:ineq1}
        \|\widetilde{F}_3\|_{L^2(\bbR^{1+2n})}+\|\widetilde{F}_4\|_{L^2(\bbR^{1+2n})}\leq c\left(\int_{\bbR^{1+2n}\cap \{x_n=0\}}|v_nf|^2\,d\Gamma\right)^{\frac12}
    \end{align}
    for some constant $c=c(s)$. 
    Now we repeat the the exact same proof in \cite[Theorem 2.1 and Cor 2.2]{Bou02} with $s=\frac{1-r}{2}$ for the equation
    \begin{align*}
        \partial_t\widetilde{f}+v\cdot\nabla_x\widetilde{f}=\widetilde{F}_1+\nabla_v\cdot\widetilde{F}_2+(-\Delta_x)^{\frac{r}{2}}\widetilde{F}_3+\widetilde{F}_4.
    \end{align*}
    Then we deduce
    \begin{align}\label{transbdd:ineq2}
        \|(-\Delta_x)^{\frac{1-r}4}\widetilde{f}\|_{L^2(\bbR^{1+2n})}\leq c\left(\|\widetilde{f}\|_{L^2(\mathbb{R}^{1+2n})}+\|\nabla_v\widetilde{f}\|_{L^2(\bbR^{1+2n})}+\sum_{i=1}^4\|\widetilde{F}_i\|_{L^2(\bbR^{1+2n})}\right)
    \end{align}
    for some constant $c=c(n,r)$. Since $r$ can be very close to $\frac12$, combining \eqref{transbdd:ineq1} and \eqref{transbdd:ineq2} leads to the desired estimate. 
\end{proof}

\section{H\"older regularity up to the boundary }\label{sec:4}

In this section, we establish H\"older regularity up to the boundary for kinetic equations with bounded measurable coefficients subject to a general reflection-type boundary condition of the form
\begin{align*}
    f =\alpha \mathcal{R}_bf +(1-\alpha)g \quad  \text{in } \gamma_-, \quad \text{where} \quad \mathcal{R}_bf(t,x',0,v) = f(t,x',0,v',-bv_n),
\end{align*}
 and $\alpha\in[0,1]$, $b\in(0,1]$, and $\alpha/b^2\in[0,1]$. Note that this condition encompasses Maxwell conditions and also super-elastic boundary conditions. 

Specifically, we first obtain $L^\infty$-estimates and subsequently establish $C^\alpha$-estimates. Finally, we explain how to prove local boundedness for solutions with bounce-back condition. This section indicates the general scope of applicability of our technique.

\subsection{Local boundedness}

The following is the main result of this section.

\begin{lemma}\label{lem.bdd}
   Let $z_0\in\gamma_0$ and $R\leq1$.  Let $f$ be a weak solution to 
    \begin{equation}\label{bdd:eq.bdry}
\left\{
\begin{alignedat}{3}
\partial_tf+v\cdot\nabla_xf-\ddiv(A\nabla_vf)&=B\cdot\nabla_vf+F&&\qquad \mbox{in  $\mathcal{H}_R(z_0)$}, \\
f&=\alpha \mathcal{R}_bf+(1-\alpha)g&&\qquad  \mbox{in $\mathcal{Q}_R(z_0)\cap\gamma_-$},
\end{alignedat} \right.
\end{equation}
for some $\alpha\in[0,1]$, $b\in(0,1]$ with $\alpha/b^2\in[0,1]$, where $g\equiv0$ if $\alpha=1$. Then we have 
\begin{align}\label{bdd:est.bdd}
    \|f\|_{L^\infty(H_{R/2}(z_0))}\leq c\left(\dashint_{\mathcal{H}_R(z_0)}|f|\,dz+R^2\|F\|_{L^\infty(\mathcal{H}_R(z_0))}+(1-\alpha)\|g\|_{L^\infty(\mathcal{Q}_R(z_0)\cap\gamma_-)}\right)
\end{align}
for some constant $c=c(n,\Lambda,R\|B\|_{L^\infty(\mathcal{H}_R(z_0))},b)$ independent of $\alpha$, but $c\to\infty$ as $b\to0$. 
\end{lemma}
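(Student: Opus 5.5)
We prove \eqref{bdd:est.bdd} by a De Giorgi iteration at the boundary, following the strategy of the introduction. After translating and rescaling with $S_R$, we may assume $z_0=0$ and $R=1$; the drift enters only through $R\|B\|_{L^\infty}$. We set $k_0:=(1-\alpha)\|g\|_{L^\infty}+\|F\|_{L^\infty}$ and work with the truncations $w_k:=(f-k)_+$ for levels $k\ge k_0$, treating $-f$ in the same way (its boundary datum is $-g$). The two ingredients are a boundary Caccioppoli inequality, which controls $\nabla_v w_k$ together with the boundary term $\int |w_k v_n|^2\,d\Gamma$, and the gain of integrability from \autoref{lem.extension}. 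We first prove an $L^2$–$L^\infty$ bound and then pass to the $L^1$ average by the standard covering/interpolation argument.

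\textbf{Step 1: boundary Caccioppoli (the analogue of \eqref{eq:Cacc-intro-2}).} For cutoffs $\phi$ between $\mathcal{Q}_r$ and $\mathcal{Q}_\rho$, we test the weak formulation with $\phi^2 w_k$, or rather with a regularisation such as $\phi^2 \min\{w_k,M\}$ or $\phi^2 w_k/(1+\delta w_k)$, since $f^2|v_n|$ on $\gamma$ is not known to be integrable. The boundary term is $\int_\gamma \tfrac12\phi^2 w_k^2\, v_n(-1)\,d\Gamma$, with outer normal $-e_n$. To sign it we use the boundary condition: for $k\ge k_0$ and $v_n>0$ (incoming, since $n_x=-e_n$),
\[
f(v)\le \alpha f(v',-bv_n)+(1-\alpha)\|g\|_\infty ,
\]
and therefore
\[
w_k(v)\le \alpha\, w_k(v',-bv_n)+(\alpha-1)k+(1-\alpha)\|g\|_\infty \le \alpha\, w_k(v',-bv_n).
\]
The change of variables $u_n=-bv_n$ has Jacobian $b^{-1}$ and turns $|v_n|$ into $|u_n|/b$, so the incoming flux of $w_k^2$ is at most $\frac{\alpha^2}{b^2}$ times the outgoing flux, up to the fact that the reflected cylinder is $\mathcal{Q}_{\rho,b}$ from \autoref{lem.geo}. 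Since $\alpha^2/b^2\le\alpha/b^2\le1$, the boundary term has a good sign modulo cutoff mismatch. The mismatch is handled by choosing $\phi$ symmetric under $v_n\mapsto -bv_n$ on $\{x_n=0\}$, which is where the dependence on $b$ (and $c\to\infty$ as $b\to0$) enters. With the $L^1$ dissipation of \eqref{eq:Cacc-intro} applied to the regularised powers, the limit yields
\[
\|\nabla_v w_k\|_{L^2(\mathcal{H}_r)}^2+\int_{\{x_n=0\}\cap\mathcal{Q}_r}|w_k v_n|^2\,d\Gamma \;\lesssim\; (\rho-r)^{-2}\|w_k\|^2_{L^2(\mathcal{H}_\rho)}+\|F\chi_{\{w_k>0\}}\|^2_{L^2}.
\]
The boundary term on the left comes from the fact that $1-\alpha^2/b^2$ may vanish; in that case we instead bound $\int|w_kv_n|^2$ by applying the same argument with a test function weighted by $v_n$-cutoffs, or via \cite[Proposition 4.2]{Sil22} localized away from the grazing set, and this is the part we regard as delicate.

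\textbf{Step 2: iteration and main obstacle.} Each $w_k$ is a nonnegative subsolution of $\partial_t w+v\cdot\nabla_x w-\mathrm{div}_v(A\nabla_v w)\le B\cdot\nabla_v w+|F|\chi$. \autoref{lem.extension} is stated for $\Delta_v$, but its proof uses only the comparison principle and the fundamental solution, so we rewrite the equation with $-\Delta_v w=F_1+\nabla_v\cdot F_2$, where $F_2=(A-I)\nabla_v w$ and $F_1=B\cdot\nabla_v w+|F|\chi$, both in $L^2$. Combining this with Step 1 gives $\|w_k\|_{L^q(\mathcal{H}_r)}\lesssim(\rho-r)^{-1}\|w_{k'}\|_{L^2(\mathcal{H}_\rho)}$ for $k'<k$. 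Chebyshev on $\{w_{k'}>k-k'\}$ then gives the usual nonlinear recursion $U_{j+1}\le C^j U_j^{1+\epsilon}$ for $U_j=\|w_{k_j}\|_{L^2(\mathcal{H}_{r_j})}^2$, which yields the $L^\infty$ bound from the $L^2$ norm; reducing to the $L^1$ average is standard. We expect the main obstacle to be Step 1, namely rigorously signing the boundary flux without knowing $w_k^2|v_n|\in L^1(\gamma)$, and controlling $\int|w_kv_n|^2$ uniformly in $\alpha$ when $\alpha=b^2$. We would first try the truncation $\min\{w_k,M\}$, using the pointwise reflection inequality together with monotonicity of truncation, and then let $M\to\infty$ by monotone convergence.
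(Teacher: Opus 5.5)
Your route is the paper's. Your test function $\phi^2\min\{w_k,M\}$ is $\tfrac12\varphi_M'(f)\phi^2$ for the paper's convex truncation $\varphi_M$ of $(\xi-k)_+^2$. The boundary term is signed by the $L^1$-dissipation $\alpha/b^2\le1$. The gain of integrability is \autoref{lem.extension} with $F_2=(A-I)\nabla_v w_k$, followed by De Giorgi (\autoref{lem.changeofvar}--\autoref{lem.q.int}).

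Two points do not work as written. First, your inequality $w_k(v)\le\alpha\, w_k(v',-bv_n)$ requires $(1-\alpha)(\|g\|_{L^\infty}-k)\le0$, i.e.\ $k\ge\|g\|_{L^\infty}$. This fails for $k_0\le k<\|g\|_{L^\infty}$ with your $k_0=(1-\alpha)\|g\|_{L^\infty}+\|F\|_{L^\infty}$. The paper also takes $k\ge\|g\|_{L^\infty}$ (\autoref{lem.extension1}, \autoref{lem.q.int}), so both arguments give the bound with $\|g\|_{L^\infty}$ in place of $(1-\alpha)\|g\|_{L^\infty}$. In fact the factor $(1-\alpha)$ cannot hold with an $\alpha$-independent constant. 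Normalise $\phi_{\alpha,1}$ from \autoref{lem.phialpha} by $\phi_{\alpha,1}(0,v)=|v|^{\lambda_{\alpha,1}}$ for $v<0$. Then $u=(\phi_{\alpha,1}-1)/\lambda_{\alpha,1}$ solves the problem with $A=I$, $B=F=0$, $b=1$ and $g\equiv-1/\lambda_{\alpha,1}$. As $\alpha\to1$, $(1-\alpha)\|g\|_{L^\infty}\to\pi/\sqrt3$ and $\dashint_{\cH_1}|u|\,dz$ stays bounded, while $|u(0)|=1/\lambda_{\alpha,1}\to\infty$.

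Second, the weighted trace $\int_{\{x_n=0\}\cap\cQ_r}|w_kv_n|^2\,d\Gamma$ required by \autoref{lem.extension} is where the real work lies, and you leave it open. Your truncated energy estimate only yields $(1-\alpha/b^2)\int_{\gamma_+}\varphi_M(f)\phi^2|v_n|\,d\Gamma$, which degenerates as $\alpha\to b^2$. Your alternative, \cite[Proposition 4.2]{Sil22}, controls the trace by the full $H^1_{\mathrm{kin}}$-norm of $f$. The recursion needs a bound by $(\rho-r)^{-2}\|w_k\|^2_{L^2(\cH_\rho)}+\|F\chi_{\{f>k\}}\|^2_{L^2(\cH_\rho)}$, so that route fails.

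The paper's device is to test with $\varphi(f)\eta$, where $\varphi(\xi)=(\xi-k)_+^2$ is \emph{not} truncated and $\eta\eqsim(v_n)_-$ on $\cQ_r$ with $\eta=0$ for $v_n\ge0$. Because $\eta$ vanishes on $\gamma_-$ and is $O(|v_n|)$ at $\gamma_0$, the boundary term is dominated by $\int_\gamma f^2|v_n|^2\,d\Gamma<\infty$. So this test is admissible (second part of \autoref{lem.changeofvar}), and only the outgoing flux survives, with a good sign. Combined with the gradient bound already obtained, this gives $\int_{\gamma_+\cap\cQ_r}w_k^2|v_n|^2\,d\Gamma\lesssim(\rho-r)^{-2}\|w_k\|^2_{L^2(\cH_\rho)}+\|F\chi_{\{f>k\}}\|^2_{L^2(\cH_\rho)}$. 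The reflection inequality and the change of variables $v_n\mapsto-bv_n$ then give $\int_{\gamma_-\cap\cQ_r}w_k^2|v_n|^2\,d\Gamma\le\frac{\alpha^2}{b^3}\int_{\gamma_+\cap\cQ_r}w_k^2|v_n|^2\,d\Gamma$, uniformly in $\alpha$.

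Finally, a minor point about your cutoff. A cutoff exactly invariant under $v_n\mapsto-bv_n$ cannot equal $1$ on $\cQ_r$ and be supported in $\cQ_\rho$ once $b\rho<r$. This is why the paper gets the gradient bound only on the squashed cylinder $\cH_{r,b}$ and covers the rest by estimates away from $\gamma_-\cup\gamma_0$. The sign argument only needs $\phi(v',v_n)\le\phi(v',-bv_n)$ for $v_n>0$ on $\{x_n=0\}$. Any cutoff radially nonincreasing in $v-v_0$ satisfies this, since $v_{0,n}=0$ and $b\le1$.
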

To obtain this result, we will show that for any $k\geq {\|g\|_{L^\infty(\mathcal{Q}_R(z_0))}}$, it holds that $(f-k)_+$ is a subsolution to \eqref{bdd:eq.bdry} and, in particular, the $L^2$-norm of $(f-k)_+(n_x\cdot v)$ at the boundary $\gamma$ and $\nabla_v(f-k)_+$ is bounded (see \autoref{lem.extension1}). Here, the boundary condition plays a crucial role in controlling the norm of $(f-k)_+(n_x\cdot v)$. Then we apply \autoref{lem.extension} to the function $(f-k)_+$ to get a higher integrability result, which gives $L^\infty$ estimates by the standard De Giorgi iteration.

As a first step to get \eqref{bdd:est.bdd}, we prove the following.

\begin{lemma}\label{lem.changeofvar}
    Let $z_0\in\gamma_0$ and let $f$ be a weak solution to \eqref{bdd:eq.bdry} with $R=1$. Let $\frac12\leq r<\rho\leq1$ and $\varphi\in C^{1,1}(\bbR)$ with $\varphi(0)=0$ and $\|\varphi'\|_{L^\infty(\bbR)}+\|\varphi''\|_{L^\infty(\bbR)}\leq c$ for some constant $c$. For any $\psi\in C_c^\infty(\mathcal{Q}_{\frac{r+3\rho}4}(z_0))$, we have
    \begin{equation}\label{changeofvar:eq1}
    \begin{aligned}
        &-\int_{\mathcal{H}_{\frac{r+3\rho}{4}}(z_0)}\varphi(f)(\partial_t+v\cdot\nabla_x)\psi\,dz+\int_{\gamma\cap \mathcal{Q}_{\frac{r+3\rho}4}(z_0)}\varphi(f)\psi(n_x\cdot v)\,d\Gamma\\
        &\quad+\int_{\mathcal{H}_{\frac{r+3\rho}4}(z_0)}A\nabla_vf\cdot\nabla_v\psi\varphi'(f)\,dz+\int_{\mathcal{H}_{\frac{r+3\rho}4}(z_0)}A\nabla_vf\cdot\nabla_vf\varphi''(f)\psi\,dz\\
        &=\int_{\mathcal{H}_{\frac{r+3\rho}4}(z_0)}(B\cdot\nabla_vf+F)\varphi'(f)\psi\,dz.
    \end{aligned}
    \end{equation}
    Moreover, if $\psi=0$ in $\gamma_-$, then we have \eqref{changeofvar:eq1} without assuming the global boundedness of $\varphi'$.
\end{lemma}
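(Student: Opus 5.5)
The plan is to derive \eqref{changeofvar:eq1} as a chain rule for the kinetic weak formulation. Formally it is obtained by testing \eqref{defn:eq.weak} with $\varphi'(f)\psi$. This test function is not admissible, since $f$ is only in $H^1_{\mathrm{kin}}$ and its trace is only known in the weighted space from \autoref{rmk.defn.weak}. So I would regularize $f$ by a kinetic mollification and pass to the limit.

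\textbf{Step 1: reduction away from the boundary.} Take the cutoff $\xi_\eps(x_n)$ from the proof of \autoref{lem.extension}, which vanishes near $x_n=0$. Since $\psi\xi_\eps$ is supported in the interior of $\mathcal{H}_{\frac{r+3\rho}{4}}(z_0)$, we can mollify $f$ there. Set $f_\delta=f*\eta_\delta$, with the kinetic convolution of Section~\ref{sec:prelim} and $\delta$ small compared to $\eps$ and to $\rho-r$. Then $f_\delta$ satisfies the equation with right-hand side $(F+B\cdot\nabla_v f)_\delta+\ddiv((A\nabla_v f)_\delta)$ plus a commutator. The commutator of $v\cdot\nabla_x$ with the kinetic convolution is harmless: the group structure $z_1\circ z$ was built exactly so that transport commutes with left translation. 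Using the smooth function $\varphi(f_\delta)$, the classical chain rule gives the identity with $\xi_\eps\psi$ in place of $\psi$ and $f_\delta$ in place of $f$.

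\textbf{Step 2: letting $\delta\to0$.} We have $f_\delta\to f$ in $L^2$ and $\nabla_v f_\delta\to\nabla_v f$ in $L^2$ locally, and $\varphi',\varphi''$ are bounded and continuous. By dominated convergence and a.e.\ convergence along a subsequence, every interior term converges. This includes $A\nabla_v f\cdot\nabla_v f\,\varphi''(f)$: write it as $(A\nabla_v f_\delta)\cdot\nabla_v f_\delta\,\varphi''(f_\delta)$ and use strong $L^2$ convergence of the gradients together with the boundedness of $\varphi''$. This yields the identity with test function $\xi_\eps\psi$, where the transport term reads
\[
-\int \varphi(f)\,(\partial_t+v\cdot\nabla_x)(\xi_\eps\psi)\,dz .
\]

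\textbf{Step 3: letting $\eps\to0$ to produce the boundary term (main obstacle).} The only delicate term is
\[
-\int\varphi(f)\,\psi\, v_n\,\partial_{x_n}\xi_\eps\,dz .
\]
It should converge to the boundary integral $\int_\gamma \varphi(f)\psi(n_x\cdot v)\,d\Gamma$, with $n_x=-e_n$. Two facts are needed for this:
\begin{itemize}
\item the trace of $\varphi(f)$ equals $\varphi$ applied to the trace of $f$;
\item $x_n\mapsto \int \varphi(f)\,\psi\, v_n(\cdot,x_n,\cdot)$ is continuous at $x_n=0$ in the appropriate weak sense.
\end{itemize}
I would obtain both from the trace theory of \cite[Proposition 4.2]{Sil22}. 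Since $\varphi$ is Lipschitz with $\varphi(0)=0$, we have $|\varphi(f)|\le c|f|$, and $\varphi(f)$ is again a kinetic $H^1$-type function. Its transport derivative is controlled by Step~2 applied to test functions vanishing near the boundary. Hence its trace is well defined and weighted by $|v_n|$, and equality of the traces follows by approximation. Integrability of the boundary term comes from the weighted bound $\int|f|^2|v_n|^2\,d\Gamma<\infty$ in \autoref{rmk.defn.weak}, combined with $|\varphi(f)|\le c|f|$.

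\textbf{The case $\psi=0$ on $\gamma_-$.} If $\psi=0$ in $\gamma_-$, the boundary term lives only on $\gamma_+$, where outgoing characteristics carry the information. We then truncate $\varphi$ by functions $\varphi_M$ with globally bounded $\varphi_M'$, apply the result above to $\varphi_M$, and let $M\to\infty$ by monotone convergence. This is legitimate only on the outgoing part, where the trace is controlled by interior quantities, so boundedness of $\varphi'$ is not needed there. I expect this step, together with the identification of the boundary trace of $\varphi(f)$ in Step~3, to be the main technical difficulty. The point is that the trace of $f$ is only weakly integrable near the grazing set, so the convergence must be justified with the $\min\{|v_n|,|v_n|^2\}$ weight.
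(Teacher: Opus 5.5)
Your Steps~1--2 (the interior chain rule for the test functions $\xi_\eps\psi$) are essentially fine; see the remark on $\varphi''$ below. Step~3, however, has a genuine gap, and it is exactly the content of the lemma.

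For $\varphi(\xi)=\xi$, comparing \eqref{defn:eq.weak} tested with $\psi$ and with $\xi_\eps\psi$ gives $-\int f\psi v_n\partial_{x_n}\xi_\eps\,dz\to\int_\gamma f\psi(n_x\cdot v)\,d\Gamma$. In other words, the averaged slices of $f$ converge \emph{weakly} to the trace. For nonlinear $\varphi$ this is not enough. Identifying the limit of $-\int\varphi(f)\psi v_n\partial_{x_n}\xi_\eps\,dz$ with $\int_\gamma\varphi(f)\psi(n_x\cdot v)\,d\Gamma$ requires \emph{strong} convergence to the boundary values, and your argument does not supply it.
\begin{itemize}
\item The claim that $\varphi(f)$ is again a kinetic $H^1$ function fails. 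By Step~2 its transport derivative contains $-\varphi''(f)A\nabla_vf\cdot\nabla_vf$, which in general lies only in $L^1$ and not in $L^2_{t,x}H^{-1}_v$. So \cite[Proposition 4.2]{Sil22} does not apply to $\varphi(f)$.
\item More importantly, ``equality of the traces follows by approximation'' has no approximating sequence behind it. Your mollification in Step~1 takes place strictly inside $\{x_n>0\}$ and carries no information about boundary values.
\end{itemize}

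The missing idea, which is how the paper argues, is an approximation of $f$ that is regular \emph{up to} $\{x_n=0\}$ and converges there in the weighted trace norm.
\begin{itemize}
\item After a time translation placing the cylinder in $\{t<0\}$, take $f_\delta=\eta_\delta*f$ with $\supp\eta\subset\{t>0\}\cap\{x_n<0\}\cap\{v_n>0\}$.
\item For $w=(s,y,u)\in\supp\eta_\delta$, the point $w^{-1}\circ z$ has $n$-th space coordinate $x_n-y_n+(s-t)u_n>x_n$. So $f_\delta$ only samples $f$ strictly inside the domain.
\item Since left translations commute with $\partial_t+v\cdot\nabla_x$ and $\nabla_v$, $f_\delta$ satisfies the mollified equation up to the boundary.
\end{itemize}
No cutoff $\xi_\eps$ is then needed. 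Testing the mollified equation with $\varphi'(f_\delta)\psi$ produces $\int_\gamma\varphi(f_\delta)\psi(n_x\cdot v)\,d\Gamma$ directly. The bound $|\varphi(f_\delta)-\varphi(f)|\le c|f_\delta-f|$ together with Cauchy--Schwarz reduces its convergence to $\int_{\gamma\cap\supp\psi}|f_\delta-f|^2|n_x\cdot v|^2\,d\Gamma\to0$, which is \cite[Proposition 4.3]{Sil22}. Your cutoff route could be rescued by proving that the slices $f(\cdot,s,\cdot)$ converge to the trace in $L^2_{\mathrm{loc}}(|v_n|^2\,d\Gamma)$ as $s\downarrow0$, but that needs the same ingredients.

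Two smaller points:
\begin{itemize}
\item $\varphi\in C^{1,1}$ does not make $\varphi''$ continuous (e.g.\ $(\xi-k)_+^2$). So the $\varphi''(f_\delta)$ term in Step~2 needs the fact that $\nabla_vf=0$ a.e.\ on the level sets where $\varphi''$ jumps, as in \cite[Lemma 5.6]{Sil22}.
\item For $\psi=0$ on $\gamma_-$, your truncation does work once the main case is available. The reason, though, is not that the outgoing trace is controlled by interior quantities. It is that $|\psi|\le c|v_n|$ on $\gamma$, so with $|\varphi(f)|\le c(|f|+|f|^2)$ the boundary integrand is dominated by $c(|f|+|f|^2)|v_n|^2$. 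This is integrable by the weighted trace bound, and one concludes by dominated (not monotone) convergence.
\end{itemize}
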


 Note that since the equation \eqref{bdd:eq.bdry} is scale-invariant, we always assume $R=1$. For example, $f(z)\coloneqq f(z_0\circ S_Rz)$ is a solution to \eqref{bdd:eq.bdry} with $z_0$, $R$ $A$, $B$, $F$ and $g$ replaced by $0$, $1$, $A(z_0\circ S_Rz)$, $RB(z_0\circ S_Rz)$, $R^2F(z_0\circ S_Rz)$ and $g(z_0\circ S_Rz)$, respectively.

\begin{proof}
   As $z_0\in\gamma_0$, we may assume $t_0=-1$ to ensure that $\mathcal{H}_{1}(z_0)\subset \{t<0\}$. We are now going to mollify the equation to get the desired results. Choose a function ${\eta}\in C^{\infty}_c(\R^{1+2n})$ such that 
   \begin{align*}
        \mathrm{supp}\,{\eta}\subset \{t>0\}\cap \{x_n<0\}\cap \{v_n>0\}\quad\text{and}\quad \int_{\bbR^{1+2n}}{\eta}=1.
    \end{align*}Then for small $\delta>0$, the function $f_\delta\coloneqq \eta_\delta*f$ solves
    \begin{align}\label{extension1:eq.mollif}
    (\partial_t+v\cdot\nabla_x)f_\delta-\ddiv(\eta_\delta*(A\nabla_vf)   )=\eta_\delta *(B\cdot\nabla_vf)+\eta_\delta*F\quad\text{in }\mathcal{H}_{\frac{r+3\rho}{4}}(z_0),
    \end{align}
    where $\eta_\delta(z)\coloneqq \frac{1}{\delta^{4n+2}}\eta(S_{\frac1\delta}z)$.

    Now testing \eqref{extension1:eq.mollif} with $\varphi'(f_\delta)\psi$ for some $\psi\in C_c^\infty(\mathcal{Q}_{\frac{r+3\rho}{4}}(z_0))$, we have
 \begin{equation}\label{changeofvar:weak.for}
    \begin{aligned}
        &-\int_{\mathcal{H}_{\frac{r+3\rho}4}(z_0)}{\varphi_{}(f_\delta)}(\partial_t+v\cdot\nabla_x)\psi+\int_{\gamma\cap \mathcal{Q}_{\frac{r+3\rho}4}(z_0)}{\varphi_{}(f_\delta)}\psi(n_x\cdot v)\\
        &\quad+\int_{\mathcal{H}_{\frac{r+3\rho}4}(z_0)}\eta_\delta*(A\nabla_vf)\cdot\nabla_v\psi\varphi'_{}(f_\delta)+\int_{\mathcal{H}_{\frac{r+3\rho}4}(z_0)}\eta_\delta*(A\nabla_vf)\cdot\nabla_vf_\delta\varphi''_{}(f_\delta)\psi\\
        &=\int_{\mathcal{H}_{\frac{r+3\rho}4}(z_0)}\eta_\delta*(B\cdot\nabla_vf+F)\varphi'_{}(f_\delta)\psi.
    \end{aligned}
    \end{equation}
    Except for the second term on the left-hand side of \eqref{changeofvar:weak.for}, each limit as $\delta \to 0$ of the remaining terms in \eqref{changeofvar:weak.for} is already discussed in \cite[Lemma 5.6]{ Sil22}. Hence, we focus on the limit of the integral on the boundary $\gamma$.
    \begin{itemize}
        \item By the fact that $|\varphi'|\leq c$, we have 
\begin{equation*}
\begin{aligned}
    J\coloneqq\int_{\gamma\cap \mathcal{Q}_{\frac{r+3\rho}4}(z_0)}|({\varphi_{}(f_\delta)}-\varphi(f))\psi(n_x\cdot v)|&\leq c\int_{\gamma\cap \mathcal{Q}_{\frac{r+3\rho}4}(z_0)}|(f_\delta-f)\psi(n_x\cdot v)|\\
    &\leq c\left(\int_{\gamma\cap\supp\,\psi}|f-f_\delta|^2|n_x\cdot v|^2\right)^{\frac12},
\end{aligned}
\end{equation*}
which tends to $0$, as $\delta\to0$. In particular, we have used \cite[Proposition 4.3]{Sil22} together with the fact that $f_\delta \to f$ in $L^2_{t,x}H^1_v$ and $(\partial_t+v\cdot\nabla_x)f_\delta\rightharpoonup (\partial_t+v\cdot\nabla_x)f$ in $L^2_{t,x}H^{-1}_v$. 
\item Next, we consider the case that $\varphi'$ is not bounded, but $\psi\equiv0$ in $\gamma_-$. Then we observe 
\begin{align*}
&    |\psi(t,x',0,v,v_n)|=|\psi(t,x',0,v,v_n)-\psi(t,x',0,v,0)|\leq c|v_n|,\\
&|\varphi(f_\delta)-\varphi(f)|\leq\left| \int_{0}^1\varphi'(\xi f_\delta+(1-\xi)f)\,d\xi\right||f_\delta-f|\leq c(1+|f_\delta|+|f| )|f_\delta-f|,
\end{align*}
where we have used $|\varphi''|\leq c$ for the last inequality.
Using this, we have
\begin{equation*}
\begin{aligned}
   J&\leq c\int_{\gamma\cap \supp \psi}|(1+|f_\delta|+|f|))(f_\delta-f)(n_x\cdot v)^2|\\
    &\leq c\left(\int_{\gamma\cap\supp\,\psi}|f-f_\delta|^2|n_x\cdot v|^2\right)^{\frac12}\left(\int_{\gamma\cap\supp\,\psi}(1+|f|^2+|f_\delta|^2)|n_x\cdot v|^2\right)^{\frac12},
\end{aligned}
\end{equation*}
which goes to 0, as $\delta\to0$.
    \end{itemize}
As we mentioned before, by applying the same arguments given in the proof of \cite[Lemma 5.6]{Sil22} to the remaining terms in \eqref{changeofvar:weak.for}, we get \eqref{changeofvar:eq1} by taking $\delta\to0$. This completes the proof.
\end{proof}
Now using the formula established in \eqref{changeofvar:eq1}, we are ready to prove that $(f-k)_+$ is a subsolution. Moreover, we prove that its weighted boundary $L^2$ norm on $\gamma$ with the weight $|v\cdot n_x|^2$, as well as the $L^2$ of its gradient, are bounded.
\begin{lemma}\label{lem.extension1}
    Let $z_0\in\gamma_0$ and let $f$ be a weak solution to \eqref{bdd:eq.bdry} with $R=1$. Let $\frac12\leq r<\rho\leq1$ and $k\geq{\|g\|_{L^\infty(\mathcal{Q}_1(z_0)\cap \gamma_-)}}$.  Then $(f-k)_+$ is a subsolution to 
     \begin{align}\label{extension1:eq.subsol}
        \partial_t(f-k)_++v\cdot\nabla_x(f-k)_+-\ddiv(A\nabla_v(f-k)_+)=F_1\quad\text{in }\mathcal{H}_{\frac{r+\rho}2}(z_0),
    \end{align}
    where 
    \begin{align*}
        \int_{\mathcal{H}_{\frac{r+\rho}{2}}(z_0)}|F_1|^2\,dz\leq c\left(\int_{\mathcal{H}_{\rho}(z_0)}|\nabla_v(f-k)_+|^2+|F\mbox{\large$\chi$}_{f>k}|^2\,dz\right)
    \end{align*}
    for some constant $c=c(n,\Lambda)$. Furthermore, we have
    \begin{align}\label{extension1:ineq.bdry.weightv}
        \int_{\mathcal{Q}_{r}(z_0)\cap \gamma}(f-k)_+^2{|v\cdot n_x|^2}\,d\Gamma+\int_{\mathcal{H}_r(z_0)}|\nabla_v(f-k)_+|^2\,dz\leq c\left(\int_{\mathcal{H}_{\rho}(z_0)}\frac{|(f-k)_+|^2}{(\rho-r)^2}+|F\mbox{\large$\chi$}_{f>k}|^2\,dz\right)
    \end{align}
    for some constant $c=c(n,\Lambda,b)$, where $c\to\infty$ as $b\to0$. 
\end{lemma}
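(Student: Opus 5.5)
The plan is to test the equation with convex truncations of $(f-k)_+$ through \autoref{lem.changeofvar}. The reflection condition, together with $\alpha/b^2\le 1$ and a cutoff adapted to $\mathcal{R}_b$, will make the boundary flux nonnegative. The weighted trace term in \eqref{extension1:ineq.bdry.weightv} is then obtained afterwards from the trace estimate \cite[Proposition 4.2]{Sil22}, rather than from testing with $f$ itself, which we cannot justify.

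\emph{Step 1 (subsolution property).} Take $\varphi_\eps\in C^{1,1}$ convex with $\varphi_\eps(s)=0$ for $s\le k$, $0\le\varphi_\eps'\le 1$, and $\varphi_\eps\to(s-k)_+$. Apply \eqref{changeofvar:eq1} with nonnegative $\psi\in C_c^\infty(\mathcal{H}_{\frac{r+\rho}2}(z_0))$, so that no boundary term appears. Drop the term $\int A\nabla_v f\cdot\nabla_v f\,\varphi_\eps''(f)\psi\ge 0$ and let $\eps\to 0$. Since $\varphi_\eps'(f)\to\chi_{\{f>k\}}$ and $\nabla_v(f-k)_+=\chi_{\{f>k\}}\nabla_v f$, this gives \eqref{extension1:eq.subsol} with $F_1=(B\cdot\nabla_v f+F)\chi_{\{f>k\}}$. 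Because $R=1$ and $|B|\le\Lambda$, we get $|F_1|\le\Lambda|\nabla_v(f-k)_+|+|F\chi_{\{f>k\}}|$, which is the claimed $L^2$ bound on $F_1$.

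\emph{Step 2 (energy estimate and the boundary sign).} For $M>0$ let $\varphi_M$ be convex and increasing, with $\varphi_M(s)=(s-k)_+^2$ for $s\le k+M$ and affine for $s>k+M$. Then $\varphi_M'$ and $\varphi_M''$ are bounded and \autoref{lem.changeofvar} applies. Take $\psi=\zeta^2$, where $\zeta$ is a kinetic cutoff equal to $1$ on $\mathcal{Q}_r(z_0)$, supported in $\mathcal{Q}_{\frac{r+3\rho}4}(z_0)$, and \emph{nonincreasing in $|v_n|$} (for instance a product of a function of $|v_n|$ with a function of the remaining variables). Since $n_x=-e_n$, the boundary term equals $\int_{\gamma_+}\varphi_M(f)\zeta^2|v_n|-\int_{\gamma_-}\varphi_M(f)\zeta^2|v_n|$.

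On $\gamma_-$ we have $f\le\alpha\mathcal{R}_bf+(1-\alpha)k$, because $g\le k$. Hence $(f-k)_+\le\alpha(\mathcal{R}_bf-k)_+$. Convexity of $\varphi_M$, together with $\varphi_M=0$ on $(-\infty,k]$, gives $\varphi_M(f)\le\alpha\,\varphi_M(\mathcal{R}_bf)$. Changing variables $w_n=-bv_n$ gives $|v_n|\,dv_n=b^{-2}|w_n|\,dw_n$. Since $|w_n|\le|v_n|$, the monotonicity of $\zeta$ gives $\zeta(v',v_n)\le\zeta(v',w_n)$ (and \autoref{lem.geo} keeps the image inside the cutoff region). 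Therefore the $\gamma_-$ integral is bounded by $\frac{\alpha}{b^2}\int_{\gamma_+}\varphi_M(f)\zeta^2|v_n|$, and the boundary term is $\ge0$ precisely because $\alpha/b^2\le1$. This $L^1$-type dissipativity, transported by the reflection onto the outgoing boundary, is exactly where the hypothesis enters, and it is the step I expect to be the main obstacle. The delicate points are making the cutoff compatible with $\mathcal{R}_b$ and checking that the boundary terms are finite for fixed $M$; the latter holds because $\varphi_M$ grows linearly, so \cite[Proposition 4.2]{Sil22} applies.

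Discarding the boundary term, the identity \eqref{changeofvar:eq1} yields the usual Caccioppoli inequality
\[
\int\varphi_M''(f)|\nabla_v f|^2\zeta^2\lesssim\int\varphi_M(f)\bigl(|(\partial_t+v\cdot\nabla_x)\zeta^2|+|\nabla_v\zeta|^2\bigr)+\int\varphi_M'(f)^2\zeta^2+\int|F\chi_{\{f>k\}}|^2\zeta^2 .
\]
The $B$-term is absorbed by Young's inequality, using $\varphi_M'^2\le 4\varphi_M''\varphi_M$-type bounds on each region. Letting $M\to\infty$ by monotone convergence gives $\int_{\mathcal{H}_r}|\nabla_v(f-k)_+|^2\lesssim(\rho-r)^{-2}\int_{\mathcal{H}_\rho}(f-k)_+^2+\int|F\chi_{\{f>k\}}|^2$.

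\emph{Step 3 (trace term).} By Step 1, $\zeta(f-k)_+$ satisfies an equation whose right-hand side lies in $L^2_{t,x}H^{-1}_v$ and is controlled by the bound of Step 2. Hence its $H^1_{\mathrm{kin}}$ norm is controlled by the right-hand side of \eqref{extension1:ineq.bdry.weightv}. On the bounded cylinder we have $\min\{|v_n|,|v_n|^2\}\gtrsim|v_n|^2$, so \cite[Proposition 4.2]{Sil22} bounds $\int_{\gamma\cap\mathcal{Q}_r}(f-k)_+^2|v_n|^2$ by the same quantity. Combining this with Step 2 gives \eqref{extension1:ineq.bdry.weightv}.

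The constant depends on $b$ through the geometry of \autoref{lem.geo}. The cylinder $\mathcal{Q}_{\frac{r+3\rho}{4}}(z_0)$ must contain the reflected region, and the cutoff comparison degenerates as $b\to0$, which explains $c\to\infty$ as $b\to0$.
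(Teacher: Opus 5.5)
Steps 1 and 2 are sound, and Step 2 reaches the sign of the boundary flux more cleanly than the paper does. Your convexity bound $\varphi_M(f)\le\alpha\,\varphi_M(\mathcal{R}_bf)$ on $\gamma_-$, combined with a cutoff nonincreasing in $|v_n|$, replaces three things in the paper: the $\mathcal{R}_b$-invariant cutoff $\psi(t,x,v',v_n)=\psi(t,x,v',-bv_n)$, the split of $\varphi_M$ into quadratic and affine parts, and the covering argument from $\mathcal{H}_{r,b}$ to $\mathcal{H}_r$. It also gives the gradient bound with a constant independent of $b$. Two small repairs are needed. First, a product of a function of $|v_n|$ and a function of $v'$ cannot equal $1$ on $B_r(v_0)$ and vanish outside $B_{\frac{r+3\rho}{4}}(v_0)$ when $\rho-r$ is small, so take $\zeta$ radial in $v-v_0$. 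Second, on $\{f>k+M\}$ you have $\varphi_M''=0$ but $\varphi_M'=2M$. There, use $\varphi_M'(f)\le 2(f-k)_+$ and absorb the cross terms only after letting $M\to\infty$, as the paper does.

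The genuine gap is Step 3. Step 1 gives an inequality, not an equation:
\[
(\partial_t+v\cdot\nabla_x)(f-k)_+=\ddiv(A\nabla_v(f-k)_+)+F_1-\mu,
\]
where $\mu\ge0$ is the defect measure you dropped (the weak-$*$ limit of $\varphi_\eps''(f)A\nabla_vf\cdot\nabla_vf$). All you know is that $\mu$ has locally finite mass. That controls neither an $H^{-1}_v$ norm when $n\ge2$ nor square integrability in $(t,x)$. So nothing gives $\zeta(f-k)_+\in H^1_{\mathrm{kin}}$ with a bound in terms of $(f-k)_+$, and \cite[Proposition 4.2]{Sil22} cannot be invoked. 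Applying it to $f$ itself does not help, because the De Giorgi iteration needs a right-hand side involving only $(f-k)_+$.

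The missing idea is to read the trace off the identity \eqref{changeofvar:eq1} itself, where the defect term has a sign. Take $\varphi(\xi)=(\xi-k)_+^2$ and $\psi=\phi$, with $\phi\eqsim(v_n)_-$ on $\mathcal{Q}_r(z_0)$ and $\phi=0$ on $\{v_n\ge0\}$. Since $\phi$ vanishes on $\gamma_-$, \autoref{lem.changeofvar} applies even though $\varphi'$ is unbounded. The term $\int\varphi''(f)A\nabla_vf\cdot\nabla_vf\,\phi\ge0$ stays on the left. Together with your gradient bound this gives
\[
\int_{\gamma_+\cap\mathcal{Q}_r(z_0)}(f-k)_+^2|v_n|^2\,d\Gamma\lesssim\int_{\mathcal{H}_\rho(z_0)}\frac{(f-k)_+^2}{(\rho-r)^2}+|F\chi_{\{f>k\}}|^2\,dz .
\]
The $\gamma_-$ part then follows from $(f-k)_+\le\alpha(\mathcal{R}_bf-k)_+$ and the substitution $w_n=-bv_n$, which turns $|v_n|^2\,dv_n$ into $b^{-3}|w_n|^2\,dw_n$. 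In your version, this factor $\alpha^2b^{-3}$ is the only place $b$ enters, and it is why $c\to\infty$ as $b\to0$.
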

\begin{proof}
    As $z_0\in\gamma_0$, we may assume without loss of generality that $z_0=0$. Since $(\xi-k)_+$ is not in $C^{1,1}$, we consider a regularized function $\varphi_{\eps}$ by 
\begin{equation*}
\begin{aligned}
    \varphi_{\eps}(\xi)\coloneqq \begin{cases}
        \xi-k-\eps\quad&\text{if }\xi>k+2\eps,\\
        h_\eps(\xi)\quad&\text{if }k<\xi\leq k+2\eps,\\
        0\quad&\text{if }\xi\leq k
    \end{cases}
\end{aligned}
\end{equation*}
for some convex and smooth function $h_\eps$ such that $|\varphi_{\eps}'|\leq c$. In particular, we have $\varphi_{\eps}(\xi)\to (\xi-k)_+$, as $\eps\to0$. 

By \autoref{lem.changeofvar}, we have for any $\psi\in C_c^\infty(\mathcal{H}_{\frac{r+\rho}2})$,
 \begin{equation*}
    \begin{aligned}
        &-\int_{\mathcal{H}_{\frac{r+3\rho}4}}{\varphi_{\eps}(f)}(\partial_t+v\cdot\nabla_x)\psi+\int_{\mathcal{H}_{\frac{r+3\rho}4}}(A\nabla_vf)\cdot\nabla_v\psi\varphi'_{\eps}(f)
        \leq \int_{\mathcal{H}_{\frac{r+3\rho}4}}(B\cdot\nabla_vf+F)\varphi'_{\eps}(f)\psi,
    \end{aligned}
    \end{equation*}
    where we have used the fact that $\psi\equiv0$ on $\gamma$ and $\varphi_\eps''\geq0$. Taking $\eps\to0$, we deduce that $(f-k)_+$ is a subsolution to 
\begin{align*}
    \partial_t(f-k)_++v\cdot\nabla_x(f-k)_+-\ddiv(A\nabla_v(f-k)_+)= F_1\quad\text{in }H_{\frac{r+\rho}{2}}
\end{align*}
with
\begin{align*}
    F_1\coloneqq B\cdot\nabla_v(f-k)_++F\mbox{\large$\chi$}_{f>k}.
\end{align*}
Actually, note that \eqref{extension1:eq.subsol} holds for any $k\in\bbR$.

Now, we are going to prove \eqref{extension1:ineq.bdry.weightv}. To do so we will use two different test functions to get $L^2$ bounds of $|\nabla_v(f-k)_+|$ and $(f-k)_+|v\cdot n_x|$, respectively.

 Let $\psi \in C_c^\infty(\mathcal{Q}_{\frac{r+\rho}2})$ with $\psi\equiv 1$ on $\mathcal{Q}_r\cap \{x_n\geq0\}$ and $\psi(t,x,v',v_n)=\psi(t,x,v',-bv_n)$ for $v_n\geq0$, which gives $|\nabla_v\psi|+|(\partial_t+v\cdot\nabla_x)\psi|\leq\frac{c}{b(\rho-r)}$. For $M\gg1$, we define
    \begin{align*}
    \varphi_{M}(\xi)\coloneqq\begin{cases}  2M(\xi-\frac{M}2-k)&\quad\text{if }\xi>M+k,\\
    (\xi-k)_+^2&\quad\text{if }\xi\leq M+k,
    \end{cases}
    \end{align*}
    to see that $\varphi_M\in C^{1,1}$ with $\varphi_M(0)=0$, $\|\varphi'\|_{L^\infty(\bbR)}+\|\varphi''\|_{L^\infty(\bbR)}\leq c(M)$ and that $\varphi_M$ is convex. Note that we cannot apply \autoref{lem.changeofvar} directly with $\varphi(\xi)=(\xi-k)_+^2$, as $\psi\equiv 1$ in $\mathcal{Q}_r$ and since we do not know that the second term on the left-hand side of \eqref{changeofvar:eq1} is well-defined. Therefore, we introduce the  auxiliary function $\varphi_M$ and take $M\to\infty$ later.
    
    Now using \autoref{lem.changeofvar} with $\varphi$ and $\psi$ replaced by $\varphi_M$ and $\psi^2$, we have
    \begin{equation}\label{extension1:ene.ineq.grad} 
    \begin{aligned}
       \sum_{i=1}^2J_i &\coloneqq\int_{\gamma\cap \mathcal{Q}_{\frac{r+3\rho}4}}\varphi_M(f)\psi^2(n_x\cdot v)\,d\Gamma+\int_{\mathcal{H}_{\frac{r+3\rho}4}}A\nabla_vf\cdot\nabla_vf\varphi_M''(f)\psi^2\,dz\\
        &\leq-\int_{\mathcal{H}_{\frac{r+3\rho}4}}A\nabla_vf\cdot\nabla_v\psi^2\varphi_M'(f)\,dz+ \int_{\mathcal{H}_{\frac{r+3\rho}{4}}}\varphi_M(f)(\partial_t+v\cdot\nabla_x)\psi^2\,dz\\
        &\qquad+\int_{\mathcal{H}_{\frac{r+3\rho}4}}(B\cdot\nabla_vf+F)\varphi_M'(f)\psi^2\,dz\eqqcolon\sum_{i=3}^5J_i.
    \end{aligned}
    \end{equation}
    We now estimate the terms $J_i$. Using the boundary condition, we have
        \begin{align*}
            J_{1,1}&\coloneqq\int_{\gamma_-\cap \mathcal{Q}_{\frac{r+3\rho}4}}\varphi_M(f)\psi^2(n_x\cdot v)\,d\Gamma\\
            &=\int_{\gamma_-\cap \mathcal{Q}_{\frac{r+3\rho}4}}\varphi_M(\alpha \mathcal{R}_bf+(1-\alpha)g)\psi^2(n_x\cdot v)\,d\Gamma\\
            &\leq\int_{\gamma_-\cap \mathcal{Q}_{\frac{r+3\rho}4}}(\alpha \mathcal{R}_bf+(1-\alpha)g-k)_+^2{\mbox{\large$\chi$}}_{\{\alpha \mathcal{R}_bf+(1-\alpha)g<{M+k}\}}\psi^2(n_x\cdot v)\,d\Gamma\\
            &\quad+\int_{\gamma_-\cap \mathcal{Q}_{\frac{r+3\rho}4}}2M(\alpha \mathcal{R}_bf-{M}/2-\alpha k){\mbox{\large$\chi$}}_{\{\alpha \mathcal{R}_bf+(1-\alpha)g>{M+k}\}}\psi^2(n_x\cdot v)\,d\Gamma\eqqcolon J_{1,1,1}+J_{1,1,2},
        \end{align*}
        where we have also used the fact that $(1-\alpha)g-(1-\alpha)k\leq0$ for the last term. For the term $J_{1,1,1}$, we first observe that for $M\gg1$,
        \begin{align*}
          (\alpha f+(1-\alpha)\mathcal{R}_bg-k)^2   \leq 2M\alpha(f-M/2-k)\quad\text{if } M+k<f<\frac{M+k-(1-\alpha)\mathcal{R}_bg}{\alpha}.
        \end{align*}
        Thus, using the change of variables $v_n \mapsto - b v_n$, the above observation, the fact that $g\leq k$, and that $\psi^2(t,x,v',v_n)=\psi^2(t,x,v',-bv_n)$, we get
        \begin{align*}
            J_{1,1,1}&\geq-\frac{1}{b^2}\int_{\Gamma_b}\alpha(f-k)_+^2{\mbox{\large$\chi$}}_{\{f<{M+k}\}}\psi^2(n_x\cdot v)_+\,d\Gamma\\
            &\quad-\frac{1}{b^2}\int_{\Gamma_b}(\alpha f+(1-\alpha)\mathcal{R}_bg-k)_+^2{\mbox{\large$\chi$}}_{\{M+k<f<\frac{M+k-(1-\alpha)\mathcal{R}_bg}{\alpha}\}}\psi^2(n_x\cdot v)_+\,d\Gamma\\
            &\geq -\frac{\alpha^2}{b^2}\int_{\Gamma_b}(f-k)_+^2{\mbox{\large$\chi$}}_{\{f<{M+k}\}}\psi^2(n_x\cdot v)\,d\Gamma\\
            &\quad-\frac{\alpha}{b^2}\int_{\Gamma_b}2M(f-M/2-k)_+{\mbox{\large$\chi$}}_{\{M+k<f<\frac{M+k-(1-\alpha)\mathcal{R}_bg}{\alpha}\}}\,d\Gamma,
        \end{align*}
        where we write $\Gamma_b\equiv\gamma_+\cap \mathcal{Q}_{\frac{r+3\rho}4,b}\subset \gamma_+\cap \mathcal{Q}_{\frac{r+3\rho}4}$ and the domain $\mathcal{Q}_{\frac{r+3\rho}4,b}$ is determined in \autoref{lem.geo} (applied with $z_0 = 0$).
        Similarly,  we get
        \begin{align*}
            J_{1,1,2}\geq -\frac{\alpha}{b^2}\int_{\Gamma_b}2M(f-M/2-k){\mbox{\large$\chi$}}_{\{f>\frac{M+k-(1-\alpha)\mathcal{R}_bg}{\alpha}\}}\,d\Gamma,
        \end{align*}
        where we have also used the fact that $(\alpha f-M/2-\alpha k)_+\leq\alpha( f-M/2- k)_+$. Combining all the estimates $J_{1,1,1}$ and $J_{1,1,2}$ together with $\Gamma_b\subset \gamma_+\cap \mathcal{Q}_{\frac{r+3\rho}4}$ and
        the assumption $\alpha/b^2\leq1$, we have 
        \begin{align*}
            J_{1,1}+\int_{\gamma_+\cap \mathcal{Q}_{\frac{r+3\rho}4}}\varphi_M(f)\psi^2(n_x\cdot v)\,d\Gamma \ge \left( 1 - \frac{\alpha}{b^2} \right) \int_{\gamma_+\cap \mathcal{Q}_{\frac{r+3\rho}4}}\varphi_M(f)\psi^2(n_x\cdot v)\,d\Gamma \geq0,
        \end{align*}
         which implies $J_1\geq0$.

         After a few simple computations and using the definition of $\varphi_M$, we deduce
         \begin{align*}
             &J_2\geq2\int_{\mathcal{H}_{\frac{r+3\rho}4}}A\nabla_v(f-k)_+\cdot\nabla_v(f-k)_+{\mbox{\large$\chi$}}_{\{f<{M+k}\}}\psi^2,\\
             &J_3\leq c\int_{\mathcal{H}_{\frac{r+3\rho}4}}\psi|\nabla_v(f-k)_+||\nabla_v\psi|(f-k)_+,\\
             &J_4\leq \frac{c}{(\rho-r)^2}\int_{\mathcal{H}_{\frac{r+3\rho}4}}(f-k)_+^2,\\
             &J_5\leq c\int_{\mathcal{H}_{\frac{r+3\rho}4}}(|B\nabla_v(f-k)_+|+|F|)(f-k)_+ \psi^2
         \end{align*}
         for some constant $c=c(n,\Lambda,b)$,
         where we have used the fact that $M\leq (f-k)_+$ when $f>M+k$. Altogether, we obtain 
         \begin{align*}
             \int_{\mathcal{H}_{\frac{r+3\rho}4}}A\nabla_v(f-k)_+\cdot\nabla_v(f-k)_+{\mbox{\large$\chi$}}_{\{f<{M+k}\}}\psi^2&\leq \frac{c}{(\rho-r)}\int_{\mathcal{H}_{\frac{r+3\rho}4}}|\nabla_v(f-k)_+|\psi(f-k)_+\\
             &+c\int_{\mathcal{H}_{\frac{r+3\rho}4}}\frac{(f-k)^2_{+}}{(\rho-r)^2}+|F{\mbox{\large$\chi$}}_{\{f>k\}}|^2.
         \end{align*}
         Now taking $M\to\infty$ and using Young's inequality, we get
\begin{equation}\label{extension1:ineq2}
    \begin{aligned}
         \int_{\mathcal{H}_{r,b}}|\nabla_v(f-k)_+|^2
        &\leq c\int_{\mathcal{H}_{\rho}}\frac{(f-k)_+^2}{(\rho-r)^2}+|F\mbox{\large$\chi$}_{f>k}|^2
    \end{aligned}
    \end{equation}
for some constant $c=c(n,\Lambda)$, where $\mathcal{H}_{r,b}=\mathcal{Q}_{r,b}\cap \{x_n>0\}$, as the test function $\psi\equiv1$ on $\mathcal{Q}_r\cap \{x_n\geq0\}$ and $\psi(t,x,v',v_n)=\psi(t,x,v',-bv_n)$ for $v_n\geq0$. 

To obtain the desired estimate, we need to extend the domain of the integral on the left-hand side of \eqref{extension1:ineq2}, from $\mathcal{H}_{r,b}$ to $\mathcal{H}_r$. Note that for any $z_0=(t_0,x_0,v_0',v_{0,n})\in \mathcal{H}_r\setminus \mathcal{H}_{r,b}$, we have $v_{0,n}>-br$. Thus, for any $\mathcal{H}_{m({\rho-r})}(z_0)$ with a small constant $m=m(b)$ such that $\mathcal{H}_{m(\rho-r)}(z_0)\cap\gamma_0=\emptyset$ and $\mathcal{H}_{m({\rho-r})}(z_0)\subset \mathcal{H}_\rho$, we have the usual energy estimate
\begin{align*}
    \int_{\mathcal{H}_{\frac{m(\rho-r)}{2}}(z_0)}|\nabla_v(f-k)_+|^2\leq  c\int_{\mathcal{H}_{m({\rho-r})}(z_0)}\frac{(f-k)_+^2}{(\rho-r)^2}+|F\mbox{\large$\chi$}_{f>k}|^2,
\end{align*}
as the cylinder $\mathcal{H}_{{m(\rho-r)}}(z_0)$ is away from $\gamma_-\cup\gamma_0$. Combining this and \eqref{extension1:ineq2} together with the covering argument from \cite[Lemma 2.3]{KLN25} to ensure a uniformly bounded overlap of the cylinders, we have 
\begin{align}\label{extension1:ineq3}
    \int_{\mathcal{H}_r(z_0)}|\nabla_v(f-k)_+|^2\,dz\leq c\left(\int_{\mathcal{H}_{\rho}(z_0)}\frac{|(f-k)_+|^2}{(\rho-r)^2}+|F\mbox{\large$\chi$}_{f>k}|^2\,dz\right)
\end{align}
for some constant $c=c(n,\Lambda,b)$, where $c\to\infty $ as $b\to0$.

Next, we are going to obtain weighted $L^2$ estimates at $\gamma$. To this end, we choose a function
    $\phi$ that satisfies $\phi(t,x',v',v_n)\eqsim |v_n|$ in $\mathcal{Q}_r\cap\{v_n<0\}$, $\phi(t,x,v',v_n)=0$ in $v_n\geq0$, and $\supp \phi\subset \mathcal{Q}_{\frac{r+\rho}2}$. Since $\phi=0$ in $\gamma_-$, by \autoref{lem.changeofvar} with $\psi=\phi$, and $\varphi(\xi)=(\xi-k)_+^2$, we have \eqref{changeofvar:eq1} with $\psi=\phi$ and $\varphi(\xi)=(\xi-k)_+^2$. Using the fact that $\varphi''\geq0$ and the definition of $\varphi$, we directly deduce that 
    \begin{equation}\label{extension1:ineq.energy1}
    \begin{aligned}
         &\int_{\gamma\cap\mathcal{Q}_{\frac{r+\rho}2}}(f-k)^2_{+}(n_x\cdot v)\phi+\int_{\mathcal{H}_{\frac{r+\rho}2}}A\nabla_v(f-k)_+\cdot\nabla_v(f-k)_+\phi\\
        &\leq 2\left(\int_{\mathcal{H}_{\frac{r+\rho}2}}(f-k)_+^2|(\partial_t+v\cdot\nabla_x)\phi|+\int_{\mathcal{H}_{\frac{r+\rho}2}}|A\nabla_v(f-k)_+||\nabla_v\phi |(f-k)_+\right)\\
        &\quad+2\int_{\mathcal{H}_{\frac{r+\rho}2}}(B\cdot\nabla_v(f-k)_++F{\mbox{\large$\chi$}}_{f>k})(f-k)_+\phi.
    \end{aligned}
    \end{equation}
    Now, using Cauchy's inequality, the fact that $|(\partial_t+v\cdot\nabla_x)\phi|+|\nabla_v\phi|\leq c(\rho-r)^{-1}$ and \eqref{extension1:ineq3} with $r$ and $\rho$ replaced by $\frac{r+\rho}{2}$ and $\frac{r+3\rho}4$, we deduce
    \begin{align}\label{extension1:ineq0}
        \int_{\gamma\cap\mathcal{Q}_{\frac{r+\rho}2}}(f-k)^2_{+}\phi|n_x\cdot v|\leq c\int_{\mathcal{H}_{\frac{r+3\rho}4}}\frac{(f-k)_+^2}{(\rho-r)^2}+|F\mbox{\large$\chi$}_{f>k}|^2.
    \end{align}
    We now use the condition $\phi(t,x,v',v_n)\eqsim |v_n|=(n_x\cdot v)_+$ in $\mathcal{Q}_r\cap \{v_n<0\}$ and $\phi(t,x,v',v_n)=0$ in $v_n\geq0$ to see that
    \begin{align}\label{extension1:ineq01}
        \int_{\mathcal{Q}_r\cap\gamma_+}(f-k)_+^2(n_x\cdot v)_+^2\,d\Gamma\leq c\int_{\mathcal{H}_{\frac{r+3\rho}4}}\frac{(f-k)_+^2}{(\rho-r)^2}+|F\mbox{\large$\chi$}_{f>k}|^2.
    \end{align}
    We now use the boundary condition to obtain a suitable estimate also on $\gamma_-$. From the fact that $f=\alpha\mathcal{R}_bf+(1-\alpha)g$ on $\gamma_-$, the choice of $k$ and the change of variables $v_n \mapsto - b v_n$, we observe 
\begin{align*}
    \int_{\mathcal{Q}_r\cap\gamma_-}(f-k)_+^2(n_x\cdot v)_-^2\,d\Gamma&=  \int_{\mathcal{Q}_r\cap\gamma_-}\Big(\alpha(\mathcal{R}_bf-k)+ (1-\alpha)(g-k)  \Big)_+^2(n_x\cdot v)^2_-\,d\Gamma\\
    &=\int_{\gamma_+\cap \mathcal{Q}_{r,b}} \Big(\alpha(f-k)+(1-\alpha)(\mathcal{R}_bg-k)\Big)_+^2 (n_x\cdot v)^2_+\frac{\,d\Gamma}{b^3}\\
    &\leq \int_{\mathcal{Q}_r\cap\gamma_+}\frac{\alpha^2}{b^3}(f-k)_+^2 (n_x\cdot v)^2_+\,d\Gamma,
\end{align*}
where we have used $\mathcal{Q}_{r,b}\subset \mathcal{Q}_r$ and $\mathcal{R}_b g \le k$. 
When $\alpha=0$, by the boundary condition, we already have $(f-k)_+\equiv0$ on $\mathcal{Q}_r\cap\gamma_-$.
Thus, by combining the previous two estimates, we have 
\begin{align*}
        \int_{\gamma\cap \mathcal{Q}_r}(f-k)^2_{+}(n_x\cdot v)^2\leq c\int_{\mathcal{H}_{\rho}}\frac{(f-k)_+^2}{(\rho-r)^2}+|F\mbox{\large$\chi$}_{f>k}|^2
    \end{align*}
    for some constant $c=c(n,\Lambda,b)$ with $c\to\infty$ as $b\to0$.
Combining this with \eqref{extension1:ineq3}, we finish the proof.

\end{proof}

We now combine \autoref{lem.extension} with \autoref{lem.extension1} to derive a reverse H\"older-type inequality of $(f-k)_+$.
\begin{lemma}\label{lem.q.int}
   Let $z_0\in\gamma_0$ and let $f$ be a weak solution to \eqref{bdd:eq.bdry} with $R=1$. Let $\frac12\leq r<\rho\leq1$ and $k\geq{\|g\|_{L^\infty(\mathcal{Q}_1(z_0)\cap \gamma_-)}}$. Then we have
    \begin{align*}
        \left(\int_{\mathcal{H}_{r}(z_0)}({f}-k)_+^q\,dz\right)^{\frac{2}q}\leq c\left(\int_{\mathcal{H}_\rho(z_0)}\frac{({f}-k)_+^2}{(\rho-r)^2}+|F\mbox{\large$\chi$}_{f>k}|^2\,dz\right)
    \end{align*}
    for some $q>2$, where $c=c(n,\Lambda,b)$ with $c\to\infty$ as $b\to0$. 
\end{lemma}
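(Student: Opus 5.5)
We apply \autoref{lem.extension} to $h\coloneqq (f-k)_+$ and then close the resulting bound with \autoref{lem.extension1}. As in the proof of \autoref{lem.extension1}, we take $z_0=0$. The statement controls squared quantities, so it suffices to show
\begin{align*}
\Vert h\Vert_{L^q(\mathcal{H}_r)}\le c\big((\rho-r)^{-1}\Vert h\Vert_{L^2(\mathcal{H}_\rho)}+\Vert F\mbox{\large$\chi$}_{f>k}\Vert_{L^2(\mathcal{H}_\rho)}\big).
\end{align*}

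\textbf{Step 1 (recasting the equation).} By \autoref{lem.extension1}, $h\ge0$ is a subsolution in $\mathcal{H}_{(r+\rho)/2}$ of the equation with operator $\ddiv(A\nabla_v\cdot)$ and right-hand side $F_1=B\cdot\nabla_v h+F\mbox{\large$\chi$}_{f>k}$. Writing $\ddiv(A\nabla_v h)=\Delta_v h+\ddiv((A-I)\nabla_v h)$, we see that $h$ is a subsolution of \eqref{extension:eq.given} with $F_1$ as above and $F_2\coloneqq (A-I)\nabla_v h$. Both satisfy $\Vert F_1\Vert_{L^2}+\Vert F_2\Vert_{L^2}\lesssim \Vert \nabla_v h\Vert_{L^2}+\Vert F\mbox{\large$\chi$}_{f>k}\Vert_{L^2}$.

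\textbf{Step 2 (localized extension estimate).} We rerun the proof of \autoref{lem.extension} with the inner radius $r$ and the outer radius $r_1\coloneqq\frac{r+\rho}{2}$ in place of $\rho$ and $1$, so that the cutoff $\phi$ obeys $|\nabla_v\phi|+|(\partial_t+v\cdot\nabla_x)\phi|\lesssim(\rho-r)^{-1}$; this uses $r\ge\frac12$. This produces the right-hand side $\widetilde F_1,\dots,\widetilde F_4$ of \eqref{eq:tilde-f}. The terms $h(\partial_t+v\cdot\nabla_x)\phi$ and $h\nabla_v\phi$ already have size $(\rho-r)^{-1}\Vert h\Vert_{L^2}$. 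The terms $\phi F_1$ and $\phi F_2$ are controlled by $\Vert\nabla_v h\Vert_{L^2(\mathcal{H}_{r_1})}+\Vert F\mbox{\large$\chi$}_{f>k}\Vert_{L^2}$ without any factor of $(\rho-r)^{-1}$. The boundary measure $\mu=hv_n\phi\,\delta_{\{x_n=0\}}$ is controlled, through \eqref{extension:est.meas}, by $\big(\int_{\gamma\cap\mathcal{Q}_{r_1}}h^2v_n^2\big)^{1/2}$, again with no cutoff factor. Every one of these quantities is bounded by \eqref{extension1:ineq.bdry.weightv}, applied with the pair $(r_1,\rho)$, by $(\rho-r)^{-1}\Vert h\Vert_{L^2(\mathcal{H}_\rho)}+\Vert F\mbox{\large$\chi$}_{f>k}\Vert_{L^2(\mathcal{H}_\rho)}$. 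This is the desired order.

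\textbf{Step 3 (main obstacle: the cross term).} The only term that threatens the exponent is $\nabla_v h\cdot\nabla_v\phi$ in $\widetilde F_1$. Estimated naively it costs $(\rho-r)^{-1}\Vert\nabla_v h\Vert_{L^2}\lesssim(\rho-r)^{-2}\Vert h\Vert_{L^2}$. We plan to avoid this by choosing $\phi=\psi^2$, where $\psi$ is the reflected cutoff used in the proof of \autoref{lem.extension1}, and by using the weighted Caccioppoli estimate from that proof, which in fact gives $\Vert\psi\nabla_v h\Vert_{L^2}\lesssim \Vert h\nabla_v\psi\Vert_{L^2}+\Vert F\mbox{\large$\chi$}_{f>k}\Vert_{L^2}$. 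We then write $\nabla_v h\cdot\nabla_v\psi^2=2(\psi\nabla_v h)\cdot\nabla_v\psi$, and we insert the extra factor $\psi$ into \emph{both} $\widetilde F_1$ and $\widetilde F_2$, so that each cutoff derivative multiplies only $h$ or $\psi\nabla_v h$.

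If a single factor of $(\rho-r)^{-1}$ still remains on the cross term, we intend to remove it with the standard iteration lemma. Since $q>2$, interpolation gives $\Vert h\Vert_{L^2}\le\Vert h\Vert_{L^q}^{\theta}\Vert h\Vert_{L^1}^{1-\theta}$. Combined with Young's inequality, this lets the worse term be absorbed as $\frac12\Vert h\Vert_{L^q(\mathcal{H}_{r_1})}$ plus lower-order terms. Iterating over radii between $r$ and $\rho$ then gives the claimed $(\rho-r)^{-2}$ dependence for the squared norms. All constants depend only on $n,\Lambda,b$, and blow up as $b\to0$ solely through \autoref{lem.extension1}.
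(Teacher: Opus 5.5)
Your Steps 1 and 2 are exactly the paper's proof. The paper rewrites the subsolution inequality of \autoref{lem.extension1} with $F_2=(I-A)\nabla_v(f-k)_+$ (the sign is irrelevant), applies \autoref{lem.extension} on $\mathcal{H}_{\frac{r+\rho}{2}}(z_0)$, and bounds every resulting term by \eqref{extension1:ineq.bdry.weightv}.

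Step 3, however, does not work, for three reasons.
\begin{itemize}
\item Taking $\phi=\psi^2$ only rewrites the cross term as $2(\psi\nabla_v h)\cdot\nabla_v\psi$. The Caccioppoli argument only gives $\|\psi\nabla_v h\|_{L^2}\lesssim(\rho-r)^{-1}\|h\|_{L^2(\mathcal{H}_\rho)}+\|F\chi_{\{f>k\}}\|_{L^2}$. Your version, with $\|h\nabla_v\psi\|_{L^2}$ on the right, also drops the transport term $\int h^2\psi|(\partial_t+v\cdot\nabla_x)\psi|$. Multiplied by $\|\nabla_v\psi\|_\infty\sim(\rho-r)^{-1}$, the cross term is still of order $(\rho-r)^{-2}\|h\|_{L^2}$.
\item The cross term is not the only offender. $\widetilde F_1$ also contains $-F_2\cdot\nabla_v\phi$ with $F_2=(A-I)\nabla_v h$, which has exactly the same size.
\item Your fallback fails structurally. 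Interpolating $\|h\|_{L^2}\le\|h\|_{L^q}^{\theta}\|h\|_{L^1}^{1-\theta}$ and absorbing yields $\|h\|_{L^q(\mathcal{H}_r)}\lesssim(\rho-r)^{-2/(1-\theta)}\|h\|_{L^1(\mathcal{H}_\rho)}+\dots$. This trades the $L^2$ norm on the right for $L^1$ at the price of a \emph{larger} power. The iteration lemma returns whatever power $(\rho-r)^{-\alpha}$ enters the absorbed inequality, here $\alpha=2/(1-\theta)>2$, so it cannot lower the exponent in front of a fixed norm.
\end{itemize}

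What Steps 1--2 actually give, after tracking the factor $(1-\rho)^{-1}$ in \autoref{lem.extension} against \eqref{extension1:ineq.bdry.weightv}, is
\begin{align*}
\Big(\int_{\mathcal{H}_r(z_0)}h^q\,dz\Big)^{\frac2q}\le c\int_{\mathcal{H}_\rho(z_0)}\Big(\frac{h^2}{(\rho-r)^4}+\frac{|F\chi_{\{f>k\}}|^2}{(\rho-r)^2}\Big)\,dz,\qquad h=(f-k)_+.
\end{align*}
This is also all that the paper's one-line proof delivers; it records the power as $(\rho-r)^{-2}$ without accounting for the cross term. The discrepancy is harmless. In the De Giorgi iteration behind \autoref{lem.bdd} the gaps $\rho-r$ shrink geometrically, so any fixed power of $(\rho-r)^{-1}$ only produces a geometric factor, which the superlinear decay absorbs. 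You should therefore drop Step 3 and state the estimate with the weaker power, rather than try to remove the extra factor by cutoff or interpolation tricks.
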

\begin{proof}
    Since $(f-k)_+$ is a nonnegative subsolution to \eqref{extension1:eq.subsol}, we can rewrite this as
    \begin{align*}
        \partial_t(f-k)_++v\cdot\nabla_x(f-k)_+-\Delta_v(f-k)_+=F_1+\nabla_v\cdot F_2\quad\text{in }\mathcal{H}_{\frac{r+\rho}2(z_0)},
    \end{align*}
    where $F_2\coloneqq (I-A)\nabla_v(f-k)_+$.
    Thus, by applying \autoref{lem.extension} and using \eqref{extension1:ineq.bdry.weightv}, we deduce the desired estimate. 
\end{proof}
Now we are ready to prove \autoref{lem.bdd}.
\begin{proof}[Proof of \autoref{lem.bdd}.]
By a scaling argument, we may assume $R=1$. By the standard De Giorgi iteration argument as in the proof of \cite[Theorem 3.1]{GIMV19}, we obtain the desired estimate \eqref{bdd:est.bdd}. This completes the proof.
\end{proof}

\subsection{H\"older regularity}

We now present the second main result of this section. Since we have already proved in \autoref{lem.bdd} that solutions are bounded up to the boundary, we can assume that $f\big\rvert_{\gamma}$ is a bounded function. Away from $\gamma_0$, the regularity at $\gamma_+$ ensures the H\"older continuity of $\mathcal{R}_bf$. Consequently, for $g\in C^\alpha(\gamma_-)$, the boundary term $\alpha\mathcal{R}_bf+(1-\alpha)g$ is H\"older continuous away from $\gamma_0$. Hence, the regularity theory for in-flow (see \cite{Sil22}) yields that $f$ is H\"older-continuous up to $\gamma_-\cup\gamma_+$. We may therefore assume that $f$ attains its boundary values in a continuous manner when $g\in C^\alpha(\gamma_-)$. 

To prove the boundary regularity up to $\gamma_0$, we treat the problem as a kinetic equation with in-flow condition and use the improvement of oscillation scheme given in \cite[Lemma 8.4]{Sil22}. To deduce that the oscillation of the boundary data decays fast enough for solutions to be H\"older continuous, we crucially use that $\alpha<1$ (see \eqref{holsmall:ineq1.osck}).

\begin{lemma}\label{lem.oscillation}
 Let $z_0\in\gamma_0$ and $R\leq1$.  Let $f$ be a weak solution to 
    \begin{equation}\label{oscillation:eq.bdry}
\left\{
\begin{alignedat}{3}
\partial_tf+v\cdot\nabla_xf-\ddiv(A\nabla_vf)&=B\cdot\nabla_vf+F&&\qquad \mbox{in  $\mathcal{H}_R(z_0)$}, \\
f&=\alpha \mathcal{R}_bf+(1-\alpha)g&&\qquad  \mbox{in $\mathcal{Q}_R(z_0)\cap\gamma_-$},
\end{alignedat} \right.
\end{equation}
for some $\alpha\in[0,1)$, $b\in(0,1]$ with $\alpha/b^2\in[0,1]$. Then there is $\beta=\beta(n,\Lambda,\alpha)$ such that
\begin{align}\label{holsmall:est.hol}
    \osc_{\mathcal{H}_\rho(z_0)}f\leq c\left(\frac{\rho}{R}\right)^\beta\left(\|f\|_{L^\infty(\mathcal{H}_R(z_0))}+R^2\|F\|_{L^\infty(\mathcal{H}_R(z_0))}+R^{\beta}[g]_{C^\beta(\mathcal{Q}_R(z_0)\cap\gamma_-)}\right)
\end{align}
for some constant $c=c(n,\Lambda,R\|B\|_{L^\infty(\mathcal{H}_R(z_0))},\alpha)$. Moreover, we have the following limit behavior of the constants $\beta$ and $c$: $\lim\limits_{\alpha\to0}\beta>0$, $\lim\limits_{\alpha\to1}\beta=0$, $\lim\limits_{\alpha\to0}c<\infty$, $\lim\limits_{\alpha\to1}c=\infty$.
\end{lemma}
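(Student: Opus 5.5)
We prove \eqref{holsmall:est.hol} by an improvement-of-oscillation iteration at the grazing point $z_0$, treating \eqref{oscillation:eq.bdry} as an in-flow problem whose in-flow datum is $\alpha\mathcal{R}_bf+(1-\alpha)g$. After scaling and translation we take $R=1$ and $z_0=0$. We normalize so that $\|f\|_{L^\infty(\mathcal{H}_1)}+\|F\|_{L^\infty(\mathcal{H}_1)}+[g]_{C^\beta(\mathcal{Q}_1\cap\gamma_-)}\le 1$. The boundedness of $f$ in $\mathcal{H}_1$ is already available from \autoref{lem.bdd}. The basic tool is the in-flow oscillation decay of \cite[Lemma 8.4]{Sil22}. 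In the form we use it, there are $\theta\in(0,1)$ and $\mu\in(0,1)$, depending only on $n,\Lambda,\|B\|_\infty$, such that
\begin{align*}
\osc_{\mathcal{H}_{\theta r}}f\le (1-\mu)\osc_{\mathcal{H}_r}f+\osc_{\mathcal{Q}_r\cap\gamma_-}(f|_{\gamma_-})+c\,r^2\|F\|_{L^\infty}.
\end{align*}
This is the interior-type oscillation lemma applied to the solution extended across $\gamma_-$ by the sup and the inf of its boundary values, as described in the introduction.

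The key step is to estimate the oscillation of the boundary datum on $\gamma_-\cap\mathcal{Q}_r$. By \autoref{lem.geo}, $R_b(\gamma_-\cap\mathcal{Q}_r)=\gamma_+\cap\mathcal{Q}_{r,b}\subset\overline{\mathcal{H}_r}$, because $b\le1$. Since $f$ attains its boundary values continuously on $\gamma_+$ (the discussion before the lemma), we get
\begin{align*}
\osc_{\mathcal{Q}_r\cap\gamma_-}f\le \alpha\osc_{\gamma_+\cap\mathcal{Q}_{r,b}}f+(1-\alpha)\osc_{\mathcal{Q}_r\cap\gamma_-}g\le \alpha\osc_{\mathcal{H}_r}f+(1-\alpha)r^\beta .
\end{align*}
Substituting this into the in-flow decay does not help directly, since $(1-\mu)+\alpha$ may exceed $1$. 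The remedy is not to apply the lemma with the crude boundary oscillation. Instead, we use the De Giorgi-type measure decrease in its one-sided form. We consider $M_r=\sup_{\mathcal{H}_r}f$ and $m_r=\inf_{\mathcal{H}_r}f$. On $\gamma_-\cap\mathcal{Q}_r$ the datum satisfies
\begin{align*}
\alpha m_r+(1-\alpha)\inf g\le f\le \alpha M_r+(1-\alpha)\sup g .
\end{align*}
Hence, up to the small $g$-term, the boundary data lie at distance at least $(1-\alpha)\tfrac12\osc_{\mathcal{H}_r}f$ from one of the two extremes. Assume without loss of generality that $f\le M_r-(1-\alpha)\omega_r/2$ on $\gamma_-\cap\mathcal{Q}_r$, where $\omega_r=\osc_{\mathcal{H}_r}f$. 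We then apply the in-flow lemma to $M_r-f$, whose boundary values are bounded below by $(1-\alpha)\omega_r/2$. The lemma of \cite{Sil22} (extension by a constant plus the weak Harnack/measure-to-pointwise estimate) then gives $\inf_{\mathcal{H}_{\theta r}}(M_r-f)\ge c_0(1-\alpha)\omega_r - c r^\beta$. This yields
\begin{align}
\label{holsmall:ineq1.osck}
\omega_{\theta r}\le \bigl(1-c_0(1-\alpha)\bigr)\omega_r+c\,r^{\beta}.
\end{align}

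Iterating \eqref{holsmall:ineq1.osck} with $r=\theta^k$ gives geometric decay $\omega_{\theta^k}\le c\,\theta^{k\beta}$. Here $\beta$ is chosen so that $\theta^\beta\ge 1-c_0(1-\alpha)/2$, which also absorbs the $g$- and $F$-terms. Standard interpolation between the scales $\theta^k$ then gives \eqref{holsmall:est.hol}. The constants behave as stated: $\beta\approx\log(1-c_0(1-\alpha))/\log\theta$, which stays positive as $\alpha\to0$ and tends to $0$ as $\alpha\to1$, and $c$ blows up correspondingly.

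The main obstacle is the middle step. We must verify that the in-flow result of \cite{Sil22} can be used in this one-sided quantitative form, where only a lower bound on the boundary data is assumed, since the datum is not a priori continuous at $\gamma_0$. We must also check that the constants depend only on $\alpha$ (and not on $b$), using $R_b(\gamma_-\cap\mathcal{Q}_r)\subset\overline{\mathcal{H}_r}$.
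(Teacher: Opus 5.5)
Your proposal is correct and follows essentially the paper's argument. You regard the problem as an in-flow problem, apply the one-sided decrease-of-supremum lemma \cite[Lemma 8.4]{Sil22}, use the reflection bound $\osc_{\gamma_-}f\le\alpha\osc f+(1-\alpha)\osc g$ with $\alpha<1$ to obtain a contraction factor $1-c\,\theta(1-\alpha)$, and then iterate. Your ``main obstacle'' is not one, because that lemma is already stated in this one-sided form: it only requires $f-\sup_{\gamma_-}f\le 1$ and small $F$, with no continuity of the datum at $\gamma_0$.

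The only difference is cosmetic. Instead of your dichotomy on which extreme the datum avoids, the paper applies the lemma to both $f$ and $-f$ and adds. This gives $\osc_{\mathcal{H}_{1/2}}f\le(1-\theta)\osc_{\mathcal{H}_1}f+\theta\osc_{\mathcal{Q}_1\cap\gamma_-}f+\dots$, with weight $\theta$ rather than $1$ on the boundary term. So the direct substitution you dismissed does in fact work.
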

Note that the fact that $\beta\to0$ and $c\to\infty $ as $\alpha\to1$ is natural due to the 1D computations in \autoref{lem.phialpha}.
\begin{proof}
    Since $z_0\in\gamma_0$, we may assume $z_0=0$ and $R=1$. Thus, we will prove that there is a small constant $\beta=\beta(n,\Lambda,\alpha)$ such that for any $\rho\leq \frac{1}{2}$,
    \begin{align}\label{holsmall:ineq0.osc}
        \osc_{\mathcal{H}_\rho}f\leq c\rho^{\beta}\left(\|f\|_{L^\infty(\mathcal{H}_1)}+\|F\|_{L^\infty(\mathcal{H}_1)}+[g]_{C^\beta(\mathcal{Q}_1\cap\gamma_-)}\right).
    \end{align} 
     Since $f$ is a bounded weak solution with boundary data $\alpha \mathcal{R}_bf+(1-\alpha)g$ at $\gamma_-$ by \autoref{lem.bdd}, we can apply \cite[Lemma 8.4]{Sil22}. Note that even though the lemma is given in one-sided cylinders, straightforward modifications of the proof make the lemma true in two-sided cylinders (see the discussion in \cite[Section 2.8]{RoWe25}). Before applying the lemma, we observe that $\widetilde{f}\coloneqq\frac{f}{\sup_{\mathcal{H}_1}f-\sup_{\gamma_-\cap \mathcal{Q}_1}f+\eps_0^{-1}\|F\|_{L^\infty(\mathcal{H}_1)}}$ is a solution to \eqref{oscillation:eq.bdry} with $z_0$, $r$ and $F$ replaced by $0$, $1$ and $\widetilde{F}\coloneqq\frac{F}{\sup_{\mathcal{H}_1}f-\sup_{\gamma_-\cap \mathcal{Q}_1}f+\eps_0^{-1}\|F\|_{L^\infty(\mathcal{H}_1)}}$, where the small constant $\eps_0=\eps_0(n,\Lambda)\in(0,1)$ is determined in \cite[Lemma 8.4]{Sil22}. Then we have $\widetilde{f}-\sup_{\mathcal{Q}_1\cap\gamma_-}\widetilde{f}\leq 1$ in $\mathcal{Q}_1$ and $\|\widetilde{F}\|_{L^\infty(\mathcal{H}_1)}\leq \eps_0$. Thus, there is a small constant $\theta=\theta(n,\Lambda)$ such that for any $z\in \mathcal{H}_{\frac12}$, it holds
    \begin{align*}
        \frac{f(z)-\sup_{\gamma_-\cap \mathcal{Q}_1}f}{\sup_{\mathcal{H}_1}f-\sup_{\gamma_-\cap \mathcal{Q}_1}f+\eps_0^{-1}\|F\|_{L^\infty(\mathcal{H}_1)}}=\widetilde{f}-\sup_{\mathcal{Q}_1\cap\gamma_-}\widetilde{f}\leq 1-\theta.
    \end{align*}
    Similarly, considering $-f$ instead of $f$, we have for any $z\in \mathcal{H}_{\frac12}$,
    \begin{align*}
        \frac{-f(z)+\inf_{\gamma_-\cap \mathcal{Q}_1}f}{-\inf_{\mathcal{H}_1}f+\inf_{\gamma_-\cap \mathcal{Q}_1}f+\eps_0^{-1}\|F\|_{L^\infty(\mathcal{H}_1)}}=\frac{-f(z)-\sup_{\gamma_-\cap \mathcal{Q}_1}(-f)}{\sup_{\mathcal{H}_1}(-f)-\sup_{\gamma_-\cap \mathcal{Q}_1}(-f)+\eps_0^{-1}\|F\|_{L^\infty(\mathcal{H}_1)}}\leq 1-\theta.
    \end{align*}
    Combining the previous two inequalities leads to
    \begin{align*}
        \osc_{\mathcal{H}_{\frac12}}f&\leq (1-\theta)(\osc_{\mathcal{H}_1}f-\osc_{\mathcal{Q}_{1}\cap\gamma_-}f)+\osc_{\mathcal{Q}_1\cap\gamma_-}f+2(1-\theta)\eps_0^{-1}\|F\|_{L^\infty(\mathcal{H}_1)}\\
        &\leq (1-\theta)\osc_{\mathcal{H}_1}f+\theta\osc_{\mathcal{Q}_1\cap\gamma_-}f+2(1-\theta)\eps_0^{-1}\|F\|_{L^\infty(\mathcal{H}_1)}.
    \end{align*}
    Now, we use the boundary condition to observe that
    \begin{align*}
        \osc_{\mathcal{Q}_1\cap\gamma_-}f\leq \osc_{\mathcal{Q}_1\cap\gamma_-}\alpha\mathcal{R}_bf+\osc_{\mathcal{Q}_1\cap\gamma_-}(1-\alpha)g\leq \alpha\osc_{\mathcal{Q}_{1}\cap\gamma_+} f+(1-\alpha)\osc_{\mathcal{Q}_1\cap\gamma_-}g,
    \end{align*}
    where we have used the fact that $\osc\limits_{\mathcal{Q}_1\cap\gamma_-}f(t,x',0,v',-bv_n)\leq \osc\limits_{\mathcal{Q}_1\cap\gamma_+}f(t,x',0,v',v_n)$, as $b\in[0,1]$.
    This implies
    \begin{align*}
         \osc_{\mathcal{H}_{\frac12}}f\leq (1-\theta+\theta\alpha)\osc_{\mathcal{H}_1}f+\theta(1-\alpha)\osc_{\mathcal{Q}_1\cap\gamma_-}g+2(1-\theta)\eps_0^{-1}\|F\|_{L^\infty(\mathcal{H}_1)}.
    \end{align*}
    By the scaling argument together with the fact that $1-\alpha\leq1$, we deduce for any $k\geq1$, 
    \begin{align}\label{holsmall:ineq1.osck}
        \osc_{\mathcal{H}_{{2^{-k}}}}f\leq (1-\theta+\theta\alpha)\osc_{\cH_{{2^{-k+1}}}}f+\theta\osc_{\mathcal{Q}_{{2^{-k+1}}}\cap\gamma_-}g+2(1-\theta)\eps_0^{-1}2^{-2(k-1)}\|F\|_{L^\infty(\cH_{2^{-(k-1)}})}.
    \end{align}
    Then using this together with the inductive argument, we derive
    \begin{align*}
        \osc_{\mathcal{H}_{{2^{-k}}}}f&\leq (1-\theta+\alpha\theta)^{k}\osc_{\mathcal{H}_{1}}f+\sum_{i=1}^{k}(1-\theta+\theta\alpha)^{i-1}\left[\theta\osc_{\mathcal{Q}_{2^{-(k-i)}} \cap\gamma_-}g+c2^{-2(k-i)}\|F\|_{L^\infty(\mathcal{H}_{2^{-(k-i)}})}\right]\\
        &\leq (1-\theta+\alpha\theta)^k\osc_{\mathcal{H}_1}f+c\sum_{i=1}^{k}(1-\theta+\theta\alpha)^{i-1}\theta2^{-(k-i)\beta}([g]_{C^\beta(\mathcal{Q}_1\cap\gamma_-)}+\|F\|_{L^\infty(\mathcal{H}_1)})\\
        &\leq c2^{-k\beta}(\|f\|_{L^\infty(\mathcal{H}_1)}+[g]_{C^\beta(\mathcal{Q}_1\cap\gamma_-)}+\|F\|_{L^\infty(\mathcal{H}_1)})
    \end{align*}
    by choosing a small $\beta=\beta(n,\Lambda)$ such that $2^{-\beta}=\frac{2-\theta+\alpha\theta}2\in (1-\theta+\alpha\theta,1)$, where $c=c(n,\Lambda,\alpha)$. This verifies \eqref{holsmall:ineq0.osc}. In particular, we observe that the constant $\beta\to0$ and $c\to\infty$ as $\alpha\to1$, and $\beta$ is positive and $c$ is bounded as $\alpha\to0$.
\end{proof}

Now we combine \autoref{lem.oscillation} and \autoref{lem.calpha} together with a covering argument to prove the boundary H\"older estimate. Since the reflection operator
\begin{lemma}\label{lem.holsmall}
   Let $z_0\in\gamma_0$ and $R\leq1$.  Let $f$ be a weak solution to 
    \begin{equation*}
\left\{
\begin{alignedat}{3}
\partial_tf+v\cdot\nabla_xf-\ddiv(A\nabla_vf)&=B\cdot\nabla_vf+F&&\qquad \mbox{in  $\mathcal{H}_R(z_0)$}, \\
f&=\alpha \mathcal{R}_bf+(1-\alpha)g&&\qquad  \mbox{in $\mathcal{Q}_R(z_0)\cap\gamma_-$},
\end{alignedat} \right.
\end{equation*}
for some $\alpha\in[0,1)$, $b\in(0,1]$ with $\alpha/b^2\in[0,1]$. Then there is $\beta=\beta(n,\Lambda,\alpha)$ such that
\begin{align}\label{holsmall:est.hol}
    R^{\beta}[f]_{C^\beta(H_{R/2}(z_0))}\leq c\left(R^{-4n-2}\|f\|_{L^1(\mathcal{H}_R(z_0))}+R^2\|F\|_{L^\infty(\mathcal{H}_R(z_0))}+R^{\beta}[g]_{C^\beta(\mathcal{Q}_R(z_0)\cap\gamma_-)}\right)
\end{align}
for some constant $c=c(n,\Lambda,\alpha,b)$. Moreover, we have the following limit behavior of the constants $\beta$ and $c$: $\lim\limits_{\alpha\to0}\beta>0$, $\lim\limits_{\alpha\to1}\beta=0$, $\lim\limits_{\alpha\to0}c<\infty$, $\lim\limits_{b\to0}c=\lim\limits_{\alpha\to1}c=\infty$.
\end{lemma}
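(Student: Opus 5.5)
The plan is to combine the oscillation decay at grazing points from \autoref{lem.oscillation} with the in-flow estimates of \autoref{lem.calpha} and \autoref{lem.schhol.away-} by a standard covering/dichotomy argument, and then absorb $\|f\|_{L^\infty}$ using \autoref{lem.bdd}. By scaling we take $z_0=0$ and $R=1$. First, \autoref{lem.bdd} gives
\begin{align*}
\|f\|_{L^\infty(\mathcal{H}_{3/4})}\le c\Big(\|f\|_{L^1(\mathcal{H}_1)}+\|F\|_{L^\infty(\mathcal{H}_1)}+\|g\|_{L^\infty}\Big),
\end{align*}
with $c=c(n,\Lambda,b)$; this is where the blow-up of $c$ as $b\to0$ enters. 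The term $\|g\|_{L^\infty}$ can be replaced by $[g]_{C^\beta}$ by subtracting a constant: since $f-k$ solves the same problem with $g$ replaced by $g-k$ (using $\alpha k+(1-\alpha)k=k$), we may take $k=g(z^*)$ for some $z^*\in\gamma_-\cap\mathcal{Q}_1$. Then we apply \autoref{lem.oscillation} at every grazing point $\bar z\in\gamma_0\cap\mathcal{Q}_{1/2}$ on cylinders of size $1/4$. This gives $\operatorname{osc}_{\mathcal{H}_\rho(\bar z)}f\le c\rho^\beta K$, where $K$ denotes the right-hand side of \eqref{holsmall:est.hol}.

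Next we fix two points $z_1,z_2\in\mathcal{H}_{1/2}$ with $d:=\mathrm{dist}_{\mathrm{kin}}(z_1,z_2)$, and let $r:=$ the kinetic distance of $z_1$ to $\gamma_0$ (measured roughly by $|x_n|^{1/3}+|v_n|$). We split into two cases. If $d\ge r/8$, we pick $\bar z\in\gamma_0$ close to $z_1$; both points then lie in $\mathcal{H}_{Cd}(\bar z)$ and the grazing oscillation bound gives $|f(z_1)-f(z_2)|\le cd^\beta K$. If $d<r/8$, the cylinder $\mathcal{H}_{r/2}(z_1)$ misses $\gamma_0$. If it also misses $\{x_n=0\}$, we use interior $C^\beta$ estimates (\autoref{lem.calpha} with $\gamma_-$ empty). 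If it meets $\gamma_+$ only, the estimate is again of in-flow type with no data. If it meets $\gamma_-$, we use \autoref{lem.calpha} with data $\alpha\mathcal{R}_bf+(1-\alpha)g$, whose $C^\beta$ seminorm on $\gamma_-\cap\mathcal{Q}_{r/2}(z_1)$ must be bounded.

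This last bound is the main obstacle. By \autoref{lem.geo}, $[\mathcal{R}_bf]_{C^\beta(\gamma_-\cap\mathcal{Q}_{r/2}(z_1))}\le c[f]_{C^\beta(\gamma_+\cap\mathcal{Q}_{r/2,b}(R_bz_1))}$. The cylinder $\mathcal{Q}_{r/2,b}(R_bz_1)$ lies at distance $\gtrsim br$ from $\gamma_0$, so near the reflected point on $\gamma_+$ we apply \autoref{lem.calpha} in a cylinder of size $\sim br$ avoiding $\gamma_-\cup\gamma_0$, where no inflow data appear. The oscillation there is at most $\operatorname{osc}_{\mathcal{H}_{Cr}(\bar z)}f\le c r^\beta K$ by the grazing decay. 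Hence $[\mathcal{R}_bf]_{C^\beta}\lesssim (br)^{-\beta}r^\beta K$, with a $b$-dependent constant. Inserting this into the scaled \autoref{lem.calpha}, with $f$ replaced by $f-f(z_1)$ so that the $L^1$/$L^\infty$ term is controlled by the oscillation on $\mathcal{H}_{Cr}(\bar z)$, gives $r^\beta[f]_{C^\beta(\mathcal{H}_{r/4}(z_1))}\le cr^\beta K$, and hence $|f(z_1)-f(z_2)|\le cd^\beta K$.

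One subtlety is that $\beta$ must not exceed the exponent of \autoref{lem.calpha}; we reduce $\beta$ to the minimum of the two exponents. The stated limits as $\alpha\to0,1$ are inherited from \autoref{lem.oscillation}, and the limit $b\to0$ from \autoref{lem.bdd} and the covering step above. Finally, a covering with boundedly overlapping cylinders, as in \cite[Lemma 2.3]{KLN25}, turns these pointwise bounds into the seminorm estimate on $\mathcal{H}_{1/2}$.
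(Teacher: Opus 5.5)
Your overall route is the paper's. You combine grazing oscillation decay from \autoref{lem.oscillation} with the in-flow estimates of \autoref{lem.calpha} on cylinders missing $\gamma_0$. You control $[\mathcal{R}_bf]_{C^\beta}$ through \autoref{lem.geo} and the regularity of $f$ near $\gamma_+$ at scale $\sim br$. Finally you use \autoref{lem.bdd}. Your single-scale dichotomy is a somewhat leaner bookkeeping than the paper's three cases: you apply the up-to-the-boundary in-flow estimate directly on a cylinder around $z_1$ of size comparable to its distance to $\gamma_0$, whereas the paper follows the chain $z_1\to\widetilde{z_1}\to\overline{z_1}$ along characteristics. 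This is legitimate, since \autoref{lem.calpha} holds for arbitrary centers.

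The step that does not work as written is the removal of $\|g\|_{L^\infty}$. Replacing $(f,g)$ by $(f-k,g-k)$ with $k=g(z^*)$ does give $\|g-k\|_{L^\infty}\le c[g]_{C^\beta}$. However, the right-hand side of the target contains $\|f\|_{L^1(\cH_1)}$, which is not invariant under this shift. You end up with $\|f-k\|_{L^1(\cH_1)}\le \|f\|_{L^1(\cH_1)}+c|k|$, and nothing in your argument bounds $|g(z^*)|$ by $\|f\|_{L^1}+\|F\|_{L^\infty}+[g]_{C^\beta}$.

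To fix this, take $k=g(0)$ and note that $g(0)=f(0)$. This follows by letting $\gamma_-\ni z\to 0$ in the boundary condition, using that $\mathcal{R}_b$ fixes $\gamma_0$, that $f$ is continuous at $0$ by \autoref{lem.oscillation}, and that $\alpha<1$. Apply \autoref{lem.oscillation} and \autoref{lem.bdd} to $f-f(0)$, whose data $g-g(0)$ has $L^\infty$ norm at most $c[g]_{C^\beta}$. For small $\rho$ this gives
\[
|f(0)|\le \dashint_{\cH_\rho}|f|+\osc_{\cH_\rho}f\le c\rho^{-(4n+2)}\|f\|_{L^1(\cH_1)}+c\rho^{\beta}\bigl(\|f\|_{L^1(\cH_1)}+|f(0)|+\|F\|_{L^\infty(\cH_1)}+[g]_{C^\beta}\bigr).
\]
Choosing $\rho$ with $c\rho^\beta\le\frac12$ absorbs $|f(0)|$, and then your constant shift closes the argument. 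This is exactly the content of the second claim of \autoref{lem.rep}; it uses only \autoref{lem.oscillation} and \autoref{lem.bdd}, so there is no circularity. The paper's own proof passes over this point by simply invoking \autoref{lem.bdd}, but you should include the absorption step explicitly.
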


\begin{proof}
    First, we may assume $z_0=0$ and $R=1$, as $z_0\in\gamma_0$. At the grazing set that $f$ is now well-defined by \eqref{holsmall:ineq0.osc}. Next, note that $\alpha\mathcal{R}_bf+(1-\alpha)g$ is H\"older continuous away from $\gamma_0$ by the fact that $f$ is H\"older continuous in $\gamma_+$. Thus, the pointwise value of $f(z)$ is well defined in $\overline{\mathcal{H}_{1/2}}$. Now, using a covering argument together with \eqref{holsmall:ineq0.osc} and H\"older estimates away from $\gamma_0$, we are going to prove for any $z_1\in \mathcal{H}_{\frac12}$ and $\rho\leq\frac1{2^{10}}$, 
    \begin{align}\label{holsmall:des2}
        \|f-f(z_1)\|_{L^\infty(\mathcal{H}_{\rho}(z_1))}\leq c\rho^{\beta}\left(\|f\|_{L^\infty(\mathcal{H}_1)}+\|F\|_{L^\infty(\mathcal{H}_1)}+[g]_{C^\beta(\mathcal{Q}_1\cap\gamma_-)}\right),
    \end{align}
    where $c=c(n,\Lambda,\alpha,b)$. In particular, we divide the proof into three steps. In the first two steps, we will prove \eqref{holsmall:des2} in the region $\{x_n\geq (v_{1,n}/4)^3\}$.  In the final step, we use these estimates to derive oscillation estimates near $\gamma_-$. This is more delicate because the boundary contribution involves $\mathcal{R}_bf$, and its treatment relies on the regularity of the solution on $\gamma_+$.
    \begin{itemize}
        \item Let $x_{1,n}\geq |v_{1,n}/4|^3$. There is a constant $\rho_1\eqsim( x_{1,n})^{\frac13}$ such that 
        \begin{align}\label{holsmall:proj0}
            \mathcal{H}_{\rho_1/8}(z_1)\cap \gamma=\emptyset,\quad\mathcal{H}_{\rho_1}(z_1)\cap \gamma_0\neq\emptyset,\quad \mathcal{H}_{\rho_1}(z_1)\subset \mathcal{H}_{8\rho_1}(\overline{z_1}),
        \end{align} where $\overline{z_1}\coloneqq (t_1,x_1',0,v_1',0)$ is the natural projection of $z_1$ onto $\gamma_0$.  Using \autoref{lem.calpha} with $f$ replaced by $f-f(\overline{z}_1)$ and \eqref{holsmall:proj0}, we have for any $\rho\leq \frac{\rho_1}{32}$,
        \begin{equation*}
        \begin{aligned}
            \|f-f(z_1)\|_{L^\infty(\mathcal{H}_\rho(z_1))}&\leq c\left(\frac{\rho}{\rho_1}\right)^{\beta}\left(\|f-f(\overline{z_1})\|_{L^\infty(\mathcal{H}_{\rho_1/(16)}({z_1}))}+\rho_1^2\|F\|_{L^\infty(\mathcal{H}_{\rho_1/(16)}(z_1))}\right)\\
            & \leq c\left(\left(\frac{\rho}{\rho_1}\right)^{\beta}\|f-f(\overline{z_1})\|_{L^\infty(\mathcal{H}_{8\rho_1}({\overline{z_1}}))}+\rho^{\beta}\|F\|_{L^\infty(\mathcal{H}_{1})}\right)
        \end{aligned}
        \end{equation*}
        where $c=c(n,\Lambda)$ and we take $\beta$ to be less than the H\"older exponent $\beta_0=\beta_0(n,\Lambda)$ determined in \autoref{lem.calpha}. Combining this and \eqref{holsmall:ineq0.osc} together with the fact that $\overline{z_1}\in \gamma_0$, we have the desired estimate \eqref{holsmall:des2}. On the other hand, when $\rho>\frac{\rho_1}{32}$, then we see $\mathcal{H}_{\rho}(z_1)\subset\mathcal{H}_{2^8\rho}(\overline{z_1})$. Thus, by directly applying \eqref{holsmall:ineq0.osc} with $0$ replaced by $\overline{z_1}\in\gamma_0$, we get
        \begin{align*}
            \|f-f(z_1)\|_{L^\infty(\mathcal{H}_{\rho}(z_1))}\leq \|f-f(z_1)\|_{L^\infty(\mathcal{H}_{2^8\rho}(\overline{z_1}))}&\leq \osc_{\mathcal{H}_{2^8\rho}(\overline{z_1})}f\\
            &\leq c\rho^\beta\left(\|f\|_{L^\infty(\mathcal{H}_1)}+\|F\|_{L^\infty(\mathcal{H}_1)}+[g]_{C^\beta(\mathcal{Q}_1\cap\gamma_-)}\right).
        \end{align*}
        Thus, we also have \eqref{holsmall:des2} in this case. 
        \item Let $x_{1,n}\leq -(v_{1,n}/4)^3$. Let $\widetilde{z_1}\coloneqq z_1\circ (-\frac{x_{1,n}}{v_{1,n}},0,0)$ and $\overline{z_1}\coloneqq (\widetilde{t_1},\widetilde{x_1}',0,\widetilde{v_1}',0)$, which are the natural projections of $z_1$ onto $\gamma_+$ and of $\widetilde{z_1}$ onto $\gamma_0$, respectively. In particular, we have for $r_1\eqsim \left(\frac{x_{1,n}}{|{v_{1,n}}|}\right)^{\frac12}$ and $\rho_1\eqsim |v_{1,n}|$,
        \begin{equation}\label{holsmall:proj+}
        \begin{aligned}
            &\mathcal{H}_{r_1/8}(z_1)\cap \gamma=\emptyset,\quad \mathcal{H}_{r_1}(z_1)\cap \gamma_+\neq\emptyset,\quad \mathcal{H}_{r_1}(z_1)\subset \mathcal{H}_{8r_1}(\widetilde{z_1}),\\
            &\mathcal{H}_{\rho_1/8}(\widetilde{z_1})\cap\gamma_0=\emptyset,\quad \mathcal{H}_{\rho_1}(\widetilde{z_1})\cap\gamma_0\neq\emptyset,\quad \mathcal{H}_{\rho_1}(\widetilde{z_1})\subset\mathcal{H}_{8\rho_1}(\overline{z_1}).
            \end{aligned}
            \end{equation}
        If $\rho\leq \frac{r_1}{32}$, using \autoref{lem.calpha} with $f$ replaced by $f-f(\widetilde{z_1})$ leads to
        \begin{align}\label{holsmall:ineq4}
            \|f-f(z_1)\|_{L^\infty(\mathcal{H}_\rho(z_1))}
            & \leq c\left(\left(\frac{\rho}{r_1}\right)^{\beta}\|f-f(\widetilde{z_1})\|_{L^\infty(\mathcal{H}_{r_1/(16)}({\widetilde{z_1}}))}+\rho^{\beta}\|F\|_{L^\infty(\mathcal{H}_{1})}\right),
        \end{align}
        where $c=c(n,\Lambda)$. Similarly, we have for any $\rho\leq \frac{\rho_1}{32}$,
    \begin{align*}
            \|f-f(\widetilde{z_1})\|_{L^\infty(\mathcal{H}_\rho(\widetilde{z_1}))}
            & \leq c\left(\left(\frac{\rho}{\rho_1}\right)^{\beta}\|f-f(\overline{z_1})\|_{L^\infty(\mathcal{H}_{\rho_1/(16)}({\overline{z_1}}))}+\rho^{\beta}\|F\|_{L^\infty(\mathcal{H}_{1})}\right)
        \end{align*}
        for some constant $c=c(n,\Lambda)$. Thus, by appropriately combining the previous two inequalities with \eqref{holsmall:ineq0.osc} and \eqref{holsmall:proj+}, according to the size of the radius $\rho$, we can deduce \eqref{holsmall:des2} in this case as well. So far we have proved \eqref{holsmall:des2} for any $z\in \mathcal{H}_{\frac12}\cap \{x_{1,n}\geq (v_{1,n}/4)^3\}$. This implies that for any $\mathcal{H}_r(z_0)$ with $\mathcal{H}_{2r}(z_0)\subset \{v_n<0\}$, we have 
        \begin{align}\label{holsmall:gamma+.stable}
            [f]_{C^{\beta}(\mathcal{H}_r(z_0))}\leq c\left(\|f\|_{L^\infty(\mathcal{H}_1)}+\|F\|_{L^\infty(\mathcal{H}_1)}\right)
        \end{align}
        for some constant $c=c(n,\Lambda)$. We will use this estimate to get the regularity of $\mathcal{R}_bf$.
        \item $x_{1,n}\geq (v_{1,n}/4)^3$. Here, we use the same cylinders as in \eqref{holsmall:proj+}. If $\rho\leq \frac{r_1}{32}$, as in \eqref{holsmall:ineq4}, we get
        \begin{align}\label{holsmall:ineq5}
            \|f-f(z_1)\|_{L^\infty(\mathcal{H}_\rho(z_1))}
            & \leq c\left(\left(\frac{\rho}{r_1}\right)^{\beta}\|f-f(\widetilde{z_1})\|_{L^\infty(\mathcal{H}_{r_1/(16)}({\widetilde{z_1}}))}+\rho^{\beta}\|F\|_{L^\infty(\mathcal{H}_{1})}\right)
        \end{align}
        for some constant $c=c(n,\Lambda)$. In particular we have used interior regularity estimates. Next, when $\rho\leq\frac{\rho_1}{32}$, we use \autoref{lem.calpha} with $f$ replaced by $f-f(\overline{z_1})$ to see that
       \begin{align*}
            \|f-f(\widetilde{z_1})\|_{L^\infty(\mathcal{H}_\rho(\widetilde{z_1}))}
            & \leq c\left(\frac{\rho}{\rho_1}\right)^{\beta}\|f-f(\overline{z_1})\|_{L^\infty(\mathcal{H}_{\rho_1/(16)}({\overline{z_1}}))}\\
            &\quad+c\rho^{\beta}\left(\|F\|_{L^\infty(\mathcal{H}_{1})}+[\alpha\mathcal{R}_bf+(1-\alpha)g]_{C^{\beta}(\gamma_-\cap\mathcal{Q}_{\rho_1/(16)}(\widetilde{z_1}))}\right),
        \end{align*}
        where $c=c(n,\Lambda)$. We are now going to estimate the boundary term $\mathcal{R}_bf$. First, by \autoref{lem.geo}, we have
        \begin{align*}
            [\mathcal{R}_bf]_{C^{\beta_0}(\gamma_-\cap \mathcal{Q}_{\rho_1/(16)}(\widetilde{z}_1))}\leq  c[f]_{C^{\beta_0}(\gamma_-\cap \mathcal{Q}_{\rho_1/(16),b}(\mathcal{R}_b\widetilde{z}_1))}
        \end{align*}
        for some constant $c=c(n)$, where the set $\mathcal{Q}_{\frac{\rho_1}{16},b}(z)$ is given in \autoref{lem.geo}. In addition, for any $z\in \gamma_-\cap \mathcal{Q}_{\frac{\rho_1}{16},b}(\mathcal{R}_b\widetilde{z}_1)$, it holds $v_n<-\frac{b\rho_1}{16}$ by the fourth condition given in \eqref{holsmall:proj+}. Hence, there is a small constant $m=m(b)$ such that for any $z_0\in \gamma_-\cap \mathcal{Q}_{\frac{\rho_1}{16},b}(\mathcal{R}_b\widetilde{z}_1)$, 
        \begin{align*}
            \mathcal{H}_{2m\rho_1}(z_0)\subset \{v_n<0\}.
        \end{align*}
        Now using \eqref{holsmall:gamma+.stable} and a standard covering argument, we have 
        \begin{align*}
             [\mathcal{R}_bf]_{C^{\beta}(\gamma_-\cap \mathcal{Q}_{\rho_1/(16)}(\widetilde{z}_1))}\leq c\left(\|f\|_{L^\infty(\mathcal{H}_1)}+\|F\|_{L^\infty(\mathcal{H}_1)}\right)
        \end{align*}
        for some constant $c=c(n,\Lambda,b)$, where $c$ tends to $\infty$ as $b\to0$. Thus we have
        \begin{align*}
            \|f-f(\widetilde{z_1})\|_{L^\infty(\mathcal{H}_\rho(\widetilde{z_1}))}
            & \leq c\left(\frac{\rho}{\rho_1}\right)^{\beta}\|f-f(\overline{z_1})\|_{L^\infty(\mathcal{H}_{\rho_1/(16)}({\overline{z_1}}))}\\
            &\quad+c\rho^{\beta}\left(\|f\|_{L^\infty(\mathcal{H}_1)}+\|F\|_{L^\infty(\mathcal{H}_{1})}+[g]_{C^{\beta}(\gamma_-\cap\mathcal{Q}_{1}(\widetilde{z_1}))}\right).
        \end{align*}
        Combination this estimate with \eqref{holsmall:ineq5} and \eqref{holsmall:ineq0.osc}, we deduce \eqref{holsmall:des2} when $x_{1,n}\geq (v_{1/n}/4)^3$. 
     \end{itemize}
Finally, as in \autoref{lem.oscillation}, the constant $c$ in \eqref{holsmall:des2} tends to $\infty$ either $b\to0$ or $\alpha\to1$. The desired H\"older estimates immediately follows from \eqref{holsmall:des2} after using \autoref{lem.bdd}, which completes the proof.
\end{proof}

\subsection{Local boundedness for bounce-back}
\label{subsec:bounce-back}

In this subsection, we prove the local boundedness of the solution  near the grazing set when the boundary satisfies the bounce-back condition. 

First, for any $z_0\in\bbR^{1+2n}$ and $R>0$, we introduce two cylinders 
\begin{align*}
    \mathcal{Q}^-_R(z_0)\coloneqq \{(t,x,-v)\,:\,z\in \mathcal{Q}_R(z_0)\}\quad\text{and}\quad \mathcal{H}^-_R(z_0)\coloneqq \{(t,x,-v)\,:\,z\in \mathcal{H}_R(z_0)\},
\end{align*}
which will be used to describe the boundary behavior of a solution at $\gamma_-$. In particular, we write
\begin{align*}
    S(\mathcal{Q}_R(z_0))\coloneqq \mathcal{Q}_R(z_0)\cup \mathcal{Q}^-_R(z_0)\quad\text{and}\quad S(\mathcal{H}_R(z_0))\coloneqq \mathcal{H}_R(z_0)\cup \mathcal{H}^-_R(z_0).
\end{align*}

Now, we state the main result of this subsection.
\begin{theorem}\label{thm.bounce}
   Let $z_0\in\gamma_0$ and let $f$ be a weak solution to 
    \begin{equation}\label{bounce:eq}
\left\{
\begin{alignedat}{3}
\partial_tf+v\cdot\nabla_xf-\ddiv(A\nabla_vf)&=B\cdot\nabla_v+F&&\qquad \mbox{in  $S(\mathcal{H}_1(z_0)) $}, \\
f(t,x',0,v)&=f(t,x',0,-v)&&\qquad  \mbox{in $\gamma_-\cap S(\mathcal{Q}_1(z_0))$}.
\end{alignedat} \right.
\end{equation}
Then we have
\begin{align*}
    \|f\|_{L^{\infty}(S(\mathcal{H}_{1/2}(z_0)))}\leq c\left(\|f\|_{L^1(S(\mathcal{H}_1(z_0)))}+\|F\|_{L^{\infty}(S(\mathcal{H}_1(z_0)))}\right)
\end{align*}
for some constant $c=c(n,\Lambda,|v_0|)$.
\end{theorem}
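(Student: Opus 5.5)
We follow the De Giorgi strategy of \autoref{lem.bdd}: an energy/Caccioppoli estimate at the boundary, combined with the gain of integrability of \autoref{lem.extension}, gives a reverse H\"older inequality for $(f-k)_+$. The new feature is that the bounce-back operator $v\mapsto -v$ pairs the incoming trace at $(t,x',0,v)$ with the outgoing trace at $(t,x',0,-v)$. The point $-v$ lies near $-v_0$ rather than $v_0$, so a cylinder $\mathcal{Q}_R(z_0)$ is not mapped to itself. This is why we work on the symmetrized sets $S(\mathcal{Q}_R(z_0))$ and $S(\mathcal{H}_R(z_0))$, which are invariant under $v\mapsto -v$ on $\{x_n=0\}$ up to the $x$-drift; that drift produces the dependence on $|v_0|$. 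Throughout we set $z_0=(t_0,x_0',0,v_0',0)$, and $k\in\R$ is arbitrary since there is no datum $g$.

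\textbf{Step 1 (subsolution and boundary Caccioppoli).} Arguing as in \autoref{lem.changeofvar}, we show that $(f-k)_+$ is a subsolution in each of the two pieces $\mathcal{H}_\rho(z_0)$ and $\mathcal{H}^-_\rho(z_0)$. We then test with $\varphi_M(f)\psi^2$, where $\psi$ is supported in $S(\mathcal{Q}_{\rho})$, equals $1$ on $S(\mathcal{Q}_r)$, and satisfies $\psi(t,x',0,v)=\psi(t,x',0,-v)$ on $\{x_n=0\}$. On the boundary, the change of variables $v\mapsto -v$ has unit Jacobian and preserves $|v_n|$, so the condition $f=\mathcal{R}f$ gives
\[
\int_{\gamma_-}\varphi_M(f)\psi^2(n_x\cdot v)\,d\Gamma=-\int_{\gamma_+}\varphi_M(f)\psi^2(n_x\cdot v)\,d\Gamma .
\]
Hence the boundary term vanishes exactly; this is the analogue of the case $\alpha/b^2=1$. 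The main technical point here is building a symmetric cutoff compatible with the kinetic geometry. On $x_n=0$ the transport direction $v$ maps to $-v$, so $\psi$ should be constructed in the variables $(t,x-tv_0,v)$ around both $v_0$ and $-v_0$, with $|(\partial_t+v\cdot\nabla_x)\psi|\lesssim (1+|v_0|)/(\rho-r)$.

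Letting $M\to\infty$, and using a covering argument away from $\gamma_0$ for the parts of the cylinders where $\psi$ is not exactly $1$, we obtain the gradient bound on $S(\mathcal{H}_r)$. For the weighted trace we test with $\phi\eqsim|v_n|\chi_{v_n<0}$ as in \autoref{lem.extension1}. This bounds $\int_{\gamma_+}(f-k)_+^2|v_n|^2$, and the bounce-back symmetry transfers the same bound to $\gamma_-$, since $|v_n|$ is invariant under $v\mapsto-v$. The result is
\[
\int_{\gamma\cap S(\mathcal{Q}_r)}(f-k)_+^2|v_n|^2\,d\Gamma+\int_{S(\mathcal{H}_r)}|\nabla_v(f-k)_+|^2\,dz\le c\int_{S(\mathcal{H}_\rho)}\frac{(f-k)_+^2}{(\rho-r)^2}+|F\chi_{f>k}|^2\,dz .
\]

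\textbf{Step 2 (higher integrability and iteration).} We apply \autoref{lem.extension}, after translating by $z_0$ and by the reflected point, separately on $\mathcal{H}_\rho(z_0)$ and $\mathcal{H}^-_\rho(z_0)$ to the subsolution $(f-k)_+$, with $F_2=(I-A)\nabla_v(f-k)_+$. The translation $z\mapsto z_0^{-1}\circ z$ keeps $\{x_n>0\}$ invariant because $v_{0,n}=0$. The boundary term of \autoref{lem.extension} on each piece is controlled by Step 1. This yields the reverse H\"older inequality of \autoref{lem.q.int} on $S(\mathcal{H}_r)$ with some $q>2$. The standard De Giorgi iteration of \cite[Theorem 3.1]{GIMV19}, applied to $f$ and $-f$, then gives the $L^\infty$ bound in terms of $\|f\|_{L^2}$. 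Finally, the usual interpolation and iteration trick lowers this to $\|f\|_{L^1}$.

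We expect the main obstacle to be Step 1. We need a symmetric cutoff whose transport derivative is bounded, and we need the recovery of the gradient estimate on the parts of $S(\mathcal{H}_r)$ where $\psi$ cannot be taken identically $1$ near $-v_0$ when $|v_0|$ is large. If the symmetric cutoff fails, we will try first to shrink to radius $c(|v_0|)^{-1}$ and cover.
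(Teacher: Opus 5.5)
Your proposal is correct and follows the paper's proof essentially step for step. The steps match: a $v$-symmetric cutoff built from translates at $\pm v_0$ makes the boundary term vanish exactly; the $|v_n|^2$-weighted trace is estimated on $\gamma_+$ and transferred to $\gamma_-$ by the bounce-back symmetry; and the gain of integrability from \autoref{lem.extension} is fed into a De Giorgi iteration.

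Two small remarks, neither a real gap. First, the cutoff $\psi(t,x-tv_0,v-v_0)+\psi(t,x-tv_0,v+v_0)$, with $\psi$ even in $v$, is already $\ge 1$ on all of $S(\mathcal{Q}_r(z_0))$, so the covering or shrinking fallback you worry about in Step 1 is unnecessary. Second, in Step 2 the piece $\mathcal{H}^-_\rho(z_0)$ is a sheared cylinder rather than a translate of $\mathcal{H}_\rho$: its $x$-center drifts with $+v_0$ while $v\approx -v_0$. So \autoref{lem.extension} has to be rerun with the cutoff $\phi(t,x-tv_0,v+v_0)$, whose transport derivative is $O((1+|v_0|)/(\rho-r))$. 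This is exactly what the paper does with $\widetilde{\phi}$, on the whole union $S(\mathcal{H}_\rho(z_0))$ at once.
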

First, we give some comments about the main result.
\begin{itemize}
    \item At the boundary $\mathcal{Q}_R(z_0)\cap \gamma_-$, the value of the solution $f$ is prescribed through its value on $\mathcal{Q}_R^-(z_0)\cap\gamma_+$. In general, the latter set is not contained in  $\mathcal{Q}_R(z_0)$, as the velocity $v_0'$ and its reflection $-v_0'$ may be widely separated. Thus, we introduce two sets $S(\mathcal{Q}_R(z_0))$ and $S(\mathcal{H}_R(z_0))$, to properly define the boundary value.
    \item All arguments to establish the local boundedness are the same as in the proof of \autoref{lem.bdd}, except for the proof of \eqref{extension1:ineq.bdry.weightv} in \autoref{lem.extension1}. More precisely, in the proof of \autoref{lem.extension1} (see the estimate of $J_1$), the boundary condition is used to determine the sign of a boundary integral. However, if one works only with the cylinder $\mathcal{Q}_R(z_0)$, this argument breaks down, as the set $\mathcal{Q}_R^-(z_0)\cap\gamma_+$ is not necessarily contained in  $\mathcal{Q}_R(z_0)$. To overcome this, we instead work with the union of the kinetic cylinder and its reflection $S(\mathcal{H}_R(z_0))$. As a result, this creates some dependence $|v_0|$ in the energy estimates, as $\mathcal{H}^-_R(z_0)$ is not the usual kinetic cylinder. 
\end{itemize}
First, we give the lemma corresponding to \autoref{lem.extension1}.
\begin{lemma}\label{lem.extension1.bounce}
Let $\frac{1}2\leq r<\rho\leq 1$ and let $f$ be a solution to \eqref{bounce:eq}. Then we have for any $k\geq0$,
    \begin{equation}\label{extension1.bounce:ineq.bdry.weightv}
    \begin{aligned}
        &\int_{S(\mathcal{Q}_{r}(z_0))\cap \gamma}(f-k)_+^2{|v\cdot n_x|^2}\,d\Gamma+\int_{S(\mathcal{H}_r(z_0))}|\nabla_v(f-k)_+|^2\,dz\\
        &\leq c\left(\int_{S(\mathcal{H}_{\rho}(z_0))}\frac{|(f-k)_+|^2}{(\rho-r)^2}+|F\mbox{\large$\chi$}_{f>k}|^2\,dz\right)
    \end{aligned}
    \end{equation}
    for some constant $c=c(n,\Lambda,|v_0|)$.

\end{lemma}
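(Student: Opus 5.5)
We follow the proof of \autoref{lem.extension1} almost verbatim, the only change being that all cutoffs and energy identities are set up on the symmetrized region $S(\mathcal{H}_\rho(z_0))$ rather than on $\mathcal{H}_\rho(z_0)$. Without loss of generality $t_0$, $x_0$ are normalized and $z_0=(0,0,v_0',0)$. First, exactly as in \autoref{lem.extension1}, applying \autoref{lem.changeofvar} (which is local and does not use the boundary condition) with the regularizations $\varphi_\eps$ of $(\xi-k)_+$ and test functions vanishing on $\gamma$ shows that $(f-k)_+$ is a subsolution in the interior of $S(\mathcal{H}_{(r+\rho)/2}(z_0))$.

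For the gradient bound, we choose a cutoff $\psi\in C_c^\infty(S(\mathcal{Q}_{(r+\rho)/2}(z_0)))$ with $\psi\equiv1$ on $S(\mathcal{Q}_r(z_0))\cap\{x_n\ge0\}$ which is \emph{symmetric under the bounce-back map on the boundary}, i.e.\ $\psi(t,x',0,v)=\psi(t,x',0,-v)$. This is possible since $S(\mathcal{Q})$ is by construction invariant under $v\mapsto -v$ at fixed $(t,x)$; it is here that the constant picks up a dependence on $|v_0|$, because $\mathcal{Q}^-_R(z_0)$ is sheared in $x$ by $-tv_0$ rather than $+tv_0$, so $|(\partial_t+v\cdot\nabla_x)\psi|\lesssim (1+|v_0|)(\rho-r)^{-1}$ (the two pieces may overlap or be separated, and the cutoff must be built on the union). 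We then test with $\varphi_M(f)\psi^2$ as in \eqref{extension1:ene.ineq.grad}. The key point is the boundary term $J_1$: since $f=\mathcal{R}f$ with $\mathcal{R}f(v)=f(-v)$ on $\gamma_-$, the change of variables $v\mapsto -v$ (Jacobian $1$, $n_x\cdot v$ changes sign) together with the symmetry of $\psi$ gives exactly
\begin{align*}
\int_{\gamma_-\cap S(\mathcal{Q})}\varphi_M(f)\psi^2(n_x\cdot v)\,d\Gamma=-\int_{\gamma_+\cap S(\mathcal{Q})}\varphi_M(f)\psi^2(n_x\cdot v)\,d\Gamma ,
\end{align*}
so $J_1=0$ (this is the case $\alpha=b=1$, $g=0$, but now without any truncation issues since the reflected set stays in $S(\mathcal{Q})$). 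The remaining terms $J_2,\dots,J_5$ are estimated as before, and letting $M\to\infty$ gives the gradient bound on $S(\mathcal{H}_r(z_0))$ directly, with no need for the covering argument extending $\mathcal{H}_{r,b}$ to $\mathcal{H}_r$ since $b=1$.

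For the weighted trace, we test with $\phi$ supported in $S(\mathcal{Q}_{(r+\rho)/2}(z_0))$, $\phi\eqsim|v_n|$ on $\{v_n<0\}$ and $\phi=0$ on $\{v_n\ge0\}$ (vanishing on $\gamma_-$, so \autoref{lem.changeofvar} applies with $\varphi(\xi)=(\xi-k)_+^2$), and obtain as in \eqref{extension1:ineq01} the bound of $\int_{\gamma_+\cap S(\mathcal{Q}_r)}(f-k)_+^2(n_x\cdot v)^2$ in terms of the gradient bound just proved. The $\gamma_-$ part follows from the boundary condition by $v\mapsto-v$, which maps $\gamma_-\cap S(\mathcal{Q}_r(z_0))$ onto $\gamma_+\cap S(\mathcal{Q}_r(z_0))$ and preserves $(n_x\cdot v)^2$. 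The main obstacle I expect is the construction of the symmetric cutoffs on the non-standard set $S(\mathcal{Q}_R(z_0))$ with kinetic derivatives controlled by $c(|v_0|)(\rho-r)^{-1}$, and checking that $S(\mathcal{Q}_{r})\subset S(\mathcal{Q}_{\rho})$ with enough room between them uniformly in the separation of $v_0'$ and $-v_0'$; all other steps are routine adaptations.
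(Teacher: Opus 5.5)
Your proposal is correct and follows the paper's proof essentially step by step. The paper builds the $v$-even cutoff on $S(\mathcal{Q})$ as $\widetilde\psi(t,x,v)=\psi(t,x-tv_0,v-v_0)+\psi(t,x-tv_0,v+v_0)$ with $\psi$ even in $v$, cancels the boundary term via $v\mapsto -v$, and estimates $J_2,\dots,J_5$ as before. It then bounds the trace on $\gamma_+$ with a test function vanishing on $\gamma_-$ and transfers that bound to $\gamma_-$ through the boundary condition.

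One small correction to your heuristic: $\mathcal{Q}^-_R(z_0)$ is sheared by $+tv_0$, like $\mathcal{Q}_R(z_0)$, but centered at velocity $-v_0$. This mismatch is what produces the factor $(1+|v_0|)$ in $(\partial_t+v\cdot\nabla_x)\widetilde\psi$, and the bound you state is still the right one.
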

\begin{proof}
We may assume $t_0=x_0=0$. Let $\psi\in C_c^\infty(Q_{\frac{r+3\rho}4})$ with $\psi\equiv 1$ on $Q_{\frac{r+\rho}2}$ and $\psi(t,x,v)=\psi(t,x,-v)$. Next we define
\begin{align*}
    \widetilde{\psi}(t,x,v)\coloneqq \psi(t,x-tv_0,v-v_0)+\psi(t,x-tv_0,v+v_0)
\end{align*}
to see that $\psi\in C_c^\infty(S(\mathcal{Q}_{\frac{r+3\rho}4}(z_0)))$ with $\psi\equiv 1$ in $S(\mathcal{Q}_{\frac{r+\rho}2}(z_0))$ and $\widetilde{\psi}(t,x,v)=\widetilde{\psi}(t,x,-v)$. In addition, we observe 
\begin{align}\label{extension1.bounce:tildepsi}
    |(\partial_t+v\cdot\nabla_x)\widetilde{\psi}|\leq \|\partial_t\psi\|_{L^\infty}+c(1+|v_0|)\|\partial_x\psi\|_{L^\infty}\leq \frac{c|v_0|}{(\rho-r)}.
\end{align}
As in \autoref{lem.changeofvar}, we deduce \eqref{extension1:ene.ineq.grad} with $\psi$, $\mathcal{Q}_{\frac{r+3\rho}4}$, and $\mathcal{H}_{\frac{r+3\rho}4}$ replaced by $\widetilde{\psi}$, $S(\mathcal{Q}_{\frac{r+3\rho}4})(z_0)$, and $S(\mathcal{H}_{\frac{r+3\rho}4})(z_0)$, respectively. First, we use the boundary condition and the fact that $\widetilde{\psi}(t,x,v)=\widetilde{\psi}(t,x,-v)$ to see that 
\begin{align*}
    \int_{\gamma\cap S(\mathcal{Q}_{\frac{r+3\rho}4}(z_0))}\varphi_M(f)(\widetilde{\psi})^2(n_x\cdot v)\,d\Gamma&=\int_{\gamma\cap \mathcal{Q}_{\frac{r+3\rho}4}(z_0)}\varphi_M(f)(\widetilde{\psi})^2(v_n)\,d\Gamma\\
    &\quad+\int_{\gamma\cap \mathcal{Q}^-_{\frac{r+3\rho}4}(z_0)}\varphi_M(f)(\widetilde{\psi})^2(v_n)\,d\Gamma=0,
\end{align*}
For the remainder terms $J_2-J_5$ in \eqref{extension1:ene.ineq.grad}, we follow the same lines as in \eqref{extension1:ineq3} together with \eqref{extension1.bounce:tildepsi}, to derive
\begin{align*}
    \int_{S(\mathcal{H}_r(z_0))}|\nabla_v(f-k)_+|^2\,dz\leq c\left(\int_{S(\mathcal{H}_\rho(z_0))}\frac{|(f-k)_+|^2}{(\rho-r)^2}\,dz+|F\mbox{\large$\chi$}_{f>k}|^2\right),
\end{align*}
where $c=c(n,\Lambda,|v_0|)$. To get the weighted $L^2$-estimate at the boundary $\gamma$, we choose 
\begin{align}\label{defn.widephi}
    \widetilde{\phi}(t,x,v)\coloneqq  \phi(t,x-tv_0,v-v_0)+\phi(t,x-tv_0,v+v_0),
\end{align}
where the function $\phi$ is determined in \eqref{extension1:ineq.energy1}. Then, repeating the argument of \eqref{extension1:ineq01} together with $|(\partial_t+v\cdot\nabla_x)\widetilde{\phi}|\leq \frac{c|v_0|}{\rho-r}$, we deduce
\begin{align*}
    \int_{\gamma_+\cap S(\mathcal{Q}_{\frac{r+\rho}{2}}(z_0))}(f-k)^2_+(\widetilde{\psi})^2(n_x\cdot v)_+\leq c\left(\int_{S(\mathcal{H}_{\frac{r+3\rho}4}(z_0))}\frac{(f-k)_+^2}{(\rho-r)^2}+|F\mbox{\large$\chi$}_{f>k}|^2\right),
\end{align*}
where $c=c(n,\Lambda,|v_0|)$. Now using the boundary condition, we have
\begin{align*}
    \int_{\gamma\cap S(\mathcal{Q}_{{r}}(z_0))}(f-k)^2_+|n_x\cdot v|\leq c\left(\int_{S(\mathcal{H}_{\frac{r+3\rho}4}(z_0))}\frac{(f-k)_+^2}{(\rho-r)^2}+|F\mbox{\large$\chi$}_{f>k}|^2\right)
\end{align*}
for some constant $c=c(n,\Lambda,|v_0|)$. Altogether, we deduce the desired estimate.
\end{proof}
Now we are ready to prove our main result in this subsection.
\begin{proof}[Proof of \autoref{thm.bounce}] 
    As in the first part of \autoref{lem.extension1} and \autoref{lem.q.int}, we have that $(f-k)_+$ is a subsolution to 
    \begin{align*}
        \partial_t(f-k)_++v\cdot\nabla_x(f-k)_+-\Delta_v(f-k)_+=F_1+\nabla_v\cdot F_2\quad\text{in }S(\mathcal{H}_{\frac{r+\rho}2}(z_0)),
    \end{align*}
    where 
    \begin{align*}
        \|F_1\|^2_{L^2(S(\mathcal{H}_{\frac{r+\rho}2}(z_0))}+\|F_2\|^2_{L^2(S(\mathcal{H}_{\frac{r+\rho}2}(z_0))}&\leq c\int_{S(\mathcal{H}_{\frac{r+\rho}2}(z_0)}\left(|\nabla_v(f-k)_+|^2+|F\mbox{\large$\chi$}_{f>k}|^2\right)\\
        &\leq c\left(\int_{S(\mathcal{H}_{\frac{r+3\rho}4}(z_0))}\frac{(f-k)_+^2}{(\rho-r)^2}+|F\mbox{\large$\chi$}_{f>k}|^2\right),
    \end{align*}
    for some constant $c=c(n,\Lambda,|v_0|)$, where we have used \eqref{extension1.bounce:ineq.bdry.weightv} for the last inequality. Now we take the cut-off function  $\widetilde{\phi}$ defined in \eqref{defn.widephi} instead of  the function $\phi$ given in the proof of \autoref{lem.extension}. By repeating all the arguments in \autoref{lem.extension} with $\phi$ replaced by $\widetilde{\phi}$ and using \eqref{extension1.bounce:ineq.bdry.weightv}, we get for any $\frac12\leq r<\rho\leq1$,
    \begin{align*}
        \left(\int_{S(\mathcal{H}_{r}(z_0))}({f}-k)_+^q\,dz\right)^{\frac{2}q}\leq c\left(\int_{S(\mathcal{H}_\rho(z_0))}\frac{({f}-k)_+^2}{(\rho-r)^2}+|F\mbox{\large$\chi$}_{f>k}|^2\,dz\right)
    \end{align*}
    for some constants $q>2$ and $c=c(n,\Lambda,|v_0|)$. Thus, by the standard De-Giorgi's
iteration, we get the desired $L^\infty$-estimate.
\end{proof}

\section{Liouville theorems in the half-space}
\label{sec:Liouville}

In this section, we prove a 1D Liouville theorem for solutions subject to $\alpha$-reflection-type boundary conditions. 

To this end, for any $\alpha\in(0,1)$ and $b\in(0,1]$, we first construct an explicit  homogeneous solution $\phi_{\alpha,b}$ to the equation
\begin{equation}\label{eq.1d.alphab}
\left\{
\begin{alignedat}{3}
v\partial_x\phi_{\alpha,b}-\partial_{vv}\phi_{\alpha,b}&=0&&\qquad \mbox{in  $\{x>0\}$}, \\
\phi_{\alpha,b}(0,v)&=\alpha \phi_{\alpha,b}(0,-bv)&&\qquad  \mbox{in $\{x=0\}\times \{v>0\}$}.
\end{alignedat} \right.
\end{equation}
Next, we prove that any solution $h$ to a localized equation of \eqref{eq.1d.alphab} in ${H}_1$ is comparable to $\phi_{\alpha,b}$, in the sense that $h/\phi_{\alpha,b}$ enjoys suitable oscillation estimates in ${H}_1$. Using this, we establish that any global solution $h$ of \eqref{eq.1d.alphab} with a suitable growth at infinity agrees with $\phi_{\alpha,b}$ up to a multiplicative constant. This proves a Liouville theorem in the half-space (see \autoref{lem.liou.nd}).

\subsection{1D solutions with alpha-reflection condition}

First, we provide an explicit representation of the function $\phi_{\alpha,b}$ and its properties.

\begin{lemma}\label{lem.phialpha}
    For any $\alpha\in(0,1)$ and $b\in(0,1]$, define
     \begin{align}\label{phialpha:defn.phialpha}
    \phi_{\alpha,b}(x,v)\coloneqq x^{\frac{\lambda_{\alpha,b}}3}\left(M\left(-\frac{\lambda_{\alpha,b}}{3},\frac23,-\frac{v^3}{9x}\right)+\frac{\Gamma(\frac23)\Gamma(\frac{1-\lambda_{\alpha,b}}3)}{\Gamma(-\frac{\lambda_{\alpha,b}}3)\Gamma(\frac43)}\frac{v}{(9x)^{\frac13}}M\left(-\frac{\lambda_{\alpha,b}-1}{3},\frac43,-\frac{v^3}{9x}\right)\right),
\end{align}
where $\lambda_{\alpha,b}$ is the constant defined by
\begin{align}\label{phialpha:defn.lambdaalpha}
    \lambda_{\alpha,b}\coloneqq \inf\left\{ \lambda>0\,:\,2\cos\left(\pi\left(\frac{\lambda}{3}+\frac13\right)\right)=\alpha b^{\lambda}\right\}.
\end{align}
Then we have the following:
\begin{itemize}
\item $\lambda_{\alpha,b}\in (0,\frac12)$. In particular, $\lambda_{\alpha,1}\coloneqq \frac{3}{\pi} \arccos(\frac{\alpha}2)-1$.
    \item $\phi_{\alpha,b}>0$ in $\R^2 \setminus \{(0,0)\}$.
    \item $\phi_{\alpha,b}(0,v)=c|v|^{\lambda_\alpha}$ for some constant $c$, when $v\leq0$.
     \item $\phi_{\alpha,b}$ is a weak solution to \eqref{eq.1d.alphab}. In particular $\partial_v\phi_{\alpha,b}\in L^{4+\delta}({H}_R)$ for some small $\delta=\delta(\alpha,b)$ and any $R>1$.
    \item $\phi_{\alpha,b}(x,v)\eqsim \max\{|x|^{\frac13},|v|\}^{\lambda_{\alpha,b}}$. 
    \item For any $R>0$, it holds $\phi_{\alpha,b}\in C^{\lambda_{\alpha,b}}({H}_R)$, but $\phi_{\alpha,b}\notin C^{\lambda_{\alpha,b}+\eps}({H}_R)$ for any $\eps>0$.
\end{itemize} 
\end{lemma}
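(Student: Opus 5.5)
The plan is to use the self-similar structure of the stationary Kolmogorov operator $v\partial_x - \partial_{vv}$ in one dimension. Homogeneous solutions of degree $\lambda$ (in the kinetic scaling $x\sim r^3$, $v\sim r$) have the form $x^{\lambda/3}G(\xi)$ with $\xi = -v^3/(9x)$. Substituting this ansatz reduces the equation to Kummer's equation $\xi G'' + (\frac23 - \xi)G' + \frac{\lambda}{3} G = 0$. After the change of variables this has the two independent solutions $M(-\frac\lambda3,\frac23,\xi)$ and $\frac{v}{(9x)^{1/3}}M(\frac{1-\lambda}3,\frac43,\xi)$.

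\textbf{Step 1: choice of the coefficient.} The coefficient in \eqref{phialpha:defn.phialpha} is the unique combination that removes the exponential growth of $M(a,c,\xi)\sim \frac{\Gamma(c)}{\Gamma(a)}e^{\xi}\xi^{a-c}$ as $\xi\to+\infty$, that is, as $v\to-\infty$ with $x$ fixed. This combination is exactly Tricomi's function $U(-\frac\lambda3,\frac23,\xi)$, up to normalization. Hence $\phi_{\alpha,b}$ has at most polynomial growth and satisfies $\phi_{\alpha,b}(x,v)\eqsim \max\{x^{1/3},|v|\}^{\lambda}$ once positivity is known.

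\textbf{Step 2: boundary traces and the defining equation for $\lambda$.} I would compute the traces at $x=0^+$.
\begin{itemize}
\item For $v<0$ we have $\xi\to+\infty$, and the $U$-asymptotics $U(a,c,\xi)\sim\xi^{-a}$ give $\phi(0,v) = c_-|v|^{\lambda}$.
\item For $v>0$ we have $\xi\to-\infty$, and the Kummer asymptotics at $-\infty$ (the connection formula $M(a,c,-y)\sim\frac{\Gamma(c)}{\Gamma(c-a)}y^{-a}$) give $\phi(0,v)=c_+ v^\lambda$.
\end{itemize}
The ratio $c_+/c_-$ comes from Gamma-function reflection identities. I expect this to produce $c_+/c_- = 2\cos(\pi(\frac{\lambda}3+\frac13))$, the same computation as in the specular/diffuse cases of \cite{RoWe25,KiWe26}. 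The boundary condition $\phi(0,v)=\alpha\phi(0,-bv)$ for $v>0$ then reads $c_+ = \alpha b^\lambda c_-$. This is precisely the equation in \eqref{phialpha:defn.lambdaalpha}, so choosing $\lambda=\lambda_{\alpha,b}$ yields a classical solution in $\{x>0\}$ with the correct trace. Since the traces are continuous and the singularity at the origin is mild, the weak formulation of \autoref{defn.weak.sol} follows by cutting out a small kinetic ball and letting it shrink.

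\textbf{Step 3: properties of $\lambda_{\alpha,b}$ and of $\phi$.}
\begin{itemize}
\item Range of $\lambda$. The map $\lambda\mapsto 2\cos(\pi(\lambda+1)/3)$ decreases from $1$ at $\lambda=0$ to $0$ at $\lambda=\frac12$. Meanwhile $\alpha b^\lambda\in(0,1)$ is nonincreasing in $\lambda$ and starts at $\alpha<1$. Comparing the two curves with the intermediate value theorem gives a root in $(0,\frac12)$; the infimum is attained and positive because at $\lambda=0$ the left side is $1>\alpha$. For $b=1$ the equation inverts to $\lambda = \frac3\pi\arccos(\frac\alpha2)-1$.
\item Trace for $v\le0$. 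The formula $\phi(0,v)=c|v|^\lambda$ was obtained in Step 2.
\item Regularity. Homogeneity together with smoothness on the kinetic unit sphere away from $\gamma_0$ gives $\phi\in C^{\lambda}$. The failure of $C^{\lambda+\eps}$ follows from the trace $|v|^\lambda$ on $\{x=0\}$, since kinetic polynomials of degree $<1$ are constants. The integrability $\partial_v\phi\in L^{4+\delta}$ follows from $|\partial_v\phi|\lesssim \max\{x^{1/3},|v|\}^{\lambda-1}$. The homogeneous dimension of $(x,v)$ is $4$, so integrability near the origin requires $(1-\lambda)p<4$, and any $p<4/(1-\lambda)$, which exceeds $4$, works.
\end{itemize}

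\textbf{Main obstacle: strict positivity.} I expect positivity to be the hardest step. On $\{x=0\}$ it holds because $c_->0$ and $c_+ = \alpha b^\lambda c_->0$. The natural first attempt is to show $G>0$, using that $U(a,c,\xi)>0$ for $\xi>0$ and $a<0$ via its integral representation. For $\xi<0$ this representation is not available, so I would instead argue by the strong maximum principle. Restrict to the kinetic unit sphere and consider the negative part. Homogeneity, together with the Hopf-type behaviour at the boundary, would force a negative minimum. But a negative minimum is impossible when the boundary data are positive: incoming data $\alpha\phi(0,-bv)$ would have to be nonnegative. If this approach runs into difficulty at the grazing point, I would instead analyse the zeros of $U(-\frac\lambda3,\frac23,\cdot)$ on the negative axis directly. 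The aim would be to show that the restriction $\lambda<\frac12$ excludes such zeros, by a Sturm comparison against the $\lambda=\frac12$ solution, which vanishes identically on $\{x=0,v>0\}$.
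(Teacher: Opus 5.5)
Apart from positivity, your plan follows the paper's proof. It uses the same self-similar Kummer profile; your identification of the coefficient in \eqref{phialpha:defn.phialpha} with Tricomi's $U(-\frac{\lambda}{3},\frac23,\cdot)$ is exactly the paper's \eqref{phialpha:rep.U}. It also uses traces from the large-argument asymptotics, the intermediate value argument for $\lambda_{\alpha,b}\in(0,\frac12)$, and homogeneity for the comparability, the $C^{\lambda_{\alpha,b}}$ bound, the failure of $C^{\lambda_{\alpha,b}+\eps}$, and $\partial_v\phi_{\alpha,b}\in L^{4+\delta}$. Your expected ratio $c_+/c_-=2\cos(\pi(\frac{\lambda}{3}+\frac13))$ is correct; it follows from the reflection formula plus a sum-to-product identity. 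Your cut-off verification of the weak formulation, using $|\phi|\lesssim\rho^{\lambda}$ and $|\partial_v\phi|\lesssim\rho^{\lambda-1}$ with $\rho=\max\{x^{1/3},|v|\}$, is a fine substitute for the paper's route through \cite[Remark 2.2]{KiWe26} and explicit $L^2$ bounds on $\partial_{vv}\phi_{\alpha,b}$. For positivity, however, the paper gives no argument of its own. It imports $\phi_{\alpha,b}>0$ and both trace asymptotics from \cite[(3.34), (3.35)]{HJV14}, which applies since $\lambda_{\alpha,b}/3\in(0,\frac16)$.

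Your own positivity argument has a gap as written; write $\lambda=\lambda_{\alpha,b}$ below.

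First, the claim ``$U(a,c,\xi)>0$ for $\xi>0$ and $a<0$'' is false in general, e.g.\ $U(-1,c,\xi)=\xi-c$. Also, the integral representation of $U(a,c,\cdot)$ requires $a>0$. You need Kummer's transformation $U(a,c,\xi)=\xi^{1-c}U(a-c+1,2-c,\xi)$, where $a-c+1=\frac{1-\lambda}{3}>0$. The integral representation of $U(\frac{1-\lambda}{3},\frac43,\cdot)$ then gives $\phi_{\alpha,b}>0$ on $\{x>0,\ v\le0\}$.

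Second, the maximum-principle step has no valid mechanism. A negative minimum of $\phi$ on the kinetic unit sphere is not a local minimum in the plane, because by $\phi(r^3x,rv)=r^{\lambda}\phi(x,v)$ it keeps decreasing along dilations. So neither the strong maximum principle nor a Hopf lemma applies, and positive incoming data only exclude a minimum on $\{x=0\}$.

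What works is the profile ODE. Write $\phi=x^{\lambda/3}H(\zeta)$ with $\zeta=v(9x)^{-1/3}$; the equation becomes $H''+3\zeta^2H'-3\lambda\zeta H=0$. Here $H$ is real-analytic and $H(0)=1$. As $\zeta\to+\infty$, $H$ grows like a positive multiple of $\zeta^{\lambda}$; this is the trace $c_+v^{\lambda}$ with $c_+=\alpha b^{\lambda}c_->0$. A negative value on $(0,\infty)$ would then produce a negative minimum at some $\zeta_*>0$ with $H''(\zeta_*)=3\lambda\zeta_*H(\zeta_*)<0$, which is impossible. A zero of $H\ge0$ there would satisfy $H=H'=0$, forcing $H\equiv0$.

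The zeroth-order term has the right sign only for $\zeta>0$. This is exactly why the $U$-positivity on $\{v\le0\}$ cannot be dispensed with. With both pieces in place, your Sturm-comparison fallback is unnecessary.
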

\begin{proof}
Fix $\alpha\in(0,1)$ and $b\in(0,1]$. First, we prove that the number $\lambda_{\alpha,b}$ is well-defined. We observe that the function $g_1:[0,1]\to \bbR$ given by $g_1(\xi)\coloneqq\alpha b^{\xi}$ is continuous with $g_1(0)=\alpha$ and $g_1(1)=\alpha b$. On the other hand, the function $g_2(\xi)\coloneqq 2\cos(\pi(\frac\xi3+\frac13))$ is continuous with $g_2(0)=1$ and $g_2(\frac12)=0$. Thus, we have $(g_1-g_2)(0)<0<(g_1-g_2)(1)$, which implies that there is at least a positive number $\lambda\in(0,\frac12)$ such that $g_1(\lambda)=g_2(\lambda)$. Thus, $\lambda_{\alpha,b}$ is well-defined and it is in $(0,\frac12)$. Moreover, when $b=1$, we have the explicit formula $\lambda_{\alpha,1}=\frac{3}{\pi}\arccos(\frac{\alpha}2)-1$.

Next, we verify the sign of $\phi_{\alpha,b}$. Indeed, by \cite[(3.34), (3.35)]{HJV14}, we have for any $\lambda\in(0,\frac16)$,
\begin{align*}
   \Lambda(\zeta)\coloneqq \frac{M\left(-\lambda,\frac23,-\zeta^3\right)}{\Gamma(\frac13-\lambda)\Gamma(\frac23)}+\zeta\frac{M(\frac13-\lambda,\frac43,-\zeta^3)}{\Gamma(-\lambda)\Gamma(\frac43)}>0\quad\text{for any $\xi\in\bbR$}
\end{align*}
and 
\begin{align}\label{phialpha:limit.Lambda}
    \Lambda(\zeta)\sim |\zeta|^{3\lambda}\quad\text{as }\zeta\to-\infty\quad\text{and}\quad \Lambda(\zeta)\sim 2\cos\left(\pi\left(\lambda+\frac13\right)\right)\zeta^{3\lambda}\quad\text{as }\zeta\to\infty,
\end{align}
where we write $f(x)\sim g(x)$ as $x\to\infty$ if $\lim\limits_{x\to\infty}f(x)/g(x)=1$. Therefore, using this together with the fact that $\Gamma(\frac13-\lambda)\Gamma(\frac23)>0$, we have $\phi_{\alpha,b}>0$ except at the origin. In addition, by fixing $v$ and taking $x\to0$, we deduce from \eqref{phialpha:limit.Lambda} that
\begin{align}\label{phialpha:bdry.value}
    \phi_{\alpha,b}(0,v)= 2c\cos\left(\pi\left(\frac{\lambda_{\alpha,b}}3+\frac13\right)\right)v^{\lambda_\alpha}\quad\text{in }v\geq0\quad\text{and}\quad \phi_\alpha(0,v)=c|v|^{\lambda_\alpha}\quad\text{in }v\leq0
\end{align}
for some constant $c>0$.  

Now we will prove that $\phi_{\alpha,b}$ is a weak solution to \eqref{eq.1d.alphab}. First, note that $\phi_{\alpha,b}$ is smooth in $x>0$ and is continuous up to the boundary $\{x=0\}\cap \{v>0\}$ by the fact that $M(\cdot,\cdot,-\zeta)$ is analytic and \eqref{phialpha:bdry.value}. Next, from \cite[Claim]{HJV14}, $\phi_{\alpha,b}$ solves $v\partial_x\phi_{\alpha,b}-\partial_{vv}\phi_{\alpha,b}=0$.
Moreover, using \eqref{phialpha:bdry.value} and \eqref{phialpha:defn.lambdaalpha}, we get 
\begin{equation}\label{phialpha:bdry.cond}
    \phi_{\alpha,b}(0,v)=\alpha\phi_{\alpha,b}(0,-bv)\quad\text{in }v\geq0.
\end{equation}
Thus, by \cite[Remark 2.2]{KiWe26}, it suffices to show that $v\partial_x\phi_{\alpha,b},\partial_{v}\phi_{\alpha,b}\in L^2({H}_R)$ for any $R>0$. Due to the equation satisfied by $\phi_{\alpha,b}$, we have $v\partial_x\phi_{\alpha,b}\in L^2(\mathcal{H}_R)$ is equivalent to $\partial_{vv}\phi_{\alpha,b}\in L^2(\mathcal{H}_R)$.

To show this, first we consider the case that $v\geq0$. By differentiating the function \eqref{phialpha:defn.phialpha} together with \cite[13.3.15]{DLMF}, we deduce
\begin{align*}
    |\partial_{vv}\phi_{\alpha,b}|&\lesssim  x^{\frac{\lambda_{\alpha,b}}3-1}|v|M\left(-\frac{\lambda_{\alpha,b}}{3}+1,\frac53,-\frac{v^3}{9x}\right)+x^{\frac{\lambda_{\alpha,b}}3-2}|v|^4M\left(-\frac{\lambda_{\alpha,b}}{3}+2,\frac83,-\frac{v^3}{9x}\right)\\
    &\quad+\frac{v}{x^{\frac13}}\left(x^{\frac{\lambda_{\alpha,b}}3-1}|v|M\left(-\frac{\lambda_{\alpha,b}}{3}+\frac43,\frac73,-\frac{v^3}{9x}\right)+x^{\frac{\lambda_{\alpha,b}}3-2}|v|^4M\left(-\frac{\lambda_{\alpha,b}}{3}+\frac{7}{3},\frac{10}3,-\frac{v^3}{9x}\right)\right)\\
    &\quad+x^{\frac{\lambda_{\alpha,b}}3-\frac13}\frac{|v|^2}{x}M\left(-\frac{\lambda_{\alpha,b}}{3}+\frac43,\frac73,-\frac{v^3}{9x}\right),
\end{align*}
where we write $f(x)\lesssim g(x)$ if $f(x)\leq cg(x)$ for some constant $c$ depending only on $\alpha$ and $b$. When $v\leq0$, we use instead \cite[13.2.42]{DLMF} and \cite[13.2.40]{DLMF}, which leads to the formula 
    \begin{align}\label{phialpha:rep.U}
        \phi_{\alpha,b}(x,v)=mx^{\frac{\lambda_{\alpha,b}}3}U\left(-\frac{\lambda_{\alpha,b}}3,\frac23,-\frac{v^3}{9x}\right)
    \end{align}
    for some constant $m=m(\alpha,b)>0$. Similarly, we differentiate the function  $\phi_{\alpha,b}$ given in \eqref{phialpha:rep.U} together with \cite[13.322]{DLMF} so that 
    \begin{align*}
    |\partial_{vv}\phi_{\alpha,b}|\lesssim x^{\frac{\lambda_{\alpha,b}}3-1}|v|U\left(-\frac{\lambda_{\alpha,b}}{3}+1,\frac53,-\frac{v^3}{9x}\right)+x^{\frac{\lambda_{\alpha,b}}3-2}|v|^4U\left(-\frac{\lambda_{\alpha,b}}{3}+2,\frac83,-\frac{v^3}{9x}\right).
\end{align*}
Using the previous two inequalities and the fact that $U(a,b,z)\leq z^{-a}$ when $z>0$ and $M(a,b,z)\lesssim (-z)^{-a}$ when $z<0$, we have
\begin{equation}\label{phialpha:d2vh}
\begin{aligned}
    &\int_{\{-v^3\geq x\}\cap {H}_R}|\partial_{vv}h|^2+\int_{\{|v|^3\leq x\}\cap {H}_R}|\partial_{vv}h|^2+\int_{\{v^3\geq x\}\cap {H}_R}|\partial_{vv}h|^2\\
    &\lesssim \int_{\{-v^3\geq x\}\cap {H}_R}|v|^{-4+2\lambda_{\alpha,b}}+\int_{\{|v|^3\leq x\}\cap {H}_R}|x|^{\frac{2\lambda_{\alpha,b}-4}3}+\int_{\{v^3\geq x\}\cap {H}_R}|v|^{-4+2\lambda_{\alpha,b}}\leq c(\alpha,b,R),
\end{aligned}
\end{equation}
as $\lambda_{\alpha,b}>0$. We have proved $\partial_{vv}\phi_{\alpha,b}\in L^2({H}_R)$ and this gives $v\partial_x\phi_{\alpha,b}\in L^2({H}_R)$ using the equation. In addition, by differentiating the function $\phi_{\alpha,b}$ using the two representations \eqref{phialpha:defn.phialpha} and \eqref{phialpha:rep.U} from above, we similarly prove $\partial_v\phi_{\alpha,b}\in L^{4+\delta}({H}_R)$ for some small $\delta>0$ as in the proof of $\partial_{vv}\phi_{\alpha,b}\in L^2({H}_R)$. In particular, $\|\partial_v\phi_{\alpha,b}\|_{L^{4+\delta}({H}_1)}\leq c(\alpha,b)$. Thus, $\phi_{\alpha,b}$ is a weak solution, as desired. 

Lastly, we will prove $\phi_{\alpha,b}(x,v)\eqsim \max\{|x|^{\frac13},|v|\}^{\lambda_{\alpha,b}}$. By \eqref{phialpha:bdry.value}, there is a constant $c_{\max}=c_{\max}(\alpha,b)$ such that if $|v|^3>c_{\max}x$, $\phi_{\alpha,b}\eqsim x^{\frac{\lambda_{\alpha,b}}3}(|v^3|/x)^{\frac{\lambda_{\alpha,b}}3}\eqsim |v|^{\lambda_{\alpha,b}}$. On the other hand, using \cite[13.2.13]{DLMF}, when $|v|^3<c_{\min}x$ for some constant $c_{\min}=c_{\min}(\alpha,b)<1$, then we have $\phi_{\alpha,b}(x,v)\eqsim x^{\frac{\lambda_{\alpha,b}}3}$. In addition, for $c_{\min}x\leq |v|^3\leq c_{\max}x$, we have 
\begin{align*}
    M\left(-\frac{\lambda_{\alpha,b}}{3},\frac23,-\frac{v^3}{9x}\right)+\frac{\Gamma(\frac23)\Gamma(\frac{1-\lambda_{\alpha,b}}3)}{\Gamma(-\frac{\lambda_{\alpha,b}}3)\Gamma(\frac43)}\frac{v}{(9x)^{\frac13}}M\left(-\frac{\lambda_{\alpha,b}-1}{3},\frac43,-\frac{v^3}{9x}\right)\eqsim1,
\end{align*}
 which implies $\phi_{\alpha,b}(x,v) \eqsim x^{\frac{\lambda_{\alpha,b}}3}$. Thus, we get $\phi_{\alpha,b}(x,v)\eqsim \max\{|x|^{\frac13},|v|\}^{\lambda_{\alpha,b}}$, which gives the oscillation estimate at the origin. Since $\phi_{\alpha,b}$ is a weak solution, \autoref{lem.schhol.away-} implies that  $\phi_{\alpha,b}$ satisfies oscillation estimates of order  ${\lambda_{\alpha,b}}$ away from the origin. Combining the oscillation estimates at and away from the origin, we conclude that for any $R>0$, $\phi_{\alpha,b}\in C^{\lambda_{\alpha,b}}({H}_R)$. However, as $\phi_{\alpha,b}(x,0)=cx^{\frac{\lambda_{\alpha,b}}3}$, $\phi_{\alpha,b}\notin C^{\lambda_{\alpha,b}+\eps}({H}_R)$ with $\eps>0$. This completes the proof.
\end{proof}

\subsection{A boundary-Harnack type principle in terms of the 1D solution}

Now we prove the comparability between any weak solution $h$ to a localized version of the equation \eqref{eq.1d.alphab} and $\phi_{\alpha,b}$. The precise statement reads as follows:

\begin{proposition}\label{prop.osc}
    
     Let $\alpha\in(0,1)$, $b\in(0,1]$ with $\alpha/b^2\in(0,1]$. Let $h$ be a weak solution to 
    \begin{equation*}
\left\{
\begin{alignedat}{3}
v\partial_xh-\partial_{vv}h&=0&&\qquad \mbox{in  ${H}_{2R}$}, \\
h(0,v)&=\alpha h(0,-bv)&&\qquad  \mbox{in $\{x=0\}\times (0,2R)$}
\end{alignedat} \right.
\end{equation*}
with $R\geq1$.
Then there is a small $\beta=\beta(\alpha,b)$ such that for any $r\leq1$,
\begin{align*}
    \sup_{H_{rR}}\frac{h}{\phi_{\alpha,b}}-\inf_{H_{rR}}\frac{h}{\phi_{\alpha,b}}\leq cr^{\beta}R^{-\lambda_{\alpha,b}}\|h\|_{L^\infty(\mathcal{H}_R)}
\end{align*}
for some constant $c=c(\alpha,b)$, where the constant $\lambda_{\alpha,b}$ is determined in \eqref{phialpha:defn.lambdaalpha}.
\end{proposition}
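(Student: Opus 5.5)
By scaling it suffices to take $R=1$ and prove that $h/\phi_{\alpha,b}$ has oscillation decay $r^{\beta}$ at the origin. Indeed, $(x,v)\mapsto h(R^3x,Rv)$ again solves the problem, and $\phi_{\alpha,b}$ is $\lambda_{\alpha,b}$-homogeneous. This homogeneity is what produces the factor $R^{-\lambda_{\alpha,b}}$, since by \autoref{lem.phialpha} we have $\phi_{\alpha,b}\eqsim R^{\lambda_{\alpha,b}}$ on the outer part of $H_R$. The core of the argument is a one-step improvement.

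\emph{Claim.} If $h\ge 0$ in $H_{1}$, then
\begin{align*}
\inf_{H_{1/2}}\frac{h}{\phi_{\alpha,b}}\ge c_0\,\sup_{H_{1/2}}\frac{h}{\phi_{\alpha,b}}
\end{align*}
for some $c_0=c_0(\alpha,b)>0$.

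Granting the claim, iterate in the standard way. Set $M_k=\sup_{H_{2^{-k}}}h/\phi_{\alpha,b}$ and $m_k=\inf_{H_{2^{-k}}}h/\phi_{\alpha,b}$. Apply the claim to $h-m_k\phi_{\alpha,b}$ and to $M_k\phi_{\alpha,b}-h$, both of which are nonnegative solutions with the same reflection condition, rescaled to unit size. This gives $M_{k+1}-m_{k+1}\le(1-c_0/2)(M_k-m_k)$. The starting bound $M_0-m_0\lesssim\|h\|_{L^\infty}$ holds on a slightly smaller cylinder, using $\phi_{\alpha,b}\eqsim\max\{|x|^{1/3},|v|\}^{\lambda_{\alpha,b}}$ together with a first application of the claim (or local boundedness, \autoref{lem.bdd}) to control $h/\phi_{\alpha,b}$ near the origin.

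For the claim I would proceed in three steps.

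\textbf{Step 1 (interior comparability).} Fix a point $P=(x_*,0)$ with $x_*\eqsim 1$. By the interior Harnack inequality and the weak Harnack inequality for the 1D Kolmogorov operator, propagated along a Harnack chain that stays away from $\{x=0\}$, we have $h\ge c\,h(P)$ and $h\le C h(P)$ on compact subsets of $H_{3/4}\cap\{x>\delta\}$. Since $\phi_{\alpha,b}\eqsim 1$ there, this is comparability away from the boundary.

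\textbf{Step 2 (transfer to the boundary).} I would do this with the maximum principle for the reflection problem (\autoref{lem.maximum}). The dissipativity $\alpha/b^2\le1$ lets one compare $h$ with $c\,h(P)\phi_{\alpha,b}$ in a region $D\subset H_{3/4}$ whose kinetic boundary consists of the $\gamma_-$ portion, where both functions obey the same $\alpha$-reflection law, together with pieces where Step 1 already gives the ordering.

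Near $\gamma_+=\{x=0,v<0\}$ with $|v|\eqsim 1$, boundary regularity for in-flow problems away from $\gamma_0$ (\autoref{lem.schhol.away-}, \autoref{lem.calpha}) shows that $h$ is Hölder continuous. Positivity of $h$ there should follow from a weak Harnack inequality at non-grazing boundary points. To carry information from $\gamma_+$ (near $v=-1$) to $\gamma_-$ (near $v=+b$), I would use the modified mirror extension adapted to $\mathcal{R}_b$ (\autoref{lem.mirroext}). This turns a neighbourhood of $(0,b)$ into an interior-type point of an extended equation, so that Harnack applies across the reflection without passing through the grazing set.

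The upper bound $h\le C h(P)\phi_{\alpha,b}$ is obtained symmetrically, comparing $C h(P)\phi_{\alpha,b}-h$.

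\textbf{Step 3 (conclusion).} Combining the lower and upper bounds on $H_{1/2}$ gives $\inf h/\phi_{\alpha,b}\ge c\,h(P)\ge c_0\sup h/\phi_{\alpha,b}$, which is the claim.

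The main obstacle is Step 2, specifically building a subsolution or barrier compatible with the nonlocal-in-$v$ reflection $h(0,v)=\alpha h(0,-bv)$. Classical boundary-Harnack barriers do not satisfy this condition, which is exactly why the comparison must go through \autoref{lem.maximum} with $\phi_{\alpha,b}$ itself as the comparison function. The delicate part is ensuring that the Dirichlet portions of $\partial_{\mathrm{kin}}D$ (the lines $v=\pm1$ and the segment $x=1$, $v<0$) avoid the region near the origin, where no a priori ordering of $h$ and $c\,h(P)\phi_{\alpha,b}$ is available.
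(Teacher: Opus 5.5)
\textbf{The gap is the upper half of your Claim.} Your architecture is the paper's. You use \autoref{lem.maximum} with $\phi_{\alpha,b}$ as the comparison function, the mirror extension \autoref{lem.mirroext} to carry positivity from $\gamma_+$ to $\gamma_-$, interior Harnack, and a ratio iteration as in \cite[Proposition 3.6]{KiWe26}. The lower-bound half of your Step 2 is essentially \autoref{lem.lower}.

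The upper half is not ``symmetric''. To apply \autoref{lem.maximum} to $h-Ch(P)\phi_{\alpha,b}$ on $D=H_r$, you need $h\le Ch(P)$ on the lines $\{v=\pm r\}$ all the way down to $x=0$.
\begin{itemize}
\item At $(0,-r)$ this is an upper bound on an outgoing value.
\item At $(0,r)$ the boundary condition turns it into an upper bound on the outgoing value $h(0,-br)$.
\end{itemize}
Kinetic Harnack inequalities only bound past values by future ones. Values on $\{x=0,\,v<0\}$ are future values, so interior information does not bound them from above. The reflection recycles an outgoing velocity $w$ into $|w|/b$ only when $|w|<b$, and your Steps 1--2 use this recycling only for lower bounds.

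In fact the Claim on $H_{1/2}$ fails when $b<\frac12$. Take $h=h_0+Mh_1$, where $h_0$ has Dirichlet data $1$ and $h_1$ has data supported on $\{v=-1\}\times(0,\epsilon)$. Use the backward process $\dot X=-V$, and let $C(s)$ be the minimal action to move $v$ from $-s$ to $-1$ with $\int|v|\,dt\le 1$; $C$ is strictly decreasing.
\begin{itemize}
\item From $Q=(0^+,w)$ with $b<|w|<\frac12$, one reaches the support at Gaussian cost $C(|w|)/\epsilon$.
\item From an interior $P$, one must first pass $v=0$ at $x<\epsilon$, or be reflected into $v\in(-b,0)$. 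Either way the cost is at least $C(b)/\epsilon>C(|w|)/\epsilon$.
\end{itemize}
Choosing $M=1/h_1(P)$ keeps $h(P)\le2$, while $h(Q)\to\infty$ as $\epsilon\to0$. Since $\phi_{\alpha,b}(P)\eqsim\phi_{\alpha,b}(Q)\eqsim1$, this violates the Claim.

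\textbf{The repair: drop the upper half.} The paper combines two ingredients.
\begin{itemize}
\item[(i)] The one-sided bound of \autoref{lem.lower}: $\inf_{H_{1/2}}w/\phi_{\alpha,b}\gtrsim\inf_{H^+_{1,1/2}}w/\phi_{\alpha,b}$ for $w\ge0$.
\item[(ii)] \autoref{lem.upper}, i.e.\ \autoref{lem.maximum} with the barrier $C\|h\|_{L^\infty}\phi_{\alpha,b}$, where $C\phi_{\alpha,b}\ge1$ on the Dirichlet part of $\partial_{\mathrm{kin}}H_{1/2}$. This gives $|h|\le C\|h\|_{L^\infty}\phi_{\alpha,b}$, hence $M_0-m_0\lesssim\|h\|_{L^\infty}$.
\end{itemize}
Ingredient (ii) also fixes your initialization. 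Local boundedness cannot control $h/\phi_{\alpha,b}$ near the origin, and your Claim only concerns nonnegative $h$.

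Now apply (i) to $h-m_k\phi_{\alpha,b}$ and to $M_k\phi_{\alpha,b}-h$, use interior Harnack to pass to a common interior point $P_k$, and add:
\[
(m_{k+1}-m_k)+(M_k-M_{k+1})\ \ge\ c\Big[\Big(\frac{h}{\phi_{\alpha,b}}(P_k)-m_k\Big)+\Big(M_k-\frac{h}{\phi_{\alpha,b}}(P_k)\Big)\Big]=c\,(M_k-m_k).
\]
This is the geometric decay you want, with no two-sided boundary Harnack needed.
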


To get this, first we prove an upper bound of $h/\phi_{\alpha,b}$ by using the following maximum principle for solutions satisfying the $\alpha$-reflection-type boundary condition.

\begin{lemma}\label{lem.maximum}
   Let $\alpha\in(0,1)$, $b\in(0,1]$ with $\alpha/b^2\in(0,1]$. Let $h\in C(\overline{{H}_1})$ be a weak solution to 
    \begin{equation}\label{maximum:eq}
\left\{
\begin{alignedat}{3}
v\partial_xh-\partial_{vv}h&=0&&\qquad \mbox{in  ${H}_1$}, \\
h(0,v)&=\alpha h(0,-bv)&&\qquad  \mbox{in $\{x=0\}\times(0,1)$},\\
h&\leq0&&\qquad\mbox{in $\partial_{\mathrm{kin}}{H}_1\setminus (\{x=0\}\times(0,1))$}.
\end{alignedat} \right.
\end{equation} Assume $\partial_{vv}h\in L^2(H_1)$. Then we have $h\leq0$ in $H_1$.
\end{lemma}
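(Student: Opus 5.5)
We will prove the maximum principle by an energy (Stampacchia-type) argument applied to $h_+$, using the same boundary-flux dissipativity that drove \autoref{lem.extension1}. Here $H_1=\{(x,v): 0<x<1,\ |v|<1\}$ is the 1D stationary half-cylinder. Its kinetic boundary consists of $\{v=\pm1\}\times(0,1)$, $\{x=1\}\times(-1,0)$ (inflow from the right), and $\{x=0\}\times(0,1)$ (inflow from the left, where the reflection condition holds). On the first two pieces we have $h\le 0$, and $h$ is continuous up to $\overline{H_1}$. The plan is to test the equation with $h_+$, more precisely with a regularization $\varphi_M'(h)$ as in \autoref{lem.changeofvar}; since $h$ is bounded we may use $\varphi(\xi)=\xi_+^2$ after smoothing. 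Because $\partial_{vv}h\in L^2$ and $v\partial_x h=\partial_{vv}h\in L^2$, the product $v\partial_x(h_+^2)$ is integrable, so the integration by parts in $x$ is justified without cutoffs.

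First we multiply $v\partial_x h=\partial_{vv}h$ by $h_+$ and integrate over $H_1$. This gives
\begin{align*}
\int_{H_1}|\partial_v h_+|^2 + \frac12\int_{-1}^1 v\, h_+^2(1,v)\,dv - \frac12\int_{-1}^1 v\,h_+^2(0,v)\,dv = \big[h_+\partial_v h\big]_{v=-1}^{v=1}\text{-terms}.
\end{align*}
The lateral terms at $v=\pm1$ vanish because $h_+=0$ there. At $x=1$ only $v>0$ survives, since $h_+(1,v)=0$ for $v<0$, and that contribution is nonnegative.

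The key step is the $x=0$ term, $-\tfrac12\int_{-1}^1 v\,h_+^2(0,v)\,dv$. We use the boundary condition $h(0,v)=\alpha h(0,-bv)$ for $v\in(0,1)$. With the substitution $w=-bv$ we get
\begin{align*}
\int_0^1 v\,h_+^2(0,v)\,dv=\alpha^2\int_0^1 v\,h_+^2(0,-bv)\,dv=\frac{\alpha^2}{b^2}\int_{-b}^0 |w|\,h_+^2(0,w)\,dw\le \frac{\alpha^2}{b^2}\int_{-1}^0|w|\,h_+^2(0,w)\,dw.
\end{align*}
Since $\alpha^2/b^2\le \alpha/b^2\le 1$, the total boundary term at $x=0$ is nonnegative. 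We therefore obtain $\int_{H_1}|\partial_v h_+|^2\le 0$, so $h_+$ is independent of $v$ in $H_1$. Because $h_+$ vanishes at $v=\pm1$ and $h$ is continuous, $h_+\equiv0$.

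The main obstacle is justifying the boundary integrals. We need the traces $h_+^2(0,\cdot)|v|$ to be finite and the integration by parts in $x$ to be valid up to $x=0$ near the grazing point $(0,0)$, where regularity is weakest. We would handle this by integrating over $\{x>\delta\}$ and letting $\delta\to0$, using continuity of $h$ on $\overline{H_1}$ (so $h(\delta,\cdot)\to h(0,\cdot)$ uniformly) together with $v\partial_x h\in L^2$. The identity $\int v\,h_+\partial_x h = \tfrac12\partial_x\int v\,h_+^2$ holds in $W^{1,1}$ in $x$, so the limit gives exactly the trace terms. Boundedness of $h$ also gives finiteness of the traces. Here the hypothesis $\partial_{vv}h\in L^2$ is essential, because it upgrades the weak formulation enough to test with $h_+$ itself, which \autoref{lem.extension1} could not do in general.
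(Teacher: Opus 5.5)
Your proposal is correct and is essentially the paper's proof. You multiply the equation by $h_+$, which is legitimate since $v\partial_x h=\partial_{vv}h\in L^2$ and $h$ is bounded, integrate by parts in $v$ and $x$, and drop the nonnegative term at $x=1$. You then use $h_+(0,v)=\alpha h_+(0,-bv)$ with the substitution $w=-bv$ and $\alpha^2/b^2\le 1$ to show the $x=0$ term is nonnegative, which forces $\partial_v h_+\equiv 0$ and hence $h_+\equiv 0$.

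The paper applies the fundamental theorem of calculus in $x$ directly, without your $\{x>\delta\}$ limiting argument or any $\varphi_M$-type regularization. Those extra justifications are harmless but not needed, given continuity of $h$ on $\overline{H_1}$ and $\partial_{vv}h\in L^2$.
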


First, recall that $\partial_{\mathrm{kin}}{H}_1 \setminus (\{x=0\}\times(0,1)) = ([0,1] \times \{v=1\}) \cup ([0,1] \times \{v=-1\}) \cup (\{ x = 1\} \times (-1,0))$. 

Next, we point out that the assumption $\partial_{vv}h\in L^2(H_1)$ will always be satisfied later, as we verify in \autoref{lem.upper} below. We expect \autoref{lem.maximum} to remain true also without this additional assumption.

\begin{proof}
    Note that $h$ is smooth in the interior of ${H}_1$ by standard interior regularity results. Since we have assumed $\partial_{vv}h\in L^2(H_1)$, we get $v\partial_xh\in L^2(H_1)$, which ensures that for any $\psi\in L^2(H_1)$, $\int_{H_1}\partial_{vv}h\psi$ and $\int_{H_1}v\partial_{x}h\psi$ are well defined.
    Thus, using the equation, we have 
    \begin{align*}
        \int_{H_1}(v\partial_xh-\partial_{vv}h)h_+=0.
    \end{align*}
    Now by the fundamental theorem of calculus together with the fact that $h_+ = 0$ on $([0,1] \times \{v=1\}) \cup ([0,1] \times \{v=-1\})$, we get
    \begin{align*}
        0=\int_{{H}_1}v\partial_xh h_++\partial_vh\partial_vh_+=\int_{{H}_1}\frac12v\partial_x(h_+)^2+(\partial_vh_+)^2.
    \end{align*}
    Furthermore, using the fact that $h_+=0$ on $\{x=1\}\times (-1,0)$ and $vh_+^2(1,v)\geq0$ when $v\geq0$, we have
    \begin{align*}
        \int_{{H}_1}\frac12v\partial_x(h_+)^2=\int_{-1}^{1}\frac12 v(h_+^2(1,v)-h_+^2(0,v))\,dv\geq -\int_{-1}^{1}\frac{v}{2} h_+^2(0,v)\,dv.
    \end{align*}
    Now we use the boundary condition on $\{x=0\}\times (0,1)$, the change of variables $v \mapsto -b v$, and the fact that $b\in(0,1]$ and $\alpha/b^2\in(0,1]$ to derive
    \begin{align*}
        -\int_{-1}^{1}\frac{v}{2} h_+^2(0,v)\,dv&=-\int_{-1}^{0}\frac{v}{2}h_+^2(0,v) -\int_{0}^{1}\frac{v}{2}(\alpha h_+)^2(0,-bv)\\
        &=-\int_{-1}^{0}\frac{v}{2}h_+^2(0,v) + \int_{-b}^{0}\frac{v}{2}\left(\frac{\alpha}b\right)^2 h_+^2(0,v) \\
        &\geq \left(1 - \left(\frac{\alpha}{b}\right)^2 \right) \int_{-1}^{0}\frac{|v|}{2}h_+^2(0,v)\geq0.
    \end{align*}
    Altogether, we conclude that $\partial_vh_+\equiv 0$ in ${H}_1$, which implies $h_+=0$ as $h_+\coloneqq 0$ on $\partial_{\mathrm{kin}}{H}_1\setminus (\{x=0\}\times(0,1))$. Thus we have $h\leq0$ in ${H}_1$.
\end{proof}
With this lemma at hand, we can prove the following upper bound of $h/\phi_{\alpha,b}$. 
\begin{lemma}\label{lem.upper}
    Let $\alpha\in(0,1)$, $b\in(0,1]$ with $\alpha/b^2\in[0,1]$. Let $h$ be a weak solution to 
    \begin{equation*}
\left\{
\begin{alignedat}{3}
v\partial_xh-\partial_{vv}h&=0&&\qquad \mbox{in  ${H}_1$}, \\
h(0,v)&=\alpha  h(0,-bv)&&\qquad  \mbox{in $\{x=0\}\times (0,1)$}.
\end{alignedat} \right.
\end{equation*}
Then there is a constant $c=c(\alpha,b)$ such that
\begin{align*}
    \sup_{{H}_{1/2}}|h/\phi_{\alpha,b}|\leq c\|h\|_{L^\infty(\mathrm{H}_{1/2})}
\end{align*}
\end{lemma}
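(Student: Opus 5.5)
We would prove the bound by comparison with a multiple of $\phi_{\alpha,b}$, using the maximum principle \autoref{lem.maximum}. The statement as printed has $\|h\|_{L^\infty}$ over $H_{1/2}$ on the right; we read it as $\|h\|_{L^\infty(H_1)}$ (or work on $H_{1/2}$ after rescaling), so that we may normalize $\|h\|_{L^\infty(H_1)}\le 1$. Since $\phi_{\alpha,b}\eqsim \max\{|x|^{1/3},|v|\}^{\lambda_{\alpha,b}}$ by \autoref{lem.phialpha}, $h/\phi_{\alpha,b}$ is bounded away from the origin. It therefore suffices to show $|h|\le c\,\phi_{\alpha,b}$ in a small neighborhood of $(0,0)$, i.e.\ in $H_{r_0}$ for some fixed $r_0=r_0(\alpha,b)$. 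Outside $H_{r_0}$ we use $|h|\le 1\le c(r_0)\phi_{\alpha,b}$.

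\textbf{Step 1 (regularity needed for \autoref{lem.maximum}).}
\begin{itemize}
\item Since $\alpha<1$, \autoref{lem.holsmall} (specialized to the 1D stationary setting) shows that $h$ is continuous up to the boundary in $\overline{H_{3/4}}$.
\item To verify $\partial_{vv}h\in L^2$ on a slightly smaller box, we use the same strategy as for $\phi_{\alpha,b}$. Away from the grazing point, \autoref{lem.schhol.away-} gives Schauder bounds; by scaling they take the form $|\partial_{vv}h(x,v)|\lesssim d^{\beta-2}$, where $d=\max\{x^{1/3},|v|\}$ and $\beta$ is the Hölder exponent from Step 1. Here the boundary datum $\alpha h(0,-bv)$ is $C^{k,\eps}$ at scale $d$ because $h$ is regular on $\gamma_+$ away from $0$.
\item The integral $\int d^{2\beta-4}\,dx\,dv$ is finite, since the homogeneous dimension is $3+1=4$ and $2\beta-4>-4$. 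Hence $\partial_{vv}h\in L^2$.
\item The same holds for $\phi_{\alpha,b}$, so also for the differences used below.
\end{itemize}

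\textbf{Step 2 (comparison on a scaled box).} On the box $H_{r_0}$, which we rescale to $H_1$ by $(x,v)\mapsto(r_0^3x,r_0v)$, consider $w=h-C\phi_{\alpha,b}$.
\begin{itemize}
\item Because $\alpha h(0,-bv)$ is linear in $h$ and $\phi_{\alpha,b}$ satisfies the same reflection condition, $w$ solves the same equation with the same boundary condition on $\{x=0\}\times(0,r_0)$.
\item On the remaining kinetic boundary $\{v=\pm r_0\}$ and $\{x=r_0^3,\,v<0\}$ we have $\phi_{\alpha,b}\ge c r_0^{\lambda_{\alpha,b}}$ by the comparability in \autoref{lem.phialpha}. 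Choosing $C=c^{-1}r_0^{-\lambda_{\alpha,b}}$ then gives $w\le 0$ there, using $|h|\le 1$.
\item \autoref{lem.maximum}, applied after scaling (the equation and the reflection condition are invariant under this scaling), yields $w\le 0$, i.e.\ $h\le C\phi_{\alpha,b}$.
\item Applying the same argument to $-h$ gives $|h|\le C\phi_{\alpha,b}$ in $H_{r_0}$, and we can take $r_0=1/2$ directly.
\end{itemize}

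\textbf{Main obstacle.} The hardest part is justifying the hypotheses of \autoref{lem.maximum}: continuity of $w$ up to $\overline{H_1}$, in particular at the corners where $v=\pm1$ meets $x=0$, and $\partial_{vv}w\in L^2$ near the grazing point and near the boundary lines.

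For the corners, we would apply the comparison on $H_\rho$ with $\rho$ slightly smaller than the domain of regularity, so that $w$ is continuous on the closed box by Step 1.

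For the $L^2$ bound, we expect the delicate point to be the Schauder step near $\gamma_-$. There the datum involves $h$ on $\gamma_+$, which must be controlled first via interior and outflow regularity. We would handle this exactly as in the proof of \autoref{lem.holsmall}: points on $\gamma_+$ are treated first, and their regularity is then transferred to $\gamma_-$ through \autoref{lem.geo}.
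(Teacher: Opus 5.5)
Your proposal is correct and is essentially the paper's proof. Continuity up to the boundary comes from \autoref{lem.holsmall}, and $\partial_{vv}h\in L^2(H_{1/2})$ follows from the scale-invariant bound $|\partial_{vv}h(x,v)|\lesssim \max\{x^{1/3},|v|\}^{\gamma-2}$; the paper takes the near-boundary part from \cite[Lemmas 3.13, 3.14]{KiWe26} and the interior part from \autoref{lem.schhol.away-}. Then \autoref{lem.maximum} is applied to $\pm h-C\phi_{\alpha,b}$ on $H_{1/2}$.

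Your reinterpretation of the right-hand side as $\|h\|_{L^\infty(H_1)}$ is not needed. Since $h$ is continuous on $\overline{H_{1/2}}$, its values on $\partial_{\mathrm{kin}}H_{1/2}$ are already bounded by $\|h\|_{L^\infty(H_{1/2})}$, which gives the statement exactly as written.
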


\begin{proof}
    Note from \autoref{lem.phialpha} that there is a constant $c=c(\alpha,b)$ such that 
    \begin{align*}
        c\phi_{\alpha,b}\geq 1 \quad \text{on } \partial_{\mathrm{kin}}{H}_{1/2}\setminus\{x=0\}\times (0,(1/2)^3).
    \end{align*}
 Before applying \autoref{lem.maximum}, we will show $\partial_{vv}h\in L^2(H_{1/2})$.
    First, note from \autoref{lem.holsmall} that $h\in C^{\gamma}(\overline{H_{\frac34}})$ for some small $\gamma>0$, which gives 
    \begin{align}\label{upper:ineq1}
        |h(x,v)|\leq c\max\{x^{1/3},|v|\}^{\gamma}.
    \end{align}
    In addition, by \cite[Lemma 3.13, Lemma 3.14]{KiWe26} and \autoref{lem.schhol.away-} when $|v|^3\geq x/8$ and $|v|^3\leq x/8$, respectively, we deduce 
    \begin{align}\label{upper:ineq2}
        |\partial_{vv}h(x,v)|\leq c\rho^{-2}\|h\|_{L^\infty(H_\rho(x,v))},
    \end{align}
    where $\rho\coloneqq \max\{x^{1/3},|v|\}/8$. Thus, as in \eqref{phialpha:d2vh} together with \eqref{upper:ineq1} and \eqref{upper:ineq2}, we have $\partial_{vv}h\in L^2(H_{1/2})$.
    Thus, applying \autoref{lem.maximum} to the functions $h-c\phi_{\alpha,b}\|h\|_{L^\infty({H}_{1/2})}$ and $-h-c\phi_{\alpha,b}\|h\|_{L^\infty({H}_{1/2})}$ which are solutions to \eqref{maximum:eq} and are continuous in $\overline{{H}}_{1/2}$, we get the desired result.
\end{proof}

Now it remains to obtain a pointwise lower bound of $h/\phi_{\alpha,b}$ for any nonnegative solution $h$. In case $\alpha=0$, we have proved such lower bound in \cite[Lemma 3.7]{KiWe26} by constructing a suitable barrier based on the fact that $\phi_{0}\geq0$ and $\partial_v\phi_0\leq 0$, where $\phi_0$ is the solution to \eqref{eq.1d.alphab} with $\alpha=0$. However, note that $\phi_{\alpha,b}$ does not satisfy the latter property if $\alpha>0$, since $\partial_v\phi_{\alpha,b}$ changes sign. We overcome this issue by a mirror extension argument across $\{x=0\}\times\{v> v_1\}$ for some $v_1 > 0$ which we provide in the following lemma.

\begin{lemma}\label{lem.mirroext}
   Let $\alpha\in(0,1)$, $b\in(0,1]$ with $\alpha/b^2\in[0,1]$. Let $h\in C(\overline{{H}_1})$ be a weak solution to 
    \begin{equation}\label{mirroext:eq}
\left\{
\begin{alignedat}{3}
v\partial_xh-\partial_{vv}h&=0&&\qquad \mbox{in  ${H}_1$}, \\
h(0,v)&=\alpha  h(0,-bv)&&\qquad  \mbox{in $\{x=0\}\times (0,1)$}.
\end{alignedat} \right.
\end{equation}
Given $x_1\in(0,1)$ and $0<v_1<v_2<1$, we define the function $\widetilde{h}(x,v)$ by
\begin{align}\label{mirroext:defn.tildeh}
    \widetilde{h}(x,v)\coloneqq\begin{cases}
        h(x,v)&\quad\text{in }[0,x_1]\times [v_1,v_2],\\
        \alpha h(-b^3x,-bv)&\quad\text{in }[-x_1,0]\times[v_1,v_2].
    \end{cases}
\end{align}
Then, $\widetilde{h}$ is a weak solution to 
\begin{align*}
    v\partial_x\widetilde{h}-\partial_{vv}\widetilde{h}=0\quad\text{in }(-x_1,x_1)\times (v_1,v_2).
\end{align*}
\end{lemma}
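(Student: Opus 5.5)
The plan is to verify the weak formulation directly: check that each half is a solution, and that the two halves glue together across $\{x=0\}$ with no interface term.

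First, the left piece. Define $k(x,v)\coloneqq \alpha h(-b^3x,-bv)$ for $x\in[-x_1,0]$, $v\in[v_1,v_2]$. Write $y=-b^3x\in[0,b^3x_1]$ and $w=-bv\in[-bv_2,-bv_1]$. Then
\[
\partial_x k=-\alpha b^3\,\partial_x h, \qquad v\partial_x k=\alpha b^2\, w\,\partial_x h, \qquad \partial_{vv}k=\alpha b^2\,\partial_{vv}h ,
\]
where the derivatives of $h$ are evaluated at $(y,w)$. Hence $v\partial_xk-\partial_{vv}k=\alpha b^2\,(w\partial_xh-\partial_{vv}h)(y,w)=0$. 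The point $(y,w)$ lies in $H_1$ with $y>0$ and $w<0$, a region bounded away from the grazing point $(0,0)$ since $|w|\ge bv_1>0$. So $k$ is a classical (smooth) solution in $(-x_1,0)\times(v_1,v_2)$, by the interior smoothness of $h$. Likewise $h$ is smooth in $(0,x_1)\times(v_1,v_2)$. Moreover, since $v\ge v_1>0$, the boundary points $(0,v)$ lie in $\gamma_-$ (inflow, away from $\gamma_0$), and $h$ is Hölder continuous up to them. Similarly $k$ near $x=0^-$ corresponds to $h$ near $\gamma_+$-points $(0,w)$ with $w<0$. By \autoref{lem.schhol.away-}, applied with boundary data $\alpha h(0,-b\cdot)$, which is itself regular by the same lemma at $\gamma_+$ (compare \autoref{lem.geo}), both traces are continuous, and $\partial_vh$ and $\partial_vk$ are bounded up to $x=0$ on compact $v$-subintervals.

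Second, the gluing. For $\psi\in C_c^\infty((-x_1,x_1)\times(v_1,v_2))$, integrate by parts separately on $\{x>0\}$ and $\{x<0\}$. The $v$-derivative terms produce no boundary terms, since $\psi$ has compact support in $v$. The transport term produces
\[
\int_{v_1}^{v_2} v\,\psi(0,v)\,\bigl(k(0^-,v)-h(0^+,v)\bigr)\,dv .
\]
By continuity and the boundary condition, $k(0^-,v)=\alpha h(0,-bv)=h(0^+,v)$ for $v\in(v_1,v_2)\subset(0,1)$, so this term vanishes and $\widetilde h$ satisfies the weak formulation. To make the integration by parts rigorous, I would integrate over $\{|x|>\eps\}$ and let $\eps\to0$, using continuity of $h$ up to $\{x=0\}\times[v_1,v_2]$ on both sides. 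I would also need $\partial_v\widetilde h\in L^2$, which follows from the away-from-grazing estimates, so that $\widetilde h\in H^1_{\rm kin}$.

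The main obstacle is justifying these trace and regularity facts up to $x=0$ on both sides away from $\gamma_0$. In particular, $h(0,-bv)$ must be sufficiently regular for the Schauder estimate at $\gamma_-$, which is exactly where $v_1>0$ and $b>0$ are used. Alternatively, one could bypass classical traces by mollifying in $x$ and passing to the limit via the $L^2$ trace bounds of \autoref{rmk.defn.weak}.
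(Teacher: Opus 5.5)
Your proposal is correct and follows essentially the paper's proof. Both verify the weak formulation on each side of $\{x=0\}$, integrate the transport term by parts, and observe that the interface terms $\int_{v_1}^{v_2} v\,\psi(0,v)\,h(0,v)\,dv$ and $\alpha\int_{v_1}^{v_2} v\,\psi(0,v)\,h(0,-bv)\,dv$ cancel by the boundary condition. The only difference in structure is that the paper handles the $x<0$ half by explicit changes of variables rather than first checking that $k$ solves the equation pointwise. Your $\varepsilon$-cutoff justification in fact needs only the hypotheses $h\in C(\overline{H_1})$ and $\partial_v h\in L^2(H_1)$, together with interior smoothness. So the regularity up to $\gamma_\pm$ and the bound $\partial_{vv}h\in L^2$ that the paper invokes (and that you flag as the main obstacle) are not actually needed.
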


Note that in case $\alpha = b = 1$, the mirror extended function $\widetilde{h}$ becomes a solution in $\R^2$, i.e. we can set $v_1 = 0$ and extend $h$ also in a neighborhood of the grazing set. This fact is well known (see \cite{GHJO20}) and has been used in the proof of the Liouville theorem in \cite[Lemma 3.6]{RoWe25}.

\begin{proof}
   First, by the standard interior regularity theory, we observe that $h$ is smooth inside ${H}_1$, and in $[0,x_1) \times (v_1,v_2)$ by \autoref{lem.schhol.away-} and the boundary condition. Moreover, as in the proof of \autoref{lem.upper}, we have $\partial_{vv}h\in L^2([0,x_1]\times [v_1,v_2])$, which gives $v\partial_xh\in L^2([0,x_1]\times [v_1,v_2])$ by using the equation.
    Note that $\partial_v\widetilde{h}\in L^2(D)$, where $D\coloneqq (-x_1,x_1)\times (v_1,v_2)$, as $\partial_vh\in L^2({H}_1)$. Now, we extend the solution across the line $\{x=0\}\times \{v\geq v_1\}$. It suffices to show that for any $\psi\in C_c^\infty(D)$,
    \begin{align*}
        \int_{D}-v\widetilde{h}\partial_x\psi+\int_{D}\partial_v\widetilde{h}\partial_v\psi=0.
    \end{align*}
    First, we will estimate the terms $J_1$ and $J_2$ given by
    \begin{align*}
        \int_{D}\partial_v\widetilde{h}\partial_v\psi=\int_{D\cap \{x>0\}}\partial_v\widetilde{h}\partial_v\psi+\int_{D\cap\{x<0\}}\partial_v\widetilde{h}\partial_v\psi\eqqcolon J_1+J_2.
    \end{align*}
    By the definition of $\widetilde{h}$ and using the equation \eqref{mirroext:eq} together with the fact that $\psi(\cdot,v_1)=\psi(\cdot,v_2)=\psi(x_1,\cdot)=0$ and the smoothness of $h$ up to $\gamma_-$, we have 
    \begin{align*}
        J_1=\int_{v_1}^{v_2}\int_{0}^{x_1}\partial_vh\partial_v\psi\,dx\,dv&=-\int_{v_1}^{v_2}\int_{0}^{x_1}\partial_{vv}h\psi\,dx\,dv\\
        &=-\int_{v_1}^{v_2}\int_{0}^{x_1}v\partial_xh\, \psi\,dx\,dv\\
        &=\int_{v_1}^{v_2}\int_{0}^{x_1}v\partial_x\psi\,h\,dx\,dv-\int_{v_1}^{v_2} v(\psi h)(x,v)\,dv\bigg\rvert_{x=0}^{x=x_1}\\
        &=\int_{v_1}^{v_2}\int_{0}^{x_1}v\widetilde{h}\partial_{x}\psi\,dx\,dv+\int_{v_1}^{v_2} v(\psi h)(0,v)\,dv.
    \end{align*}
    Similarly, using this time the regularity of $h$ up to $\gamma_+$, we get
    \begin{align*}
        J_2=\int_{v_1}^{v_2}\int_{-x_1}^{0}(-b)\alpha\partial_vh(-b^3x,-bv)\partial_v\psi\,dx\,dv
        &=\int_{-bv_2}^{-bv_1}\int_{0}^{b^3x_1}-\alpha b^{-3}\partial_vh(x,v)\partial_v\psi(-x/b^3,-v/b)\,dx\,dv\\
        &=(-b^{-2})\alpha\int_{-bv_2}^{-bv_1}\int_{0}^{b^3x_1}v\partial_xh\, \psi(-x/b^3,-v/b)\,dx\,dv\\
        &=(-b^{-5})\alpha \int_{-bv_2}^{-bv_1}\int_{0}^{b^3x_1}v\partial_x\psi(-x/b^3,-v/b)\,h\,dx\,dv\\
        &\quad+(-b^{-2})\alpha\int_{-bv_2}^{-bv_1} vh(x,v)\psi(-x/b^3,-v/b)\,dv\bigg\rvert_{x=0}^{x=b^3x_1},
    \end{align*}
    where we have used the change of variables $v \mapsto -bv$ several times and applied the equation \eqref{mirroext:eq}. Again, by the change of variables and the fact that $\psi(-x_1,\cdot)\equiv0$, we further rewrite $J_2$ as
    \begin{align*}
        J_2&=\alpha\int_{v_1}^{v_2}\int_{-x_1}^{0}vh(-b^3x,-bv)\partial_{x}\psi\,dx\,dv-\alpha \int_{v_1}^{v_2} vh(0,-bv)\psi(0,v)\,dv.
    \end{align*}
    Now combining $J_1$ and $J_2$ together with the boundary condition, we obtain
    \begin{align*}
        \int_{D}-v\widetilde{h}\partial_x\psi+\int_{D}\partial_v\widetilde{h}\partial_v\psi= \int_{D}-v\widetilde{h}\partial_x\psi+J_1+J_2=0.
    \end{align*}
    This completes the proof.
\end{proof} 
Now we are ready to prove the lower bound of $h/\phi_{\alpha,b}$ for any given nonnegative solution $h$.
\begin{lemma}\label{lem.lower}
    Let $\alpha\in(0,1)$, $b\in(0,1]$ and $\alpha/b^2\in(0,1]$. Let $h$ be a weak solution to 
    \begin{equation*}
\left\{
\begin{alignedat}{3}
v\partial_xh-\partial_{vv}h&=0&&\qquad \mbox{in  ${H}_1$}, \\
h(0,v)&=\alpha h(0,-bv)&&\qquad  \mbox{in $\{x=0\}\times (0,1)$},\\
h&\geq0&&\qquad\mbox{in ${H}_1$}.
\end{alignedat} \right.
\end{equation*}
Then, there is a constant $c=c(\alpha,b)$ such that
\begin{align*}
    \inf_{{H}^+_{1,\frac12}}\frac{h}{\phi_{{\alpha,b}}}\leq c\inf_{{H}_{\frac12}}\frac{h}{\phi_{\alpha,b}},
\end{align*}
where ${H}^+_{1,\frac12}\coloneqq\{(x,v)\in{H}_1\,:\, (1/2)^3\leq x\leq 1\}$.
\end{lemma}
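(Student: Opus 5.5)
The plan is to normalize, push positivity of $h$ from the interior region ${H}^+_{1,\frac12}$ onto the remaining parts of the kinetic boundary of ${H}_{1/2}$, and then compare $h$ with a multiple of $\phi_{\alpha,b}$ using the maximum principle \autoref{lem.maximum}. By homogeneity we may assume $\inf_{{H}^+_{1,\frac12}} h/\phi_{\alpha,b}=1$. By \autoref{lem.phialpha}, $\phi_{\alpha,b}\eqsim 1$ on ${H}^+_{1,\frac12}$, so $h\ge c_0>0$ there. Recall that
\begin{align*}
\partial_{\mathrm{kin}}{H}_{1/2}\setminus(\{x=0\}\times(0,\tfrac12))=([0,\tfrac18]\times\{v=\pm\tfrac12\})\cup(\{x=\tfrac18\}\times(-\tfrac12,0)).
\end{align*}
The segment $\{x=\frac18\}$ already lies in $\overline{{H}^+_{1,\frac12}}$. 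The work is therefore to show $h\ge c_1>0$ on the two horizontal segments $v=\pm\frac12$, $x\in[0,\frac18]$, all the way down to $x=0$. These segments stay at distance $\gtrsim 1$ from the grazing point.

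\textbf{Step 1: the segment $v=-\frac12$.} Here particles move toward decreasing $x$ and exit through $\gamma_+$. I would use a Harnack chain along backward characteristics, i.e.\ interior Harnack inequalities for the stationary Kolmogorov operator. This carries the lower bound $h\ge c_0$ from $\{x\ge\frac18\}$ to points $(x,-\frac12)$ with $x>0$ small. Since nothing is prescribed at $\gamma_+$, a Harnack inequality up to $\gamma_+$ holds. To get it, I would either combine the interior Harnack with the Hölder continuity up to $\gamma_+$ from \autoref{lem.schhol.away-}, or argue directly with cylinders $\mathcal{H}_r(z)$ whose past lies in the interior. This gives $h\ge c_1$ on $[0,\frac18]\times[-\frac34,-\frac14]$, and in particular $h(0,v)\ge c_1$ for $v\in[-\frac34,-\frac14]$.

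\textbf{Step 2: the segment $v=+\frac12$.} The boundary condition gives $h(0,v)=\alpha h(0,-bv)\ge \alpha c_1$ for $v\in[\frac{1}{4b},\frac{3}{4b}]\cap(0,1)$. However, $b\le1$ may place these velocities outside the range we need. So instead I would apply the mirror extension \autoref{lem.mirroext} on a rectangle $(-x_1,x_1)\times(v_1,v_2)$ with $0<v_1<\frac12<v_2$, choosing $v_1,v_2$ so that $bv_1,bv_2$ correspond to the region of Step~1; if needed I would first redo Step~1 on $[bv_1,bv_2]$. On $\{x<0\}$ the extension is $\widetilde h(x,v)=\alpha h(-b^3x,-bv)\ge \alpha c_1$. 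Since $\widetilde h$ is a nonnegative solution across $\{x=0\}$, the interior Harnack inequality for $\widetilde h$ transports this positivity (characteristics move toward increasing $x$ for $v>0$) to $[0,\frac18]\times\{\frac12\}$. This gives $h\ge c_2(\alpha,b)>0$ there.

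\textbf{Step 3: comparison.} Set $w\coloneqq \varepsilon\phi_{\alpha,b}-h$ with $\varepsilon=\min\{c_0,c_1,c_2\}/\sup_{H_{1/2}}\phi_{\alpha,b}$. Then $w$ solves \eqref{maximum:eq} on ${H}_{1/2}$ after rescaling, since both $h$ and $\phi_{\alpha,b}$ satisfy the reflection condition. Moreover $w\le0$ on $\partial_{\mathrm{kin}}{H}_{1/2}\setminus(\{x=0\}\times(0,\frac12))$ by Steps 1--2. The hypotheses $w\in C(\overline{H_{1/2}})$ and $\partial_{vv}w\in L^2$ are checked exactly as in the proof of \autoref{lem.upper}, using \autoref{lem.holsmall}, \eqref{upper:ineq2}, and the estimate \eqref{phialpha:d2vh} for $\phi_{\alpha,b}$. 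Then \autoref{lem.maximum} gives $h\ge\varepsilon\phi_{\alpha,b}$ in ${H}_{1/2}$, which is the claim with $c=\varepsilon^{-1}$.

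I expect the main obstacle to be Step~2 together with the Harnack up to $\gamma_+$ in Step~1. One must handle the geometry carefully, tracking the velocity ranges under $v\mapsto -bv$ so that the mirror rectangle stays away from $v=0$. One must also justify Harnack chains reaching down to $x=0$ with constants depending only on $(\alpha,b)$.
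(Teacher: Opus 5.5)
Your proposal is correct and follows essentially the same route as the paper. The paper first gets a lower bound on ${H}^+_{1,\frac12}$ and then a Harnack-type estimate along characteristics up to $\gamma_+$ on $[0,1]\times[-1,-\frac b2]$; it cites \cite[(3.6) in Lemma 3.4]{KiWe26} for this where you build the Harnack chain yourself. It then uses the mirror extension of \autoref{lem.mirroext} with $v_1=\frac14$, $v_2=\frac34$ together with a Harnack inequality to reach $[0,\frac18]\times\{\frac12\}$, and concludes with \autoref{lem.maximum} applied to $-h+\frac{m_0}{C}\phi_{\alpha,b}$ on ${H}_{1/2}$. The only cosmetic difference is that the paper transports positivity from the single point $\widetilde h(0,\frac12)=\alpha h(0,-\frac b2)$ rather than from the reflected region $\{x<0\}$, and its velocity range already extends down to $-\frac b2$, which is exactly the small-$b$ adjustment you anticipated.
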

\begin{proof}
    We may assume $m_0\coloneqq \inf\limits_{{H}^+_{1,\frac12}}\frac{h}{\phi_{\alpha,b}}\neq0$. By \autoref{lem.phialpha}, $\phi_{\alpha,b}\geq\frac1c $ for some constant $c=c(\alpha,b)$ in ${H}^+_{1,\frac12}$ so that we have 
    \begin{align}\label{lbdd:ineq0}
        h\geq \frac{m_0}{c}\quad\text{in }{H}^+_{1,\frac12}.
    \end{align}
     Since $h\geq0$ in ${H}_1$, by \cite[(3.6) in Lemma 3.4]{KiWe26}, we have 
    \begin{equation}\label{lbdd:ineq.h}
       h(1,-v)\leq ch(x,-v) \quad\text{for any }(x,v)\in[0,1]\times [-3/4,-b/2],
    \end{equation}
    which implies $h(x,v)\geq \frac{m_0}{c}$ in $(x,v)\in [0,1]\times[-1,-b/2]$, where $c=c(\alpha,b)$. 
    
    Now we will prove 
    \begin{align}\label{lbdd:ineq1.h}
        h(x,v)\geq \frac{m_0}{c}\quad\text{in }  [0,(1/2)^3]\times\{v=1/2\}.
    \end{align} By the aid of \autoref{lem.mirroext}, we find that $\widetilde{h}$ defined in \eqref{mirroext:defn.tildeh} with $x_1=(3/4)^3$, $v_1=1/4$, and $v_2=3/4$ is a nonnegative weak solution to
    \begin{align*}
        v\partial_x\widetilde{h}-\partial_{vv}\widetilde{h}=0\quad\text{in }(-(3/4)^3,(3/4)^3)\times (1/4,3/4)\eqqcolon D.
    \end{align*}
  Since $\widetilde{h}$ solves the equation in  $D$ and since $(x,1/2)$ is an interior point of $D$ for any $x\in[0,1/2^3]$,  we can apply \cite[(3.5) in Lemma 3.4]{KiWe26} to obtain
   \begin{align*}
       \widetilde{h}(0,1/2)\leq c\widetilde{h}(x,1/2)\quad\text{for any }x\in[0,(1/2)^3].
   \end{align*}
    This gives \eqref{lbdd:ineq1.h} by the definition of $\widetilde{h}$ and \eqref{lbdd:ineq.h}. Combining \eqref{lbdd:ineq0}, \eqref{lbdd:ineq.h}, \eqref{lbdd:ineq1.h}, and the fact that $\phi_{\alpha,b}\leq c$ in ${H}_{1/2}$ for some constant $c=c(\alpha,b)$, there is a constant $C=C(\alpha,b)\geq1$ such that $-h+\frac{m_0\phi_{\alpha,b}}{C}$ is a weak solution to \eqref{maximum:eq} with ${H}_1$ replaced by ${H}_{1/2}$. Thus, the desired estimate follows from \autoref{lem.maximum}.
\end{proof}

Now we prove the first main result in this section.
\begin{proof}[Proof of \autoref{prop.osc}.]
    By considering the function ${h}_R(x,v)\coloneqq h(R^3x,Rv)/R^{\lambda_{\alpha,b}}$, we may assume $R=1$. Since we have the upper and lower bound estimate of $h/\phi_{\alpha,b}$ in \autoref{lem.upper} and \autoref{lem.lower}, respectively, the proof is exactly the same as in the proof of \cite[Proposition 3.6]{KiWe26} with $R_0=1$. 
\end{proof}

\subsection{Liouville theorems in the half-space}

Next, we establish the second main result of this section, namely a 1D Liouville theorem.
\begin{proposition}\label{prop.liou1}
   Let $\alpha\in(0,1)$, $b\in(0,1]$ with $\alpha/b^2\in(0,1]$. Let $\lambda_{\alpha,b}$ be the constant determined in \eqref{phialpha:defn.lambdaalpha} and $\beta_{\alpha,b}$ be the constant determined in \autoref{prop.osc}. Let $h$ be a weak solution to 
    \begin{equation}\label{liou1:eq}
\left\{
\begin{alignedat}{3}
v\partial_xh-\partial_{vv}h&=0&&\qquad \mbox{in  $\{x>0\}$}, \\
h(0,v)&=\alpha \phi_{\alpha,b}(0,-bv)&&\qquad  \mbox{in $\{x=0\}\times \{v>0\}$},\\
\|h\|_{L^\infty({H}_R)}&\leq cR^{\lambda_{\alpha,b}+\eps} && \qquad\text{for any }R\geq1,
\end{alignedat} \right.
\end{equation}
where $\eps\in(0,\beta_{\alpha,b})$ and $c\in\bbR$. Then $h=m\phi_{\alpha,b}$ for some  $m\in\bbR$.
\end{proposition}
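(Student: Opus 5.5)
The plan is to apply \autoref{prop.osc} on ever larger half-cylinders and use the growth assumption to make the oscillation of $h/\phi_{\alpha,b}$ vanish. (The boundary condition in \eqref{liou1:eq} is read as $h(0,v)=\alpha h(0,-bv)$; this is presumably a typo in the statement, since otherwise $h=m\phi_{\alpha,b}$ could not hold for general $m$.)

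\textbf{Step 1: the quotient is well defined and bounded on every compact set.} For every $R\ge1$, $h$ is a weak solution of the localized problem of \autoref{prop.osc} in $H_{2R}$. By \autoref{lem.phialpha}, $\phi_{\alpha,b}>0$ away from the origin. By \autoref{lem.upper}, applied after rescaling, $h/\phi_{\alpha,b}$ is bounded on $H_{R}$. Hence $m(r)\coloneqq \sup_{H_r} h/\phi_{\alpha,b}$ and $\inf_{H_r} h/\phi_{\alpha,b}$ are finite.

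\textbf{Step 2: the oscillation vanishes.} Fix $\rho>0$ and let $R\ge \max\{1,\rho\}$. Apply \autoref{prop.osc} with $r=\rho/R\le 1$, together with the growth bound $\|h\|_{L^\infty(H_R)}\le cR^{\lambda_{\alpha,b}+\eps}$. This gives
\begin{align*}
\osc_{H_\rho}\frac{h}{\phi_{\alpha,b}}\le c\left(\frac{\rho}{R}\right)^{\beta_{\alpha,b}}R^{-\lambda_{\alpha,b}}R^{\lambda_{\alpha,b}+\eps}=c\rho^{\beta_{\alpha,b}}R^{\eps-\beta_{\alpha,b}}.
\end{align*}
Since $\eps<\beta_{\alpha,b}$, letting $R\to\infty$ shows that $h/\phi_{\alpha,b}$ is constant on $H_\rho$. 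As $\rho$ is arbitrary, $h=m\phi_{\alpha,b}$ on $\{x>0\}$ for some constant $m\in\R$.

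\textbf{Possible obstacle: the scaling in \autoref{prop.osc}.} The proposition carries the norm $\|h\|_{L^\infty(\mathcal H_R)}$ and the factor $R^{-\lambda_{\alpha,b}}$. This must be checked against the homogeneity of $\phi_{\alpha,b}$. Under $h_R(x,v)=h(R^3x,Rv)/R^{\lambda_{\alpha,b}}$, the function $\phi_{\alpha,b}$ is invariant, because it is $\lambda_{\alpha,b}$-homogeneous with respect to the kinetic scaling. The boundary condition is also preserved, since $\mathcal R_b$ commutes with the dilation. So the bound transfers exactly as written, and no extra step is needed there.

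The substantive work, namely the maximum principle, the mirror extension and the resulting boundary Harnack inequality, is already contained in \autoref{prop.osc}. The proof therefore reduces to the short limiting argument in Step 2.
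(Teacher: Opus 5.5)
Your proposal is correct and follows the paper's argument. Both combine \autoref{prop.osc} with the growth bound so that the oscillation of $h/\phi_{\alpha,b}$ is controlled by a negative power of $R$. The paper takes $r=R^{-\delta/\beta}$ with $\delta=\frac{\eps+\beta}{2}$ on the growing sets $H_{R^{1-\delta/\beta}}$, while you fix $H_\rho$ and take $r=\rho/R$, which is equivalent and slightly simpler bookkeeping. Your reading of the boundary condition as $h(0,v)=\alpha h(0,-bv)$ is the intended one.
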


\begin{proof}
   To keep the notation simple, we denote $\beta := \beta_{\alpha,b}$ and $\lambda := \lambda_{\alpha,b}$. Fix $\delta=\frac{\eps+\beta}{2}\in(\eps,\beta)$. We now apply \autoref{prop.osc} with $r=R^{-\frac{\delta}{\beta}}$ to see that
   \begin{align*}
       \sup_{H_{R^{1-\frac{\delta}{\beta}}}}\frac{h}{\phi_{\alpha,b}}-\inf_{H_{R^{1-\frac{\delta}{\beta}}}}\frac{h}{\phi_{\alpha,b}}\leq cR^{-\lambda -\delta}\|h\|_{L^\infty(\mathcal{H}_R)}.
   \end{align*}
   Now using the growth condition on $h$, we further estimate 
   \begin{align*}
        \sup_{H_{R^{1-\frac{\delta}{\beta}}}}\frac{h}{\phi_{\alpha,b}}-\inf_{H_{R^{1-\frac{\delta}{\beta}}}}\frac{h}{\phi_{\alpha,b}}\leq cR^{-\delta+\eps}.
   \end{align*}
   Since $\eps<\delta$ and $\frac{\delta}{\beta}<1$, by taking $R\to\infty$, we deduce that $\osc_{\{x>0\}\times \R} \frac{h}{\phi_{\alpha,b}} = 0$ and therefore $\frac{h}{\phi_{\alpha,b}}=m$ for some constant $m\in\bbR$. This completes the proof. 
\end{proof}

Lastly, in light of \autoref{prop.liou1} and a dimension reduction process, we have the following $n$-dimensional Liouville's theorem for a general second order differential operator with constant coefficients.
\begin{theorem}\label{lem.liou.nd}
    Fix $\alpha\in(0,1)$, $b\in(0,1]$ with $\alpha/b^2\in(0,1]$. Let $\lambda_{\alpha,b}$ be the constant determined in \eqref{phialpha:defn.lambdaalpha} and $\beta_{\alpha,b}$ be the constant determined in \autoref{prop.osc}. Let $f$ be a weak solution to 
    \begin{equation*}
\left\{
\begin{alignedat}{3}
\partial_tf+v\cdot\nabla_xf-a^{i,j}\partial_{v_i,v_j}f&=0&&\qquad \mbox{in  $\{x_n>0\}$}, \\
f(t,x',0,v',v_n)&=\alpha f(t,x',0,v',-bv_n)&&\qquad  \mbox{in $\{x_n=0\}\times \{v_n>0\}$},\\
\|f\|_{L^\infty({H}_R)}&\leq cR^{\lambda_{\alpha,b}+\eps}&&\qquad\text{for any }R\geq1,
\end{alignedat} \right.
\end{equation*}
where $\eps\in(0,\beta_{\alpha,b})$, $c\in\bbR$ and $a^{n,n}=1$. Then $f(z)=m\phi_{\alpha,b}(x_n,v_n)$ for some constant $m\in\bbR$.
\end{theorem}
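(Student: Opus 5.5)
The plan is to reduce to the 1D Liouville theorem \autoref{prop.liou1} by showing that $f$ is independent of all variables except $(x_n,v_n)$, following the dimension-reduction scheme of \cite[Lemma 3.6]{RoWe25} and \cite{KiWe26}.

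\emph{Step 1: normalization.} Since $a^{n,n}=1$, the linear change of variables $v'\mapsto v'-(a^{1,n},\dots,a^{n-1,n})v_n$, with the corresponding change in $x'$ (which preserves the transport structure and $\{x_n=0\}$), lets me assume $a^{i,n}=0$ for $i<n$. The crucial point is that this map commutes with the reflection $v_n\mapsto -bv_n$ only after an adjustment. So I first check that the boundary condition remains of the same form. If it does not, I keep general $a$ and work with tangential difference quotients directly.

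\emph{Step 2: tangential differentiation.} For directions $e$ in $t$, $x'$ (and then $v'$), the difference quotient $D_h f=(f(z\circ(he))-f(z))/h^{\sigma}$ is again a solution with the same reflection condition, because the condition only involves $v_n$ and the equation is invariant under such translations. Note that the $v'$-translation produces a transport term $x'$-drift, which is handled by taking $x'$ and $t$ derivatives first. I then apply the global $C^{\beta}$ estimate \autoref{lem.holsmall} at scale $R$ together with the growth bound $\|f\|_{L^\infty(H_R)}\le cR^{\lambda+\eps}$. This gives
\[
\|D_hf\|_{L^\infty(H_R)}\lesssim R^{\lambda+\eps-\beta'}
\]
for a suitable exponent $\beta'$. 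Iterating enough times with incremental quotients, as in \cite{RoWe25}, the growth exponent drops below $0$. A solution with negative growth exponent vanishes, by \autoref{lem.bdd} applied on cylinders of radius $R\to\infty$. Hence $\partial_t f=\nabla_{x'}f=0$. Next, $\nabla_{v'}f$ solves the equation with source $-\nabla_{x'}f=0$, and the same argument gives $\nabla_{v'}f=0$. One technicality is that iterating requires the Hölder exponent available for the differentiated solutions. Since $\eps<\beta_{\alpha,b}$ is small, a single application of \autoref{prop.osc}-type oscillation decay might already suffice.

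\emph{Step 3: conclusion.} After Step 2, $f=f(x_n,v_n)$ solves $v_n\partial_{x_n}f-\partial_{v_nv_n}f=0$ with $f(0,v_n)=\alpha f(0,-bv_n)$ and the growth bound $R^{\lambda+\eps}$. \autoref{prop.liou1} then yields $f=m\phi_{\alpha,b}(x_n,v_n)$.

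\emph{Main obstacle.} I expect the main obstacle to be Step 2 for the $v'$ and $t$ directions. These translations do not commute cleanly with the half-space kinetic cylinders and the reflection. My first attempt would be to use Lie-group translations $z\mapsto z_0\circ z$ with $z_0\in\{x_n=v_n=0\}$. These preserve $\{x_n>0\}$ and $\gamma_\pm$, and they commute with $R_b$ since $R_b$ acts only on $v_n$.
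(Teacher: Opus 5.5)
Your plan is essentially the paper's proof. Difference quotients along left translations $z\mapsto z_0\circ z$ with $x_{0,n}=v_{0,n}=0$ preserve both the constant-coefficient equation and the reflection condition; use these rather than $z\circ(he)$, which for the $t$-direction moves $x_n$ by $hv_n$. \autoref{lem.holsmall} together with the growth bound then lowers the growth exponent until iterated differences vanish, and once $f=f(x_n,v_n)$ the normalization $a^{n,n}=1$ leaves exactly the 1D problem of \autoref{prop.liou1}, so Step~1 can be dropped (its shear does not preserve the reflection condition anyway).

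The one step to make explicit is the following. Vanishing of $k$-th differences only shows that $f$ is polynomial in $t$, $x'$, $v'$. It is $\lambda_{\alpha,b}+\eps<1$, which lies below the homogeneities $2$, $3$, $1$ of these variables, that forces these polynomials to be constant; this is precisely where the paper uses the inequality.
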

\begin{proof}
    Since we have uniform $C^\beta$ estimates up to the boundary by \autoref{lem.holsmall}, we use the difference quotient technique from \cite[Theorem 3.5]{RoWe25} and \cite[Theorem 4.1]{KiWe26}. Note that as we take difference quotient only in  $(t,x',v')$, the boundary condition is preserved. Following the same lines as in the proofs of \cite[Theorem 3.5]{RoWe25} and \cite[Theorem 4.1]{KiWe26} together with the growth condition and the fact that $\lambda_{\alpha,b}+\eps<1$, we deduce that $f=f(x_n,v_n)$ depends only on $(x_n,v_n)$ and that it is a 1D solution to \eqref{liou1:eq}. Therefore, by \autoref{prop.liou1}, we have $f(z)=m\phi_{\alpha,b}(x_n,v_n)$ for some constant $m\in\bbR$. This completes the proof.
\end{proof}

\section{Optimal regularity}
\label{sec:opt}
In this section, we will prove the optimal regularity of kinetic equations with $\alpha$-reflection-type boundary conditions with $\alpha<1$.
The main result of this section is the following.

\begin{theorem}\label{thm.opthol}
    Fix $\alpha\in(0,1)$, $b\in(0,1]$ with $\alpha/b^2\in(0,1]$. Let $z_0\in\gamma_0$, $R\leq1$ and let $f$ be a weak solution to 
    \begin{equation*}
\left\{
\begin{alignedat}{3}
\partial_tf+v\cdot\nabla_xf-\ddiv(A\nabla_vf)&=B\cdot\nabla_vf+F&&\qquad \mbox{in  $\mathcal{H}_R(z_0)$}, \\
f&=\alpha \mathcal{R}_bf+(1-\alpha)g&&\qquad  \mbox{in $\gamma_-\cap \mathcal{Q}_R(z_0)$},
\end{alignedat} \right.
\end{equation*}
where $A\in C^\eps(\mathcal{H}_R(z_0))$, $B\in L^\infty(\mathcal{H}_R(z_0))$, $F\in L^\infty(\mathcal{H}_R(z_0))$, and $g\in C^{\lambda_{\alpha,b}+\eps}(\gamma_-\cap \mathcal{Q}_R(z_0))$ for some $\eps>0$. 

Then we have 
\begin{equation}\label{opthol:main1}
\begin{aligned}
    R^{\lambda_{\alpha,b}}[f]_{C^{\lambda_{\alpha,b}}(H_{R/2}(z_0))}&\leq c\mathbf{B}(z_0,R)^{\lambda_{\alpha,b}}\Bigg(\|f\|_{L^\infty(\mathcal{H}_R(z_0))}+R^2\|F\|_{L^\infty(\mathcal{H}_R(z_0))}\\
    &\qquad\qquad\qquad\qquad+R^{\lambda_{\alpha,b}+\eps}[g]_{C^{\lambda_{\alpha,b}+\eps}(\gamma_-\cap \mathcal{Q}_R(z_0))}\Bigg),
\end{aligned}
\end{equation}
where $c=c(n,\Lambda,\eps,\|A\|_{C^\eps(\mathcal{H}_R(z_0))},\alpha,b)$ and $\mathbf{B}(z_0,R)\coloneqq 1+\|B\|_{L^\infty(\mathcal{H}_R(z_0))}$. Moreover, for any  $\delta\in(0,\lambda_{\alpha,b})$, we have 
\begin{align}\label{opthol:main2}
    R^{\delta}[f]_{C^{\delta}(H_{R/2}(z_0))}&\leq c\mathbf{B}(z_0,R)^{\delta}\Bigg(\|f\|_{L^\infty(\mathcal{H}_R(z_0))}+R^2\|F\|_{L^\infty(\mathcal{H}_R(z_0))}+R^{\delta}[g]_{C^{\delta}(\gamma_-\cap \mathcal{Q}_R(z_0))}\Bigg),
\end{align}
where $c=c(n,\Lambda,\eps,\|A\|_{C^\eps(\mathcal{H}_R(z_0))},\delta,\alpha,b)$.
\end{theorem}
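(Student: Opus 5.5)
We follow the blow-up/compactness scheme of \cite{RoWe25,KiWe26}. After scaling we may assume $z_0=0$ and $R=1$; the factor $\mathbf{B}(z_0,R)^{\lambda_{\alpha,b}}$ is produced at the end, exactly as in \eqref{lem.schhol.away-.depB}, by working in cylinders of size $\mathbf{B}^{-1}$ where the rescaled drift is bounded by one. The core statement is the following expansion at grazing points. For every $z_1\in\gamma_0\cap\mathcal{Q}_{1/2}$ there is $m(z_1)\in\R$ such that
\begin{align*}
|f(z_1\circ z)-m(z_1)\,\phi_{\alpha,b}(x_n,v_n)|\le C r^{\lambda_{\alpha,b}+\eps'} \quad\text{for } z\in\mathcal{H}_r,\ r\le \tfrac12,
\end{align*}
with $|m(z_1)|\le C$, $\eps'\in(0,\min\{\eps,\beta_{\alpha,b}\})$, and $C$ bounded by the right-hand side of \eqref{opthol:main1}. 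Here the coefficient matrix is frozen at $z_1$ and normalized to $a^{n,n}=1$ by a linear change of variables in $(x,v)$ that preserves $\{x_n=0\}$ and $\mathcal{R}_b$. Since $\phi_{\alpha,b}\eqsim\max\{|x_n|^{1/3},|v_n|\}^{\lambda_{\alpha,b}}$ by \autoref{lem.phialpha}, this expansion gives oscillation of order $r^{\lambda_{\alpha,b}}$ at each grazing point.

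To prove the expansion we argue by contradiction. Suppose it fails. We consider the approximation quantity $\theta(r)=\sup_{\rho\ge r}\rho^{-\lambda-\eps'}\|f_k-m_{k,\rho}\phi_{\alpha,b}\|_{L^\infty(\mathcal{H}_\rho)}$, where $m_{k,\rho}$ is the $L^2$ projection onto $\phi_{\alpha,b}$ and $f_k$, $A_k$, $B_k$, $F_k$, $g_k$, $z_k$ are sequences along which $\theta_k(r_k)\to\infty$. We rescale by setting $w_k(z)=(f_k(z_k\circ S_{r_k}z)-m_{k,r_k}\phi_{\alpha,b})/(r_k^{\lambda+\eps'}\theta_k(r_k))$. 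Because $\phi_{\alpha,b}$ solves the constant-coefficient 1D problem, the function $w_k$ satisfies:
\begin{itemize}
\item the growth bound $\|w_k\|_{L^\infty(\mathcal{H}_R)}\le CR^{\lambda+\eps'}$;
\item the orthogonality condition $\int_{\mathcal{H}_1}w_k\phi_{\alpha,b}=0$ and the nondegeneracy $\|w_k\|_{L^\infty(\mathcal{H}_1)}\ge c>0$;
\item an equation whose error terms vanish as $k\to\infty$. The contributions $(A_k-A_k(z_k))\nabla\phi_{\alpha,b}$ are controlled using $A\in C^\eps$ and the bound $\partial_v\phi_{\alpha,b}\in L^{4+\delta}$ from \autoref{lem.phialpha}. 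The drift and source terms carry extra powers of $r_k$. The boundary datum $(1-\alpha)(g-g(z_k))$ is small in $C^{\lambda+\eps}$ because $\eps'<\eps$; a constant is absorbed since $(1-\alpha)g(z_k)$ shifts $f$ by $g(z_k)$ and $\phi_{\alpha,b}$ is unaffected.
\end{itemize}

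The uniform H\"older estimate \autoref{lem.holsmall}, combined with \autoref{lem.bdd} and \autoref{lem.schhol.away-} away from $\gamma_0$, gives compactness in $C_{loc}$. The limit $w$ then solves the constant-coefficient problem of \autoref{lem.liou.nd} with growth $R^{\lambda+\eps'}$. That theorem forces $w=m\phi_{\alpha,b}$, and orthogonality forces $m=0$, contradicting the nondegeneracy. The bound on $m(z_1)$ then follows from the standard telescoping argument $|m_{2\rho}-m_\rho|\lesssim\rho^{\eps'}$.

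The second step turns the pointwise expansions into the seminorm \eqref{opthol:main1}, by the same three-case covering as in the proof of \autoref{lem.holsmall}. For interior points and points near $\gamma_+$, we project onto a grazing point $\overline{z_1}$ at distance $\rho_1$, apply interior or inflow estimates on scale $\rho_1$, and use the expansion at $\overline{z_1}$. For points near $\gamma_-$, the datum $\alpha\mathcal{R}_bf+(1-\alpha)g$ must be controlled in $C^{\lambda}$ on scale $\rho_1$. By \autoref{lem.geo} this reduces to $C^\lambda$ regularity of $f$ on $\gamma_+$ at distance $\gtrsim b\rho_1$ from $\gamma_0$, which the $\gamma_+$ case already provides. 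The estimate \eqref{opthol:main2} follows from the same scheme with exponent $\delta<\lambda_{\alpha,b}$. In that case no expansion is needed: the blow-up limit has growth $R^\delta$ with $\delta<\lambda$, so \autoref{lem.liou.nd} (or \autoref{prop.osc}) forces it to vanish, and only $C^\delta$ regularity of $g$ is required.

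The main obstacle is the blow-up step. Three things must be checked there: the error terms involving $\phi_{\alpha,b}$ in the rescaled equation really vanish given only $A\in C^\eps$; the weak formulation, including the reflection boundary condition, passes to the limit; and the exponent $\eps'$ is below $\beta_{\alpha,b}$ as \autoref{lem.liou.nd} requires.
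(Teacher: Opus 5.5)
Your overall architecture is the paper's. It uses the expansion at grazing points via blow-up with $L^2$ projections onto $\phi_{\alpha,b}$, identifies the limit through \autoref{lem.liou.nd}, gets a contradiction with orthogonality, and finishes with the three-case covering from \autoref{lem.holsmall}.

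\textbf{The gap is in the compactness step.} You invoke \autoref{lem.holsmall} (together with \autoref{lem.bdd}) to get uniform H\"older bounds for $w_k$. Those lemmas allow only a bounded source $F$ and no divergence-form term. After subtracting $m_{k,r_k}\phi_{\alpha,b}$, however, the equation for $w_k$ contains the terms $\sum_i\partial_{v_i}\big((A_k-A_k(z_k))_{i,n}\,\partial_{v_n}\phi_{\alpha,b}\big)$ and $B_k\cdot\nabla_v\phi_{\alpha,b}$.
\begin{itemize}
\item $\partial_{v_n}\phi_{\alpha,b}$ is homogeneous of negative degree $\lambda_{\alpha,b}-1$ in $(x_n,v_n)$ and not identically zero. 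Since it does not depend on $(t,x',v')$, it is unbounded along the \emph{entire} grazing set $\{x_n=v_n=0\}$.
\item $A_k-A_k(z_k)$ vanishes only at the base point.
\end{itemize}
The $L^{4+\delta}$ bound you quote shows these terms vanish in the weak formulation, but compactness needs more. You need $C^\gamma$ bounds up to $\gamma_0$, uniform in $k$, for three reasons: to get locally uniform convergence on $\overline{\mathcal{H}_R}$, to pass the reflection condition to the limit, and to keep the nondegeneracy $\|w\|_{L^\infty(\mathcal{H}_1)}\ge c$. Without them, the sup of $w_k$ may concentrate at the grazing set and be lost in the limit; interior estimates do not prevent this.

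\textbf{The missing ingredient} is a H\"older estimate for the reflection problem with sources of the form $F\partial_{v_n}\phi_{\alpha,b}-\mathrm{div}_v(G\partial_{v_n}\phi_{\alpha,b})$, $F,G\in L^\infty$. This is \autoref{lem.replace}, proved by splitting:
\begin{itemize}
\item Let $h$ solve the equation with this source and zero data on $\partial_{\mathrm{kin}}\mathcal{H}_1$. Then $h\in C^\gamma$ by the in-flow theory for sources in which $\partial_{v_n}\phi_{\alpha,b}\in L^{4+\delta}_{x_n,v_n}$ uniformly in $(t,x',v')$ \cite[Lemma 5.2]{KiWe26}. Its $L^2$ norm is controlled by the energy estimate.
\item $f-h$ solves the reflection problem with bounded source and datum $g+\frac{\alpha}{1-\alpha}\mathcal{R}_b h$. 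This datum is $C^\gamma$ by \autoref{lem.geo}, so \autoref{lem.holsmall} applies.
\end{itemize}
With this lemma your blow-up for \eqref{opthol:main1} goes through.

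The issue does not arise in your treatment of \eqref{opthol:main2}. There you subtract no multiple of $\phi_{\alpha,b}$, so the sources stay bounded and \autoref{lem.holsmall} suffices for compactness. The growth $R^\delta$ with $\delta<\lambda_{\alpha,b}$ then forces the Liouville limit to vanish, so that part is fine.
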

To prove this result, we first establish expansion estimates of order $\lambda_{\alpha,b}+\eps$ for some small $\eps>0$ near the grazing set. Note that we already have H\"older estimates away from $\gamma_-\cup\gamma_0$ by \autoref{lem.schhol.away-}, which implies that the boundary data belongs to a suitable H\"older space. This, in turn, is sufficient to obtain H\"older estimates up to $\gamma_-$. Thus, combining these estimates via a covering argument yields the optimal regularity.

The following proposition yields expansion estimates at the grazing set of order $\lambda_{\alpha,b} +\eps$ for some $\eps > 0$.

\begin{proposition}\label{thm.expansion}
    Fix $\alpha\in(0,1)$, $b\in(0,1]$ with $\alpha/b^2\in(0,1]$. Let $z_0\in\gamma_0$, $R\leq1$, and let $f$ be a weak solution to 
    \begin{equation}\label{expansion:eq}
\left\{
\begin{alignedat}{3}
\partial_tf+v\cdot\nabla_xf-\ddiv(A\nabla_vf)&=B\cdot\nabla_vf+F&&\qquad \mbox{in  $\mathcal{H}_R(z_0)$}, \\
f&=\alpha \mathcal{R}_bf+(1-\alpha)g&&\qquad  \mbox{in $\gamma_-\cap \mathcal{Q}_R(z_0)$},
\end{alignedat} \right.
\end{equation}
where $A\in C^\eps(\mathcal{H}_R(z_0))$, $B\in L^\infty(\mathcal{H}_R(z_0))$, and $F\in L^\infty(\mathcal{H}_R(z_0))$ for some $\eps\in(0,\beta_{\alpha,b})$, where $(A(z_0))_{n,n}=1$ and the small constant $\beta_{\alpha,b}$ is determined in \autoref{prop.osc}. 

Then there is a constant $C\in\bbR$ such that for any $r\in(0,R/2]$,
\begin{equation}\label{expansion:maingoal}
\begin{aligned}
    \|f-f(z_0)-C\phi_{\alpha,b}\|_{L^\infty(\mathcal{H}_R(z_0))}&\leq c\left(\frac{r}R\right)^{\lambda_{\alpha,b}+\eps}\Bigg(\|f\|_{L^\infty(\mathcal{H}_R(z_0))}+R^2\|F\|_{L^\infty(\mathcal{H}_R(z_0))}\\
    &\qquad\qquad\qquad\qquad+R^{\lambda_{\alpha,b}+\eps}[g]_{C^{\lambda_{\alpha,b}+\eps}(\gamma_-\cap \mathcal{Q}_R(z_0))}\Bigg),
\end{aligned}
\end{equation}
where $c=c(n,\Lambda,\eps,\alpha,b,\|A\|_{C^\eps(\mathcal{H}_R(z_0))},\|B\|_{L^\infty(\mathcal{H}_R(z_0))})$.
\end{proposition}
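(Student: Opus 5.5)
We argue by compactness and blow-up, following \cite[Theorem 3.5]{RoWe25} and \cite{KiWe26}, with \autoref{lem.liou.nd} classifying the limits. By scaling ($f(z_0\circ S_R z)$ solves a problem of the same type with $B$ replaced by $RB$, $F$ by $R^2F$) and translation we take $z_0=0$, $R=1$. We normalize so that $\|f\|_{L^\infty(\mathcal{H}_1)}+\|F\|_{L^\infty(\mathcal{H}_1)}+[g]_{C^{\lambda_{\alpha,b}+\eps}}\le 1$.

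Since $0\in\gamma_0$ and $f$ is continuous up to $\gamma_0$ by \autoref{lem.holsmall}, the boundary condition gives $f(0)=\alpha f(0)+(1-\alpha)g(0)$, hence $f(0)=g(0)$. Therefore $\bar f:=f-f(0)$ solves the same problem with $g$ replaced by $g-g(0)$, which is $O(\rho^{\lambda_{\alpha,b}+\eps})$ on $\mathcal{Q}_\rho$. We write $\lambda=\lambda_{\alpha,b}$ and $\phi=\phi_{\alpha,b}(x_n,v_n)$; using $a^{nn}(0)=1$, $\phi$ solves the frozen problem.

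For $\rho\in(0,1/2]$, let $C_\rho$ be the $L^2(\mathcal{H}_\rho)$-projection coefficient of $\bar f$ onto $\phi$. Define
\[
\theta(r):=\sup_{\rho\in[r,1/2]}\rho^{-\lambda-\eps}\|\bar f-C_\rho\phi\|_{L^\infty(\mathcal{H}_\rho)},
\]
which is nonincreasing in $r$. The main claim is that $\theta$ is bounded by a universal constant. Granting this, the standard telescoping argument applies: $|C_{2\rho}-C_\rho|\lesssim\theta\rho^{\eps}$, which uses $\phi\eqsim\rho^\lambda$ on a set of positive measure of $\mathcal{H}_\rho$ (\autoref{lem.phialpha}). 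Hence $C_\rho\to C$ with $|C_\rho-C|\lesssim\rho^\eps$, and this gives \eqref{expansion:maingoal}.

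To prove the claim, suppose by contradiction that there are solutions $f_m$ (with data as above) and radii $r_m\to0$ with $\theta_m(r_m)\to\infty$. We may assume $\theta_m(r_m)\ge\frac12\theta_m(\rho)$ for all $\rho\ge r_m$. Consider the blow-ups
\[
w_m(z):=\frac{\bar f_m(S_{r_m}z)-C_{m,r_m}\phi(S_{r_m}z)}{r_m^{\lambda+\eps}\theta_m(r_m)}.
\]
They satisfy the following.
\begin{itemize}
\item[(a)] $\|w_m\|_{L^\infty(\mathcal{H}_1)}\ge \frac12$.
\item[(b)] $\int_{\mathcal{H}_1}w_m\phi=0$.
\item[(c)] $\|w_m\|_{L^\infty(\mathcal{H}_R)}\lesssim R^{\lambda+\eps}$ for $1\le R\le 1/(2r_m)$. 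This follows from the telescoping bound on $C_\rho$ and the homogeneity $\phi(S_R\cdot)=R^\lambda\phi$.
\end{itemize}
Each $w_m$ solves an equation with coefficients $A_m(z)=A(S_{r_m}z)\to A(0)$ locally uniformly, with drift $r_mB(S_{r_m}\cdot)\to0$, and with boundary datum $(1-\alpha)(g-g(0))(S_{r_m}\cdot)/(r_m^{\lambda+\eps}\theta_m)\to0$. The right-hand side has two contributions:
\begin{itemize}
\item[(i)] a source term $r_m^{2-\lambda-\eps}F/\theta_m\to0$;
\item[(ii)] the commutator $-\theta_m^{-1}r_m^{-\lambda-\eps}\,\ddiv\big((A_m-A(0))\nabla_v\phi\big)$, which arises because $\phi$ solves only the frozen equation.
\end{itemize}
The commutator is a divergence-form term whose $L^{2}$ norm on $\mathcal{H}_R$ is $\lesssim \theta_m^{-1} r_m^{-\eps}\cdot r_m^{\eps}R^{\eps}\|\nabla_v\phi\|_{L^{4+\delta}}\to0$. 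Here we use $[A]_{C^\eps}$ and $\partial_v\phi\in L^{4+\delta}$ from \autoref{lem.phialpha}.

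The main obstacle is the compactness step, which must produce locally uniform convergence up to $\gamma_0$. Divergence-form sources are not directly covered by \autoref{lem.holsmall}, which assumes $F\in L^\infty$. I would therefore first estimate the commutator separately: solve an auxiliary in-flow problem, or use the higher integrability $L^{4+\delta}$ via the De Giorgi argument of \autoref{lem.bdd} and \autoref{lem.oscillation}, which tolerates $L^{p}$ divergence sources for $p$ large. This gives uniform $C^{\beta}$ bounds on $w_m$ in each $\overline{\mathcal{H}_R}$. Arzel\`a--Ascoli then yields $w_m\to w$ locally uniformly, and $\nabla_v w_m\rightharpoonup\nabla_v w$ by the Caccioppoli estimate \autoref{lem.extension1}. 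These convergences suffice to pass to the limit in the weak formulation \eqref{defn:eq.weak}--\eqref{defn2:eq.weak}, including the boundary traces, since the boundary values converge uniformly.

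The limit $w$ solves the constant-coefficient problem with the pure reflection condition $w=\alpha\mathcal{R}_bw$ and growth $R^{\lambda+\eps}$, where $\eps<\beta_{\alpha,b}$. By \autoref{lem.liou.nd}, $w=m\phi$ for some $m\in\mathbb{R}$. Passing to the limit in (b) gives $m\int_{\mathcal{H}_1}\phi^2=0$, so $m=0$ because $\phi>0$. This contradicts (a) in the limit, so $\theta$ is bounded and the proposition follows.
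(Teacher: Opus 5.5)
Your overall scheme (normalizing $f(0)=g(0)$, $L^2$-projections $C_\rho$ onto $\phi_{\alpha,b}$, the monotone quantity $\theta$, blow-ups normalized by $r_m^{\lambda+\eps}\theta_m(r_m)$, uniform H\"older bounds via an auxiliary Dirichlet problem that absorbs the divergence term, \autoref{lem.liou.nd}, and the orthogonality contradiction) is the same as the paper's. However, your treatment of the commutator in (ii) has a gap.

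\textbf{The commutator does not vanish for the reason you give.} Because $w_m$ contains $-C_{m,r_m}\phi(S_{r_m}\cdot)$ and $\nabla_v[\phi(S_{r_m}z)]=r_m^{\lambda}\nabla_v\phi(z)$, the commutator is, up to sign, $\frac{C_{m,r_m}}{r_m^{\eps}\theta_m(r_m)}\ddiv\big((A_m-A_m(0))\nabla_v\phi\big)$. You dropped the factor $C_{m,r_m}$. With it, your bound becomes $\lesssim \frac{|C_{m,r_m}|}{\theta_m(r_m)}[A]_{C^\eps}R^{\eps}\|\partial_{v_n}\phi\|_{L^{4+\delta}}$. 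The fact that $\theta_m(r_m)\to\infty$ no longer makes this small, because your telescoping estimate, as you use it, only gives $|C_{m,r_m}|\lesssim 1+\theta_m(r_m)$.

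This matters because $A$ is merely $C^\eps$, so the quotients $(A_m(S_{r_m}\cdot)-A_m(0))/r_m^{\eps}$ need not tend to zero. If $C_{m,r_m}/\theta_m(r_m)\to\kappa\neq 0$ along a subsequence, the limit $w$ solves an equation with a nontrivial divergence source, and \autoref{lem.liou.nd} does not apply. The uniform H\"older bounds still hold, since they only need the ratio to be bounded; what fails is the identification of the limit.

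\textbf{The missing ingredient} is $\lim_{m}|C_{m,r_m}|/\theta_m(r_m)=0$, which the paper takes from \cite[Lemma 2.15]{RoWe25}. To prove it, proceed as follows.
\begin{itemize}
\item Use the scale-by-scale bound $|C_{m,2\rho}-C_{m,\rho}|\lesssim \rho^{\eps}\theta_m(\rho)$, rather than bounding every scale by $\theta_m(r_m)$.
\item Split the telescoping sum for $C_{m,r_m}-C_{m,1/2}$ at a scale $\delta$. Dyadic scales $\le\delta$ contribute $\lesssim \delta^{\eps}\theta_m(r_m)$.
\item Scales $>\delta$ contribute $\lesssim\theta_m(\delta)\lesssim \delta^{-\lambda-\eps}$, uniformly in $m$. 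This uses $\|\bar f_m\|_{L^\infty}\le 2$ and $|C_{m,\rho}|\lesssim\rho^{-\lambda}$.
\item Divide by $\theta_m(r_m)\to\infty$ and then let $\delta\to0$.
\end{itemize}

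A smaller point: as written, ``$\theta_m(r_m)\ge\frac12\theta_m(\rho)$ for $\rho\ge r_m$'' holds automatically by monotonicity. What (a) actually requires is that the supremum defining $\theta_m(r_m)$ is almost attained at $\rho=r_m$. You can arrange this by re-choosing $r_m$. The new radii still tend to $0$, since $\rho^{-\lambda-\eps}\|\bar f_m-C_{m,\rho}\phi\|_{L^\infty(\mathcal{H}_\rho)}\lesssim\rho^{-\lambda-\eps}$.
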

The proof relies on a blow-up argument, which requires uniform H\"older estimates up to the boundary even when the right-hand side is unbounded. We therefore first establish the following auxiliary lemma before proceeding to the proof of \autoref{thm.expansion}. 
\begin{lemma}\label{lem.replace}
   Fix $\alpha\in(0,1)$, $b\in(0,1]$ with $\alpha/b^2\in(0,1]$.  Let $f$ be a bounded weak solution to 
    \begin{equation*}
\left\{
\begin{alignedat}{3}
\partial_tf+v\cdot\nabla_xf-\ddiv(A\nabla_vf)&=B\cdot\nabla_vf+F\partial_{v_n}\phi_{\alpha,b}-\ddiv(G\partial_{v_n}\phi_{\alpha,b})&&\qquad \mbox{in  $\mathcal{H}_1$}, \\
f&=\alpha \mathcal{R}_bf+(1-\alpha)g&&\qquad  \mbox{in $\gamma_-\cap \mathcal{Q}_1$},
\end{alignedat} \right.
\end{equation*}
where $A\in C^\eps(\mathcal{H}_1)$, $B\in L^\infty(\mathcal{H}_1)$, and $F\in L^\infty(\mathcal{H}_1)$. Then there is a constant $\gamma=\gamma(n,\Lambda,\alpha,b)$ such that
\begin{align*}
    [f]_{C^\gamma(\mathcal{H}_{1/2})}\leq c(\|f\|_{L^\infty(\mathcal{H}_1)}+\|F\|_{L^\infty(\mathcal{H}_1)}+\|G\|_{L^\infty(\mathcal{H}_1)}+[g]_{C^{\gamma}(\gamma_-\cap \mathcal{Q}_1)})
\end{align*}
for some constant $c=c(n,\Lambda,\eps,\alpha,b,\|A\|_{C^\eps(\mathcal{H}_1)},\|B\|_{L^\infty(\mathcal{H}_1)})$.
\end{lemma}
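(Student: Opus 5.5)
Before the Hölder scheme can run, the new source terms have to be controlled in the integrability classes that the machinery of Section~\ref{sec:4} accepts. By \autoref{lem.phialpha}, $\partial_{v_n}\phi_{\alpha,b}\in L^{4+\delta}(H_1)$ for some $\delta=\delta(\alpha,b)>0$. Since $\phi_{\alpha,b}$ depends only on $(x_n,v_n)$, this gives $F\partial_{v_n}\phi_{\alpha,b}\in L^{p}(\mathcal{H}_1)$ and $G\partial_{v_n}\phi_{\alpha,b}\in L^{p}(\mathcal{H}_1)$ for some $p>4$, after integrating in the remaining variables. The norms are bounded by $c(\alpha,b)(\|F\|_{L^\infty}+\|G\|_{L^\infty})$. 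The plan is therefore to redo \autoref{lem.bdd}, \autoref{lem.oscillation} and \autoref{lem.holsmall} with source terms $F_1-\ddiv F_2$, where $F_1,F_2\in L^p$ and $p$ is large enough.

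\emph{Step 1 (energy and local boundedness).} In \autoref{lem.changeofvar} and \autoref{lem.extension1} the divergence term adds $\int F_2\cdot\nabla_v(\varphi_M'(f)\psi^2)$. Young's inequality absorbs this into the gradient term, so \eqref{extension1:ineq.bdry.weightv} holds with the extra terms $|F_1\chi_{f>k}|^2+|F_2\chi_{f>k}|^2$ on the right. The boundary sign argument for $J_1$ is unchanged, since it uses only the boundary condition. \autoref{lem.extension} already allows $\nabla_v\cdot F_2$, so \autoref{lem.q.int} holds with these extra terms. In the De Giorgi iteration the source enters through $\|F_i\chi_{\{f>k\}}\|_{L^2}\le\|F_i\|_{L^p}|\{f>k\}|^{1/2-1/p}$. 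Since $p>4$, hence $p>2$, the exponent $1/2-1/p$ is positive, which gives the superlinear gain the iteration needs. This yields the $L^\infty$ bound. (If the gain $q/2$ is too weak relative to $p$, one can iterate with the $L^q$-improvement for several steps.)

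\emph{Step 2 (oscillation decay at $\gamma_0$ and covering).} This is the main obstacle. The scheme of \autoref{lem.oscillation} relies on \cite[Lemma 8.4]{Sil22}, which assumes bounded right-hand sides. I would first check whether the De Giorgi intermediate-value and measure-to-pointwise lemmas of \cite{Sil22} (or \cite{GIMV19}) tolerate sources $F_1-\ddiv F_2$ with $F_i\in L^p$ for $p$ above the critical exponent. The homogeneous dimension is $4n+2$, so the natural smallness condition is scale-invariant: $\|F_1\|_{L^{p}}$ scales like $r^{2-(4n+2)/p}$ and $\|F_2\|_{L^p}$ like $r^{1-(4n+2)/p}$.

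Here the 1D structure should be exploited. The relevant quantity is the local integral of $|\partial_{v_n}\phi_{\alpha,b}|^p$ over a cylinder $\mathcal{H}_r(\bar z)$. For $p>4$ that is still below the critical exponent of the 1D variables, homogeneity of $\phi_{\alpha,b}$ shows that the scaled source $r\,\partial_{v_n}\phi_{\alpha,b}(S_r\cdot)$ is comparable to $r^{\lambda_{\alpha,b}}$ times a fixed $L^p$ function. So at every scale the rescaled source is small, of size $r^{\lambda_{\alpha,b}}$, in the norm where the Sil22 lemma applies (possibly after replacing $L^\infty$ by this $L^p$ in its proof). This gives oscillation decay in \eqref{holsmall:ineq1.osck}, with the term $2^{-2k}\|F\|_{L^\infty}$ replaced by $c\,2^{-k\lambda_{\alpha,b}}(\|F\|_{L^\infty}+\|G\|_{L^\infty})$. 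Taking $\gamma<\min\{\beta,\lambda_{\alpha,b}\}$ keeps the geometric sum summable.

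\emph{Step 3 (away from $\gamma_0$).} Away from $\gamma_0$, $\partial_{v_n}\phi_{\alpha,b}$ is bounded on each cylinder used in the covering argument of \autoref{lem.holsmall}. Its bound is controlled by a negative power of the distance to $\gamma_0$, with exponent $\lambda_{\alpha,b}-1>-1$. Hence \autoref{lem.calpha} and \autoref{lem.schhol.away-}, in a divergence-form version with $L^\infty$ fields, apply with the same scaling bookkeeping as before. The reflected datum $\alpha\mathcal{R}_bf$ is handled exactly as in the third case of \autoref{lem.holsmall}, and \autoref{lem.geo} transfers Hölder bounds from $\gamma_+$. Combining Steps 2 and 3 through the three-case covering gives the claimed $C^\gamma$ estimate.
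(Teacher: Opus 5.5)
\textbf{There is a genuine gap.} Step~2 carries the whole difficulty of the lemma, and it is not actually carried out. You propose to check whether the De Giorgi-type lemmas behind \cite[Lemma 8.4]{Sil22} survive unbounded sources. In the isotropic $L^p$ setting you set up, they do not. For $p=4+\delta$ your own scaling gives $\|r\,G(S_r\cdot)\|_{L^p}\sim r^{1-(4n+2)/p}$ with a negative exponent. So a divergence-form source in isotropic $L^{4+\delta}$ lies below the critical integrability $4n+2$ and need not even give bounded solutions. Only the mixed norm $L^\infty_{t,x',v'}L^{4+\delta}_{x_n,v_n}$, combined with the homogeneity of $\phi_{\alpha,b}$, scales favourably. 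Proving oscillation lemmas at $\gamma_0$ for divergence-form sources in that mixed norm is real work, not a matter of replacing $L^\infty$ by $L^p$ in an existing proof.

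Step~1 has the same defect. The source term closes the De Giorgi iteration only if $\frac{2}{p}+\frac{2}{q}<1$, where $q$ is the exponent from \autoref{lem.extension}, which is only slightly larger than $2$. So $p>2$ is not enough. Iterating \autoref{lem.extension} is not available either, since it takes $L^2$ data. Step~1 is also superfluous, because $f$ is assumed bounded.

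The missing idea is to remove the unbounded source by a decomposition instead of redoing Section~\ref{sec:4}.
\begin{itemize}
\item Let $h$ solve the same equation in $\mathcal{H}_1$ with $h=0$ on $\partial_{\mathrm{kin}}\mathcal{H}_1$; existence is given by \cite[Lemma 2.7]{Zhu24}.
\item This is a pure in-flow problem, and H\"older regularity with sources $F\partial_{v_n}\phi_{\alpha,b}$ and $\ddiv(G\partial_{v_n}\phi_{\alpha,b})$ is already known for it. The proof of \cite[Lemma 5.2]{KiWe26} only uses $\sup_{(t,x',v')}\|\partial_{v_n}\phi_{\alpha,b}\|_{L^{4+\delta}_{x_n,v_n}}<\infty$. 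It gives $\|h\|_{C^\gamma(\mathcal{H}_{7/8})}\le c(\|h\|_{L^2(\mathcal{H}_1)}+\|F\|_{L^\infty}+\|G\|_{L^\infty})$.
\item The energy estimate bounds $\|h\|_{L^2(\mathcal{H}_1)}$ by $\|F\|_{L^\infty}+\|G\|_{L^\infty}$.
\item Since $h=0$ on $\gamma_-$, the difference $f-h$ solves the equation with no source and with $f-h=\alpha\mathcal{R}_b(f-h)+(1-\alpha)\tilde g$ on $\gamma_-$, where $\tilde g=g+\frac{\alpha}{1-\alpha}\mathcal{R}_bh$.
\item By \autoref{lem.geo}, $[\tilde g]_{C^\gamma}\le c([g]_{C^\gamma}+[h]_{C^\gamma})$.
\item Hence \autoref{lem.holsmall} applies to $f-h$ exactly as stated, and adding the bounds for $h$ and $f-h$ gives the lemma.
\end{itemize}
This confines the unbounded-source issue to an in-flow problem where it has already been resolved. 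No part of the reflection-condition machinery needs to be reworked.
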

In our applications, we can assume $f$ to be uniformly bounded up to the boundary so that we can restrict ourselves to bounded solution here.

\begin{proof}
    By \cite[Lemma 2.7]{Zhu24}, there is a weak solution $h$ to
     \begin{equation*}
\left\{
\begin{alignedat}{3}
\partial_th+v\cdot\nabla_xh-\ddiv(A\nabla_vh)&=B\cdot\nabla_vh+F\partial_{v_n}\phi_{\alpha,b}-\ddiv(G\partial_{v_n}\phi_{\alpha,b})&&\qquad \mbox{in  $\mathcal{H}_{1}$}, \\
h&=0&&\qquad  \mbox{in $\partial_{\mathrm{kin}}\mathcal{H}_{1}$}.
\end{alignedat} \right.
\end{equation*}
By \autoref{lem.phialpha}, we observe that $\partial_{v_n}\phi_{\alpha,b}=\partial_{v_n}\phi_{\alpha,b}(x_n,v_n)\in L^{4+\delta}({H}_1)$ for some small $\delta>0$  and $\sup_{(t,x',v')}\|\partial_{v_n}\phi_{\alpha,b}\|_{L^{4+\delta}_{x_n,v_n}({H}_1)}\leq c(\alpha,b)$. Next, as in the proof of \cite[Lemma 5.2]{KiWe26}, we get that there is a small constant $\gamma=\gamma(n,\Lambda,\alpha,b)$ such that
\begin{align*}
    \|h\|_{C^\gamma(H_{7/8})}\leq c(\|h\|_{L^2(\mathcal{H}_1)}+\|F\|_{L^\infty(\mathcal{H}_1)}+\|G\|_{L^\infty(\mathcal{H}_1)}),
\end{align*}
where $c=c(n,\Lambda,\alpha,b,\|B\|_{L^\infty(\mathcal{H}_1)})$. Although \cite[Lemma 5.2]{KiWe26} is stated under the assumption $\delta=\frac13$, its proof works for any $\delta>0$. The corresponding constants $\beta$ and $c$ in \cite[Lemma 5.2]{KiWe26} then depend on $\delta$ as well.

In addition, from the standard energy estimate as in \cite[Lemma 2.8]{Zhu24}, we have 
\begin{align*}
    \|h\|_{L^2(\mathcal{H}_1)}\leq c(\|F\|_{L^\infty(\mathcal{H}_1)}+\|G\|_{L^\infty(\mathcal{H}_1)}).
\end{align*}
Next, we consider the equation
\begin{equation*}
\left\{
\begin{alignedat}{3}
\partial_t(f-h)+v\cdot\nabla_x(f-h)-\ddiv(A\nabla_v(f-h))&=B\cdot\nabla_v(f-h)&&\qquad \mbox{in  $\mathcal{H}_{3/4}$}, \\
f-h&=\alpha\mathcal{R}_b(f-h)+(1-\alpha)\widetilde{g}&&\qquad  \mbox{in $\gamma_-\cap \mathcal{Q}_{3/4}$},
\end{alignedat} \right.
\end{equation*}
where we write $\widetilde{g}\coloneqq g+\frac{\alpha}{1-\alpha}\mathcal{R}_bh$. By \autoref{lem.holsmall} and \autoref{lem.geo}, we have 
\begin{align*}
    [f-h]_{C^{\gamma}(\mathcal{H}_{1/2})}&\leq c(\|f-h\|_{L^\infty(\mathcal{H}_{3/4})}+[\widetilde{g}]_{C^\gamma(\gamma_-\cap \mathcal{Q}_{3/4})})\\
    &\leq c(\|f-h\|_{L^\infty(\mathcal{H}_{3/4})}+[h]_{C^{\gamma}(\mathcal{H}_{3/4})}+[g]_{C^{\gamma}(\gamma_-\cap \mathcal{Q}_{3/4})})
\end{align*}
for some constant $c=c(n,\Lambda,\alpha,b,\|B\|_{L^\infty(\mathcal{H}_1)})$.
Now we combine all the estimates to see that 
\begin{align*}
    [f]_{C^\gamma(\mathcal{H}_{1/2})}\leq [h]_{C^{\gamma}(\mathcal{H}_{1/2})}+[f-h]_{C^{\gamma}(\mathcal{H}_{1/2})}\leq c(\|f\|_{L^\infty(\mathcal{H}_1)}+\|F\|_{L^\infty(\mathcal{H}_1)}+\|G\|_{L^\infty(\mathcal{H}_1)}+[g]_{C^{\gamma}(\gamma_-\cap \mathcal{Q}_1)}),
\end{align*}
which completes the proof.
\end{proof}

Now we are ready to prove the expansion estimate from \eqref{expansion:maingoal}.

\begin{proof}[Proof of \autoref{thm.expansion}]
By \autoref{lem.holsmall}, $f$ is H\"older continuous up to the boundary and $f(z_0)=\alpha\mathcal{R}_bf(z_0)+(1-\alpha)g(z_0)$ is well-defined. Since $z_0\in \gamma_0$, we have $\mathcal{R}_bf(z_0)=f(z_0)$ so that $f(z_0)=g(z_0)$. Thus, by considering $\widetilde{f}(z)\coloneqq f(z)-f(z_0)$, we have that $\widetilde{f}$ is a solution to \eqref{expansion:eq} with $g$ replaced by $\widetilde{g}=g(z)-g(z_0)$, which gives $\widetilde{f}(z_0)=\widetilde{g}(z_0)=0$. Therefore, we always assume $f(z_0)=g(z_0)=0$.

Moreover, by a scaling argument (see \cite[Proposition 5.1]{KiWe26}) and the fact that $z_0\in\gamma_0$, we may assume $R=1$ and $z_0=0$. Now it suffices to prove \eqref{expansion:maingoal} under the assumptions $R=1$, $z_0=0$, and $f(0)=g(0)=0$. 

We prove it by contradiction. Suppose there are sequences $(A_l)_l$, $(B_l)_l$, $(F_l)_l$, $(g_l)_{l}$, and $(f_l)_l$ such that 
\begin{equation*}
\left\{
\begin{alignedat}{3}
\partial_tf_l+v\cdot\nabla_xf_l-\ddiv(A_l\nabla_vf_l)&=B_l\cdot\nabla_vf_l+F_l&&\qquad \mbox{in  $\mathcal{H}_1$}, \\
f_l&=\alpha \mathcal{R}_bf_l+(1-\alpha)g_l&&\qquad  \mbox{in $\gamma_-\cap \mathcal{Q}_1$},
\end{alignedat} \right.
\end{equation*}
where $(A_l(0))_{n,n}=1$, $f_l(0)=g_l(0)=0$,
\begin{equation}\label{expansion:ass.coeff}
\begin{aligned}
    &\|A_l\|_{C^\eps(\mathcal{H}_1)}+\|B_l\|_{L^\infty(\mathcal{H}_1)}\leq \Lambda,\\
    &\|f_l\|_{L^\infty(\mathcal{H}_1)}+\|F_l\|_{L^\infty(\mathcal{H}_1)}+[g]_{C^{\lambda_{\alpha,b}+\eps}(\gamma_-\cap \mathcal{Q}_1)}\leq 1,
\end{aligned}
\end{equation}
but 
\begin{align}\label{expansion:contr} 
    \sup_{l\in\N}\inf_{C\in\bbR}\inf_{\rho\in(0,\frac12]}\rho^{-(\lambda_{\alpha,b}+\eps)}\|f_l-C\phi_{\alpha,b}\|_{L^\infty(\mathcal{H}_\rho)}=\infty.
\end{align}
We now choose a suitable blow-up sequence to derive a contradiction. To this end, we start by choosing a constant $C_{l,\rho}$ for each $l\in\N$ and $\rho>0$ such that for any $C\in\bbR$,
\begin{align}\label{expansion:l2proj}
    \int_{\mathcal{H}_\rho}|f_l-C_{l,\rho}\phi_{\alpha,b}|^2\,dz\leq \int_{\mathcal{H}_\rho}|f_l-C\phi_{\alpha,b}|^2\,dz\quad\text{and}\quad
    \int_{\mathcal{H}_\rho}(f_l-C_{l,\rho}\phi_{\alpha,b})\phi_{\alpha,b}\,dz=0.
\end{align}
Note that the constant $C_{l,\rho}$ is obtained by taking an $L^2$-projection (see \cite{RoWe25,KiWe26}). Next, we define a function $\theta(r):(0,1/2]\to\bbR$ by
\begin{align*}
    \theta(r)\coloneqq \sup_{l\in\N}\sup_{\rho\in[r,\frac12]}\rho^{-(\lambda_{\alpha,b}+\eps)}\|f_l-C_{l,\rho}\phi_{\alpha,b}\|_{L^\infty(\mathcal{H}_\rho)}.
\end{align*}
Since $\phi_{\alpha,b}$ is homogeneous with degree $\lambda_{\alpha,b}$, using \cite[Lemma 2.14]{RoWe25}, we have $\theta(r)\to\infty$ as $r\to0$. More precisely, if $\theta(r)\leq N$, then by \cite[Lemma 2.14]{RoWe25}, for each $l$, we can find a constant $C_{l,\infty}$ such that $\|f_l-C_{l,\infty}\phi_{\alpha,b}\|_{L^\infty(\mathcal{H}_\rho)}\leq c\rho^{\lambda_{\alpha,b}+\eps}$, which contradicts \eqref{expansion:contr}.

As $\lim\limits_{r\to0}\theta(r)=\infty $ and $\theta(r)$ is decreasing, there are sequences $(l_m)_m$ and $(r_{l_m})_m$ such that $r_{l_m}\to0$ as $m\to\infty$ and 
\begin{align}\label{expansion:compar}
    \theta(r_{l_m})\geq r_{l_m}^{-(\lambda_{\alpha,b}+\eps)}\|f_{l_m}-C_{l_m,r_{l_m}}\phi_{\alpha,b}\|_{L^\infty(H_{r_{l_m}})}\geq \frac{\theta(r_{l,m})}2.
\end{align}
To simplify the notation, we shall henceforth assume that $l_m=m$.

For each $m$ and for any $R\leq \frac{1}{r_m}$, using \cite[Lemma 2.15]{RoWe25}, we have
\begin{align}\label{expansion:linfty}
    \|f_{m}-C_{m,r_{m}}\phi_{\alpha,b}\|_{L^\infty(H_{Rr_{m}})}\leq c\theta(r_{m})(Rr_{m})^{\lambda_{\alpha,b}+\eps}
\end{align}
with 
\begin{align}\label{expansion:limit.cmrm}
    \lim_{m\to\infty}\frac{|C_{m,r_m}|}{\theta(r_m)}=0.
\end{align}
Now we are ready to define a blow up sequence $\widetilde{f}_m$ by 
\begin{align*}
    \widetilde{f}_m(z)\coloneqq \frac{f_m(S_{r_m}z)-C_{m,r_{m}}\phi_{\alpha,b}(S_{r_m}z)}{r_m^{\lambda_{\alpha,b}+\eps}}
\end{align*}
and observe that $\widetilde{f}_m$ solves
\begin{equation}\label{expansion:eq.tildem}
\left\{
\begin{alignedat}{3}
\partial_t\widetilde{f}_m+v\cdot\nabla_x\widetilde{f}_m-\ddiv(\widetilde{A}_m\nabla_v\widetilde{f}_m)&=\widetilde{B}_m\cdot\nabla_v\widetilde{f}_m+\widetilde{F}_m+\widetilde{G}_m&&\qquad \mbox{in  $H_{1/r_m}$}, \\
\widetilde{f}_m&=\alpha \mathcal{R}_b\widetilde{f}_m+(1-\alpha)\widetilde{g}_m&&\qquad  \mbox{in $\gamma_-\cap Q_{1/r_m}$},
\end{alignedat} \right.
\end{equation}
where 
\begin{align*}
    &\widetilde{A}_m\coloneqq A(S_{r_m}z),\quad \widetilde{B}_m(z)\coloneqq r_mB(S_{r_m}z),\quad \widetilde{g}_m(z)\coloneqq \frac{g(S_{r_m}z)}{r_m^{\lambda_{\alpha,b}+\eps}\theta(r_m)},\quad \widetilde{F}_m(z)\coloneqq \frac{r_m^{2-(\lambda_{\alpha,b}+\eps)}F_m(S_{r_m}z)}{\theta(r_m)}\\
    &\widetilde{G}_m(z)\coloneqq-\frac{C_{m,r_m}\ddiv((\widetilde{A}_m-I)\nabla_v(\phi_{\alpha,b}(S_{r_m}z))+C_{m,r_m}\widetilde{B}_m\cdot\nabla_v(\phi_{\alpha,b}(S_{r_m}z))}{r_m^{\lambda_{\alpha,b}+\eps}}.
\end{align*}
In particular, we have used the fact that $v\cdot \nabla_x\phi_{\alpha,b}-\ddiv(\nabla_v\phi_{\alpha,b})=0$ by \autoref{lem.phialpha}.

Next we prove uniform estimates of the sequences $\widetilde{g}_m$, $\widetilde{F}_m$, $\widetilde{G}_m$ and $\widetilde{f}_m$, in order to find the limiting equation of $\widetilde{f}_m$.

To do so, using the homogeneity of $\phi_{\alpha,b}$ and the fact that $\phi_{\alpha,b}$ depends only on $(x_n,v_n)$ and $(\widetilde{A}_m(0))_{n,n}=1$, we have 
\begin{align*}
    -\ddiv((\widetilde{A}_m-I)\nabla_v(\phi_{\alpha,b}))&=-\sum_{i=1}^{n-1}\partial_{v_i}(\widetilde{A}_m)\partial_{v_n}\phi_{\alpha,b}-\partial_{v_n}((\widetilde{A}_m-I)_{n,n}\partial_{v_n}\phi_{\alpha,b})\\
    &=-\sum_{i=1}^n\partial_{v_i}((\widetilde{A}_m-\widetilde{A}_m(0))_{i,n}\partial_{v_n}\phi_{\alpha,b}),
\end{align*}
which gives
\begin{align*}
    \widetilde{G}_m\coloneqq -\frac{C_{m,r_m}\sum\limits_{i=1}^n\partial_{v_i}((\widetilde{A}_m-\widetilde{A}_m(0))_{i,n}\partial_{v_n}\phi_{\alpha,b})+C_{m,r_m}\widetilde{B}_m\cdot\nabla_v\phi_{\alpha,b}}{r_m^{\eps}\theta(r_m)},
\end{align*}
Moreover, we observe
 $\|(\widetilde{A}_m-\widetilde{A}_m(0))\|_{L^\infty(\mathcal{H}_R)}\leq c(R r_m)^{\eps}$, $\|\widetilde{B}_m\|_{L^\infty(\mathcal{H}_R)}\leq cr_m$ by \eqref{expansion:ass.coeff}. Now using these observations, $g_m(0)=0$, \eqref{expansion:ass.coeff}, \eqref{expansion:limit.cmrm}, we have for each $R\geq1$
\begin{equation}\label{expansion:limit.dataseq}
\begin{aligned}
    &\lim_{m\to\infty}\left\|\frac{C_{m,r_m}(\widetilde{A}_m-\widetilde{A}_m(0))}{r_m^\eps\theta(r_m)}\right\|_{C^{\eps}(\mathcal{H}_R)}+\lim_{m\to\infty}\left\|\frac{C_{m,r_m}\widetilde{B}_m}{r_m^\eps\theta(r_m)}\right\|_{L^\infty(\mathcal{H}_R)}= 0,
    \\&\lim_{m\to\infty}\left\|\widetilde{F}_m(S_{r_m}z)\right\|_{L^\infty(\mathcal{H}_R)}=\lim_{m\to\infty}\left\|\frac{r_m^{2-(\lambda_{\alpha,b}+\eps)}F_m(S_{r_m}z)}{\theta(r_m)}\right\|_{L^\infty(\mathcal{H}_R)}=0,\\
    &
     \lim_{m\to\infty}\left\|\widetilde{g}_m\right\|_{C^{\lambda_{\alpha,b}+\eps}(\mathcal{H}_R)}= \lim_{m\to\infty}\left\|\frac{g_m(S_{r_m}z)-g_m(0)}{r_m^{\lambda_{\alpha,b}+\eps}\theta(r_m)}\right\|_{C^{\lambda_{\alpha,b}+\eps}(\mathcal{H}_R)}=0,
\end{aligned}
\end{equation}
where we have also used the fact that $\theta(r)\to\infty$ as $r\to0$ for the last limit.
Now using \eqref{expansion:limit.dataseq} and \eqref{expansion:linfty} to get $\|\widetilde{f}_m\|_{L^\infty(H_{2R})}\leq c(R)$, we can apply a rescaled version of \autoref{lem.replace} to $\widetilde{f}_m$ so that for any $R\leq \frac{1}{2r_m}$, we get
\begin{align}\label{expansion:cgamma}
    \|\widetilde{f}_m\|_{C^\gamma(\mathcal{H}_R)}\leq c(n,\Lambda,\eps,\alpha,b,R)
\end{align}
for some $\gamma=\gamma(n,\Lambda,\alpha,b)$.  Since $\phi_{\alpha,b}(x_n,v_n)$ is smooth away from $\{x_n=0\}$ by \autoref{lem.phialpha}, by \cite[Lemma 2.5]{KiWe26} together with \eqref{expansion:linfty} and \eqref{expansion:limit.dataseq}, we get that for any $\mathcal{Q}_1(z_0)\Subset \{x_n>0\}$ with $R\leq1$,
\begin{align}\label{expansion:c1eps}
    \|\widetilde{f}_m\|_{C^{1,\eps}(\mathcal{Q}_1(z_0))}\leq c(n,\Lambda,\eps,\alpha,b,z_0)
\end{align}
 for some constant $c$ independent of $m$, which implies $\widetilde{f}_m\to \widetilde{f}_\infty$ in $C^{1,\eps}(\mathcal{Q}_1(z_0))$.
 
 Next, by testing \eqref{expansion:eq} with $\widetilde{f}_m\psi^2$, where $\psi\in C_c^\infty(\mathcal{Q}_{R})$ with $\psi\equiv1$ on $\mathcal{Q}_{R/2}$, we deduce
 \begin{align*}
    \int_{\mathcal{H}_{\frac{R}2}}|\nabla_v \widetilde{f}_m|^2&\leq c\Bigg(\|\widetilde{f}_m\|_{L^\infty(\mathcal{H}_R)}++C_{m,r_m}\left(\frac{\|\widetilde{A}_m-\widetilde{A}_m(0)\|_{L^\infty(\mathcal{H}_R)}+\|\widetilde{B}_m\|_{L^\infty(\mathcal{H}_R)}}{r_m^\eps\theta(r_m)}\right)\int_{\mathcal{H}_R}|\nabla_{v_n}\phi_{\alpha,b}|^2\\
    &\quad\qquad+\int_{\mathcal{H}_R}|\widetilde{F}_m|^2+\|\widetilde{g}_m\|_{L^\infty(\gamma_-\cap\mathcal{Q}_R(z_0))}\Bigg),
\end{align*}
 where $c=c(n,\Lambda,R)$. Now using \eqref{expansion:linfty}, \eqref{expansion:limit.dataseq}, and the fact that $\|\partial_{v_n}\phi_{\alpha,b}\|_{L^{2}(\mathcal{H}_R)}\leq c(R)$, we derive
\begin{align}\label{expansion:l2grad}
    \|\nabla_v\widetilde{f}_m\|_{L^2(\mathcal{H}_R)}\leq c(n,\Lambda,\alpha,b,R).
\end{align}
Therefore, in light of \cite[Lemma D.1]{KiWe26} together with \eqref{expansion:limit.dataseq}-\eqref{expansion:l2grad}, by taking $m\to\infty$, we get
 \begin{equation}\label{expansion:limit.eq}
\left\{
\begin{alignedat}{3}
\partial_t\widetilde{f}_\infty+v\cdot\nabla_x\widetilde{f}_\infty-\ddiv(\widetilde{A}_\infty\nabla_v\widetilde{f}_\infty)&=0&&\qquad \mbox{in  $\{x_n>0\}$}, \\
\widetilde{f}_\infty(t,x',0,v',v_n)&=\alpha \widetilde{f}_\infty(t,x',0,v',-bv_n)&&\qquad  \mbox{in $\{x_n=0\}\times \{v_n>0\}$},\\
\|\widetilde{f}_\infty\|_{L^\infty({H}_R)}&\leq cR^{\lambda_{\alpha,b}+\eps}&&\qquad\text{for any }R\geq1,
\end{alignedat} \right.
\end{equation}
where the last condition follows from \eqref{expansion:linfty}. Moreover, as $\widetilde{g}_m$ uniformly converges to 0 and $\widetilde{f}_m$ also uniformly converges to $\widetilde{f}_\infty$ by \eqref{expansion:limit.dataseq} and \eqref{expansion:cgamma}, respectively, we obtain the boundary condition given in the second line of \eqref{expansion:limit.eq}.

By \autoref{lem.liou.nd}, we have $\widetilde{f}_\infty=C\phi_{\alpha,b}$ for some constant $C$. Then by \eqref{expansion:l2proj}, using the definition of $\widetilde{f}_m$ and the fact that $\widetilde{f}_m\rightrightarrows\widetilde{f}_\infty$, we have 
\begin{align*}
    \lim_{m\to\infty}\int_{H_{1}}\widetilde{f}_mC\phi_{\alpha,b}\,dz=\int_{H_{1}}\widetilde{f}_\infty^2\,dz=0,
\end{align*} which gives $\widetilde{f}_\infty\equiv0$. This contradicts \eqref{expansion:compar}, as $\|\widetilde{f}_{\infty}\|_{L^\infty(\mathcal{H}_1)}=\lim\limits_{m\to\infty}\|\widetilde{f}_{m}\|_{L^\infty(\mathcal{H}_1)}\geq\frac12$. This completes the proof.
\end{proof}

Combining \autoref{thm.expansion} and \autoref{lem.schhol.away-} together with a covering argument, we can prove the main result of this section.

\begin{proof}[Proof of \autoref{thm.opthol}]
    Since $z_0\in\gamma_0$, we may assume $z_0=0$ and $R=1$. Next, as in the scaling argument given in \cite[(5.4)]{KiWe26}, we may assume $\|B\|_{L^\infty(\mathcal{H}_1)}\leq 1$. Now we want to show that for any $z_1\in \mathcal{H}_{1/2}$ and $\rho\leq \frac{1}{128}$,
    \begin{align}\label{opthol:ineq0}
        \|f-f(z_1)\|_{L^\infty(\mathcal{H}_\rho(z_1))}\leq c\rho^{\lambda_{\alpha,b}}(\|f\|_{L^\infty(\mathcal{H}_1)}+\|F\|_{L^\infty(\mathcal{H}_1)}+[g]_{C^{\lambda_{\alpha,b}+\eps}(\gamma_-\cap \mathcal{Q}_1)}),
    \end{align}
    where $c=c(n,\Lambda,\alpha,b,\|A\|_{C^\eps(\mathcal{H}_1)})$. 
    
    From now on, for simplicity, all constants are understood to depend only on $n,\Lambda,\alpha,b,\|A\|_{C^\eps(\mathcal{H}_1)}$, and we do not indicate this dependence explicitly.

    First, we note from \autoref{thm.expansion} that for any $z_0=(t_0,x_0',x_{0,n},v_0',v_{0,n})\in \mathcal{H}_{\frac12}$ with $x_{0,n}=v_{0,n}=0$, such that $z_0\in\gamma_0$, 
    \begin{align*}
        \|f-f(z_0)-C_0\phi_{\alpha,b}\|_{L^\infty(\mathcal{H}_\rho(z_0))}\leq \rho^{\lambda_{\alpha,b}+\eps}(\|f\|_{L^\infty(H_{1/4}(z_0))}+\|F\|_{L^\infty(H_{1/4}(z_0))}+[g]_{C^{\lambda_{\alpha,b}+\eps}(\gamma_-\cap Q_{1/4}(z_0))}),
    \end{align*}
     whenever $\rho\leq \frac14$. Moreover taking $\rho=\frac14$, we have 
     \begin{align*}
         |C_0|\leq c(\|f\|_{L^\infty(H_{1/4}(z_0))}+\|F\|_{L^\infty(H_{1/4}(z_0))}+[g]_{C^{\lambda_{\alpha,b}+\eps}(\gamma_-\cap Q_{1/4}(z_0))}).
     \end{align*}
     Combining this with \autoref{lem.phialpha} and using that $x_{0,n}=v_{0,n}=0$ yields that for any $z_0\in\gamma_0$
    \begin{align}\label{opthol:ineq1}
        \osc_{\mathcal{H}_\rho(z_0)}f\leq 2\|f-f(z_0)\|_{L^\infty(\mathcal{H}_\rho(z_0))}\leq c\rho^{\lambda_{\alpha,b}}(\|f\|_{L^\infty(H_{1})}+\|F\|_{L^\infty(H_{1})}+[g]_{C^{\lambda_{\alpha,b}+\eps}(\gamma_-\cap \mathcal{Q}_{1})}).
    \end{align}
    Using this, \autoref{lem.schhol.away-} and a covering argument, we are going to deduce \eqref{opthol:ineq0}. In fact, the proof is identical to that of \autoref{lem.holsmall}. More precisely, the key estimate \eqref{holsmall:des2} is obtained by combining the oscillation estimate \eqref{holsmall:ineq0.osc} near the grazing set, the H\"older estimates away from $\gamma_0$ provided by \autoref{lem.calpha}, and a covering argument.  Although \autoref{lem.calpha} is stated up to $\gamma_0$, in the proof of \autoref{lem.holsmall}, we only use this result away from $\gamma_0$. Since \eqref{opthol:ineq1} and \autoref{lem.schhol.away-} play the roles of \eqref{holsmall:ineq0.osc} and \autoref{lem.calpha} (away from $\gamma_0$), respectively,  the estimate \eqref{opthol:ineq0} follows immediately by repeating the argument of \autoref{lem.holsmall} with $\beta=\lambda_{\alpha,b}$. Since we have established \eqref{opthol:ineq0}, this yields \eqref{opthol:main1}. For \eqref{opthol:main2}, first note that applying the same argument as in \autoref{thm.expansion} with $\lambda_{\alpha,b}$ replaced by $\delta$, we have for any $r\leq \frac{R}2$ and $z_0\in\gamma_0$
     \begin{align*}
         \|f-f(z_0)\|_{L^\infty(\mathcal{H}_R(z_0))}\leq c\left(\frac{r}{R}\right)^{\delta}\left(\|f\|_{L^\infty(\mathcal{H}_R(z_0))}+R^2\|F\|_{L^\infty(\mathcal{H}_R(z_0))}+R^\delta[g]_{C^\delta(\gamma_-\cap \mathcal{Q}_R(z_0))}\right).
     \end{align*}
     Therefore, we proceed exactly as in the proof of \eqref{opthol:main1} with $\lambda_{\alpha,b}$ replaced by $\delta$ to obtain \eqref{opthol:main2}. This completes the proof.

\end{proof}

\section{Maxwell and super-elastic boundary condition}
\label{sec:main}

In this section, we prove optimal regularity results for Maxwell- and super-elastic boundary conditions and prove our main results \autoref{thm:main-Maxwell}, \autoref{thm:main-Holder}, and \autoref{thm:main-general}. 

\subsection{Maxwell boundary condition}
First, we prove the boundary regularity with Maxwell boundary condition in the half space $\{x_n>0\}$. Next, using this together with a flattening argument and a covering argument, we establish the optimal regularity in a general domain $\Omega$ (see \autoref{thm:Maxwell-body}).

\begin{theorem}\label{thm.maxhalf}
Let $f$ be a weak solution to 
     \begin{equation}\label{maxhalf:eq}
\left\{
\begin{alignedat}{3}
\partial_tf+v\cdot\nabla_xf-\ddiv(A\nabla_vf)&=B\cdot\nabla_vf+F&&\qquad \mbox{in  $\mathcal{H}_{2,\infty}$}, \\
f&=\alpha \mathcal{R}f+(1-\alpha)\mathcal{N}f&&\qquad  \mbox{in $\gamma_-$},
\end{alignedat} \right.
\end{equation}
where $\mathcal{H}_{2,\infty}\coloneqq I_2\times (B_{8}'\times (0,8))\times \bbR^n$ and 
\begin{align}\label{maxhalf:defn.nf}
    \mathcal{N}f(t,x,v)\coloneqq \mathcal{M}(t,x,v)\int_{\bbR^n}f(t,x',0,w)(w_n)_-\,dw.
\end{align}
Fix $p\geq 2$ and denote $r_0\coloneqq \frac{\langle v_0\rangle^{-p}}{64}$ for any given $v_0\in \bbR^n$. Let $\eps\in(0,1)$, $\lambda\in\N$, and $M>0$. Then we have the following.
\begin{itemize}
    \item Assume 
    \begin{align}\label{maxhalf:ass1}
        \sup_{z_0\in \mathcal{H}_{2,\infty}}\left(\langle v_0\rangle^{M}\|\mathcal{M}\|_{C^{\lambda_\alpha+\eps}(\mathcal{H}_{r_0}(z_0))}+\|A\|_{C^\eps(\mathcal{H}_{r_0}(z_0))}+\langle v_0\rangle^{-2}\|B\|_{L^\infty(\mathcal{H}_{r_0}(z_0))}\right)\leq c_B
    \end{align}
    for some $c_B>0$. Then there is a constant $M_0=M_0(n,p)$ such that if $M>M_0$, then for any $z_0\in\mathcal{H}_{1,\infty}$, we have 
    \begin{align}\label{maxhalf:maineq}
        \|f\|_{C^{\lambda_\alpha}(\mathcal{H}_{r_0}(z_0))}&\leq c\langle v_0\rangle^{q}(\|f\|_{L^1(\mathcal{H}_{2r_0}(z_0))}+\|F\|_{L^\infty(\mathcal{H}_{2r_0}(z_0))})\nonumber\\
        &\quad+c\langle v_0\rangle^{q}(\|f\|_{L^1(\mathcal{H}_{2r_0}(R(z_0)))}+\|F\|_{L^\infty(\mathcal{H}_{2r_0}(R(z_0)))})\\
        &\quad+c\langle v_0\rangle^{q} \|\mathcal{M}\|_{C^{\lambda_{\alpha}+\eps}(\mathcal{H}_{2r_0}(z_0))}\|\langle \cdot\rangle^qf(t,x,\cdot)\|_{L^\infty(I_2\times B_8'\times(0,8);L^1(\bbR^n))}\nonumber\\
        &\quad+c\langle v_0\rangle^{q} \|\mathcal{M}\|_{C^{\lambda_{\alpha}+\eps}(\mathcal{H}_{2r_0}(z_0))}\|\langle \cdot\rangle^qF(t,x,\cdot)\|_{L^\infty(\mathcal{H}_{2,\infty})},\nonumber
    \end{align}
    where $c=c(n,\Lambda,\alpha,p,c_B)$, $q=q(n,p)$, $R(z_0) \coloneqq (t_0,x_0,v_0,-v_{0,n})$ and the constant $\lambda_{\alpha}\equiv \lambda_{\alpha,1}$ is determined in \eqref{phialpha:defn.lambdaalpha}.
    \item Assume 
    \begin{align}\label{maxhalf:ass2}
        \sup_{z_0\in \mathcal{H}_{2,\infty}}\left(\langle v_0\rangle^{M}\|\mathcal{M}\|_{C^{6\lambda+\frac76}(\mathcal{H}_{r_0}(z_0))}+\|A\|_{C^{6\lambda+1}(\mathcal{H}_{r_0}(z_0))}+\langle v_0\rangle^{-2}\|B\|_{C^{6\lambda}(\mathcal{H}_{r_0}(z_0))}\right)\leq c_B
    \end{align}
    for some $c_B>0$. Then there is a constant $M_1=M_1(n,\lambda,\eps,p,\alpha)$ such that if $M>M_1$, then for any $z_0\in\mathcal{H}_{1,\infty}$ and $\mathcal{H}_{2r_0}(z_0)\cap \gamma_0=\emptyset$,
    \begin{align}\label{maxhalf:maineq2}
        \|f\|_{C^{\lambda,\eps}(\mathcal{H}_{r_0}(z_0))}&\leq c\left(\|\langle \cdot\rangle^qf(t,x,\cdot)\|_{L^\infty(\mathcal{H}_{2,\infty})}+\|\langle \cdot\rangle^qF(t,x,\cdot)\|_{C^{6\lambda}(\mathcal{H}_{2,\infty})}\right),
    \end{align}
    where $c=c(n,\Lambda,\eps,\lambda,\alpha,c_B,p)$, $q=q(n,\lambda,\eps,p,\alpha)$ and we write
    \begin{align}\label{maxhalf:defn.wenorm}
        \|\langle \cdot\rangle^qF(t,x,\cdot)\|_{C^{6\lambda}(\mathcal{H}_{2,\infty})}\coloneqq \sup_{z_0\in\mathcal{H}_{2,\infty}}\langle v_0\rangle^q\|F\|_{C^{6\lambda}(\mathcal{H}_{1}(z_0)\cap \mathcal{H}_{2,\infty})}.
    \end{align}
\end{itemize}
\end{theorem}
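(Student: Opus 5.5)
The plan is to reduce \autoref{thm.maxhalf} to the general reflection results with $b=1$, setting $g\coloneqq\mathcal{N}f$. The main work is to show that this $g$ is regular enough with explicit polynomial weights in $\langle v_0\rangle$. We will work in the rescaled cylinders $\mathcal{H}_{r_0}(z_0)$ with $r_0=\langle v_0\rangle^{-p}/64$. On these cylinders the rescaled drift $r_0 B$ is bounded, since $\langle v_0\rangle^{-2}\|B\|_{L^\infty}\le c_B$ and $p\ge2$. The transport term $v_0\cdot\nabla_x$ is absorbed by the Galilean change of variables $z\mapsto z_0\circ z$. Hence the local estimates \autoref{lem.bdd}, \autoref{lem.holsmall}, \autoref{thm.opthol} and \autoref{lem.schhol.away-} apply with constants depending on $c_B$, and the only losses are powers of $\langle v_0\rangle$ coming from $r_0^{-1}$.

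\textbf{Step 1 (regularity of $\mathcal{N}f$).} Write $\mathcal{N}f=\mathcal{M}\,\Phi(t,x')$, where $\Phi(t,x')=\int_{\mathbb{R}^n} f(t,x',0,w)(w_n)_-\,dw$ is the outgoing flux. Then
\[
[\mathcal{N}f]_{C^{\lambda_\alpha+\eps}}\lesssim \|\mathcal{M}\|_{C^{\lambda_\alpha+\eps}}\big(\|\Phi\|_{L^\infty}+[\Phi]_{C^{\lambda_\alpha+\eps}_{t,x'}}\big).
\]
The factor $\langle v_0\rangle^{M}$ in \eqref{maxhalf:ass1} pays for the localization in $v_0$.

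To bound $\Phi$, split the $w$-integral into $|w_n|>\delta$ and $|w_n|\le\delta$.

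On $\{|w_n|>\delta\}\cap\gamma_+$ we are away from $\gamma_0$, so $f$ is in-flow regular. We use \autoref{lem.schhol.away-}, applied at each $w$ in cylinders of size $\langle w\rangle^{-p}\min\{1,|w_n|\}$. This gives Hölder regularity of $f(\cdot,\cdot,0,w)$ of order up to $1$ in $(t,x')$ with polynomial weights in $\langle w\rangle$ and $|w_n|^{-1}$. These weights are integrable against $(w_n)_-$ and the decay $\langle w\rangle^q$ of $f$. The $L^\infty$ input comes from \autoref{lem.bdd}, which converts $L^1$ norms into the weighted $L^\infty_{t,x}L^1_v$ norm.

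On $\{|w_n|\le\delta\}$ we use the a priori $C^\beta$ bound at $\gamma_0$ from \autoref{lem.holsmall}. Its data $g=\mathcal{N}f$ is only controlled in $C^\beta$ once $\Phi$ is bounded, so we first close at a low exponent $\beta$ with an $L^\infty$ bound on $\Phi$. This works because $\Phi$ only needs $L^\infty$ of $f$ weighted by $\langle w\rangle$.

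Optimizing $\delta$ against $|t-t'|$ and $|x'-y'|$ yields $\Phi\in C^{\beta'}$. We then bootstrap: \autoref{thm.opthol} gives $f\in C^{\lambda_\alpha}$ near $\gamma_0$, and the small-$|w_n|$ contribution is then of order $\delta^{1+\lambda_\alpha}$ (one power from the factor $(w_n)_-$). This should give $\Phi\in C^{\lambda_\alpha+\eps}$ in $(t,x')$, noting that $\Phi$ does not depend on $v$, so its kinetic regularity is just parabolic-type regularity in $(t,x')$.

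\textbf{Step 2 (assembly).} We apply \autoref{thm.opthol} with $b=1$ and $g=\mathcal{N}f$ in $\mathcal{H}_{2r_0}(z_0)$. The reflection $\mathcal{R}$ couples $z_0$ to $R(z_0)$, which is why the terms on $\mathcal{H}_{2r_0}(R(z_0))$ appear in \eqref{maxhalf:maineq}. By \autoref{lem.geo} with $b=1$, the Hölder norm of $\mathcal{R}f$ on $\gamma_-\cap\mathcal{Q}$ is controlled by $f$ near the reflected point, and there we use the interior or in-flow estimates. Covering and rescaling give the weight $\langle v_0\rangle^q$.

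For the second item, away from $\gamma_0$ we iterate \autoref{lem.schhol.away-} alternately at $z_0$ and $R(z_0)$. Each step gains regularity in the boundary data $\alpha\mathcal{R}f+(1-\alpha)\mathcal{N}f$. The regularity of $\Phi$ is limited by the grazing contribution, but $\Phi$ enters only through $\mathcal{M}\Phi$ and is smoothed in $(t,x')$. The loss of roughly $6$ derivatives per unit of $\lambda$ in \eqref{maxhalf:ass2} should account for the conversion between kinetic and $(t,x')$ orders ($x$ has weight $3$, $t$ weight $2$), together with the lost smoothing near grazing.

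\textbf{Main obstacle.} I expect the hardest part to be the regularity of $\Phi$ across $w_n\to0$: showing that the grazing region contributes a Hölder modulus of order strictly above $\lambda_\alpha$ while all polynomial weights stay summable. This is what fixes $M_0$ and $q$. For higher orders, I would first try trading $x'$-derivatives of $\Phi$ for $(t,x')$-difference quotients of the equation, which commute with the boundary condition, before handling the grazing region.
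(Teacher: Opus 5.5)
For the first bullet your plan is the paper's. You take $b=1$ and $g=\mathcal{N}f$, and you get the regularity of $\mathcal{N}f$ from the regularity of $f$ at $\gamma_+$ together with the weight $(w_n)_-$; that regularity needs no boundary condition, cf.\ \eqref{Nfreg:c2eps}. You then assemble with \autoref{thm.opthol} near $\gamma_0$ and with \autoref{lem.geo} near $\gamma_-$, which is where the $R(z_0)$ terms come from. Your treatment of the grazing strip, however, should be dropped. Invoking \autoref{lem.holsmall} there is circular as written: its datum $\mathcal{M}\Phi$ must already be $C^\beta$, and boundedness of $\Phi$ does not give that. It is also unnecessary. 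For two points of $\gamma_-\cap\mathcal{Q}_r(z_0)$, the outgoing values at velocity $w$ are at kinetic distance $\lesssim r+(r^2|w'-v_0'|)^{1/3}+(r^2|w_n|)^{1/3}$. So only the strip $|w_n|\lesssim r+(r^2|w'-v_0'|)^{1/3}$ escapes the cylinders of size $\sim|w_n|$ where \eqref{Nfreg:c2eps} applies. On that strip the weighted $L^\infty$ bound alone gives a contribution $O(r^{4/3})$, up to powers of $\langle v_0\rangle$; your count $\delta^{1+\lambda_\alpha}$ misses the factor $\delta$ coming from the width of the strip. This is \eqref{Nfreg:mainineq}, and it yields \eqref{Nfreg:main1} from $f\in L^\infty$ alone, with no regularity at $\gamma_0$ and no bootstrap through \autoref{thm.opthol}. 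So the ``main obstacle'' you name is not one.

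The genuine gap is in the second bullet, whose entire content is \eqref{Nfreg:main2}: $\mathcal{N}f\in C^{\lambda,\eps}$ for arbitrary $\lambda$. Your proposal gives no mechanism for this, because integration in $w$ does not smooth in $(t,x')$. The strip computation above yields only a fixed power of $r$ (the paper's $C^{7/6}$), whether you know $f\in L^\infty$ or $f\in C^{\lambda_\alpha}$ at $\gamma_0$. To go beyond it you need bounds on tangential difference quotients $\delta_{h,k}f/|h|^{k+\theta}$ uniformly up to $\gamma_0$, since exactly these enter $\Phi$ in the strip. Such quotients do preserve the boundary condition, see \eqref{Nfreg:eq.diffquo}. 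But their equation contains sources like $\ddiv\big(\tfrac{\delta_hA}{|h|^{\theta}}\nabla_v f\big)$, which are unbounded near $\gamma_0$: $f$ is only $C^{\lambda_\alpha}$ there, so $\nabla_vf$ blows up. Consequently neither \autoref{thm.opthol} nor \autoref{lem.schhol.away-} applies to the quotients.

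The paper closes this gap with three ingredients. First, an $L^q$ bound on $\nabla_v\delta_{h,l}f$ up to $\gamma_0$. It is obtained from the H\"older bound on the quotient combined with gradient estimates away from $\gamma_0$, and these in turn require $\mathcal{R}$ and $\mathcal{N}$ of the lower-order quotients to be in $C^{7/6}$. Second, the perturbative estimate \autoref{lem.pertur}, which gives H\"older regularity of order $\lambda_\alpha-\delta$ at $\gamma_0$ for the reflection problem with such $L^q$ divergence-form data. Third, the induction \eqref{max:procedure}, in which each round raises the exponent of $|h|$ by only about $\lambda_\alpha/3$, so roughly $3/\lambda_\alpha$ rounds are needed per order.

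Your alternating use of \autoref{lem.schhol.away-} at $z_0$ and $R(z_0)$ does not address this. When $\mathcal{H}_{2r_0}(z_0)\cap\gamma_0=\emptyset$, $\mathcal{R}f$ is already as regular as the coefficients allow after a single step, and the only obstruction is $\mathcal{N}f$. Your heuristic for the $6\lambda$ loss is likewise not a substitute for this argument.
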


A key step in the proof is to establish sufficient regularity for $\mathcal{N}f$, which we establish in \autoref{lem.Nfreg}. This will allow us to apply \autoref{thm.opthol} and \autoref{lem.schhol.away-}, to prove \eqref{maxhalf:maineq} and \eqref{maxhalf:maineq2}, respectively. 
Having at hand \autoref{thm.opthol}, this argument is relatively straightforward for the proof of the first claim \eqref{maxhalf:maineq}. The proof of the second claim \eqref{maxhalf:maineq2} uses \autoref{lem.schhol.away-} instead of \autoref{thm.opthol}, however it is a lot more technical since we require higher regularity of $\cN f$. Our proof follows a similar procedure as the one given in \cite[Section 7]{KiWe26}. The scheme using difference quotient techniques and the inductive argument as following:
\begin{alignat}{7}\label{max:procedure}
& f\in L^\infty
& \sqarrow{1}&\; \mathcal{N}f\in C^{\frac76}
& \sqarrow{2}&\; f\in C^{\lambda_0}
& \sqarrow{3}&\; \frac{\delta_{h}f}{|h|^{\frac{\lambda_0}3}}\in L^\infty
\nonumber\\
& \frac{\delta_{h}f}{|h|^{\frac{\lambda_0}3}}\in L^\infty
& \sqarrow{1}&\; \mathcal{N}\frac{\delta_hf}{|h|^{\frac{\lambda_0}3}}\in C^{\frac76}
& \sqarrow{2'}&\; \frac{\delta_hf}{|h|^{\frac{\lambda_0}3}}\in C^{\lambda_0-\delta}
& \sqarrow{3}&\; \frac{\delta_{h}f}{|h|^{\frac{2\lambda_0}3-\eps}}\in L^\infty
\nonumber\\
& \frac{\delta_{h}f}{|h|^{\frac{2\lambda_0}3-\eps}}\in L^\infty
& \sqarrow{1}&\; \mathcal{N}\frac{\delta_hf}{|h|^{\frac{2\lambda_0}3-\eps}}\in C^{\frac76}
& \sqarrow{2'}&\; \frac{\delta_hf}{|h|^{\frac{2\lambda_0}3-\eps}}\in C^{\lambda_0-\delta}
& \sqarrow{3}&\; \frac{\delta_{h}f}{|h|^{\frac{3\lambda_0}3-2\eps}}\in L^\infty\\
& \vdots
& \sqarrow{1}&\; \vdots
& \sqarrow{2'}&\; \vdots
& \sqarrow{3}&\; \vdots
\nonumber\\
& \frac{\delta_{h,k}f}{|h|^{k+\frac{\lambda_0}3-\eps}}\in L^\infty
& \sqarrow{1}&\; \mathcal{N}\frac{\delta_{h,k}f}{|h|^{k+\frac{\lambda_0}3-\eps}}\in C^{\frac76}
& \sqarrow{2'}&\; \frac{\delta_{h,k}f}{|h|^{k+\frac{\lambda_0}3-\eps}}\in C^{\lambda_0-\delta}
& \sqarrow{3}&\; \frac{\delta_{h,k}f}{|h|^{k+\frac{2\lambda_0}3-2\eps}}\in L^\infty,\nonumber
\end{alignat}
where $\lambda_0\coloneqq\frac12$, $\delta_hf(z)$ is $f(t+h,x,v)-f(t,x,v)$ or $\delta_hf(z)=f(t,x+he_i,v)-f(t,x,v)$ with the $i$-th unit vector $e_i$ for some $i\in\{1,2,\ldots ,n-1\}$ and $\delta_{h,k}=\delta_h\circ\delta_{h,{k-1}}f$. 

Now we explain the arguments used in each step labeled $(1),(2),(2'),(3)$.
\begin{itemize}
    \item[(1)] We use the regularity at $\gamma_+$ of $\frac{\delta_{h,k}f}{|h|^{k+\alpha}}$ together with the boundedness of $\frac{\delta_{h,k}f}{|h|^{k+\alpha}}$ for some $\alpha\in[0,1)$ and $k\in \N\cup\{0\}$ (see \textbf{Step 1} in the proof of \cite[Proposition 7.2]{KiWe26}). 
    \item[(2)] We apply the $C^{\lambda_0}$ estimate for the in-flow condition when the right-hand side is bounded and the boundary data belongs to $C^{\frac76}$. 
    \item[(3)] This step follows by elementary properties of kinetic H\"older spaces (see  \cite[(A.2)]{KiWe26}).
    \item[(2')] This step involves several ingredients. First, we observe that the function $\frac{\delta_{h,k}f}{|h|^{k+\alpha}}$ solves the equation whose right-hand side contains terms of the form $-\ddiv(\frac{\delta_hA}{|h|}\nabla_v\frac{\delta_{h,k-1}f}{|h|^{k-1+\alpha}})$, which are generally unbounded. By the inductive hypothesis, we have uniform $C^{\lambda_0-\delta}$ estimates for  $\frac{\delta_{h,k-1}f}{|h|^{k-1+\alpha}} $ as well as uniform estimates for $\mathcal{N}\frac{\delta_{h,k-1}f}{|h|^{k-1+\alpha}}\in C^{\frac76}$ at the boundary. The latter implies uniform $C^{\frac76}$ estimates of $\nabla_v\frac{\delta_{h,k-1}f}{|h|^{k-1+\alpha}} $ away from $\gamma_0$. Combining the H\"older estimates of $\frac{\delta_{h,k-1}f}{|h|^{k-1+\alpha}}$ with the gradient estimate away from $\gamma_0$,  $\nabla_v\frac{\delta_{h,k-1}f}{|h|^{k-1+\alpha}}$ belongs to a suitable $L^q$ space up to $\gamma_0$. Then, by the perturbation argument given in \cite[Lemma 7.5]{KiWe26} and \textbf{Step 3} in \cite[Proposition 7.2]{KiWe26}, it follows that $\frac{\delta_{h,k}f}{|h|^{k+\alpha}}\in C^{\lambda_0-\delta}$. 
\end{itemize}
This way we prove inductively $\mathcal{N}\frac{\delta_{h,k}f}{|h|^{k}}\in C^{\frac76}$ for each $k\in\N\cup\{0\}$.

Now we adapt this argument to the present setting. Observe that steps (1) and (3) remain unchanged, since they do not rely on any boundary condition.
In step (2), however, an addition argument is needed. Near $\gamma_0$ and away from $\gamma_-$, $C^{\lambda_\alpha}$ regularity follows from \autoref{thm.opthol} and \eqref{lem.schhol.away-.depB}, respectively. The main issue arises in estimating the $C^{\lambda_\alpha}$ norm on a cylinder $\mathcal{H}_{r_0}(z_0)$ located near $\gamma_-$. In this case, the reflection operator $\mathcal{R}f$ requires control of $\|f\|_{C^{\lambda_\alpha}(\mathcal{H}_{r_0}(R(z_0)))}$. This regularity follows from the corresponding regularity near $\gamma_+$ and is given in \eqref{lem.schhol.away-.depB}. As a result, we get additional terms of the form $\|f\|_{L^\infty(\mathcal{H}_{2r_0}(R(z_0)))}+\|F\|_{L^\infty(\mathcal{H}_{2r_0}(R(z_0)))}$ in our estimates (see the second line in the right-hand side of \eqref{maxhalf:maineq}, for instance). 

Step (2') proceeds similarly, except that the optimal H\"older exponent is now $\lambda_\alpha$. We apply a difference quotient operator to the equation, which creates an unbounded right-hand side of the form $-\ddiv(\frac{\delta_hA}{|h|^{\frac{\lambda_{\alpha}}3}}\nabla_vf)$. Since the difference quotient are merely taken in tangential directions, the boundary condition is preserved (see \eqref{Nfreg:eq.diffquo}). Next, note that we have $\mathcal{R}f, \mathcal{N}f\in C^{\frac76}$ at $\gamma_-$, which gives $\nabla_vf\in C^{\frac76}$ away from $\gamma_0$. Combining this gradient regularity with the fact that $f\in C^{\lambda_\alpha}$, we deduce $\nabla_vf\in L^q$ for some suitable $q$. Thus, by the perturbation argument of \autoref{lem.pertur} below, we can prove $\frac{\delta_hf}{|h|^{\lambda_\alpha/3}}\in C^{\lambda_\alpha-\delta}$ for any $\delta>0$ near $\gamma_0$. On the other hand, as $\frac{\delta_{h}f}{|h|^{\lambda_\alpha/3}}\in L^\infty$,  \eqref{lem.schhol.away-.depB} further implies $\frac{\delta_hf}{|h|^{{\lambda_\alpha}/3}}\in C^{\lambda_\alpha-\delta}$ for any $\delta>0$ away from $\gamma_-\cup\gamma_0$. Since we have proved H\"older estimates near $\gamma_0$ and away from $\gamma_-$, the H\"older estimate of $\frac{\delta_hf}{|h|^{\lambda_\alpha/3}}$ near $\gamma_-$ follows exactly as in the case of $f$ (see step (2) above). Consequently, we inductively repeat the procedure  described in \eqref{max:procedure} with $\lambda_0$ replaced by $\lambda_\alpha$.   

Based on the discussion above, we will prove the regularity of $\mathcal{N}f$. Since many parts of the proof closely follow from the same arguments as in \cite{KiWe26}, we omit the corresponding details and refer the reader to \cite{KiWe26}.

\begin{lemma}\label{lem.Nfreg}
    Let $f$ be a weak solution to \eqref{maxhalf:eq}. Fix $p\geq \frac12$ and write $r_0\coloneqq \frac{\langle v_0\rangle^{-p}}{64}$ for any $v_0\in\mathcal{H}_{2,\infty}$. 
    \begin{itemize}
        \item Assume \eqref{maxhalf:ass1}. Then there is a constant $M_0=M_0(n,p)$ such that if $M>M_0$, then for any $z_0\in \mathcal{H}_{1,\infty}$ and $\eps\in(\lambda_\alpha,1)$,
        \begin{align}\label{Nfreg:main1}
            [\mathcal{N}f]_{C^{\eps}(\gamma_-\cap \mathcal{Q}_{r_0}(z_0))}\leq c\langle v_0\rangle\|\mathcal{M}\|_{C^{\eps}(\gamma_-\cap\mathcal{Q}_{r_0}(z_0))}\left(\|\langle \cdot\rangle^qf(t,x,\cdot)\|_{L^\infty(\mathcal{H}_{2,\infty})}+\|\langle \cdot\rangle^qF(t,x,\cdot)\|_{L^\infty(\mathcal{H}_{2,\infty})}\right)
        \end{align}
        for some $c=c(n,\Lambda,\eps,\alpha,c_B,p)$ and $q=q(n,p,\eps)$.
        \item Assume \eqref{maxhalf:ass2}. Then there is a constant $M_1=M_1(n,\lambda,\eps,p,\alpha)$ such that if $M>M_1$, then for any $z_0\in \mathcal{H}_{1,\infty}$ and $\eps\in(0,1)$,
        \begin{align}\label{Nfreg:main2}
            [\mathcal{N}f]_{C^{\lambda,\eps}(\gamma_-\cap \mathcal{Q}_{r_0}(z_0))}\leq c\left(\|\langle \cdot\rangle^qf(t,x,\cdot)\|_{L^\infty(\mathcal{H}_{2,\infty})}+\|\langle \cdot\rangle^qF(t,x,\cdot)\|_{C^{6\lambda}(\mathcal{H}_{2,\infty})}\right)
        \end{align}
        for some $c=c(n,\Lambda,\eps,\alpha,c_B,p,\lambda,\mathcal{M})$ and $q=q(n,\lambda,p,\eps,\alpha)$.
    \end{itemize}
\end{lemma}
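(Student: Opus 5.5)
The regularity of $\mathcal{N}f$ is a product of $\mathcal{M}$ with the flux $\Phi(t,x')\coloneqq \int_{\bbR^n} f(t,x',0,w)(w_n)_-\,dw$. Since $\mathcal{N}f(t,x,v)=\mathcal{M}(t,x,v)\Phi(t,x')$, its kinetic H\"older seminorm on $\gamma_-\cap\mathcal{Q}_{r_0}(z_0)$ is bounded by
\begin{align*}
[\mathcal{M}]_{C^{\eps}}\|\Phi\|_{L^\infty}+\|\mathcal{M}\|_{L^\infty}[\Phi]_{C^{\eps}},
\end{align*}
where for $\Phi$ only the $(t,x')$-dependence matters. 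Kinetic distance in $t$ scales like $|t|^{1/2}$ and in $x'$ like $|x'-(\cdot)|^{1/3}$; the transport shift $tv'$ is absorbed using the factor $\langle v_0\rangle$, which produces the $\langle v_0\rangle$ in \eqref{Nfreg:main1}. So the task is to show that $\Phi$ is $C^{\eps/2}$ in $t$ and $C^{\eps/3}$ in $x'$, with weighted norms of $f$ and $F$ as the bound. The plan follows \textbf{Step 1} of \cite[Proposition 7.2]{KiWe26}.

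\textbf{Step 1 (boundedness and decay).} Bound $\|\Phi\|_{L^\infty}$ by the weighted $L^\infty_{t,x}L^1_v$ norm of $f$ at the boundary. To get pointwise control of $f$ on $\gamma_+$, apply \autoref{lem.bdd} on cylinders $\mathcal{H}_{r_0}(z)$, using that $g=\mathcal{N}f$ is bounded by $\|\mathcal{M}\|_{L^\infty}\|\Phi\|_{L^\infty}$, together with the decay $\langle v_0\rangle^{-M}$ of $\mathcal{M}$. The cylinder sizes $r_0=\langle v_0\rangle^{-p}/64$ keep $r_0|v_0|\lesssim 1$, so the rescaled drift $r_0\|B\|_{L^\infty}\lesssim\langle v_0\rangle^{2-p}$ stays bounded for $p\ge2$. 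This gives $|f(z)|\lesssim\langle v\rangle^{-q'}$ times the right-hand norms.

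\textbf{Step 2 (H\"older regularity of the flux).} Split the integral defining $\Phi$ by region of $w$. Where $|w_n|\le\rho$ (near grazing), use only boundedness, which contributes $O(\rho^2)$. Where $|w|$ is large, use the decay from Step 1. Where $\rho<|w_n|$ and $|w|$ is moderate (on $\gamma_+$, away from $\gamma_0$), use H\"older regularity of $f$ on $\gamma_+$. Near $\gamma_+$ the inflow comes only through the reflection onto $\gamma_-$ at distance $\gtrsim\rho$ from $\gamma_0$, so \autoref{lem.schhol.away-} applies. It gives a $C^{1,\cdot}$-type estimate in $(t,x')$ with constant $\rho^{-k}$, since away from $\gamma_0$ at distance $\rho$ the solution is as regular as the data allows, and the data $\mathcal{N}f$ is itself at least bounded and the reflected $f$ is H\"older. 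Optimizing $\rho$ against the increment $h$ in $(t,x')$ yields a H\"older exponent for $\Phi$.

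To reach any exponent $\eps<1$ (kinetic), bootstrap: first obtain $\Phi\in C^{\beta}$, then feed the improved $\mathcal{N}f$ back in as data in \autoref{lem.schhol.away-} to improve the estimate away from $\gamma_0$. Because the near-grazing contribution behaves like $\rho^2$ in kinetic units, hence like $h^{2/3}$ in $x'$, the loss is acceptable. Tangential difference quotients $\delta_h$ in $(t,x')$ preserve the boundary condition, which is the key structural fact.

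I expect the main obstacle to be this balance near $\gamma_0$. With only the $C^{\lambda_\alpha}$ regularity of $f$ there, the naive split gives $\Phi$ only slightly better than $f$. The resolution I would try first is the observation that the flux weight $(w_n)_-$ vanishes at grazing. This lets one integrate the $C^{\lambda_\alpha}$ bound (via \autoref{thm.opthol}) against $|w_n|\,dw_n$, which gains $|w_n|^{2}$ and, after optimizing, any exponent below $1$ in kinetic units. The higher-order claim \eqref{Nfreg:main2} follows by the iteration scheme \eqref{max:procedure}: apply the argument above to $\delta_{h,k}f/|h|^{\cdot}$, use \autoref{lem.pertur} for the divergence-form perturbations, and track the weights $q$ and $M_1$ at each stage.
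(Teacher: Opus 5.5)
There is a genuine gap. Your three-region split in Step 2 is the right skeleton; it is what \eqref{Nfreg:mainineq}, taken from \cite[(7.25)]{KiWe26}, encodes. But you misidentify what controls the middle region, and that pushes the rest of your plan into a circle.

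Take $z_1=(t,x',0,w)\in\gamma_+$ and $\rho\lesssim|w_n|$. Then the cylinder $\mathcal{H}_{2\rho}(z_1)$ meets $\{x_n=0\}$ only inside $\gamma_+$, so \emph{no} boundary condition is felt there. The estimate \eqref{Nfreg:c2eps} (i.e.\ \cite[(7.30)]{KiWe26}, or \autoref{lem.schhol.away-} with empty inflow part) bounds $[f]_{C^{2-\eps}(\mathcal{H}_\rho(z_1))}$ by $\langle w\rangle^{4}(\rho^{-2+\eps}\|f\|_{L^\infty(\mathcal{H}_{2\rho}(z_1))}+\|F\|_{L^\infty})$. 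This bound makes no reference to $\mathcal{N}f$ or $\mathcal{R}f$.

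Taking $\rho\eqsim|w_n|$ controls the region $|w_n|\gtrsim r+(r^2|w'-v_0'|)^{1/3}$, which is the cut-off in $a_r$. The reason for this cut-off is that, at fixed $w$, the $(t,x')$-increments inside $\gamma_-\cap\mathcal{Q}_r(z_0)$ have kinetic length about $r+(r^2|w-v_0|)^{1/3}$, not $r$. The complementary grazing strip costs only $r^{4/3}\langle v_0\rangle\|\langle\cdot\rangle^4 f\|_{L^\infty}$, thanks to the weight $(w_n)_-$. Together these give \eqref{Nfreg:main.quo} with $k=0$, i.e.\ flux regularity of order $7/6>1$, in one step. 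Only the weighted $L^\infty$ norms already on the right of \eqref{Nfreg:main1} are used. No bootstrap and no information about $f$ at $\gamma_0$ is needed.

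Because you believe the $\gamma_+$ estimate is limited by the reflected and diffuse data, you instead propose a bootstrap whose starting exponent has no non-circular source. Your stated resolution is circular outright. It integrates the $C^{\lambda_\alpha}$ bound of \autoref{thm.opthol} against $|w_n|\,dw_n$, but \autoref{thm.opthol}, applied with $g=\mathcal{N}f$, requires $\mathcal{N}f\in C^{\lambda_\alpha+\eps}$, which is exactly \eqref{Nfreg:main1}. The same holds for \autoref{lem.holsmall}, which needs $[g]_{C^\beta}$. In the paper the order is reversed: \eqref{Nfreg:main1} comes first, and only then is \autoref{thm.opthol} used (for \eqref{Nfreg:ind} with $j=0$ and in \autoref{thm.maxhalf}). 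Moreover, even granting $f\in C^{\lambda_\alpha}$, the weight only shrinks the grazing strip. It does not raise the H\"older exponent in the middle region, so this step would not produce exponents close to $1$.

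The same mechanism drives \eqref{Nfreg:main2}. In step (1) of \eqref{max:procedure}, the flux of the difference quotients is again controlled by their $L^\infty$ bounds and \eqref{Nfreg:c2eps} alone. The term $\mathcal{R}f$ enters only in steps (2) and (2') near $\gamma_-$, through the estimate on the reflected cylinder around $R(z_2)$, which avoids $\gamma_-$. This is where the $R(z_2)$ terms in \eqref{Nfreg:ind} come from.

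Finally, your reduction to ``$C^{\eps/3}$ in $x'$'' is too weak. Two points of $\gamma_-$ at kinetic distance $d$ may have $|x'-y'|\approx d^2|v'|$, so you need order $\eps/2$ in $x'$. This is harmless here, since the flux is much more regular, but as stated it does not give kinetic $C^\eps$.
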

\begin{proof}
   Fix $k\in\N\cup\{0\}$ and define a function $g(z)$ by 
    \begin{align*}
        g(z)\coloneqq \int_{\bbR^n}f(t,x',0,w)(w_n)_-\,dw
    \end{align*}
    and
    \begin{align*}
        \widehat{g}_k(z)\coloneqq \delta_{h,k}g(z),
    \end{align*}
    where $\delta_{h,k}g=\delta_{h,k-1}\circ\delta_hg$ and $\delta_{h}g=g(t+h,x,v)-g(t,x,v)$. We assume 
\begin{align}\label{Nfreg:ass.M}
    \sup_{z_0\in\mathcal{H}_{1,\infty}}\langle v_0\rangle^{M}\sum_{i=0}^k\left\|\partial_t^i\mathcal{M}\right\|_{C^{\frac76}(\gamma_-\cap\mathcal{Q}_{32r_0}(z_0))}\leq 1
\end{align}
for some large $M > 1$. Based on an inductive argument, we will show that for any $z_0\in \mathcal{H}_{1,\infty}$, $r\in(0,r_0]$, and $|h|\leq 2^{-100k}$, if $M=M(n,k,p,\alpha)$ is sufficiently large, then 
\begin{align}\label{Nfreg:main.quo}
    \|\widehat{g}_k-a_r\|_{L^\infty(\gamma_-\cap \mathcal{Q}_r(z_0))}\leq cr^{\frac76}\langle v_0\rangle ^{c_k}D_k,
\end{align}
where $c_k=c_k(n,k,p,\alpha)$ and $c=c(n,\Lambda,k, C_k,\alpha)$, 
\begin{align*}
    &a_r\coloneqq \int_{\bbR^{n-1}}\int_{w_n<-16r-r_{w'}}\frac{\delta_{h,k}f}{|h|^k}(t_0,x_0',0,w',w_n)(w_n)_-\,dw_n\,dw',\\
 &   C_l\coloneqq\sup_{z_0\in \mathcal{H}_{\frac32,\infty}}\left\|{A}\right\|_{C^{3l+1}(\mathcal{H}_{32r_0}(z_0))}+\frac{1}{\langle v_0\rangle^2}\left\|B\right\|_{C^{3l}(\mathcal{H}_{32r_0}(z_0))},\\
  &D_k\coloneqq \|\langle \cdot\rangle^{c_k}f(t,x,\cdot)\|_{L^\infty(\mathcal{H}_{2,\infty})}+\left\|\langle\cdot\rangle^{c_k}{F(t,x,\cdot)}\right\|_{C^{3k}(\mathcal{H}_{2,\infty})}
\end{align*}
    with $r_{w'}\coloneqq 16(r^2|w'-v_0'|)^{\frac13}$. Note that the notation $\left\|\langle\cdot\rangle^{c_k}{F(t,x,\cdot)}\right\|_{C^{3k}(\mathcal{H}_{2,\infty})}$ is introduced in \eqref{maxhalf:defn.wenorm}.
    To this end, first by \cite[(7.25)]{KiWe26}, which uses only the kinetic geometry, we have 
    \begin{equation}\label{Nfreg:mainineq}
\begin{aligned}
    &|h|^{k}\|\widehat{g}_k-a_r\|_{L^\infty(\gamma_-\cap\mathcal{Q}_{r}(z_0))}\\
    &\leq cr^{\frac43}\langle v_0\rangle \|\langle\cdot\rangle^4\delta_{h,k}f(t,x',0,\cdot)\|_{L^\infty(\bbR^n)}\\
    &\quad+cr^{\frac{2(2-\eps)}{3}}\langle v_0\rangle\sup_{(t,x')\in \mathbf{B}_{\frac32}}{\int_{ \frac{1}{{\langle w\rangle}^{p}}\leq |w_n|}\int_{{w_n<-64r
   }}\langle w\rangle^5[\delta_{h,k}f]_{C^{2-\eps}(\mathcal{H}_{\frac{1}{64{\langle w\rangle}^p}}(t,x',0,w))}(w_n)_-\,dw}\\
    &\quad+ cr^{\frac{2(2-\eps)}{3}}\langle v_0\rangle\sup_{(t,x')\in \mathbf{B}_{\frac32}} \int_{\frac{1}{{\langle w\rangle}^{p}}\geq |w_n|}\int_{{w_n<-64r
    }}\langle w\rangle^5[\delta_{h,k}f]_{C^{2-\eps}(\mathcal{H}_{\frac{|w_n|}{64}}(t,x',0,w))}(w_n)_-\,dw,
\end{aligned}
\end{equation}
where $\mathbf{B}_{\frac32}\coloneqq I_{(\frac32)^2}\times B'_{(\frac32)^3}$ and $c=c(n,\eps,p)$. 

Next, we apply the difference quotient operator $\delta_{h,l}$ to \eqref{maxhalf:eq} so that for any $l\leq k$,
    \begin{equation}\label{Nfreg:eq.diffquo}
\left\{
\begin{alignedat}{3}
(\partial_t+v\cdot\nabla_x)\delta_{h,l}f-\ddiv(A_{l,h}\nabla_v\delta_{h,l}f)&=B_{l,h}\cdot\nabla_v\delta_{h,l}f+\delta_{h,l}F+G_l&&\quad \mbox{in  $\{x_n>0\}\cap \mathcal{H}_{\frac74,\infty}$}, \\
\delta_{h,l}f&=\alpha\delta_{h,l}\mathcal{R}f+(1-\alpha)\delta_{h,l}\mathcal{N}f&&\quad  \mbox{in $\gamma_-$},
\end{alignedat} \right.
\end{equation}
where 
\begin{align*}
    G_l\coloneqq \sum_{i=0}^{l}\ddiv(\delta_{h,l-i}A_{i,h}\nabla_v\delta_{h,l}f)+\delta_{h,l-i}B_{i,h}\cdot\nabla_v\delta_{h,i}f.
\end{align*}
From \cite[(7.30)]{KiWe26}, which uses only the regularity at $\gamma_+$, we have 
\begin{align}\label{Nfreg:c2eps}
    \left[\frac{\delta_{h,l}f}{|h|^l}\right]_{C^{2-\eps}(\mathcal{H}_\rho(z_1))}&\leq c\langle v_1\rangle^{4(l+1)}\sum_{i=0}^{l}\left(\rho^{-2+\eps} \left\|\frac{\delta_{h,i}f}{|h|^i}\right\|_{L^\infty(\mathcal{H}_{2\rho}(z_1))}+\left\|\frac{\delta_{h,i}F}{|h|^i}\right\|_{L^\infty(\mathcal{H}_{2\rho}(z_1))}\right),
\end{align}
where $c=c(n,l,\eps,c_B,C_l)$, $\rho\leq r_1=\frac{\langle v_1\rangle ^{-p}}{64}$ with $x_{1,n}=0$, and $v_{1,n}<0$. Indeed, we can choose
\begin{align*}
    C_0 \coloneqq \sup_{z_0\in \mathcal{H}_{\frac32,\infty}}\left\|{A}\right\|_{C^{\eps}(\mathcal{H}_{32r_0}(z_0))}+\frac{1}{\langle v_0\rangle^2}\left\|B\right\|_{L^{\infty}(\mathcal{H}_{32r_0}(z_0))}.
\end{align*}
Thus, plugging \eqref{Nfreg:c2eps} into \eqref{Nfreg:mainineq}, we can deduce \eqref{Nfreg:main1}. Now we will focus on \eqref{Nfreg:main.quo} with $k\geq1$.

As in \textbf{Step 3} given in the proof of \cite[Proposition 7.2]{KiWe26}, we will show that 
for any $j\in\{0,1,\ldots , k\}$ and $\delta\in(0,\lambda_\alpha)$, there is $N_j=N_j(n,j,p,\alpha)$ such that for any $z_2\in \mathcal{H}_{\frac32,\infty}$ and $\rho\leq {r_2}=\frac{1}{64{\langle v_2\rangle}^p}$,
\begin{equation}\label{Nfreg:ind}
\begin{aligned}
&\left[\frac{\delta_{h,j}f}{|h|^j}\right]_{C^{\lambda_\alpha-\delta}(\mathcal{H}_{2\rho}(z_2))}+\left\|\frac{\delta_{h,j}f}{|h|^j}\right\|_{L^\infty(\mathcal{H}_{2\rho}(z_2))}\\
&\leq c\langle v_2\rangle^{N_j}\left(\|f\|_{L^\infty(\mathcal{H}_{4r_2}(z_2))}+\left\|{F}\right\|_{C^{3j}(\mathcal{H}_{4r_2}(z_2))}+D_j\left\|{\mathcal{M}}\right\|_{C^{3_j+\frac76}(\gamma_-\cap\mathcal{Q}_{4r_2}(z_2))}\right)\\
&\quad +c\langle v_2\rangle^{N_j}\left(\|f\|_{L^\infty(\mathcal{H}_{4r_2}(R(z_2)))}+\left\|{F}\right\|_{C^{3j}(\mathcal{H}_{4r_2}(R(z_2)))}\right)
\end{aligned}
\end{equation}
where $c=c(n,\delta,j,C_j,p,\alpha)$ provided that $M=M(n,k,p,\alpha)$ given in \eqref{Nfreg:ass.M} is sufficiently large. First, note that we prove this with $\delta=0$ if $j=0$. To this end, when $v_{2,n}\leq \frac72r_2$ or $\mathcal{Q}_{3r_2}(z_2)\cap \gamma_-=\emptyset$, we use \autoref{thm.opthol} or \eqref{lem.schhol.away-.depB}, respectively. Then we derive \eqref{Nfreg:ind} with $j=0$ and $\delta=0$. Suppose $\mathcal{Q}_{3r_2}(z_2)\cap \gamma_-\neq\emptyset$ with $v_{2,n}>\frac72r_2$. Then by \eqref{lem.schhol.away-.depB} with $R=r_2(=\frac{1}{64\langle v_2\rangle^{p}})$, we have 
\begin{align*}
    &\left[f\right]_{C^{\lambda_\alpha}(\mathcal{H}_{2r_2}(z_2))}+\left\|f\right\|_{L^\infty(\mathcal{H}_{2r_2}(z_2))}\\
&\leq c\langle v_2\rangle^{\frac{1}2+\frac{p}{4}}\Bigg(\|f\|_{L^\infty(\mathcal{H}_{3r_2}(z_2))}+\left\|{F}\right\|_{L^{\infty}(\mathcal{H}_{3r_2}(z_2))}+D_0\left\|{\mathcal{M}}\right\|_{C^{\lambda_\alpha}(\gamma_-\cap\mathcal{Q}_{3r_2}(z_2))}+[\mathcal{R}f]_{C^{\lambda_\alpha}(\gamma_-\cap\mathcal{Q}_{3r_2}(z_2))}\Bigg).
\end{align*}
Since $\mathcal{Q}_{\frac{7r_2}2}(\mathcal{R}z_2)\cap\gamma_-=\emptyset$, by \eqref{Nfreg:c2eps} with $l$, $\rho$, and $z_1$, replaced by $0$, ${r_2}{}$, $\mathcal{R}z_2$, respectively and some covering argument, we deduce
\begin{align*}
    [\mathcal{R}f]_{C^{\lambda_\alpha}(\gamma_-\cap\mathcal{Q}_{3r_2}({R}(z_2)))}\leq c\langle v_2\rangle^{\frac12+\frac{p}{4}}\left(\|f\|_{L^\infty(\mathcal{H}_{4r_2}(R(z_2)))}+\left\|{F}\right\|_{L^{\infty}(\mathcal{H}_{4r_2}(R(z_2)))}\right).
\end{align*}
Combining the previous two inequalities leads to \eqref{Nfreg:ind} with $j=0$ whenever $\mathcal{Q}_{3r_2}(z_2)\cap \gamma_-\neq\emptyset$ with $v_{2,n}>\frac72r_2$. This verifies \eqref{Nfreg:ind} with $j=0$, in particular $\delta=0$.

Now we assume that \eqref{Nfreg:ind} holds for any $j\in\{0,1,2,\ldots, l\}$ and we will prove \eqref{Nfreg:ind} with $j=l+1$.

Following \cite[\textbf{Step 3-1} in Proposition 7.2]{KiWe26} with $\frac12(=\lambda_0)$ replaced by $\lambda_\alpha$, which requires only elementary computations with \eqref{Nfreg:mainineq}, \eqref{Nfreg:c2eps}, and \eqref{Nfreg:ind}, we have for $\alpha<\frac{\lambda_\alpha}{3}$,
\begin{align}\label{Nfreg:ind.g}
    \left\|\frac{\delta_{h,l+1}g}{|h|^{l+\alpha}}\right\|_{C^{\frac76}(\gamma_-\cap\mathcal{Q}_{r_0}(z_0))}\leq cD_{l+1}\quad\text{and}\quad \left\|\frac{\delta_{h,j}g}{|h|^{j}}\right\|_{C^{\frac76}(\gamma_-\cap\mathcal{Q}_{r_0}(z_0))}\leq cD_j
\end{align}
for any $j\leq l$, where $c=c(n,\alpha,l,\eps,c_B,C_l,p)$. 
As in \cite[\textbf{Step 3-2} in Proposition 7.2]{KiWe26}, we have for any $\mathcal{Q}_{4\rho}(z_1)\cap \gamma_0=\emptyset$,
\begin{align*}
   & \rho^{-\frac16}\left\|\frac{\nabla_v\delta_{h,l}f}{|h|^{l}}\right\|_{L^{\infty}(\mathcal{H}_\rho(z_1))}+ \left[\frac{\nabla_v\delta_{h,l}f}{|h|^{l}}\right]_{C^{\frac16}(\mathcal{H}_\rho(z_1))}\\
    &\leq c\langle v_1\rangle^{4(l+1)}\sum_{i=0}^l\rho^{-\frac76} \left\|\frac{\delta_{h,i}f-\delta_{h,i}f_{}(z_1)}{|h|^i}\right\|_{L^\infty(\mathcal{H}_{2\rho}(z_1))}\\
    &\quad+c\langle v_1\rangle^{4(l+1)}\sum_{i=0}^l\left(\left\|\frac{\delta_{h,i}F}{|h|^i}\right\|_{L^\infty(\mathcal{H}_{2\rho}(z_1))}+\left[\frac{\delta_{h,i}\mathcal{\mathcal{R}}f}{|h|^i}\right]_{C^{\frac76}(\gamma_-\cap\mathcal{Q}_{2\rho}(z_1))}+\left[\frac{\delta_{h,i}\mathcal{N}f}{|h|^i}\right]_{C^{\frac76}(\gamma_-\cap\mathcal{Q}_{2\rho}(z_1))}\right),
\end{align*} 
where $c=c(n,l,c_B,C_l)$. The only difference compared to the first inequality given in \cite[\textbf{Step 3-2} in Proposition 7.2]{KiWe26} is the term $\left[\frac{\delta_{h,i}\mathcal{\mathcal{R}}f}{|h|^i}\right]_{C^{\frac76}(\gamma_-\cap\mathcal{Q}_{2\rho}(z_1))}$. From \autoref{lem.geo} with $b=1$, \eqref{Nfreg:c2eps} with $\eps=\frac56$ and \eqref{Nfreg:ind}, we deduce
\begin{align*}
    \left[\frac{\delta_{h,i}\mathcal{\mathcal{R}}f}{|h|^i}\right]_{C^{\frac76}(\gamma_-\cap\mathcal{Q}_{2\rho}(z_1))}&\leq c\left[\frac{\delta_{h,i}{}f}{|h|^i}\right]_{C^{\frac76}(\mathcal{Q}_{2\rho}(R(z_1)))}\\
    &\leq c\rho^{-\frac76}\langle v_1\rangle^{c_i}\left(\|f\|_{L^\infty(\mathcal{H}_{4r_1}(R(z_1)))}+\left\|{F}\right\|_{C^{3(i+1)}(\mathcal{H}_{4r_1}(R(z_1)))}\right)\\
    &\quad+c\rho^{-\frac76}D_l\langle v_1\rangle^{c_i}\left\|{\mathcal{M}}\right\|_{C^{3i+\frac76}(\gamma_-\cap\mathcal{Q}_{4r_1}(R(z_1)))}.
\end{align*}
Combining the previous two inequalities and following the same lines as in \cite[(7.38) in Proposition 7.2]{KiWe26} with (7.31) and (7.37) in \cite{KiWe26} replaced by \eqref{Nfreg:ind} and \eqref{Nfreg:ind.g}, respectively, we have
\begin{equation*}
\begin{aligned}
    \left\|\frac{\delta_{h,l}\nabla_vf}{|h|^l}\right\|_{L^{\infty}(\mathcal{H}_\rho(z_1))}&\leq c\langle v_1\rangle^{c_l}\rho^{-1-\delta+\lambda_\alpha}\left(\|f\|_{L^\infty(\mathcal{H}_{4r_1}(z_1))}+\left\|{F}\right\|_{C^{3l}(\mathcal{H}_{4r_1}(z_1))}\right)\\
    &\quad+c\langle v_1\rangle^{c_l}\rho^{-1-\delta+\lambda_\alpha}\left(\|f\|_{L^\infty(\mathcal{H}_{4r_1}(R(z_1)))}+\left\|{F}\right\|_{C^{3l}(\mathcal{H}_{4r_1}(R(z_1)))}\right)\\
    &\quad+cD_l\langle v_1\rangle^{c_l}\rho^{-1-\delta+\lambda_\alpha}\left\|\mathcal{M}\right\|_{C^{3l+\frac76}(\gamma_-\cap\mathcal{Q}_{4r_1}(z_1))},
\end{aligned}
\end{equation*}
where $c_l=c_l(n,l,p,\alpha)$ and $c=c(n,\alpha,\delta,l,c_B,C_l,p)$.
The remainder of the argument follows exactly as in \cite[\textbf{Step 3} in Proposition 7.2]{KiWe26} except that we apply \autoref{lem.pertur} in place of \cite[Lemma 7.5]{KiWe26}. Moreover, the number of iterations in \cite[\textbf{Step 3} in Proposition 7.2]{KiWe26} becomes $[3/\lambda_{\alpha}]$ instead of $6$. In particular, the exponent $8-\delta_0(=\frac{8}{1+2\delta})$ used in \textbf{Step 3} is replaced by $\frac{4}{1+\delta-\lambda_\alpha}$.  Thus, we have \eqref{Nfreg:ind} when $M=M(n,k,p,\alpha)$ is sufficiently large. Lastly, by repeating the argument in \cite[\textbf{Step 4} in Proposition 7.2]{KiWe26} together with the fact that taking the difference quotient operator in the $x'$-variables does not change the boundary condition, we deduce \eqref{Nfreg:main2}. This completes the proof.
\end{proof}

Next, we provide an auxiliary lemma that was used in the proof of \autoref{lem.Nfreg}. This lemma shows sharp H\"older estimates at the grazing set $\gamma_0$ when the right-hand side is given by unbounded functions. 
\begin{lemma}\label{lem.pertur}
  Let $z_0\in\gamma_0$ and  let $f$ be a weak solution to 
      \begin{equation*}
\left\{
\begin{alignedat}{3}
\partial_tf+v\cdot\nabla_xf-\ddiv(A\nabla_vf)&=B\cdot\nabla_vf+F-\ddiv(G)&&\qquad \mbox{in  $\mathcal{H}_{R}(z_0)$}, \\
f&=\alpha \mathcal{R}f+(1-\alpha)g&&\qquad  \mbox{in $\gamma_-$},
\end{alignedat} \right.
\end{equation*}
where $A\in C^\eps(\mathcal{H}_R(z_0))$. Fix $p\geq\frac12$, $C_1\geq1$ and write $r_1\coloneqq \frac{\langle v_1\rangle^{-p}}{64}$ and $\widehat{r_1}\coloneqq \frac{\langle v_1\rangle^{-p}}{C_1}$ for any given $z_1$. Fix $\delta<\lambda_\alpha$, $R=2r_0 =\frac{\langle v_0\rangle^{-p}}{64}$ and assume that for some $c_B$ and $C_0$
\begin{equation*}
\begin{aligned}
    &\langle v_0\rangle^{-2}\|B\|_{L^\infty(\mathcal{H}_R(z_0))}\leq c_B,\\
    &\sup_{z_1\in\gamma_0\cap\mathcal{Q}_R(z_0)}\sup_{(t,x',v')\in \mathcal{H}'_{\widehat{r}_1}(t_1,x_1',v_1')}\left(\|G(t,x',\cdot,v',\cdot)\|_{L^{\frac4{1+\delta-\lambda_\alpha}}({H}_{\widehat{r}_1})}+\|F(t,x',\cdot,v',\cdot)\|_{L^{\frac4{1+\delta-\lambda_\alpha}}({H}_{\widehat{r}_1})}\right)\leq C_0,\\
    &\sup_{z_2\in\mathcal{Q}_R(z_0)}\sup_{\substack{\mathcal{Q}_\rho(z_2)\cap\gamma_0=\emptyset,\\
    \rho\leq \frac{\langle v_2\rangle^{-p}}{64}}}\left(\|G\|_{L^\infty(\mathcal{H}_\rho(z_2))}+\|F\|_{L^\infty(\mathcal{H}_\rho(z_2))}\right)\leq C_0.
\end{aligned}
\end{equation*}
Then we have 
\begin{align}\label{pertur:maineq}
    [f]_{C^{\lambda_\alpha-\delta}(\mathcal{H}_{r_0}(z_0))}\leq c\langle v_0\rangle^{c_p}\left(\|f\|_{L^\infty(\mathcal{H}_{2r_0}(z_0))}+[g]_{C^{\frac34}(\gamma_-\cap \mathcal{Q}_{2r_0}(z_0))}+C_0\right),
\end{align}
where $c_p=c(n,p)$ and $c=c(n,\Lambda,\eps,\alpha,\|A\|_{C^\eps(\mathcal{H}_{2r_0}(z_0))},c_B,\delta,C_0,p)$.
\end{lemma}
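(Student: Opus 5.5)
The approach adapts \autoref{thm.expansion} and \autoref{thm.opthol} to unbounded right-hand sides, in the spirit of \cite[Lemma 7.5]{KiWe26}. The target is the expansion estimate at every $z_1\in\gamma_0\cap\mathcal{Q}_{R}(z_0)$:
\begin{align*}
\|f-f(z_1)\|_{L^\infty(\mathcal{H}_\rho(z_1))}\le c\langle v_0\rangle^{c_p}\rho^{\lambda_\alpha-\delta}\big(\|f\|_{L^\infty}+[g]_{C^{3/4}}+C_0\big).
\end{align*}
This is then combined with estimates away from $\gamma_0$ through the covering argument of \autoref{lem.holsmall}. First we normalize: translate $z_1$ to $0$ and rescale by $S_{\widehat r_1}$. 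Since $\widehat r_1\eqsim\langle v_1\rangle^{-p}$ and $|v_1|\lesssim|v_0|$, every rescaling factor is a power of $\langle v_0\rangle$, and these powers produce $c_p$. After rescaling, $B$ has bounded norm because $\widehat r_1\langle v_0\rangle^2\lesssim 1$ once $p\ge 2$; for smaller $p$ the factor is absorbed into $c_p$. We also note that $[g]_{C^{3/4}}$ dominates any $C^{\lambda_\alpha-\delta}$ seminorm we need, because $\lambda_\alpha<\frac12<\frac34$.

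The core step is a compactness/blow-up argument identical to that of \autoref{thm.expansion}, but with target exponent $\lambda_\alpha-\delta$ in place of $\lambda_\alpha+\eps$, and with no $\phi_{\alpha,1}$ term subtracted. Suppose \eqref{pertur:maineq} fails along a sequence. Define $\theta(r)=\sup_l\sup_{\rho\ge r}\rho^{-(\lambda_\alpha-\delta)}\|f_l-f_l(0)\|_{L^\infty(\mathcal{H}_\rho)}$ and blow up with $\widetilde f_m=(f_m(S_{r_m}\cdot)-f_m(0))/(r_m^{\lambda_\alpha-\delta}\theta(r_m))$. Under this scaling, $G$ becomes $r_m^{1-\lambda_\alpha+\delta}G(S_{r_m}\cdot)/\theta$ and $F$ becomes $r_m^{2-\lambda_\alpha+\delta}F(S_{r_m}\cdot)/\theta$.

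The main obstacle is to show that these rescaled sources still satisfy the hypotheses of \autoref{lem.replace}, namely uniform $C^\gamma$ bounds, and vanish in the limit. The integrability exponent $q=\frac{4}{1+\delta-\lambda_\alpha}$ is chosen for exactly this. In the $(x_n,v_n)$ variables, $L^q$ scales like $r^{-4/q}=r^{-(1+\delta-\lambda_\alpha)}$, so $r^{1-\lambda_\alpha+\delta}\|G(S_r\cdot)\|_{L^q(H_1)}=\|G\|_{L^q(H_r)}\le C_0$. The rescaled norm is therefore uniformly bounded and, after division by $\theta(r_m)\to\infty$, tends to zero. Away from $\gamma_0$ the rescaled sources are bounded by $C_0$ via the $L^\infty$ hypothesis.

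To obtain the uniform H\"older bound with an $L^q$ divergence source instead of $G\partial_{v_n}\phi$, we use the same energy/De Giorgi argument as \cite[Lemma 5.2]{KiWe26}: we solve an auxiliary zero-Dirichlet problem with $L^q$ source, $q>4$, to get a $C^\gamma$ function, and we treat the remainder with \autoref{lem.holsmall}. Here the supremum over the tangential variables $(t,x',v')$ in the hypothesis gives the mixed-norm integrability needed in the full space.

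In the limit we obtain a solution of the constant-coefficient problem with reflection condition $\alpha\mathcal{R}$ and growth $R^{\lambda_\alpha-\delta}$, vanishing at $0$. By \autoref{lem.liou.nd}, which also covers growth exponents below $\lambda_\alpha$ via \autoref{prop.osc}, it equals $m\phi_{\alpha,1}$. The growth rate strictly below $\lambda_\alpha$ forces $m=0$, contradicting the normalization $\|\widetilde f_m\|_{L^\infty(\mathcal{H}_1)}\gtrsim 1$. Finally, for points away from $\gamma_0$ we use \autoref{lem.schhol.away-} (through \eqref{lem.schhol.away-.depB}) with the $L^\infty$ bound on $F,G$. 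Near $\gamma_-$ we also bound $\mathcal{R}f$ through the $\gamma_+$ regularity, as in the third step of \autoref{lem.holsmall}, and the covering argument there then gives \eqref{pertur:maineq}.
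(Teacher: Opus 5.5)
Your blow-up argument is a valid route, but it is not the one the paper takes. The paper runs no compactness argument with the singular source. Instead it splits $f=h+(f-h)$, where $h$ solves the equation with source $F-\ddiv(G)$ and zero data on $\partial_{\mathrm{kin}}\mathcal{H}_R(z_0)$. Since $h$ has in-flow data, \cite[Lemma 7.5]{KiWe26} applies with $\delta$ replaced by $\frac12-\lambda_\alpha+\delta$; this is where the exponent $\frac{4}{1+\delta-\lambda_\alpha}=\frac{8}{1+2(\frac12-\lambda_\alpha+\delta)}$ comes from. It gives $h\in C^{\lambda_\alpha-\delta}$ because $\lambda_\alpha<\frac12$, and the energy estimate gives $\|h\|_{L^2}\lesssim C_0$. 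The remainder $f-h$ has no source and satisfies the reflection condition with data $\widetilde g=g+\frac{\alpha}{1-\alpha}\mathcal{R}h\in C^{\lambda_\alpha-\delta}$, so \eqref{opthol:main2} together with \autoref{lem.bdd} finishes the proof.

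What the paper's route buys is brevity and modularity. The whole difficulty of the unbounded divergence source, both at and away from $\gamma_0$, is loaded onto an in-flow problem whose optimal exponent $\frac12$ exceeds $\lambda_\alpha$. The reflection then enters only through H\"older boundary data $\mathcal{R}h$.

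Your route reproves the estimate directly. It is essentially the argument the paper alludes to for \eqref{opthol:main2}, now carried out with singular sources. It needs only the small-exponent compactness of \cite[Lemma 5.2]{KiWe26} and \autoref{lem.liou.nd}, not the sharp in-flow estimate. The price is that you must redo the compactness lemma with general mixed-norm sources. You also need Hölder estimates of order $\lambda_\alpha-\delta$ near $\gamma_\pm$ and in the interior with divergence-form sources $\ddiv(G)$ in the covering step. \autoref{lem.schhol.away-} as stated only handles non-divergence $F\in L^\infty$, so that citation must be replaced. A local splitting of the paper's type, or the divergence-form estimates behind \cite[Lemma 7.5]{KiWe26}, would do.

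Finally, compactness constants depend non-explicitly on $\|B\|$. Absorbing $\widehat r_1\|B\|$ into $\langle v_0\rangle^{c_p}$ when $p<2$ should therefore go through the scaling argument of \cite[Proposition 5.1]{KiWe26}.
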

\begin{proof}
    Let $h$ be a weak solution to 
     \begin{equation*}
\left\{
\begin{alignedat}{3}
\partial_th+v\cdot\nabla_xh-\ddiv(A\nabla_vh)&=B\cdot\nabla_vh+F-\ddiv(G)&&\qquad \mbox{in  $\mathcal{H}_{R}(z_0)$}, \\
h&=0&&\qquad  \mbox{in $\partial_{\mathrm{kin}}\mathcal{H}_{R}(z_0)$}.
\end{alignedat} \right.
\end{equation*}
Then by \cite[Lemma 7.5]{KiWe26} with $\delta$ replaced by $\frac12-\lambda_\alpha+\delta$, we deduce 
\begin{align*}
    \|h\|_{C^{\lambda_\alpha-\delta}(\mathcal{H}_{3r_0/2}(z_0))}\leq c\langle v_0\rangle^{c_p}\left(\|h\|_{L^2(\mathcal{H}_{2r_0}(z_0))}+C_0\right),
\end{align*}
where $c=c(n,\Lambda,\eps,\alpha,\|A\|_{C^\eps(\mathcal{H}_{2r_0}(z_0))},c_B,\delta,C_0,p)$ and $c_p=c_p(n,p)$. Even though \cite[(7.14)]{KiWe26} is written for $h \in L^\infty$, the proof of \cite[Lemma 7.5]{KiWe26} actually goes through for $h \in L^2$. Moreover, by the standard energy estimate as in \cite[Lemma 2.8]{Zhu24} together with the assumptions of $G$ and $F$, we have 
\begin{align*}
    \|h\|_{L^2(\mathcal{H}_{2r_0}(z_0))}\leq c(\|F\|_{L^2(\mathcal{H}_{2r_0}(z_0)}+\|G\|_{L^2(\mathcal{H}_{2r_0}(z_0)})\leq cC_0,
\end{align*}
where $c=c(n,\Lambda)$. Next, note that $f-h$ solves  
\begin{equation*}
\left\{
\begin{alignedat}{3}
\partial_t(f-h)+v\cdot\nabla_x(f-h)-\ddiv(A\nabla_v(f-h))&=B\cdot\nabla_v(f-h)&&\qquad \mbox{in  $\mathcal{H}_{R}(z_0)$}, \\
f-h&=\alpha\mathcal{R}_b(f-h)+(1-\alpha)\widetilde{g}&&\qquad  \mbox{in $\gamma_-$},
\end{alignedat} \right.
\end{equation*}
where we write $\widetilde{g}\coloneqq g+\frac{\alpha}{1-\alpha}\mathcal{R}_bh$. By \eqref{opthol:main2} and \autoref{lem.bdd}, we have
\begin{align*}
    [f-h]_{C^{\lambda_\alpha-\delta}(\mathcal{H}_{r_0}(z_0))}\leq c\langle v_0\rangle^{c_p}\left(\|f-h\|_{L^2(\mathcal{H}_{3r_0/2}(z_0))}+\|\widetilde{g}\|_{C^{\lambda_\alpha-\delta}(\gamma_-\cap \mathcal{Q}_{3r_0/2}(z_0))}\right).
\end{align*}
Thus, combining all the estimates together with the fact that $\|g\|_{L^\infty(\gamma_-\cap \mathcal{Q}_{2r_0}(z_0))}\leq c\|f\|_{L^\infty(\mathcal{H}_{2r_0}(z_0))}$ gives the desired estimate.
\end{proof}

Now we are ready to prove the main result of this section.

\begin{proof}[Proof of \autoref{thm.maxhalf}] To prove \eqref{maxhalf:maineq}, first we obtain $L^\infty$ estimates of $f$. Note from the definition of $\mathcal{N}f$ in \eqref{maxhalf:defn.nf}, we have 
\begin{align*}
    \|\mathcal{N}f\|_{L^\infty(\gamma_-\cap \mathcal{Q}_{2r_0}(z_0))}\leq  \|\mathcal{M}\|_{L^\infty(\gamma_-\cap \mathcal{Q}_{2r_0}(z_0))}\|\langle \cdot\rangle f(t,x,\cdot)\|_{L^\infty(I_2\times B_8'\times(0,8);L^1(\bbR^n))}.
\end{align*}
Thus, by \eqref{lem.calpha} together with the fact that $r_0\|B\|_{L^\infty(\mathcal{H}_{2r_0}(z_0))}\leq c_B$, we get
\begin{align*}
    \|f\|_{L^\infty(\mathcal{H}_{r_0}(z_0))}\leq c\langle v_0\rangle^{c_q}(\|f\|_{L^1(\mathcal{H}_{2r_0}(z_0))}+\|F\|_{L^\infty(\mathcal{H}_{2r_0}(z_0))}+\|\mathcal{N}f\|_{L^\infty(\gamma_-\cap \mathcal{Q}_{2r_0}(z_0))})
\end{align*}
for some constant $c=c(n,\Lambda,p,c_B)$. Using this and taking $M_0=M_0(n,p)$ large enough, we have 
\begin{align*}
    \|\langle\cdot\rangle^qf(t,x,\cdot)\|_{L^\infty(\mathcal{H}_{2,\infty})}&\leq c(\|\langle \cdot\rangle ^{q_0}f(t,x,\cdot)\|_{L^\infty(I_2\times B_8'\times(0,8);L^1(\bbR^n))}+\|\langle \cdot\rangle^{q_0}F(t,x,\cdot)\|_{L^\infty(\mathcal{H}_{2,\infty})}.
\end{align*}
Now using this and \eqref{Nfreg:ind} with $j=0$, we get the desired estimate \eqref{maxhalf:maineq}. 

To prove \eqref{maxhalf:maineq2}, we take the constant $M_2=M_2(n,p,\lambda,\eps,\alpha)$ to get \eqref{Nfreg:main2}. First if $\mathcal{Q}_{3r_0/2}(z_0)\cap \gamma_-=\emptyset$, then we have \eqref{maxhalf:maineq2} by \autoref{lem.schhol.away-}. If $\mathcal{Q}_{3r_0/2}(z_0)\cap \gamma_-\neq\emptyset$, then we have 
\begin{align*}
        \|f\|_{C^{\lambda,\eps}(\mathcal{H}_{r_0}(z_0))}&\leq c\left(\|\langle \cdot\rangle^qf(t,x,\cdot)\|_{L^\infty(\mathcal{H}_{2,\infty})}+\|\langle \cdot\rangle^qF(t,x,\cdot)\|_{C^{6\lambda}(\mathcal{H}_{2,\infty})}+\|\mathcal{R}f\|_{C^{\lambda,\eps}(\gamma_-\cap \mathcal{Q}_{3r_0/2}(z_0))}\right).
    \end{align*}
Since $\mathcal{Q}_{2r_0}(\mathcal{R}z_0)\cap \gamma_-=\emptyset$, using \autoref{lem.geo} with $b=1$ and \eqref{maxhalf:maineq2} with $z_0$ replaced by $\mathcal{R}z_0$, we also have the desired estimate when $\mathcal{Q}_{3r_0/2}(z_0)\cap \gamma_-\neq\emptyset$. This completes the proof.
\end{proof}

Next, we provide an example to verify the optimality of the main result \autoref{thm.maxhalf}. 
\begin{example}\label{ex:opt}
    Recall from \autoref{lem.phialpha} the solution $\phi_{\alpha}\equiv\phi_{\alpha,1}$ to 
    \begin{equation*}
\left\{
\begin{alignedat}{3}
v\partial_x\phi_\alpha-\partial_{vv}\phi_\alpha&=0&&\qquad \mbox{in  $\{x>0\}$}. \\
\phi_\alpha(0,v)&=\alpha \phi_\alpha(0,-v)&&\qquad  \mbox{in $\{x=0\}\times \{v>0\}$}
\end{alignedat} \right.
\end{equation*}
with $\phi_\alpha(0,v)=|v|^{\lambda_{\alpha}}$ in $v<0$, where $\lambda_{\alpha}\equiv \lambda_{\alpha,1}$. We can choose a smooth function $\psi=\psi(v)$ such that $\psi(v)=1$ in $(-1,1)$, $\psi\in C_c^\infty((-3,3))$, $\psi(-v)=\psi(v)$ and 
\begin{align*}
    \int_{v<0}|v|^{\lambda_{\alpha}}\psi(v)(-v)\,dv=0.
\end{align*}
Then we have 
\begin{align*}
    \int_{v<0}(-v)(|v|^{\lambda_{\alpha}}\psi(v)+2e^{-|v|^2})\,dv=1.
\end{align*}
Now we consider $h\coloneqq \phi_\alpha\psi+2e^{-|v|^2}$ and $\mathcal{M}(v)\coloneqq 2e^{-|v|^2}$ to see that 
\begin{align*}
    h(0,v)=(\alpha\phi_\alpha\psi)(0,-v)+2e^{-|v|^2}&=\alpha \mathcal{R}h(0,v)+2(1-\alpha)e^{-|v|^2}\\
    &=\alpha \mathcal{R}h(0,v)+(1-\alpha)\mathcal{M}(v) \int_{v<0}(-v)(|v|^{\lambda_\alpha}\psi(v)+2e^{-|v|^2})\,dv\\
    &=\alpha\mathcal{R}h(0,v)+(1-\alpha)\mathcal{N}h(0,v).
\end{align*}
Moreover, we observe that $h$ is a solution to 
\begin{equation*}
\left\{
\begin{alignedat}{3}
v\partial_xh-\partial_{vv}h&=F&&\qquad \mbox{in  $\{x>0\}$}, \\
h(0,v)&=\alpha \mathcal{R}h+(1-\alpha)\mathcal{N}h&&\qquad  \mbox{in $\{x=0\}\times \{v>0\}$},
\end{alignedat} \right.
\end{equation*}
where $F=-2\partial_v\psi\partial_v\phi_a-\phi_\alpha\partial_{vv}\psi-2\partial_{vv}(e^{-|v|^2})$. Since $\partial_v\psi=\partial_{vv}\psi\equiv0$ in $|v|<1$ and due to the fact that $\phi_a$ is a smooth function away from the origin, we have $F\in C^{\infty}_{\mathrm{loc}}([0,\infty)\times \bbR)$, but $h\in C^{\lambda_\alpha}(H_1)\setminus C^{\lambda_\alpha+\delta}(H_1)$ for any $\delta>0$.
\end{example}

We now prove the optimal regularity for the Maxwell boundary condition in general domains.

\begin{theorem}
\label{thm:Maxwell-body}
    Let $\Omega$ be a bounded $C^{2,\eps}$ domain for some $\eps>0$. Let $f$ be a weak solution to 
     \begin{equation*}
\left\{
\begin{alignedat}{3}
\partial_tf+v\cdot\nabla_xf-\ddiv(A\nabla_vf)&=B\cdot\nabla_vf+F&&\qquad \mbox{in  $(-1,1)\times \Omega\times \bbR^n$}, \\
f&=\alpha \mathcal{R}f+(1-\alpha)\mathcal{N}f&&\qquad  \mbox{in $\gamma_-$},
\end{alignedat} \right.
\end{equation*}
    where 
    \begin{align*}
        \mathcal{N}f(z)\coloneqq \mathcal{M}(t,x,v)\int_{\bbR^n}f(t,x,w) (n_x\cdot w)_-\,dw.
    \end{align*}
    Suppose
    \begin{align*}
        \sup_{z_1\in (-1,1)\times \Omega\times \bbR^n}\left(\langle v_1\rangle^M\|\mathcal{M}\|_{C^{\lambda_\alpha+\eps}(\mathcal{H}_1(z_1))}+\|A\|_{C^\eps(\mathcal{H}_1(z_1))}+\|B\|_{L^\infty(\mathcal{H}_1(z_1))}\right)\leq 1
    \end{align*}
    for some $M>0$ and $\lambda_\alpha\equiv \lambda_{\alpha,1}$ is determined in \eqref{phialpha:defn.lambdaalpha}.
    
    Then, there is a constant $M_0=M_0(n,\eps)$ such that if $M>M_0$, then we have 
    \begin{align*}
        \|f\|_{C^{\lambda_\alpha}((-\frac18,\frac18)\times\Omega\times \bbR^n)}\leq c\left(\|\langle\cdot\rangle^qf(t,x,\cdot)\|_{L^\infty((-1,1)\times\Omega;L^1( \bbR^n))}+\|\langle\cdot\rangle^qF(t,x,\cdot)\|_{L^\infty((-1,1)\times\Omega\times \bbR^n)}\right)
    \end{align*}
    for some $q=q(n)$ and $c=c(n,\Lambda,\eps,\alpha)$. 
    
    In addition, if $\Omega$, $A$, $B$, $F$ and $\mathcal{M}$ are smooth, and $F$ and $\mathcal{M}$ have fast decay as $|v|\to\infty$, then $f$ is smooth away from the grazing set.
\end{theorem}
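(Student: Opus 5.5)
The plan is to reduce \autoref{thm:Maxwell-body} to the half-space result \autoref{thm.maxhalf} by flattening the boundary locally and then patching with a covering argument.

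\textbf{Step 1: interior region.} Away from $\partial\Omega$ there is nothing new. For $z_1$ with $\mathrm{dist}(x_1,\partial\Omega)\gtrsim 1$, cylinders $\mathcal{Q}_{r_1}(z_1)$ with $r_1=\langle v_1\rangle^{-p}/64$ lie inside the domain. There, interior estimates (\autoref{lem.calpha} and \autoref{lem.schhol.away-}) give $C^{\lambda_\alpha}$ bounds on $f$ with polynomial weights $\langle v_1\rangle^{q}$. These are controlled by $\|f\|_{L^1}$ and $\|F\|_{L^\infty}$ on slightly larger cylinders.

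\textbf{Step 2: flattening near the boundary.} For $x_0\in\partial\Omega$, take a $C^{2,\eps}$ diffeomorphism $\Phi$ flattening $\partial\Omega$ near $x_0$. Use the standard kinetic change of variables $(t,x,v)\mapsto(t,\Phi(x),D\Phi(x)v)$, as in \cite{RoWe25,KiWe26}. The transformed function solves an equation of the form \eqref{maxhalf:eq} in a half-space piece. Its new coefficients are:
\begin{itemize}
\item $\widetilde A=D\Phi\, A\, D\Phi^T$, which is still $C^\eps$ and uniformly elliptic;
\item a drift $\widetilde B$ that picks up a term $\sim |v|^2 D^2\Phi$, which is exactly why \eqref{maxhalf:ass1} only asks for $\langle v_0\rangle^{-2}\|B\|_{L^\infty}$ bounded;
\item a source $\widetilde F$ of the same type as $F$.
\end{itemize}
I would choose $\Phi$ with $D\Phi(x)$ orthogonal on $\partial\Omega$, for instance boundary normal coordinates. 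Then specular reflection $R_x$ becomes $v_n\mapsto -v_n$ on $\{x_n=0\}$, and $(n_x\cdot w)_-$ becomes $(w_n)_-$ up to a Jacobian. So $\mathcal{N}f$ keeps the form \eqref{maxhalf:defn.nf}, with a new $\widetilde{\mathcal M}$ that stays in $C^{\lambda_\alpha+\eps}$ with the same decay.

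\textbf{Step 3: apply the half-space estimate and cover.} Apply \eqref{maxhalf:maineq} at every $z_0$ near the flattened boundary. Its right-hand side involves only weighted $L^1$ and $L^\infty$ norms of $f$ and $F$, including at the reflected point $R(z_0)$, together with $\|\mathcal M\|_{C^{\lambda_\alpha+\eps}}$, which is bounded by $\langle v_0\rangle^{-M}$. The growth $\langle v_0\rangle^{q}$ in \eqref{maxhalf:maineq} is absorbed by this decay once $M>M_0$. Combining with Step 1 gives local $C^{\lambda_\alpha}$ bounds on cylinders of radius $r_0(v_0)$, uniformly in $v_0$.

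Passing from these local bounds to a global $C^{\lambda_\alpha}$ norm on $(-\frac18,\frac18)\times\Omega\times\R^n$ is routine:
\begin{itemize}
\item for nearby points, use the local estimates directly;
\item for far-apart points, use the $L^\infty$ bound.
\end{itemize}
Since $f$ is only assumed in $L^\infty_{t,x}L^1_v$ with weight, the $L^\infty$ bound needed to close the argument comes from the local boundedness step in the proof of \autoref{thm.maxhalf} applied in each chart.

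\textbf{Smoothness away from the grazing set.} When all data are smooth, use the second part of \autoref{thm.maxhalf}, \eqref{maxhalf:maineq2}, in each chart. The flattening map is then smooth, so assumption \eqref{maxhalf:ass2} holds for every $\lambda$ after transformation. Normal derivatives are recovered from the equation, as in \cite{KiWe26}.

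\textbf{Main obstacle.} I expect the main difficulty to be checking that the flattened problem really has the Maxwell structure, with both the reflection and the flux integral preserved. This requires $D\Phi$ to be an isometry on the boundary. If $\Phi$ is only $C^{2,\eps}$, that is what keeps $\widetilde A\in C^\eps$ and $\widetilde B$ bounded with quadratic $v$-growth. A second delicate point is that the nonlocal $\mathcal{N}f$ integrates over all $w$, so the flattening has to be done globally in $v$ and not only inside a cylinder.
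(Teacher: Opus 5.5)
Your overall route is the paper's. The paper also flattens the boundary as in \cite[Section 5]{RoWe25} and \cite[Section 8]{KiWe26}, applies \autoref{thm.maxhalf} in each chart (with \eqref{maxhalf:maineq2} for smoothness away from $\gamma_0$), and glues the $\langle v_0\rangle$-weighted local bounds by a covering argument, as in the proof of \cite[Theorem 8.2]{KiWe26}.

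\textbf{The step that fails is your flattening map.} You require $D\Phi(x)$ to be orthogonal for every $x\in\partial\Omega$. For $n\ge 3$ this makes $\Phi|_{\partial\Omega}$ a local isometry onto a piece of the flat hyperplane $\{y_n=0\}$. That is impossible unless $\partial\Omega$ is intrinsically flat; it already fails for a ball.

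The condition is also not needed:
\begin{itemize}
\item Since $\Phi_n=0$ on $\partial\Omega$, one always has $D\Phi(x)^{T}e_n=\nabla\Phi_n(x)=\mu(x)n_x$. Hence $n_x\cdot v=\mu^{-1}(D\Phi(x)v)_n$.
\item So the flux in $\mathcal{N}f$ keeps the form \eqref{maxhalf:defn.nf} for \emph{every} flattening. The factor $\mu^{-1}|\det D\Phi^{-1}|$ is absorbed into $\widetilde{\mathcal M}$.
\item The specular reflection is conjugated to $\widetilde R(w',w_n)=(w',-w_n)$, i.e. $D\Phi(x)R_x=\widetilde R\,D\Phi(x)$, as soon as $D\Phi(x)n_x\parallel e_n$ on $\partial\Omega$.
\end{itemize}

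\textbf{Your example has the wrong regularity.} Boundary normal coordinates $\Phi^{-1}(y',y_n)=(y',\psi(y'))+y_n\nu(y')$ do satisfy the weaker condition. But for a $C^{2,\eps}$ domain, $\nu$ is only $C^{1,\eps}$, so these coordinates are only $C^{1,\eps}$: the term $y_nD^2\nu$ in $D^2\Phi^{-1}$ involves third derivatives of $\psi$. The new drift $v^{T}D^2\Phi\,v$ is then not a bounded function, so \eqref{maxhalf:ass1} fails.

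You need a regularized version. For instance, replace $\nu(y')$ by $(\nu*\rho_{y_n})(y')$, the normal field mollified at scale $y_n$. This recovers bounded second derivatives, since $y_nD^2_{y'}(\nu*\rho_{y_n})=O(y_n^{\eps})$, while keeping $\partial_{y_n}\Phi^{-1}=\nu$ on $\{y_n=0\}$.

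The paper does not construct this map itself. It relies on the flattening of \cite[Section 5]{RoWe25} and on \cite[Lemmas 2.16, 2.17]{RoWe25}, which preserve the reflection condition. Once you have such a map, the rest of your argument goes through.
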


Note that here, we denoted $\mathcal{H}_1(z_1) := \cQ_1(z_1) \cap \big((-1,1) \times \Omega \times \R^n \big)$.

\begin{proof}[Proof of \autoref{thm:main-Maxwell} and \autoref{thm:Maxwell-body}]
    We proceed by the flattening argument given in \cite[Section 5]{RoWe25} and \cite[Section 8]{KiWe26}. Repeating the proof from  \cite[Theorem 8.2]{KiWe26} together with \autoref{thm.maxhalf} (which corresponds to \cite[Theorem 7.1]{KiWe26}), the desired result follows immediately. This completes the proof.
\end{proof}

Lastly, we end this subsection with the proof of \autoref{thm:main-Holder}.

\begin{proof}[Proof of \autoref{thm:main-Holder}]
First, we consider a solution $f$ to \eqref{maxhalf:maineq} with the coefficient matrix $A$ being a merely measurable. 
By \autoref{lem.calpha}, we have $f\in C^\gamma(\mathcal{H}_{R}(z_0))$ whenever $\mathcal{H}_{2R}(z_0)\cap(\gamma_0\cup\gamma_-)=\emptyset$ for some small $\gamma=\gamma(n,\Lambda)$. Next, taking $\eps$ very close to 2 in \eqref{Nfreg:mainineq}, we indeed derive \eqref{Nfreg:main1} with small $\eps>0$. Thus, as in the proof of \autoref{thm.maxhalf} with \eqref{lem.schhol.away-.depB} and \autoref{thm.opthol} replaced by \autoref{lem.calpha}, we get \eqref{maxhalf:maineq} with
$\lambda_\alpha$ replaced by $\beta$, where $\beta=\beta(n,\Lambda,\alpha,\eps)$. Lastly, using the flattening transformation as in the proof of \autoref{thm:Maxwell-body}, we get the desired estimate. This completes the proof.
\end{proof}

\subsection{Generalization of Maxwell condition}
\label{subsec:ext}
In this subsection, we will consider two generalized models of the Maxwell boundary condition (see also \cite{Mis10} for a more detailed discussion) and explain how to obtain the optimal regularity for such models. 

The first model generalizes the constant $\alpha$ to a function $\alpha\equiv \alpha(t,x')$ with $\alpha\in[0,\alpha_0]$ for some $\alpha_0<1$. In particular, we are interested in the case that $\alpha$ is given by 
\begin{align}\label{defn.alpha.genmax}
    \alpha(t,x')\coloneqq \Psi\left(\int_{\bbR^n}f(t,x',0,w)(w_n)_-\,dw\right)
\end{align}
for some $\Psi=\Psi(\xi)\in[0,\alpha_0]$ with $|\Psi'(\xi)|\leq c$. This is called the flux-dependent accommodation coefficient introduced in \cite{Cer00}.\\
Let $f$ be a solution to
\begin{equation*}
\left\{
\begin{alignedat}{3}
\partial_tf+v\cdot\nabla_xf-\ddiv(A\nabla_vf)&=B\cdot\nabla_vf+F&&\qquad \mbox{in  $\mathcal{H}_{2,\infty} = I_2\times (B_8'\times (0,8))\times \bbR^n$}, \\
f&=\alpha\mathcal{R}f+(1-\alpha)\mathcal{N}f &&\qquad  \mbox{in $\gamma_-$}.
\end{alignedat} \right.
\end{equation*}
We assume 
\begin{equation}\label{ass.alpha.genmax}
\begin{aligned}
        \sup_{z_0\in \mathcal{H}_{2,\infty}}\Bigg(\langle v_0\rangle^{M}\|\mathcal{M}\|_{C^{\lambda_{0}+\eps}(\mathcal{H}_{r_0}(z_0))}&+\langle v_0\rangle ^{-p}
        \|\alpha\|_{C^{\lambda_{0}+\eps}(\mathcal{H}_{r_0}(z_0))}\\
        &+\|A\|_{C^\eps(\mathcal{H}_{r_0}(z_0))}+\langle v_0\rangle^{-2}\|B\|_{L^\infty(\mathcal{H}_{r_0}(z_0))}\Bigg)\leq c_B
    \end{aligned}
    \end{equation}
for some $c_B>0$, $p>0$ and a large constant $M\gg1$, where $r_0\coloneqq \frac{\langle v_0\rangle^{-\frac12}}{64}$ and $\lambda_{0} := \lambda_{\alpha_0}:=\lambda_{\alpha_0,1}$. The constant $\lambda_{\alpha_0,1}$ is determined in \autoref{lem.phialpha}. 

Note that when $\alpha$ is given by \eqref{defn.alpha.genmax}, one can prove  
for any small $\eps>0$,
\begin{align}\label{ass2.alpha.genmax}
    \sup_{z_0\in \mathcal{H}_{2,\infty}}\|\alpha\|_{C^{\lambda_{0}+\eps}(\mathcal{H}_{r_0}(z_0))}\leq c_B\langle v_0\rangle^{p}
\end{align}
for some $p>0$ and $c_B>0$. We will briefly explain how to obtain this. First, we have \eqref{Nfreg:main.quo} with $k=0$, which requires only regularity at $\gamma_+$, independent of any assumption on $\gamma_-$. Together with the assumption $|\Psi'(\xi)|\leq c$, one can deduce \eqref{ass2.alpha.genmax}.
 
Now, under the assumption \eqref{ass.alpha.genmax}, one can prove the global $C^{\lambda_0}$ estimates of the solution $f$ as in \autoref{thm.maxhalf}. We will briefly sketch the proof.
\begin{itemize}
    \item First, the fact that $\alpha$ depends only on $t,x'$ allows us to apply the boundary condition exactly as in the constant case. Moreover, the assumption $\alpha\leq \alpha_0$ ensures that the iteration argument used in the oscillation estimates in \eqref{holsmall:ineq1.osck} remains valid. Thus, without any change of the proof given in Section \ref{sec:4}, we can obtain the global $C^\beta$ estimates for some small $\beta>0$, as stated in \autoref{lem.bdd} and \autoref{lem.holsmall}.
    \item We can obtain expansion estimates of the form \eqref{expansion:maingoal} with $\phi_{\alpha,b}$ and $\lambda_{\alpha,b}$ replaced by $\phi_{\alpha(t_0,x_0'),1}$ and $\lambda_{0}$, respectively. In particular, the blow-up argument of \autoref{thm.expansion} carries over verbatim after replacing $\phi_{\alpha,b}$ and $\lambda_{\alpha,b}$ by $\phi_{\alpha(t_0,x_0'),1}$ and $\lambda_{{\alpha_0}}':= \lambda_{\alpha(t_0,x_0'),1}$, respectively. The only modification is that in \eqref{expansion:limit.eq}, $\alpha$ is replaced by $\alpha(t_0,x_0')$. Since $\phi_{\alpha(t_0,x_0'),1}\in C^{\lambda'_{{\alpha_0}}}\subset C^{\lambda_0}$ by the fact that $\lambda_{\alpha,1}$ is non-decreasing with respect to $\alpha$ and ${\alpha(t_0,x_0')}\leq \alpha_0$, the expansion estimates together with the standard covering argument as in \autoref{thm.opthol} yields the $C^{\lambda_0}$ estimates at the boundary. 
    \item Combining the boundary $C^{\lambda_0}$ estimates with the assumption that $\alpha\in C^{\lambda_0+\eps}$, a straight-forward modification of the proof of \autoref{thm.maxhalf} yields \eqref{maxhalf:maineq}, with $\lambda_\alpha$ replaced by $\lambda_0$. 
    \item Finally, assuming that $\alpha$ is smooth and decays sufficiently fast at infinity, we have the smoothness of $(1-\alpha)\mathcal{N}f$ after a few simple modifications of the proof of \eqref{lem.Nfreg}. Therefore, \autoref{lem.schhol.away-} implies that $f$ is smooth away from $\gamma_0$. 
\end{itemize}

The second model replaces the operator $\mathcal{N}f$ by 
\begin{align*}
    \mathcal{N}f(t,x',v)\coloneqq \int_{\bbR^n}\mathcal{K}(v,w)f(t,x',0,w)(w_n)_-\,dw,
\end{align*}
where 
\begin{align*}
    \mathcal{K}(v,w)\geq0\quad\text{and}\quad \int_{\bbR^n}\mathcal{K}(v,w)(v_n)_-\,dv=1.
\end{align*}
The Maxwell boundary condition corresponds to the special case $\mathcal{K}(v,w)=e^{-|v|^2/2}$. Assume 
 \begin{align*}
        \sup_{z_0\in \mathcal{H}_{2,\infty}}&\Bigg(\langle v_0\rangle^{M}\left[\|\mathcal{K}(v,\cdot)\|_{L^{\infty}(\mathcal{H}_{r_0}(z_0))}+\sup_{w\in\bbR^n}\|\mathcal{K}(\cdot,w)\|_{C^{\lambda_{\alpha}+\eps}(\mathcal{H}_{r_0}(z_0))}\right]\\
        &\quad+\|A\|_{C^\eps(\mathcal{H}_{r_0}(z_0))}+\langle v_0\rangle^{-2}\|B\|_{L^\infty(\mathcal{H}_{r_0}(z_0))}\Bigg)\leq c_B
    \end{align*} 
    for some $\eps>0$ and $c_B>0$ and large constant $M\gg1$, where $r_0\coloneqq \frac{\langle v_0\rangle^{-2}}{64}$. Then, one can prove the same estimate as in \eqref{maxhalf:maineq}. To see this, fix $R>0$ and $(t_0,x_0',0,v_0)\in\bbR\times \bbR^{n-1}\times \bbR\times \bbR^n$. We first observe that
    \begin{align*}
        &\int_{\bbR^{n-1}}\int_{w_n<R}\mathcal{K}(v,w)f(t,x',0,w)(w_n)_--\int_{\bbR^{n-1}}\int_{w_n<R}\mathcal{K}(v_0,w)f(t_0,x_0',0,w)(w_n)_-\\
        &=\int_{\bbR^{n-1}}\int_{w_n<R}\mathcal{K}(v,w)f(t,x',0,w)(w_n)_--\int_{\bbR^{n-1}}\int_{w_n<R}\mathcal{K}(v,w)f(t_0,x_0',0,w)(w_n)_-\\
        &\quad+\int_{\bbR^n}\mathcal{K}(v,w)f(t_0,x_0',0,w)(w_n)_--\int_{\bbR^{n-1}}\int_{w_n<R}\mathcal{K}(v_0,w)f(t_0,x_0',0,w)(w_n)_-.
    \end{align*}
    Using this decomposition together with the assumption on the kernel $\mathcal{K}$, one can deduce from the argument of  \eqref{Nfreg:mainineq} that the same estimate \eqref{Nfreg:mainineq} holds, with $a_r=\int_{\bbR^{n-1}}\int_{w_n<R}\mathcal{K}(v_0,w)f(t_0,x_0',0,w)(w_n)_-$. As the remaining parts are the same, we can obtain \eqref{maxhalf:maineq}. 
    In addition, if we assume that $\mathcal{K}$ is smooth and decays sufficiently fast at infinity in both $v,w$-variables, then a similar argument as in \eqref{Nfreg:main2} shows that $\mathcal{N}f$ is smooth away from $\gamma_0$. Combining this with \autoref{lem.schhol.away-}, we can conclude that the solution $f$ is smooth away from $\gamma_0$.
    
\subsection{General reflection boundary condition}
In this subsection, we will prove the optimal regularity for kinetic equations with general reflection boundary conditions \eqref{eq:bdry-gen} in general domains and prove \autoref{thm:main-general}. We recall the super-elastic reflection operator $\mathcal{R}_b$ in general domain:
\begin{align*}
    \mathcal{R}_bf(t,x,v)=f(t,x,v-(1+b)(n_x\cdot v)n_x).
\end{align*}

Since the boundary condition \eqref{eq:bdry-gen} does not contain a nonlocal boundary term such as $\mathcal{N}f$ as in the Maxwell boundary condition \eqref{eq:bdry}, we can derive the global regularity by directly applying the flattening argument and \autoref{thm.opthol}. First, we state the global regularity result when the domain is given by $\mathcal{H}_{2,\infty}=I_2\times (B_8'\times (0,2^3))\times \bbR^n$.

\begin{theorem}\label{thm.inel}
Let $\eps\in(0,1)$, $\alpha\in(0,1)$, and $b\in(0,1]$ with $\alpha/b^2\in(0,1]$. Let $f$ be a weak solution to 
     \begin{equation}\label{inel:eq}
\left\{
\begin{alignedat}{3}
\partial_tf+v\cdot\nabla_xf-\ddiv(A\nabla_vf)&=B\cdot\nabla_vf+F&&\qquad \mbox{in  $\mathcal{H}_{2,\infty}$}, \\
f&=\alpha \mathcal{R}_bf+(1-\alpha)g&&\qquad  \mbox{in $\gamma_-$}.
\end{alignedat} \right.
\end{equation}
Fix $p\geq2$ and write $r_1=\frac{\langle v_1\rangle^{-p}}{64}$ for any given $v_1\in\bbR^n$. Then we have the following:
\begin{itemize}
    \item Assume $g\in C^{\eps}(\gamma_-)$ for some $\eps>0$ and for some $M\geq1$
    \begin{align*}
         \sup_{z_1\in \mathcal{H}_{2,\infty}}\langle v_1\rangle^{-2}\|B\|_{L^\infty(\mathcal{H}_{r_1}(z_1))}\leq M.
    \end{align*}  Then there is a small constant $\beta=\beta(n,\Lambda,\alpha,b,\eps,p)$ such that
    \begin{align}\label{ineql:main1}
    \|f\|_{C^{\beta}(\mathcal{H}_{1,\infty})}\leq c\left(\|\langle \cdot\rangle^q f(t,x,\cdot)\|_{L^1(\mathcal{H}_{2,\infty})}+\|\langle \cdot\rangle^q F(t,x,\cdot)\|_{L^\infty(\mathcal{H}_{2,\infty})}+[g]_{C_q^{\beta}(\gamma_-)}\right),
\end{align}
where $c=c(n,\Lambda,\alpha,b,\eps,p,M)$,  $q=q(n,p)$ and 
\begin{align*}
    [g]_{C^{\beta}_q(\gamma_-)} \coloneqq \sup_{z_0 \in \gamma_-} (1 + |v_0|)^q [g]_{C^{\beta}(\cQ_1(z_0) \cap \gamma_-)}.
\end{align*}
\item Let $g\in C^{\lambda_{\alpha,b}+\eps}(\gamma_-)$ for some $\eps>0$, where $\lambda_{\alpha,b}$ is as in \eqref{phialpha:defn.lambdaalpha}, and assume that for some $M\geq1$
\begin{align*}
        \sup_{z_1\in \mathcal{H}_{2,\infty}}\left(\|A\|_{C^\eps({H}_{r_1}(z_1))}+\langle v_1\rangle^{-2}\|B\|_{L^\infty(\mathcal{H}_{r_1}(z_1))}\right)\leq M.
    \end{align*}

    Then we have
\begin{align}\label{ineql:main2}
    \|f\|_{C^{\lambda_{\alpha,b}}(\mathcal{H}_{1,\infty})}\leq c\left(\|\langle \cdot\rangle^q f(t,x,\cdot)\|_{L^1(\mathcal{H}_{2,\infty})}+\|\langle \cdot\rangle^q F(t,x,\cdot)\|_{L^\infty(\mathcal{H}_{2,\infty})}+[g]_{C_q^{\lambda_{\alpha,b}+\eps}(\gamma_-)}\right),
\end{align}
where $c=c(n,\Lambda,\eps,\alpha, b,p,M)$, $q=q(n,p)$.
\end{itemize}
\end{theorem}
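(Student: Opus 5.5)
The plan is a covering argument: I would combine the local estimates already proved at three kinds of points (near $\gamma_0$, near $\gamma_-$ with $v_{n}$ bounded away from zero, and away from $\gamma_-\cup\gamma_0$), using cylinders $\mathcal{H}_{r_1}(z_1)$ of radius $r_1=\langle v_1\rangle^{-p}/64$. On such cylinders the rescaled drift satisfies $r_1\|B\|_{L^\infty(\mathcal{H}_{r_1}(z_1))}\le M\langle v_1\rangle^{2-p}\le M$, since $p\ge 2$. Hence all local constants are uniform in $z_1$, and only polynomial factors $\langle v_1\rangle^{c}$ appear from rescaling $R=r_1$ in the estimates.

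\textbf{Step 1: local boundedness.} For each $z_1\in\mathcal{H}_{1,\infty}$ I will bound $\|f\|_{L^\infty(\mathcal{H}_{r_1}(z_1))}$. There are three cases.
\begin{itemize}
\item If $\mathcal{Q}_{2r_1}(z_1)$ meets $\gamma_0$, apply \autoref{lem.bdd} centered at the projection onto $\gamma_0$.
\item In the interior, or near $\gamma_+$ only, use \autoref{lem.calpha} with no boundary datum, or its interior analogue.
\item Near $\gamma_-$ but away from $\gamma_0$, treat the problem as in-flow with datum $\alpha\mathcal{R}_bf+(1-\alpha)g$. Then $\|\mathcal{R}_bf\|_{L^\infty}$ is controlled by $f$ on the reflected cylinder $\mathcal{Q}_{r_1,b}(R_b(z_1))$ from \autoref{lem.geo}. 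That cylinder lies at $\gamma_+$, where the previous case applies with $|v|\lesssim\langle v_1\rangle$.
\end{itemize}
Multiplying by $\langle v_1\rangle^{q}$ and using $r_1^{-(4n+2)}\lesssim\langle v_1\rangle^{p(4n+2)}$ converts $r_1^{-(4n+2)}\|f\|_{L^1(\mathcal{H}_{2r_1}(z_1))}$ into $\|\langle\cdot\rangle^q f\|_{L^1(\mathcal{H}_{2,\infty})}$ for $q=q(n,p)$ large. The same weighting handles $\|F\|_{L^\infty}$ and $\|g\|_{L^\infty}$. The term $\|g\|_{L^\infty}$ can be replaced by $f$ on $\gamma_0$ plus the seminorm, or simply absorbed, since $g=f$ on $\gamma_0$.

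\textbf{Step 2: H\"older seminorm.}
\begin{itemize}
\item For \eqref{ineql:main1}, near $\gamma_0$ apply \autoref{lem.holsmall} with $R=r_1$, giving exponent $\beta=\beta(n,\Lambda,\alpha)$, reduced below $\eps$ if necessary.
\item For \eqref{ineql:main2}, near $\gamma_0$ apply \autoref{thm.opthol} with $R=r_1$. This gives $r_1^{\lambda_{\alpha,b}}[f]_{C^{\lambda_{\alpha,b}}}\lesssim\mathbf{B}^{\lambda_{\alpha,b}}(\cdots)$, and $\mathbf{B}(z_1,r_1)\lesssim\langle v_1\rangle^{2}$ is polynomial, since the constant depends on $\|A\|_{C^\eps}\le M$.
\item Away from $\gamma_0$ but near $\gamma_-$, use \autoref{lem.calpha} (case (i)) or \eqref{lem.schhol.away-.depB} (case (ii)) with boundary datum $\alpha\mathcal{R}_bf+(1-\alpha)g$.
\end{itemize}
The boundary datum's seminorm is estimated through \autoref{lem.geo} by $[f]_{C^{\beta}}$ on the reflected cylinder near $\gamma_+$. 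Because $v_n<-c\,b\,r_1$ there, that cylinder stays away from $\gamma_-\cup\gamma_0$, and the interior/$\gamma_+$ estimates bound it by $L^\infty$ norms from Step 1. This is exactly the mechanism in the third bullet of the proof of \autoref{lem.holsmall} and in the $j=0$ case of \eqref{Nfreg:ind}. Finally, the local seminorms on cylinders of size $r_1$ are glued into a global kinetic H\"older norm on $\mathcal{H}_{1,\infty}$. For pairs of points at kinetic distance $\ge r_1$ the $L^\infty$ bound suffices, again at a polynomial cost $r_1^{-\beta}\lesssim\langle v_1\rangle^{p\beta}$, which is absorbed into $q$.

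\textbf{Main obstacle.} The delicate part is the reflected cylinders for $b<1$. $R_b$ maps $\mathcal{Q}_{r}(z_1)\cap\gamma_-$ to the anisotropic set $\mathcal{Q}_{r,b}$. Covering it by standard kinetic cylinders that avoid $\gamma_0$ costs $b$-dependent constants. The velocities involved are comparable to $v_1$, since $|R_b v|\le|v|$, so the weights remain polynomial in $\langle v_1\rangle$. I would first handle this with the covering from \cite[Lemma 2.3]{KLN25} as in \autoref{lem.extension1}, and verify that $r_1$-sized cylinders at $R_b(z_1)$ still lie in $\mathcal{H}_{2,\infty}$ with the same weight class.
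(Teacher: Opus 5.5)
Your proposal follows essentially the same route as the paper's proof of \autoref{thm.inel}. It uses essentially the same case split for cylinders of radius $r_1$: touching $\gamma_0$, away from $\gamma_-$, or touching $\gamma_-$ with $v_{1,n}\gtrsim r_1$. It handles the last case as an in-flow problem with datum $\alpha\mathcal{R}_bf+(1-\alpha)g$, transfers $\mathcal{R}_bf$ by \autoref{lem.geo} to the reflected set where $v_n<-br_1$, and absorbs all rescaling losses into $\langle v_1\rangle^q$.

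The one step you do not actually carry out is removing $\|g\|_{L^\infty}$; the paper isolates this as \autoref{lem.rep}. Both \autoref{lem.bdd} and the $L^\infty$ part of \autoref{lem.calpha} carry $\|g\|_{L^\infty}$ on the right, whereas \eqref{ineql:main1} and \eqref{ineql:main2} contain only the weighted seminorm $[g]_{C^\beta_q}$.

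Your suggestion to use $g=f$ on $\gamma_0$ is circular as stated. Bounding $|f(z_*)|$ for $z_*\in\gamma_0$ goes through \autoref{lem.bdd}, which again contains $(1-\alpha)\|g\|_{L^\infty}$ times a constant that need not be below one, so nothing can be absorbed directly. It also loses the weights. For $z_1$ near $\gamma_-$ with $v_{1,n}\gg1$, carrying $g$ back to $\gamma_0$ through the local seminorms gives no decay in $v_{1,n}$. Yet your gluing step, which bounds far pairs by $r_1^{-\beta}\|f\|_{L^\infty}$, needs $\langle v_1\rangle^{p\beta}\|g\|_{L^\infty(\gamma_-\cap\mathcal{Q}_{2r_1}(z_1))}$ under control.

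The missing ingredient is the local absorption argument of \autoref{lem.rep} (from \cite[Lemma 7.6]{KiWe26}), carried out at scale $R\sim r_1$. Fix a boundary point $z_*$. Set $K=g(z_*)=f(z_*)$ if $z_*\in\gamma_0$, and $K=(\alpha\mathcal{R}_bf+(1-\alpha)g)(z_*)=f(z_*)$ if $z_*\in\gamma_-$ (the in-flow case). Then $f-K$ solves the same problem, and its boundary datum has size at most $cR^\beta$ times the relevant H\"older seminorm. Apply \autoref{lem.bdd} together with \autoref{lem.holsmall} (resp.\ \autoref{lem.calpha}) to $f-K$. Using $(f-K)(z_*)=0$, this gives on $\mathcal{H}_{\theta R}(z_*)$
\[
|f-K|\le C\theta^\beta\big(R^{-(4n+2)}\|f\|_{L^1(\mathcal{H}_R(z_*))}+|K|+R^2\|F\|_{L^\infty}+R^\beta[\,\cdot\,]_{C^\beta}\big).
\]
Average $|K|\le|f|+|f-K|$ over $\mathcal{H}_{\theta R}(z_*)$ and choose $\theta$ with $C\theta^\beta\le\frac12$; this absorbs $|K|$. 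The result bounds the boundary datum in $L^\infty$ by local quantities at the same velocity scale, so the weights $\langle v_1\rangle^q$ survive. With this inserted, your argument goes through as in the paper.
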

\begin{proof}
First, we prove \eqref{ineql:main1}. To this end, suppose $\mathcal{Q}_{2r_0}(z_0)\cap \gamma_-=\emptyset$ or $v_{0,n}\leq 3r_0$, where $r_0=\frac{\langle v_0\rangle^{-p}}{64}$. By \autoref{lem.calpha} and \autoref{lem.holsmall} together with \autoref{lem.rep} and the fact that $r_0\|B\|_{L^\infty(\mathcal{H}_{4r_0}(z_0))}\leq \langle v_0\rangle^{-2}\|B\|_{L^\infty(\mathcal{H}_{4r_0}(z_0))}\leq c$, we have
    \begin{align}\label{inel:ineq1}
        \|f\|_{C^{\beta}(\mathcal{H}_{r_0}(z_0))}\leq c\langle v_0\rangle^{q}\left(\|f\|_{L^1(\mathcal{H}_{4r_0}(z_0))}+\|F\|_{L^\infty(\mathcal{H}_{4r_0}(z_0))}+[g]_{C^{{\beta}}(\gamma_-\cap\mathcal{Q}_{4r_0}(z_0))}\right),
    \end{align}
    where $c=c(n,\Lambda,\eps,b,\alpha,p,M)$, $q=q(n,p)$ and $\beta=\beta(n,\Lambda,\eps,b,\alpha)$. Note that \autoref{lem.rep} is used to replace the $L^\infty$ norm of $g$ by the $C^{\beta}$ norm of $g$.  Next, when  $\mathcal{Q}_{2r_0}(z_0)\cap \gamma_-\neq\emptyset$ and $v_{0,n}> 3r_0$, by \autoref{lem.calpha} and \autoref{lem.rep}, we have 
     \begin{align*}
        \|f\|_{C^{\beta}(\mathcal{H}_{r_0}(z_0))}\leq c\langle v_0\rangle^{q}\left(\|f\|_{L^1(\mathcal{H}_{2r_0}(z_0))}+\|F\|_{L^\infty(\mathcal{H}_{2r_0}(z_0))}+[\alpha\mathcal{R}_bf+(1-\alpha)g]_{C^{\beta}(\gamma_-\cap\mathcal{Q}_{2r_0}(z_0))}\right).
    \end{align*}
    By using the super-elastic boundary condition and \autoref{lem.geo}, we obtain 
    \begin{align*}
        [\mathcal{R}_bf]_{C^{\beta}(\gamma_-\cap\mathcal{Q}_{2r_0}(z_0))}\leq c[f]_{C^{\beta}(\gamma_+\cap\mathcal{Q}_{2r_0,b}(R_b(z_0)))},
    \end{align*}
    where $c=c(n,\beta)$. Note that for any  $z\in \mathcal{Q}_{2r_0,b}(R_b(z_0))\cap \gamma_+$, we have $v_n<-br_0$. Therefore, using the regularity near $\gamma_+$ as in \eqref{inel:ineq1} together with a standard covering argument, we have
    \begin{align*}
       [\mathcal{R}_bf]_{C^{\beta}(\gamma_-\cap\mathcal{Q}_{2r_0}(z_0))}\leq c\langle v_0\rangle^{q}\left(\|f\|_{L^1(\mathcal{H}_{4r_0}(R_b(z_0)))}+\|F\|_{L^\infty(\mathcal{H}_{4r_0}(R_b(z_0)))}+[g]_{C^{{\beta}}(\gamma_-\cap\mathcal{Q}_{4r_0}(R_b(z_0)))}\right),
    \end{align*}
    where $c=c(n,\Lambda,\eps,b,\alpha,p,M)$. Combining all the estimates leads to
    \begin{align*}
        \|f\|_{C^{\lambda_{b}}(\mathcal{H}_{r_0}(z_0))}&\leq c\langle v_0\rangle^{q}\left(\|f\|_{L^1(\mathcal{H}_{4r_0}(z_0))}+\|F\|_{L^\infty(\mathcal{H}_{4r_0}(z_0))}+\|f\|_{L^1(\mathcal{H}_{4R_0}(R_b(z_0)))}+\|F\|_{L^\infty(\mathcal{H}_{4r_0}(R_b(z_0)))}\right)\\
        &\quad+ c\langle v_0\rangle^{q}\left([g]_{C^{{\beta}}(\gamma_-\cap\mathcal{Q}_{4r_0}(z_0))}+[g]_{C^{{\beta}}(\gamma_-\cap\mathcal{Q}_{4r_0}(R_b(z_0)))}\right)
    \end{align*}
    for some $q=q(n,p)$ and $c=c(n,\Lambda,\eps,b,\alpha,p,M)$. This implies \eqref{ineql:main1} for some large $q=q(n,p)$. 
    
    For \eqref{ineql:main2}, we follow the same lines as in the proof of \eqref{ineql:main1} with \autoref{lem.calpha} and \autoref{lem.holsmall} replaced by \autoref{lem.schhol.away-} and \autoref{thm.opthol}, respectively.
    This completes the proof.
\end{proof}

Now we provide the following lemma, which was used in the proof of \autoref{thm.inel} and states that the $L^\infty$ norm of the boundary data can be estimated by its H\"older semi-norm. 

\begin{lemma}\label{lem.rep}
Let $f$ be a weak solution to 
\begin{equation*}
\left\{
\begin{alignedat}{3}
\partial_tf+v\cdot\nabla_xf-\ddiv(A\nabla_vf)&=B\cdot\nabla_vf+F&&\qquad \mbox{in  $\mathcal{H}_R(z_0)$}, \\
f&=\alpha \mathcal{R}_bf+(1-\alpha)g&&\qquad  \mbox{in $\gamma_-\cap\mathcal{Q}_R(z_0)$},
\end{alignedat} \right.
\end{equation*}
where $z_0\in\gamma_-\cup\gamma_0$. If $g\in C^\eps(\gamma_-\cap\mathcal{Q}_R(z_0))$ for some $\eps>0$, then 
\begin{align*}
    \|g\|_{L^\infty(\gamma_-\cap\mathcal{Q}_R(z_0))}\leq c(R^{-(4n+2)}\|f\|_{L^1(\mathcal{H}_R(z_0))}+\|R^2F\|_{L^\infty(\mathcal{H}_R(z_0))}+R^\eps[\alpha\mathcal{R}_bf+(1-\alpha)g]_{C^{\eps}(\gamma_-\cap\mathcal{Q}_R(z_0))})
\end{align*}
for some constant $c=c(n,\Lambda,\eps,R\|B\|_{L^\infty(\mathcal{H}_R(z_0))},\alpha,b)$. In addition, if $z_0\in\gamma_0$, then we have 
\begin{align*}
    \|g\|_{L^\infty(\gamma_-\cap\mathcal{Q}_R(z_0))}\leq c(R^{-(4n+2)}\|f\|_{L^1(\mathcal{H}_R(z_0))}+\|R^2F\|_{L^\infty(\mathcal{H}_R(z_0))}+R^\eps[g]_{C^{\eps}(\gamma_-\cap\mathcal{Q}_R(z_0))})
\end{align*}
for some constant $c=c(n,\Lambda,\eps,R\|B\|_{L^\infty(\mathcal{H}_R(z_0))},\alpha,b)$.
\end{lemma}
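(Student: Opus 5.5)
The idea is to use the boundary condition pointwise to express $g$ in terms of $f$ on $\gamma_-$, and then control $f$ via the $L^\infty$ bound from \autoref{lem.calpha} or \autoref{lem.bdd}. After the scaling $z\mapsto z_0\circ S_Rz$ (which maps $g$ to $g(z_0\circ S_R\cdot)$, $F$ to $R^2F(\cdot)$, $B$ to $RB(\cdot)$ and $[\cdot]_{C^\eps}$ to $R^\eps[\cdot]_{C^\eps}$), we may take $R=1$.

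\textbf{Case $z_0\in\gamma_-$.} Write $\tilde g:=\alpha\mathcal{R}_bf+(1-\alpha)g$, which is the in-flow datum of $f$ on $\gamma_-\cap\mathcal{Q}_1(z_0)$. Since $\tilde g\in C^\eps$, $f$ attains it continuously there. Apply \autoref{lem.calpha} in the in-flow form to $f-\tilde g(z_0)$, on cylinders $\mathcal{Q}_\rho(z_1)$ centered on $\gamma_-\cap\mathcal{Q}_{1/2}(z_0)$ and then by covering. This gives
\[
|f(z)-\tilde g(z_0)|\le c\big(\|f-\tilde g(z_0)\|_{L^1}+\|F\|_{L^\infty}+[\tilde g]_{C^\eps}\big) \quad \text{near } z_0 .
\]
Integrating this inequality over a small interior region then controls $|\tilde g(z_0)|$ by $\|f\|_{L^1}+\|F\|_{L^\infty}+[\tilde g]_{C^\eps}$: the value $\tilde g(z_0)$ is the limit of averages of $f$ near the boundary, up to an oscillation error of size $[\tilde g]_{C^\eps}+\|F\|$. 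In the same way, at the reflected point (which lies in $\gamma_+$, at positive distance from $\gamma_0$ when the cylinders are chosen small relative to $|v_{0,n}|$), the trace $\mathcal{R}_bf$ is bounded by the interior $L^\infty$ bound (\autoref{lem.calpha}), hence by $\|f\|_{L^1}+\|F\|_{L^\infty}$. Solving $g=(\tilde g-\alpha\mathcal{R}_bf)/(1-\alpha)$ then bounds $\|g\|_{L^\infty}$ pointwise, with a constant depending on $\alpha$ through $(1-\alpha)^{-1}$. A covering/rescaling step extends the bound over all of $\gamma_-\cap\mathcal{Q}_1(z_0)$.

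\textbf{Case $z_0\in\gamma_0$.} Use \autoref{lem.holsmall}, which requires only $[g]_{C^\eps}$, to see that $f$ is H\"older continuous up to $z_0$. As noted in the proof of \autoref{thm.expansion}, $\mathcal{R}_bf(z_0)=f(z_0)$, so $f(z_0)=g(z_0)$. Applying \autoref{lem.holsmall} to $f-g(z_0)$, which solves the same problem with $g$ replaced by $g-g(z_0)$ (note $\alpha c+(1-\alpha)c=c$ for constants $c$), shows that $f-g(z_0)$ is controlled near $z_0$ by $\|f-g(z_0)\|_{L^1}+\|F\|+[g]_{C^\eps}$. Averaging $f-g(z_0)$ over a small half cylinder $\mathcal{H}_\rho(z_0)$ and using the H\"older decay, with $\rho$ chosen so that $c\rho^\beta\le 1/2$, absorbs the term containing $g(z_0)$. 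This yields $|g(z_0)|\le c(\|f\|_{L^1}+\|F\|_{L^\infty}+[g]_{C^\eps})$. Finally,
\[
\|g\|_{L^\infty(\gamma_-\cap\mathcal{Q}_1(z_0))}\le |g(z_0)|+c[g]_{C^\eps},
\]
which gives the second estimate.

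\textbf{Main obstacle.} The delicate step is the absorption argument for the constant: one must show that $|c|$ is controlled by the average of $f$ near the boundary, with the H\"older estimate making the remainder small. A cleaner alternative would be to argue by contradiction and compactness: normalize so that $\|g\|_{L^\infty}=1$ while the right-hand side tends to $0$, then pass to a limit with $f\equiv0$ and $g\equiv$ const $\neq0$. This contradicts the fact that the trace of $f$ on $\gamma_-$ equals $\alpha\mathcal{R}_bf+(1-\alpha)g$ and is continuous up to the boundary.
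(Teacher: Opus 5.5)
Your treatment of $z_0\in\gamma_0$ is essentially the paper's argument (the paper follows the proof of \cite[Lemma 7.6]{KiWe26}, using \autoref{lem.bdd} and \autoref{lem.holsmall}). Since $f(z_0)=g(z_0)$, the H\"older decay of $f-g(z_0)$ lets you absorb $|g(z_0)|$ into an average of $f$, and $\|g\|_{L^\infty}\le |g(z_0)|+c[g]_{C^\eps}$ finishes. The first half of your case $z_0\in\gamma_-$ is also fine: you treat $\tilde g=\alpha\mathcal{R}_bf+(1-\alpha)g$ as an in-flow datum and bound $\tilde g(z_0)$ by averaging. That is exactly what the paper's citation of \cite[Lemma 7.6]{KiWe26} provides for the first claim, namely an in-flow estimate applied to the datum $\tilde g$.

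The gap is the next step, where you bound $\mathcal{R}_bf$ on $\gamma_-\cap\mathcal{Q}_R(z_0)$ ``by the interior $L^\infty$ bound, hence by $\|f\|_{L^1}+\|F\|_{L^\infty}$''. That bound involves $f$ near the reflected points, which have $v_n<0$. But $\mathcal{Q}_R(z_0)$ only contains velocities with $v_n>v_{0,n}-R$, so once $v_{0,n}\ge R$ these sets are disjoint and nothing on the right-hand side controls $\mathcal{R}_bf$.

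This cannot be repaired, because the first estimate with $\|g\|_{L^\infty}$ on the left is false for every $\alpha\in(0,1)$. Take $A=I$, $B=0$, $F=0$, $v_{0,n}\ge 2R$, and $f(t,x,v)=\chi(v_n)$ with $\chi$ smooth, $\chi=1$ on $(-\infty,0]$ and $\chi=0$ on $[R,\infty)$. Then:
\begin{itemize}
\item $f\equiv0$ on $\mathcal{Q}_R(z_0)$, while $\mathcal{R}_bf\equiv1$ on $\gamma_-\cap\mathcal{Q}_R(z_0)$;
\item $g\equiv-\alpha/(1-\alpha)$ satisfies the boundary condition, with $\alpha\mathcal{R}_bf+(1-\alpha)g\equiv0$;
\item the right-hand side therefore vanishes, but $\|g\|_{L^\infty}=\alpha/(1-\alpha)>0$.
\end{itemize}
So the first claim has to be read with $\|\alpha\mathcal{R}_bf+(1-\alpha)g\|_{L^\infty(\gamma_-\cap\mathcal{Q}_R(z_0))}$ on the left. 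That is what \cite[Lemma 7.6]{KiWe26} gives, and it is what the proof of \autoref{thm.inel} actually uses. Your argument should stop once $\tilde g$ is bounded.

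The same example shows why your compactness alternative gives no contradiction for $z_0\in\gamma_-$. The limit $f\equiv0$, $g\equiv\mathrm{const}\neq0$ is consistent, because $\mathcal{R}_bf$ is evaluated outside the cylinder. It does work for $z_0\in\gamma_0$, where the identity $f(z_0)=g(z_0)$ passes to the limit.
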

\begin{proof}
    The first claim follows from \cite[Lemma 7.6]{KiWe26}. For the second claim, note that when $z_0\in \gamma_0$, then we have H\"older estimates of $f$ at $\mathcal{H}_{R}(z_0)$ without involving $\alpha\mathcal{R}_bf$ (see \autoref{lem.holsmall}). Therefore, by following the same argument in \cite[Lemma 7.6]{KiWe26} together with \autoref{lem.bdd} and \autoref{lem.holsmall}, we get the second claim. This completes the proof.
\end{proof}
Using the flattening argument provided in \cite[Section 8]{KiWe26} and using \autoref{thm.inel}, we are ready to prove \autoref{thm:main-general}.
\begin{proof}[Proof of \autoref{thm:main-general}]
By the flattening argument given in \cite[Section 5]{RoWe25} (see also \cite[Section 8]{KiWe26} and in particular the proof of \cite[Theorem 8.1]{KiWe26}), we can consider the solution $f$ to \eqref{inel:eq} for some large $p=p(n,\eps)$. Note that by taking the large constant $p$, we get
\begin{align*}
    \sup_{z_1\in\mathcal{H}_{2,\infty}}\|A\|_{C^\eps({H}_{r_1}(z_1))}\leq c
\end{align*}
for some constant $c=c(n,\Lambda,\eps,A,\Omega)$. In addition, by following the proof of \cite[Lemma 2.16 and (i) in Lemma 2.17]{RoWe25}, we observe that the boundary condition is preserved under the flattening transformation. Moreover, after applying the flattening transformation, the drift $B$ satisfies 
\begin{align*}
    \langle v_1\rangle^{-2}\|B\|_{L^\infty(\mathcal{H}_{r_0}(z_0))}\leq c
\end{align*}
for some constant $c=c(n,\Omega,B)$. Therefore, using the estimate \eqref{ineql:main2} given in \autoref{thm.inel} and the standard covering argument from \cite[Theorem 5.4]{RoWe25}, we get the optimal global H\"older estimates. 

Next, when $A$ is merely measurable, we use \eqref{ineql:main1} instead of \eqref{ineql:main2} to derive the global H\"older estimates.

Furthermore, if the given data is smooth, then smoothness away from the grazing set follows immediately by \autoref{lem.schhol.away-}, as the solution is smooth near $\gamma_+$ independently of the boundary condition. This completes the proof.
\end{proof}

\bibliographystyle{alpha}
\bibliography{literature}

\end{document}